	\setlist[enumerate]{label=(\roman*)}  
	\setlist[enumerate,2]{label=(\alph*)} 
\setlist{nolistsep} % Reduce spacing between bullet points and numbered lists
\title{Categories of abstract and noncommutative measurable spaces}
\let\runtitle\@title
\DeclarePairedDelimiter\abs{\lvert}{\rvert}
\DeclarePairedDelimiter\norm{\lVert}{\rVert}
\let\oldabs\abs
\def\abs{\@ifstar{\oldabs}{\oldabs*}}
\let\oldnorm\norm
\def\norm{\@ifstar{\oldnorm}{\oldnorm*}}
\definecolor{myurlcolor}{rgb}{0,0,0.5}
\definecolor{mycitecolor}{rgb}{0,0.5,0}
\definecolor{myrefcolor}{rgb}{0.5,0,0}
\numberwithin{equation}{subsection}
\theoremstyle{plain} %Titolo grassetto e testo italico
\newtheorem{thm}[equation]{Theorem}\Crefname{thm}{Theorem}{Theorems}
\newtheorem{cor}[equation]{Corollary}\Crefname{cor}{Corollary}{Corollaries}
\newtheorem{lem}[equation]{Lemma}\Crefname{lem}{Lemma}{Lemmas}
\newtheorem{prop}[equation]{Proposition}\Crefname{prop}{Proposition}{Propositions}
\newtheorem{conj}[equation]{Conjecture}\Crefname{conj}{Conjecture}{Conjectures}
\Crefname{propdef}{Proposition/Definition}{Propositions/Definitions}
\newtheorem{fact}[equation]{Fact}
\theoremstyle{definition} %Titolo grassetto
\newtheorem{defn}[equation]{Definition}\Crefname{defn}{Definition}{Definitions}
\newtheorem{ex}[equation]{Example}
\newtheorem{rem}[equation]{Remark}\Crefname{rem}{Remark}{Remarks}
\newtheorem{nota}[equation]{Notation}
\newtheorem{conv}[equation]{Convention}
\def\cref@thmoptarg[#1]#2#3#4{%
	    \ifhmode\unskip\unskip\par\fi%
	    \normalfont%
	    \trivlist%
	    \let\thmheadnl\relax%
	    \let\thm@swap\@gobble%
	    \thm@notefont{\fontseries\mddefault\upshape}%
	    \thm@headpunct{.}% add period after heading
	    \thm@headsep 5\p@ plus\p@ minus\p@\relax%
	    \thm@space@setup%
	    #2% style overrides
	    \@topsep \thm@preskip               % used by thm head
	    \@topsepadd \thm@postskip           % used by \@endparenv
	    \def\@tempa{#3}\ifx\@empty\@tempa%
	      \def\@tempa{\@oparg{\@begintheorem{#4}{}}[]}%
	    \else%
	      \refstepcounter[#1]{#3}%  <<< cleveref modification
	      \@namedef{cref@#3@alias}{#1}% added
	      \def\@tempa{\@oparg{\@begintheorem{#4}{\csname the#3\endcsname}}[]}%
	    \fi%
	    \@tempa}%
\newcommand{\id}{\operatorname{id}}
\newcommand{\op}{\mathrm{op}}
\newcommand{\cstar}{$C^{\ast}$-algebra}
\newcommand{\wstar}{$W^{\ast}$-algebra}
\newcommand{\cstars}{$\sigma C^{\ast}$-algebra}
\newcommand{\Cstars}{$\sigma C^{\ast}$-algebra}
\newcommand{\state}{\mathsf{States}}				% state space
\newcommand{\starshom}{${\sigma}$\text{-}ho\-mo\-mor\-phism}	% possible hyphenation breaks as indicated
\newcommand{\starsiso}{${\sigma}$\text{-}iso\-mor\-phism}	% possible hyphenation breaks as indicated
\newcommand{\bsigma}{Boo\-lean $\sigma$-al\-ge\-bra}
\newcommand{\bsigmahom}{$\sigma$-ho\-mo\-mor\-phism}
\newcommand{\bsigmaiso}{$\sigma$-iso\-mor\-phism}
\newcommand{\cpu}{cpu}
\newcommand{\scpu}{$\sigma$-\cpu}
\newcommand{\scpus}{\sigma\!\operatorname{cpu}}
\newcommand{\cpus}{\operatorname{cpu}}
\newcommand{\shom}{\sigma}		% was: \sigma\!\operatorname{hom}
\newcommand{\sa}{\mathrm{sa}}
\newcommand{\newterm}[1]{\emph{\textbf{#1}}}
\newcommand{\spectrum}{\operatorname{spec}} %spectrum of an element
\newcommand{\baire}{\mathsf{Baire}}
\newcommand{\spec}{\mathsf{Gelfand}} %Gelfand spectrum of a C*-algebra
\newcommand{\specs}{\mathsf{Gelfand}_{\sigma}} %sigma variant 
\newcommand{\Linf}{\mathscr{L}^{\infty}}
\newcommand{\Linfbool}{\mathscr{L}_{\operatorname{abs}}^{\infty}}
\newcommand{\stone}{\mathsf{Stone}}
\newcommand{\stones}{\mathsf{Stone}_{\sigma}}
\newcommand{\clopen}{\mathsf{Clopen}}
\newcommand{\proj}{\mathsf{Proj}}
\newcommand{\A}{\mathcal{A}}
\newcommand{\B}{\mathcal{B}}
\newcommand{\N}{\mathcal{N}}	% von Neumann algebra
\newcommand{\C}{\mathcal{C}}
\newcommand{\D}{\mathcal{D}}
\newcommand{\I}{\mathcal{I}}
\newcommand{\M}{\mathcal{M}}
\renewcommand{\Re}{\operatorname{Re}}	% real part
\renewcommand{\Im}{\operatorname{Im}}	% imaginary part
\newcommand{\hilb}{\mathcal{H}}
\newcommand{\bhilb}[1][\mathcal{H}]{\mathcal{B}(#1)}
\newcommand{\khilb}[1][\mathcal{H}]{\mathcal{K}(#1)}
\newcommand{\ball}[2]{B_{#2}(#1)} %1=point, 2=radius
\newcommand{\cball}[2]{\overline{B_{#2}(#1)}}%1=point, 2=radius
\newcommand{\ev}{\operatorname{ev}}
\newcommand{\ordlim}{\operatornamewithlimits{Lim}}
\newcommand{\univ}[1]{{#1}^{\sigma}}
\newcommand{\reg}[1]{{#1}^{r}}
\newcommand{\unitensor}{\otimes^{\sigma}}
\newcommand{\regtensor}{\otimes^r}
\newcommand{\eps}{\varepsilon}
\renewcommand{\emptyset}{\varnothing}
\newcommand{\meas}{\mathsf{Meas}}
\newcommand{\stoch}{\mathsf{Stoch}}
\newcommand{\sobmeas}{\mathsf{SobMeas}}
\newcommand{\sobstoch}{\mathsf{SobStoch}}
\newcommand{\borelmeas}{\mathsf{StBorelMeas}}
\newcommand{\borelstoch}{\mathsf{StBorelStoch}}
\newcommand{\bairemeas}{\mathsf{BaireMeas}}
\newcommand{\bairestoch}{\mathsf{BaireStoch}}
\newcommand{\cHaus}{\mathsf{cHaus}}
\newcommand{\cHausstoch}{\mathsf{cHausStoch}}
\newcommand{\Stone}{\mathsf{StoneSp}}
\newcommand{\Bool}{\mathsf{Bool}}
\newcommand{\Boolshom}{\Bool_{\shom}}
\newcommand{\SigmaB}{\sigma\mathsf{Bool}_{\shom}}
\newcommand{\csigmaB}{\mathsf{Conc}\sigma\mathsf{Bool}_{\shom}}
\newcommand{\Calg}{\mathsf{C}^{\ast}}
\newcommand{\cCalg}{\mathsf{c}\mathsf{C}^{\ast}}
\newcommand{\Calgcpu}{\Calg_{\cpus}}
\newcommand{\cCalgcpu}{\cCalg_{\cpus}}
\newcommand{\Calgshom}{\Calg_{\shom}}	% now with σ-normality on morphisms
\newcommand{\cCalgshom}{\cCalg_{\shom}}
\newcommand{\Calgscpu}{\Calg_{\scpus}}
\newcommand{\cCalgscpu}{\cCalg_{\scpus}}
\newcommand{\SigmaC}{\shom\mathsf{C}^{\ast}_{\shom}}
\newcommand{\cSigmaC}{\mathsf{c}\shom\mathsf{C}^{\ast}_\shom}
\newcommand{\SigmaCcpu}{\sigma\mathsf{C}^{\ast}_{\scpus}}
\newcommand{\cSigmaCcpu}{\mathsf{c}\shom\mathsf{C}^{\ast}_{\scpus}}
\newcommand{\msigmaC}{\mathsf{PureRep}\sigma\mathsf{C}^{\ast}_{\shom}}
\newcommand{\cmsigmaC}{\mathsf{cPureRep}\sigma\mathsf{C}^{\ast}_{\shom}}
\newcommand{\msigmaCcpu}{\mathsf{PureRep}\sigma\mathsf{C}^{\ast}_{\scpus}}
\newcommand{\cmsigmaCcpu}{\mathsf{cPureRep}\sigma\mathsf{C}^{\ast}_{\scpus}}
\newcommand{\PBenv}{\mathsf{PedBaireC}^{\ast}_{\shom}}
\newcommand{\cPBenv}{\mathsf{cPedBaireC}^{\ast}_{\shom}}
\newcommand{\pbenvcpu}{\mathsf{PedBaireC}^{\ast}_{\scpus}}
\newcommand{\cpbenvcpu}{\mathsf{cPedBaireC}^{\ast}_{\scpus}}
\newcommand{\pbsep}{\mathsf{PedBaireSepC}^{\ast}_{\shom}}	% sep versions start here
\newcommand{\cpbsep}{\mathsf{cPedBaireSepC}^{\ast}_{\shom}}
\newcommand{\pbsepcpu}{\mathsf{PedBaireSepC}^{\ast}_{\scpus}}
\newcommand{\cpbsepcpu}{\mathsf{cPedBaireSepC}^{\ast}_{\scpus}}
\newcommand{\repsigmaC}{\mathsf{Rep}\sigma\mathsf{C}^{\ast}_{\shom}}
\begin{document}
\titlepage
\begin{center}
	{\LARGE \textbf{Categories of abstract and\\[2pt] noncommutative measurable spaces}}

\par\vspace{3ex}
\textit{Tobias Fritz\textnormal{\textsuperscript{1}} \emph{and} Antonio Lorenzin\textnormal{\textsuperscript{2}}}

\par \vspace{1ex}
{\footnotesize \textsuperscript{1} Universit\"{a}t Innsbruck, Institut f\"{u}r Mathematik, Innsbruck, Austria
\par\vspace{-1ex}
\textsuperscript{2} University College London, Computer Science Department, London, UK
}
 %\hfill Innsbruck, \today

\end{center}

\begin{abstract}
\noindent 
Gelfand duality is a fundamental result that justifies thinking of general unital $C^*$-algebras as {noncommutative} versions of compact Hausdorff spaces. 
Inspired by this perspective, we investigate what noncommutative measurable spaces should be. 
This leads us to consider categories of monotone $\sigma$-complete $C^*$-algebras as well as categories of \bsigma{}s, which can be thought of as abstract measurable spaces.

Motivated by the search for a good notion of noncommutative measurable space, we provide a unified overview of these categories, alongside those of measurable spaces, and formalize their relationships through functors, adjunctions and equivalences.
This includes an equivalence between \bsigma{}s and commutative monotone $\sigma$-complete $C^*$-algebras, as well as a Gelfand-type duality adjunction between the latter category and the category of measurable spaces.
This duality restricts to two equivalences: one involving standard Borel spaces, which are widely used in probability theory, and another involving the more general Baire measurable spaces.
Moreover, this result admits a probabilistic version, where the morphisms are $\sigma$-normal cpu maps and Markov kernels, respectively.

We hope that these developments can also contribute to the ongoing search for a well-behaved Markov category for measure-theoretic probability beyond the standard Borel setting —-- an open problem in the current state of the art.
\end{abstract}
\tableofcontents

\section{Introduction}

\subsubsection*{Motivation}

About a century ago, John von Neumann introduced the formalism of Hilbert spaces as a mathematical foundation for quantum mechanics. 
Today, this framework has become standard, indicating that the expressiveness of the abstract theory is sufficient to capture the essential features of quantum phenomena.
From an algebraic perspective, structures like \cstar{}s play an important role in this story, serving as algebras of observables.
Due to the inherent probabilistic nature of quantum theory, one often speaks of \emph{quantum probability} in this context.
Optimally, one may expect classical probability to be \emph{precisely} the special case of quantum probability where the observable algebras under consideration are commutative.

A first step in this direction is provided by the well-known \emph{Gelfand duality}, which establishes an equivalence of categories between compact Hausdorff spaces and commutative \cstar{}s.\footnote{In the present paper, \cstar{}s and homomorphisms between them are always unital, and we usually leave this implicit.}
Conceptually, the commutativity condition enforces classical behavior: the Heisenberg uncertainty principle no longer applies, and all observables can meaningfully be assigned joint values.
Moreover, the celebrated Riesz--Markov--Kakutani representation theorem shows that states on a commutative \cstar{} $C(X)$ correspond bijectively to Radon probability measures on the compact Hausdorff space $X$. 
This correspondence has been extended to \emph{probabilistic Gelfand duality}~\cite{furber_jacobs_gelfand}, which relates continuous Markov kernels between compact Hausdorff spaces to completely positive unital maps (\emph{cpu maps}) between commutative \cstar{}s.

However, the topological nature of these results is unsatisfactory: our criterion that the commutative case of quantum probability should precisely be classical measure-theoretic probability does not hold, since in the latter context, Markov kernels are only required to be measurable, not continuous.
Indeed those Markov kernels that arise in mathematical practice (e.g.~as regular conditional probabilities) often fail to be continuous. 
In addition, there arguably is something wrong already at the level of objects, as algebras of measurable functions seem more natural in the probability context than algebras of continuous functions.

These reasons make us think that the category of \cstar{}s is not the best setting for quantum probability theory.
Further evidence comes from quantum measurement theory: on physical and operational grounds, one would expect that a morphism of observable algebras maps a positive operator-valued measure (POVM) on one algebra to a POVM on the other algebra.
But if every $*$-homomorphism, or even every cpu map, is a morphism, then this is not guaranteed: the image of a POVM may fail the $\sigma$-additivity required of a POVM.

It is often suggested that a good setting for quantum probability theory is that of \wstar{}s (von Neumann algebras) instead.
This is indeed well-motivated by the idea that \wstar{}s can be thought of as algebras of noncommutative random variables modulo almost everywhere equality, and there is a Gelfand-type duality that makes this precise~\cite{pavlov2022duality}.
But from our point of view, this is not satisfactory either, because quotienting by almost everywhere equality means that one needs to fix a reference measure.
In the non-atomic case, one thereby loses access to one of the most basic concepts of probability theory, namely delta measures.

A closely related issue manifests itself very cleanly and precisely in the recent theory of Markov categories~\cite{chojacobs2019strings,fritz2019synthetic}.
This framework allows us to study probability theory abstracted away from measure-theoretic details.
In these categories, each object comes equipped with a \emph{copy morphism}, which in typical Markov categories is given by the diagonal map $X \to X \times X$.
Among other things, this Markov category structure enables categorical definitions of determinism, conditioning, conditional independences, and almost sure equalities (\cite[\S~10--13]{fritz2019synthetic}). 
In other words, we are able to describe these notions inside any Markov category, without additional structure.
This fresh perspective enables a more abstract approach to the study of probability theory, which has consequently stimulated considerable interest in particular from the computer science community, and facilitated general categorical proofs of classical theorems such as the de Finetti~\cite{fritz2021definetti} and Aldous--Hoover theorems~\cite{chen2024aldoushoover}.

The issue is now that categories of measure spaces tend to \emph{not} be Markov categories: since maps between measure spaces are usually required to be measure-preserving, the diagonal map cannot be a morphism except in trivial cases.\footnote{For example for the Lebesgue measure on $[0,1]$, the pushforward along the diagonal gives the Lebesgue measure on the diagonal of the unit square $[0,1] \times [0,1]$, while the monoidal structure of the category equips $[0,1] \times [0,1]$ with the product measure, which is the usual Lebesgue measure on the unit square and therefore different.}
Taking the opposite of a category of $W^*$-algebras does not yield a Markov category either for similar reasons.\footnote{There is an interesting way around this investigated by Furber~\cite{furber2024monad}, which merits further consideration. By using a suitably defined tensor product of $W^*$-algebras, he does obtain a Markov category. However, the $W^*$-algebras that appear in this context tend to be very large, such as the double dual $C([0,1])^{**}$.}
This indicates that working with measure spaces or $W^*$-algebras obscures important concepts such as the ones listed above (determinism, conditioning, conditional independence, and almost sure equalities).

The highlighted limitations of both the continuous and the measure world prompts us to ask what a quantum, or noncommutative, measurable space should be. 
As far as we are aware, this question has not been seriously addressed in the literature before.
%As we have tried to argue, the answer is far from clear.
We believe that working on this question can lead to a deeper conceptual understanding of quantum probability theory, for example by shedding light on how notions such as conditional quantum probability can be formulated in simple categorical terms.

Another important motivation for this work comes purely from the classical side.
The usual Markov category for measure-theoretic probability uses standard Borel measurable spaces as objects.
This is limiting insofar as this class of measurable spaces is not closed under uncountable products.
It remains an open problem to find a well-behaved Markov category for measure-theoretic probability in which products of arbitrary families of objects exist.\footnote{What we mean by ``well-behaved'' (at a minimum) is that all conditionals exist, and the ``products'' should be Kolmogorov products in the sense of~\cite{fritzrischel2019zeroone}. With standard Borel spaces, we do have conditionals but only countable Kolmogorov products.}

Thus the goal of the present work is to investigate categories of measurable-like spaces for both quantum and classical probability theory.
To summarize, the motivating problems are the following:
\begin{enumerate}[itemsep=1ex,topsep=1ex]
	\item Find a definition of ``quantum measurable space'' which specializes to a well-behaved classical notion of measurable space or measurable-like space via a Gelfand-type duality.
	\item Find a well-behaved Markov category for measure-theoretic probability which has uncountable products.
\end{enumerate}
We hope that the present paper can lay some groundwork for resolving both of these problems by providing a taxonomy of approaches, as well as relations between them in the form of dualities.
% Moreover, it is of interest to understand whether \cstar{}s can be used to study measurable spaces from an algebraic perspective, especially as many proofs in measure theory are secretly algebraic in nature~\cite{segal1965algebraic}.
Moreover, as an auxiliary notion intermediate between point-based concepts of measurable space and $C^*$-algebraic ones, we also consider \emph{\bsigma{}s} as an abstract generalization of measurable spaces.
Since these have been extensively studied, examining this setting first can provide useful hints on how to generalize to the noncommutative case.

\subsubsection*{Related work}

In the classical case, recent work by Jamneshan and Tao~\cite{jamneshan23foundational} has a similar goal as our second one.
As argued by their study, a well-behaved generalization of standard Borel spaces is given by \emph{Baire measurable spaces}. This is explicitly stated on p.~7, where they write that the Baire $\sigma$-algebra is the most natural choice of $\sigma$-algebra when working with compact Hausdorff spaces.
For instance, conditional probabilities still exist~\cite[Corollary 452N]{fremlin4}. 
This result is similar to the disintegration theorem \cite[Theorem 1.6]{jamneshan23foundational}, which follows from the strong Lusin property (Proposition 7.4) of a certain class of Stone spaces: every bounded Baire measurable function is equal almost everywhere to a unique continuous function. 

While the present paper may initially appear similar to the treatment by Jamneshan and Tao, there are several key distinctions.
Notably, although both works consider Baire measurable, Stone and Rickart spaces (called $\text{Stone}_{\sigma}$-spaces in \cite{jamneshan23foundational}), our approach is mostly point-free, as we have generalizations to the noncommutative setting in mind.
In addition, Jamneshan and Tao focus on measure spaces and deterministic morphisms, which we find unsatisfactory in light of the discussion on Markov categories above.
Instead, our objective is to describe a setting where objects are measurable-like spaces and morphisms are either deterministic or correspond to Markov kernels (such as regular conditional probabilities).
%, and we are also not convinced that their categorical description on disintegration can lead to a synthetic treatment of known results (which is instead true for Markov categories and the notion of conditionals~\cite{fritz2021definetti,chen2024aldoushoover,moss2022ergodic}).
%In particular, the Loomis-Sikorski functor studied by Jamneshan and Tao is unrelated to what we refer to as Loomis-Sikorski duality (\cref{thm:ls_duality}), which instead follows the work of Chen~\cite{chen23univ}, who examined \bsigma{}s.

We comment on the relation to~\cite{jamneshan23foundational} in detail wherever any overlap appears in the main text.

\subsubsection*{Overview}

In \cref{tab:meas}, we display all the categories of measurable spaces that we consider.
These are subsequently smaller full subcategories of $\meas$, corresponding to spaces that are progressively more well-behaved.
We also discuss several categories of Boolean algebras as abstract counterparts of measurable spaces, which are summarized in \cref{tab:bool}, likewise in progressively more well-behaved subcategories.
Finally, \cref{tab:cstar} similarly lists the categories of \cstar{}s that we investigate.
For each category of measurable spaces and of \cstar{}s, we also consider a probabilistic version, where the morphisms are Markov kernels and cpu maps, respectively.
We do not do this for Boolean algebras, since there does not seem to be any natural concept of probabilistic morphism between Boolean algebras which does not proceed through measurable spaces or \cstar{}s (and would therefore result in equivalences of categories that trivially hold by definition).

With deterministic morphisms, the relations between all of these categories, as far as we consider them in this paper, are summarized in \cref{fig:det_side}.
For probabilistic morphisms instead, they are detailed in \cref{fig:prob_side}.
Before summarizing all of this in some more detail, let us note that we do not strive to be exhaustive in our treatment: for example, we do not consider the left adjoint to $\sobmeas \to \meas$ (sobrification), nor do we discuss Boolean algebra versions of the categories $\cPBenv$ and $\pbsep$, although all of this should be possible.

\begin{table}
	\centering
	\begin{tabular}{c|c|c|c}
		%\multicolumn{4}{c}{\textbf{Measurable spaces}}\\
		%\toprule
		\multicolumn{1}{c}{\textit{Category}} & \multicolumn{1}{c}{\textit{Objects}}&\multicolumn{1}{c}{\textit{Morphisms}}&\multicolumn{1}{c}{\textit{Definition}}\\
		\toprule 
		$\meas$ & Measurable spaces& \multirow{4}{*}{Measurable functions} & \ref{def:meas}\\
		$\sobmeas$ & Sober measurable spaces && \ref{def:sobmeas}\\
		$\bairemeas$ & Baire measurable spaces&&\ref{def:bairemeas}\\
		$\borelmeas$ & Standard Borel spaces &&\ref{def:borelmeas}\\
		%\hline 
		%$\stoch$ & Measurable spaces & \multirow{4}{*}{Markov kernels} & \ref{def:stoch}\\
		%$\sobstoch$&  Sober measurable spaces &&\ref{def:stoch} \\
		%$\bairestoch$ & Baire measurable spaces &&\ref{def:stoch}\\
		%$\borelstoch$ & Standard Borel spaces && \ref{def:stoch}
	\end{tabular}
	\caption{Categories of measurable spaces. Following the conventions of categorical probability~\cite{fritz2019synthetic}, we replace ``$\meas$'' by ``$\stoch$'' for any of these whenever we consider Markov kernels as morphisms instead of measurable functions.}
	\label{tab:meas}
\end{table}

\setlength{\tabcolsep}{2ex}
\begin{table}
	\centering
	\begin{tabular}{c|c|c|c}
		%\multicolumn{4}{c}{\textbf{Boolean algebras}}\\
		%\toprule
		\multicolumn{1}{c}{\textit{Category}} & \multicolumn{1}{c}{\textit{Objects}}&\multicolumn{1}{c}{\textit{Morphisms}}&\multicolumn{1}{c}{\textit{Definition}}\\
		\toprule  
		$\Bool$ & Boolean algebras& Homomorphisms & \ref{def:Bool}\\
		\hline
		$\Boolshom$ & Boolean algebras & \multirow{3}{*}{\bsigmahom{}s}&\ref{def:boolshom}\\
		$\SigmaB$ & \bsigma{}s && \ref{def:sigmaB}\\
		$\csigmaB$ & Concrete \bsigma{}s && \ref{def:meas}
	\end{tabular}
	\caption{Categories of Boolean algebras.}\label{tab:bool}
\end{table}

\begin{table}
	\centering
	\begin{tabular}{c|>{\centering\arraybackslash}m{0.4\textwidth}|c|c}
		%\multicolumn{4}{c}{\textbf{\cstar{}s}}\\
		%\toprule
		\multicolumn{1}{c}{\textit{Category}} & \multicolumn{1}{c}{\textit{Objects}}&\multicolumn{1}{c}{\textit{Morphisms}}&\multicolumn{1}{c}{\textit{Definition}}\\
		\toprule 
		$\Calg$ & \cstar{}s& $\ast$-homomorphisms & \ref{def:calg}\\
		\hline 
		$\Calgshom$ & \cstar{}s & \multirow{6}{*}{\starshom{}s} & \ref{def:calg}\\
		$\SigmaC$ & \cstars{}s && \ref{def:calg}\\
		$\msigmaC$ & Purely $\sigma$-representable \cstars{}s &&\ref{def:msigmaC}\\
		$\PBenv$ & Pedersen--Baire envelopes of \cstar{}s && \ref{def:pbenv} \\
		$\pbsep$ & Pedersen--Baire envelopes of separable \cstar{}s && \ref{def:pbenv} \\
		%\hline 
		%$\Calgcpu$ & \cstar{}s& \cpu{} maps& \ref{def:calg} \\
		%\hline 
		%$\Calgscpu$ & \cstar{}s & \multirow{6}{*}{\scpu{} maps}& \ref{def:calg}\\
		%$\SigmaCcpu$ & \cstars{}s && \ref{def:calg}\\
		%$\msigmaCcpu$ & Purely $\sigma$-representable \cstar{}s &&\ref{def:msigmaC}\\
		%$\pbenvcpu$ & Pedersen--Baire envelopes && \ref{def:pbenv}\\
		%\multirow{2}{*}{$\pbsepcpu$} & Pedersen--Baire envelopes &&\multirow{2}{*}{\ref{def:pbenv}} \\
		%&of separable \cstar{}s &&
	\end{tabular}
	\caption{Categories of \cstar{}s. The subscript ``$\sigma$'' reminds us that the morphisms in those categories that carry it are \starshom{}s. In each case, we may also add an additional subscript ``$\mathrm{cpu}$'' to indicate that we are considering completely positive maps instead of \starshom{}s. We can also add an additional prefix ``$\mathsf{c}$'' whenever we require commutativity. For example, $\cCalgscpu$ is the category of commutative \cstar{}s with $\sigma$-normal completely positive maps.}
	\label{tab:cstar}
\end{table}

Let us start the discussion with the categories of measurable spaces, which are in the left column of \cref{fig:det_side}, and the relation to the categories of Boolean algebras in the second column.
For every measurable space $X$, its $\sigma$-algebra $\Sigma_X$ is a Boolean algebra.
Due to the closure under countable unions (or equivalently countable intersections), it is even a \newterm{\bsigma{}}, i.e.~a Boolean algebra with countable suprema (or equivalently infima).
Every measurable map $X \to Y$ induces a Boolean algebra homomorphism $\Sigma_Y \to \Sigma_X$ which moreover preserves countable suprema, which makes it into a \newterm{\bsigmahom{}}.
This explains the functor $\Sigma$ in \cref{fig:det_side}.
This functor is part of the most important relation between the first two columns, namely a contravariant idempotent adjunction between $\meas$ and $\SigmaB$ 
called \newterm{Loomis--Sikorski duality}~\cite{chen23univ}.\footnote{Calling this a duality despite the fact that it is not a dual equivalence aligns with the general categorical theory of duality: see, for example, Isbell duality, but also the general study of concrete dualities~\cite[Section~VI.4]{johnstone82stone}.}
In the opposite direction to $\Sigma$, its adjoint $\stones$ assigns to every \bsigma{} $A$ the set of \bsigmahom{}s $A \to \lbrace \bot,\top\rbrace$, equipped with the $\sigma$-algebra given by the sets of the form
\begin{equation*}
[a] \coloneqq \lbrace \phi \in \stones(A) \mid \phi(a)=\top \rbrace
\end{equation*}
for any $a \in A$.
As usual for concrete dualities~\cite{johnstone82stone}, the action on morphisms is given by composition.
Loomis--Sikorski duality is not a dual equivalence for two reasons.
First, not every \bsigma{} can be realized as the $\sigma$-algebra of a measurable space, and those that do are called \newterm{concrete}.
Second, the measurable spaces of the form $\stones(A)$ are special in that their $\sigma$-algebras separate points, and actually enjoy the stronger property that the $\{0,1\}$-valued probability measures are in bijection with the underlying set of points.
Such measurable spaces are called \newterm{sober}.
Like every idempotent adjunction, Loomis--Sikorski duality restricts to a dual equivalence between suitable full subcategories, and in this case these are the categories of sober measurable spaces and concrete \bsigma{}s, as indicated in \cref{fig:det_side}.

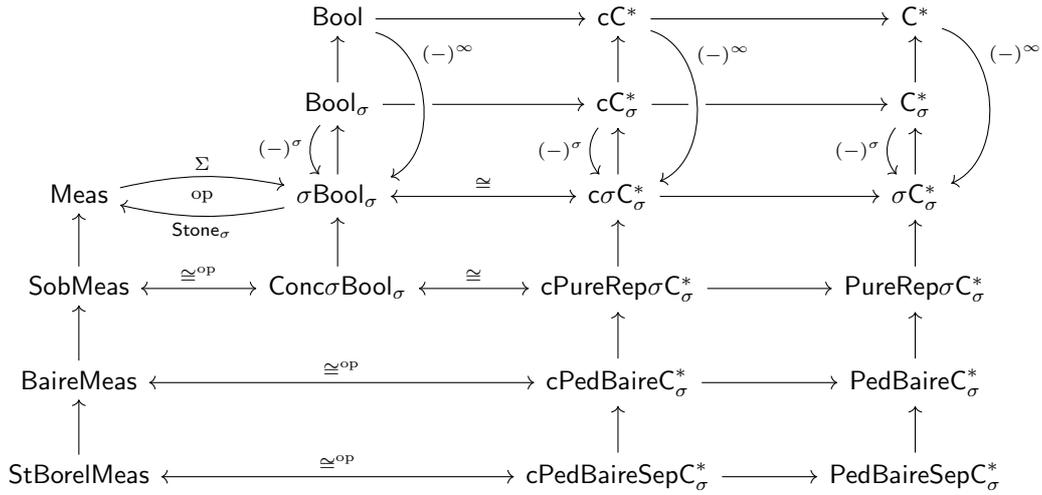
\begin{figure}
	\[
		\begin{tikzcd}[column sep=3pc]
		& \Bool	\ar[r] & \cCalg \ar[r]  & \Calg \ar[dd,bend left=70, "(-)^{\infty}", pos=0.33] \\
		& \Boolshom \ar[r]\ar[u]\ar[d,bend right=40, "\univ{(-)}" left] & \cCalgshom\ar[r]\ar[u]\ar[d,bend right=40, "\univ{(-)}" left] & \Calgshom\ar[u]\ar[d,bend right=40, "\univ{(-)}" left] \\
		\meas \ar[r,bend left=12,"\Sigma"]\ar[r,phantom,"\op"{font=\scriptsize}] & \SigmaB \ar[from=uu, bend left=70, "(-)^{\infty}", pos=0.33, crossing over] \ar[l,bend left=12,"\stones"]\ar[r,leftrightarrow, "\cong"]\ar[u] & \cSigmaC \ar[from=uu, bend left=70, "(-)^{\infty}", pos=0.33, crossing over] \ar[r]\ar[u] & \SigmaC\ar[u]\\
		\sobmeas\ar[r,leftrightarrow,"\cong^{\op}"] \ar[u]& \csigmaB\ar[r,leftrightarrow,"\cong"]\ar[u] & \cmsigmaC  \ar[r]\ar[u] & \msigmaC\ar[u]\\
		\bairemeas\ar[u]\ar[rr,leftrightarrow,"\cong^{\op}"]&& \cPBenv \ar[r]\ar[u] & \PBenv\ar[u] \\
		\borelmeas\ar[u]\ar[rr,leftrightarrow,"\cong^{\op}"]&& \cpbsep \ar[r]\ar[u] & \pbsep\ar[u]
		\end{tikzcd}
	\]
	\caption{The deterministic situation. All upward arrows are forgetful functors, bent arrows are left adjoints, and $\op$ indicates contravariance.}
	\label{fig:det_side}
\end{figure}

Moving up in the second column of \cref{fig:det_side}, there are forgetful functors from $\SigmaB$ to $\Boolshom$, the category of Boolean algebras with \bsigmahom{}s, and further to $\Bool$, the category of Boolean algebras with homomorphisms.
Both of these forgetful functors have left adjoints.
From $\Bool$ back to $\SigmaB$, this adjoint $(-)^{\infty}$ assigns to every Boolean algebra $A$ its \newterm{Baire envelope} $A^\infty$, which is the \bsigma{} of Baire sets on its Stone space (and in particular concrete).
The adjoint $\univ{(-)}$ from $\Boolshom$ back to $\SigmaB$ maps every $A$ to its \newterm{universal $\sigma$-completion} $\univ{A}$, which can be constructed as a quotient of the Baire envelope $A^\infty$.
The crucial difference between these two functors is that the universal homomorphism $A \to \univ{A}$ is a \bsigmahom{}, while this is typically not the case for $A \to A^\infty$.
There generally are other ways to $\sigma$-complete a Boolean algebra to a \bsigma{}, such as the \newterm{regular $\sigma$-completion}.
While the latter is of some interest in the study of tensor products, it does not seem to enjoy a simple universal property, and is not even functorial on homomorphisms (\cref{rem:nonfunctoriality_regular}).

We now turn to the third column in \cref{fig:det_side}, which involves \cstar{}s.
Under Stone and Gelfand duality, the functor $\Bool \to \cCalg$ corresponds to the inclusion functor of Stone spaces into compact Hausdorff spaces.
It restricts to various subcategories as indicated.
Particularly notable here is the equivalence between $\SigmaB$ and $\cSigmaC$, which is one of our main results (\cref{cor:proj_equiv}).
Its composition with Loomis--Sikorski duality is a contravariant idempotent adjunction between $\meas$ and $\cSigmaC$, which we call \newterm{measurable Gelfand duality} (\cref{thm:measurableGelfand}).
In one direction, it assigns to a measurable space $X$ the \cstars{} $\Linf(X)$ of bounded complex-valued measurable functions on $X$.
In the other direction, it takes a commutative \cstars{} $\A$ to its \newterm{Gelfand $\sigma$-spectrum}, by which we mean the measurable space $\specs(\A)$ consisting of the \starshom{}s $\A \to \mathbb{C}$.
While Loomis--Sikorski duality restricts to a dual equivalence between sober measurable spaces and concrete \bsigma{}s, measurable Gelfand duality extends this to a further equivalence with $\cmsigmaC$, the category of commutative purely $\sigma$-representable \cstar{}s with \starshom{}s.

Looking at the vertical arrows in the third column of \cref{fig:det_side}, we have forgetful functors and left adjoints analogous to those from the Boolean case.
These are the \newterm{Pedersen--Baire envelope} functor $(-)^{\infty}$ and the \newterm{universal $\sigma$-completion} functor $\univ{(-)}$.
Moving further down in the third column, we also consider full subcategories of $\cSigmaC$.
Namely $\cPBenv$ is given by those \cstars{}s that arise as Pedersen--Baire envelopes of commutative \cstar{}s, and we show that these correspond to Baire measurable spaces, i.e. measurable spaces whose $\sigma$-algebra comes from a compact Hausdorff topology (\cref{def:baire}). 
Since Baire envelopes of Boolean algebras correspond only to those Baire measurable spaces arising from Stone spaces, it is not clear whether the equivalence $\bairemeas \cong \cPBenv$ factors through Baire envelopes; we leave this open for future investigation.\footnote{By Kuratowski's theorem, uncountable standard Borel spaces are all measurably isomorphic to the Cantor space, so the conjectural factorization may follow from a generalization of Kuratowski's theorem.}
Further restricting, $\pbsep$ consists of the Pedersen--Baire envelopes of commutative separable \cstar{}s, and these enjoy a dual equivalence with standard Borel spaces.

Finally, the fourth column of \cref{fig:det_side} extends the third column to the noncommutative setting, but otherwise works in the same way.
In fact, our treatment of the functors in the third column directly takes place in the noncommutative setting, but restricts to the commutative case.

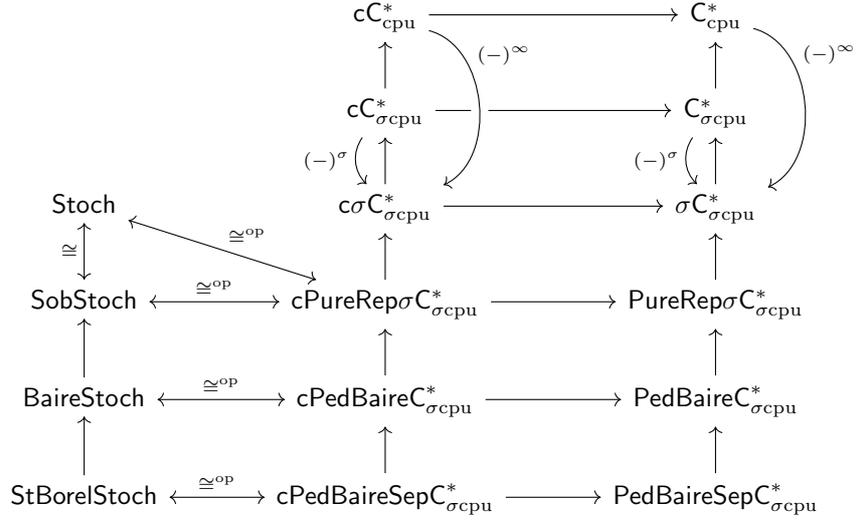
\begin{figure}
	\[
		\begin{tikzcd}[column sep=3pc]
		& \cCalgcpu \ar[r] & \Calgcpu \ar[dd,bend left=70, "(-)^{\infty}", pos=0.33] \\
		& \cCalgscpu \ar[r] \ar[u]\ar[d,bend right=40, "\univ{(-)}" left] & \Calgscpu\ar[u]\ar[d,bend right=40, "\univ{(-)}" left]\\
		\stoch \ar[rd, leftrightarrow, "\cong^{\op}"] & \cSigmaCcpu \ar[from=uu, bend left=70, "(-)^{\infty}", pos=0.33, crossing over] \ar[r]\ar[u] & \SigmaCcpu\ar[u]\\
		\sobstoch\ar[r,leftrightarrow,"\cong^{\op}"] \ar[u,leftrightarrow,"\cong"]& \cmsigmaCcpu \ar[r]\ar[u] & \msigmaCcpu\ar[u]\\
		\bairestoch\ar[u]\ar[r,leftrightarrow,"\cong^{\op}"]& \cpbenvcpu \ar[r]\ar[u] & \pbenvcpu\ar[u]\\
		\borelstoch\ar[u]\ar[r,leftrightarrow,"\cong^{\op}"]& \cpbsepcpu \ar[r]\ar[u] & \pbsepcpu\ar[u]
		\end{tikzcd}
	\]
	\caption{The probabilistic situation. All upward arrows are forgetful functors, bent arrows are left adjoints, and $\op$ indicates contravariance.}
	\label{fig:prob_side}
\end{figure}

Let us now turn to the probabilistic situation depicted in \cref{fig:prob_side}.
Since there is no natural notion of probabilistic morphism between Boolean algebras, the corresponding column is now absent, and we only deal with categories of measurable spaces and of \cstar{}s.
With this in mind, the functors and dualities depicted are largely analogous to those in the deterministic case of \cref{fig:det_side}, because the corresponding functors simply extend from deterministic morphisms to Markov kernels and cpu maps.
The only exception is the situation with $\stoch$.
This is the only one among our categories for which the deterministic morphisms do \emph{not} form a subcategory of the probabilistic morphisms, i.e.~the functor $\meas \to \stoch$ is \emph{not} faithful.
In fact, in $\stoch$ every measurable space is naturally isomorphic to its sobrification, and this upgrades the forgetful functor $\sobmeas \to \meas$ to an equivalence $\sobstoch\cong \stoch$.
However, our measurable Gelfand duality adjunction between $\meas$ and $\cCalg$ does \emph{not} extend to an adjunction between $\stoch$ and $\cSigmaCcpu$. 
This is a consequence of the existence of \cstars{}s admitting $\sigma$-states but no pure $\sigma$-states, such as $L^{\infty}([0,1])$, where $[0,1]$ carries the Lebesgue measure (\cref{rem:no_prob_adj}).

For both of our goals, it is important to also have a notion of product on objects, and we therefore also conduct a partial study of monoidal structures on our categories and consider to what extent our functors preserve these.
For our categories of measurable spaces with deterministic morphisms, the most reasonable choice of product is simply the usual product of measurable spaces.
This can be justified formally by noting that in every Markov category, the monoidal product is the categorical product on the subcategory of deterministic morphisms~\cite[Remark~10.13]{fritz2019synthetic}.
For our categories of Boolean algebras and \cstar{}s, which behave dually, we correspondingly consider coproducts.
For $A, B \in \SigmaB$, their coproduct $A \unitensor B$ can be constructed as the universal $\sigma$-completion of the coproduct $A \otimes B$ in $\Bool$, and the proof of this fact due to Sikorski is surprisingly \emph{not} purely formal.
The coproduct in $\csigmaB$ can be similarly constructed as the \emph{regular} $\sigma$-completion $A \regtensor B$ (\cref{prop:sigma_tensor_product}).
Since this equivalence $\sobmeas \cong \csigmaB$ necessarily preserves products, it follows that the $\sigma$-algebra on a product of two sober measurable spaces is the regular $\sigma$-completion of the tensor product of the $\sigma$-algebras (\cref{prop:concrete_regular}).

By using the equivalences of \cref{fig:det_side}, the universal and regular tensor products of \bsigma{}s transfer to analogous constructions for commutative \cstars{}s $\A$ and $\B$.
This produces the coproduct $\A \unitensor \B$ in $\cSigmaC$, and if both algebras are purely $\sigma$-representable, also the coproduct $\A \regtensor \B$ is in $\cmsigmaC$.
Naturally, both of these can be constructed as the regular and universal $\sigma$-completion of the usual $C^*$-tensor product $\A \otimes \B$, respectively.
We also consider the functoriality of these constructions with respect to probabilistic morphisms.
For example, we show that the regular tensor product on $\cmsigmaCcpu$ makes the dual equivalence with $\stoch$ into an equivalence of symmetric monoidal categories (\cref{prop:linf_sym}), which in fact is an equivalence of Markov categories (\cref{rem:markov_functor}).
Fortunately, the subtle distinction between $\unitensor$ and $\regtensor$ becomes unnecessary for commutative Pedersen--Baire envelopes, as the two coincide on $\cPBenv$ (\cref{prop:pb_tensor}, see also \cite[Proposition 6.7.(iv)]{jamneshan23foundational}).

In the noncommutative setting, the treatment of tensor products of \cstars{}s is more involved.
Already for vanilla \cstar{}s, there are different notions of tensor product, such as the minimal tensor product $\A \otimes_{\min} \B$ and the maximal tensor product $\A \otimes_{\max} \B$.
These are completions of the algebraic tensor product with respect to $C^*$-norms that are different in general.
To get a symmetric monoidal structure on a category of \cstars{}s, these tensor products will have to be completed to \cstars{}s of their own.
The first problem that arises with this is that it is unclear whether either tensor product of two \starshom{}s can be guaranteed to be a \starshom{} again.
This puts the functoriality of both $\unitensor_{\min}$ and $\unitensor_{\max}$ into question, and using the regular $\sigma$-completion instead will only make this more difficult.

A good solution may be to define a symmetric monoidal structure on $\PBenv$ only, where we can define $\A^\infty \otimes^\infty_{\max} \B^\infty \coloneqq (\A \otimes_{\max} \B)^\infty$.
This avoids the difficulties with showing that the tensor product of \starshom{}s is again a \starshom{}, and we indeed obtain a symmetric monoidal structure (\cref{prop:nc_pb_tensor}), which even extends to $\pbenvcpu$.

This finishes our overview of the categories and their relations as they appear in this paper.
How do these categories fare with regard to our goals?
Concerning the goal of finding a well-behaved Markov category for measure-theoretic probability, sober measurable spaces are well-known to be too general, since regular conditional probabilities between them do not need to exist.
The situation is much better for Baire measurable spaces~\cite[452N]{fremlin4}.
This is one reason for why we think that $\bairestoch$, or equivalently the opposite category of $\cpbenvcpu$, could be a well-behaved Markov category for measure-theoretic probability.
Another reason is that the Kolmogorov extension theorem holds for arbitrary families of Baire measurable spaces.\footnote{This is easiest to find in the literature through the correspondence between Baire probability measures and Radon probability measures~\cite[Theorem~7.3.1]{dudley02real}, and the fact that the theorem holds for the latter~\cite[Section~I.10]{schwartz1973radon}.}
On the other hand, this is not the case for $\borelstoch$, since standard Borel spaces are not closed under uncountable products.
Thus $\bairestoch$ is currently our best candidate, and the dual equivalence with $\cpbenvcpu$ facilitates an algebraic approach to it in the spirit of~\cite{segal1965algebraic}.

Fortunately, this matches up well with our tentative answer to the other question: in light of the relevance of well-behavedness of Baire measurable spaces in measure-theoretic probability and the existence of a nice symmetric monoidal structure, $\PBenv$ and $\pbenvcpu$ look like sensible candidates for well-behaved categories of quantum measurable spaces with deterministic and probabilistic morphisms, respectively.

\subsubsection*{Structure of the paper}

The paper is divided into four sections: \cref{sec:bool_sigma} discusses the theory of \bsigma{}s, \cref{sec:sigma_cstar} that of \cstars{}s, \cref{sec:equivalences_dualities} contains the dualities depicted in \cref{fig:det_side,fig:prob_side}, and \cref{sec:tensor_cstars} is devoted to tensor products of \cstars{}s.

We now give a more detailed summary.
After discussing basic definitions, \cref{sec:concrete_loomis_sikorski} is devoted to Loomis--Sikorski duality in the form introduced in~\cite{chen23univ}. 
At the beginning of \cref{sec:baire_envelopes}, we focus on the Loomis--Sikorski representation theorem and show how this ensures the existence of Baire envelopes (\cref{prop:baire_univ}). 
We then proceed to $\sigma$-completions in \cref{sec:bool_sigma_completions}, which are essential for the study of tensor products in \cref{sec:tensor_bool}.

\cref{sec:sigma_cstar} opens with some basics on \cstars{}s.
We then discuss, in \cref{sec:sigma_normal}, the concept of $\sigma$-normality for $*$-homomorphisms and cpu maps, a notion of $\sigma$-additivity adapted to the context of \cstars{}s.
Pedersen--Baire envelopes and their universal properties are the main focus of \cref{sec:pb_envelopes}.
This universal property facilitates a simple development of measurable functional calculus, as shown in \cref{sec:functional_calculus}.
The section then concludes in \cref{sec:sigma_compl} with a study of $\sigma$-completions.

\cref{sec:equivalence} puts together the Boolean and $C^*$-algebraic settings by constructing the equivalence $\SigmaB \cong \cSigmaC$.
We offer two distinct approaches for achieving this, where one uses Gelfand and Stone dualities (\cref{sec:gelfand_stone_ap}), while the other is more direct but less straightforward (\cref{sec:measurable_ap}).
In \cref{sec:concrete_duality}, we use this equivalence, combined with Loomis--Sikorski duality, to obtain measurable Gelfand duality and its corollaries.
The corresponding results with probabilistic morphisms are presented in \cref{sec:probabilistic}.

In \cref{sec:tensor_commutative_cstars}, we discuss tensor products of commutative \cstars{}s by mimicking the situation for \bsigma{}s. 
In particular, we obtain a technically important result, \cref{prop:pb_tensor}, which describes the universal tensor product of two commutative Pedersen--Baire envelopes.
This is then used to show that some of the equivalences in \cref{sec:equivalences_dualities} can be extended to symmetric monoidal equivalences. 
We conclude by providing a tensor product for Pedersen--Baire envelopes in \cref{sec:tensor_pbenv}.

\subsubsection*{Preliminaries}

To ensure accessibility for a broad range of researchers, we will cover detailed background on \bsigma{}s and monotone $\sigma$-complete \cstar{}s in \cref{sec:bool_sigma,sec:sigma_cstar}.
However, we assume that the reader has some basic knowledge of Boolean algebras, \cstar{}s and category theory.
Concerning the latter, familiarity with the concepts of adjunction, equivalence, and symmetric monoidal categories is recommended.

\subsubsection*{Acknowledgments}

We acknowledge support by the Austrian Science Fund (FWF P 35992-N). We further thank Tommaso Russo for fruitful discussions on functional analytic aspects, and the referee for helpful comments.
\section{\texorpdfstring{\bsigma s}{Boolean σ-algebras}}
\label{sec:bool_sigma}

To start our discussion on algebraic descriptions of measurable spaces, \bsigma{}s are a very natural choice, as they are the abstract version of $\sigma$-algebras. 
Here, we provide an introduction to \bsigma{}s with all the relevant results that will be used in the paper. 
A reader who is well-versed in the theory may skip most of this section, with the exception of \cref{sec:concrete_loomis_sikorski}, which discusses Loomis--Sikorski duality following \cite{chen23univ}.
Our only meaningful original contribution may be the use of sober measurable spaces, defined according to \cite{moss2022probability}. 
Throughout, we also introduce the relevant categories from \cref{tab:meas,tab:bool}.
\cref{rem:name_sigmacompl} justifies our choice of terminology for $\sigma$-completions, which differs from prior usage. 

\subsection{Basic definitions}
For the basics of Boolean algebras, we refer to~\cite[\S~1,2]{sikorski69boolean}.
Further precise references on Boolean $\sigma$-algebras will be given as we go along. 

\begin{defn}\label{def:Bool}
	$\Bool$ is the category of Boolean algebras and (Boolean algebra) homomorphisms.	
\end{defn}

\begin{nota}
	In a Boolean algebra $A$, we use $\wedge$, $\vee$, and $\neg$, to denote the conjunction, the disjunction, and the negation, respectively. To avoid confusion with the zero and unit of a \cstar, we write $\bot$ and $\top$ for the least and greatest elements, respectively.

	As is well-known, \newterm{Stone duality} is a dual equivalence 
	between $\Bool$ and the category of Stone spaces $\Stone$,
	\[
		\begin{tikzcd}
			\Bool \ar[r,leftrightarrow,"\cong^\op"] & \Stone.
		\end{tikzcd}
	\]
	The equivalence is given by the following functors $\clopen$ and $\stone$.
	For a Stone space $X$, the clopen sets form a Boolean algebra $\clopen(X)$.
	For a Boolean algebra $A$, the Stone space $\stone(A)$ is the set of ultrafilters on $A$.
	The elements of $A$ can be identified with clopen sets in $\stone(A)$, and the latter generate the topology of $\stone(A)$.
\end{nota}

The Boolean algebras that we consider in this paper will often be Boolean algebras of projections in a commutative $*$-algebra.
For this reason, we denote generic elements of a Boolean algebra by $p$, $q$ and $r$.

\begin{defn}[{\cite[\S~20]{sikorski69boolean}}]
    A \newterm{\bsigma}{} is a Boolean algebra which admits countable infima and suprema. For every countable sequence $( p_n )_{n\in \mathbb{N}}$, we write
    \[
    	\inf_n p_n \qquad \text{and} \qquad \sup_n p_n,  
    \]
    for the infimum and the supremum, respectively.
\end{defn}

Clearly the $\sigma$-algebra of a measurable space is a \bsigma, in which countable suprema and infima are countable unions and intersections, respectively.
However, not all \bsigma{}s can be obtained in this way; we will provide more details in \cref{sec:concrete_loomis_sikorski}, which is dedicated to the relation between \bsigma{}s and measurable spaces.

\begin{rem}[{\cite[\S~19]{sikorski69boolean}}]
	Given a sequence of elements $( p_n )_{n \in \mathbb{N}}$ in a \bsigma{} and an additional element $q$, we have
	\begin{equation}
		\label{eq:sup_vee_wedge}
		\sup_n \: (p_n \vee q) = \left(\sup_n p_n \right)  \vee q, \qquad 
		\sup_n \: (p_n \wedge q) = \left(\sup_n p_n \right)  \wedge q,
	\end{equation}
	and likewise for infima.
\end{rem}

\begin{defn}
    A \newterm{\bsigmahom}{} $\phi \colon A \to B$ between \bsigma s is a homomorphism such that
    \begin{equation}
        \phi \left (\sup_n p_n \right )= \sup_{n} \phi (p_n)    \label{eq:bsigmahom}    
    \end{equation}
    for every countable sequence $(p_n)_{n \in \mathbb{N}}$.
\end{defn}
  
Since homomorphisms preserve negation, this also implies that $\phi \left (\inf_n p_n \right )= \inf_{n} \phi (p_n)$.    
Other descriptions are given below, but we need the following notion of disjointness.

\begin{defn}\label{def:disjoint}
Two elements $p$ and $q$ in a Boolean algebra are called \newterm{disjoint} if $p\wedge q = \bot$.
\end{defn}

\begin{rem}\label{rem:sigmahom_add_mon}
	For a homomorphism $\phi \colon A \to B$ between \bsigma{}s, the following assertions are equivalent.
	\begin{enumerate}
		\item\label{it:sigmahom} $\phi$ is a \bsigmahom{};
		\item\label{it:inf0} For every sequence $(p_n)$ with $\inf_n p_n = \bot$, also $\inf_n \phi(p_n)=\bot$;
		\item\label{it:sigmaadd} ($\sigma$-additivity) For every sequence $(p_n)$ of mutually disjoint elements, \eqref{eq:bsigmahom} holds;
		\item\label{it:sigmamon} ($\sigma$-monotonicity) For every sequence $(p_n)$ that is monotone increasing, \eqref{eq:bsigmahom} holds.
	\end{enumerate}
	That \ref{it:sigmahom} implies the other properties is obvious.
	To prove that \ref{it:sigmaadd} and \ref{it:sigmamon} both imply \ref{it:sigmahom}, note that for any sequence $(p_n)$, one can define
	\[
	q_n \coloneqq p_n \wedge \neg p_{n-1} \wedge \dots \wedge \neg p_1, \qquad r_n \coloneqq p_n \vee p_{n-1} \vee \dots \vee p_1.
	\]
	By construction, $(q_n)$ is a sequence of mutually disjoint elements, while $(r_n)$ is monotone increasing, and $\sup_n p_n =\sup_n q_n = \sup_n r_n$. Therefore, the implications follow.
	If $\phi$ satisfies \ref{it:inf0}, take any supremum $p= \sup_n p_n$ and define $q_n \coloneqq p \wedge \neg p_n$. 
	By \eqref{eq:sup_vee_wedge}, $\inf_n q_n=\bot$, and hence also
	\[
		\phi(p) \wedge \neg \sup_n \phi(p_n) = \inf_n \phi(q_n) = \bot,
	\]
	since $\phi$ is a homomorphism satisfying \ref{it:inf0}.
	This gives $\phi(p) \le \sup_n \phi(p_n)$, while the direction $\phi(p) \ge \sup_n \phi(p_n)$ is trivial.
\end{rem}
  
\begin{defn}\label{def:sigmaB}
    $\SigmaB$ is the category whose objects are \bsigma s and whose morphisms are \bsigmahom s.
\end{defn}

\begin{rem}\label{rem:iso_sigmaiso}
    Every isomorphism of \bsigma s is a \bsigmahom{} (see~\cite[Equations~18.(6)-(6')]{sikorski69boolean}),\footnote{In particular, this means that the forgetful functor $\SigmaB \to \Bool$ \emph{forgets at most property-like structure} as defined in the nLab page ``stuff, structure, property'' (\href{https://ncatlab.org/nlab/show/stuff,+structure,+property}{https://ncatlab.org/nlab/show/stuff,+structure,+property}).} 
	but this is not true for general homomorphisms. 

    For the sake of an example, take the power set of natural numbers $\mathscr{P}(\mathbb{N})$, and consider the ideal $I$ given by finite subsets. Then any maximal ideal $M\supset I$ gives rise to a homomorphism $\phi \colon \mathscr{P}(\mathbb{N})\to \mathscr{P}(\mathbb{N})/M \cong \lbrace \bot, \top\rbrace $ whose kernel
    is precisely $M$.
	In other words, $M$ is associated to a \emph{non-principal ultrafilter} $U_M$ by setting $U_M \coloneqq \phi^{-1}(\top)$.
    Since every \bsigmahom {} $\mathscr{P}(\mathbb{N})\to \lbrace \bot, \top\rbrace$ is the characteristic function of a natural number, we conclude that $\phi$ cannot be a \bsigmahom, because we have $\phi(\lbrace n\rbrace)=\bot$ for every natural number $n$.
\end{rem}

\begin{rem}
    By definition, the category $\SigmaB$ is the category of models of a Lawvere theory with countably infinite arity, so in particular it is monadic over the category of sets~\cite[Section 9]{linton69outline}.
    Therefore, $\SigmaB$ has all small limits and colimits (see, for instance, \cite[Theorem~4.3.5]{borceux94handbook2}).
    In particular, the coproduct of two \bsigma s $A$ and $B$ exists, and we denote it by $A \unitensor B$.
\end{rem}
We also briefly discuss the notion of $\sigma$-ideal.
\begin{defn}
	A subset $I$ of a \bsigma{} $A$ is a \newterm{$\sigma$-ideal} if it is an ideal in the sense of Boolean algebras and furthermore $\sup_n p_n \in I$ whenever $(p_n) \subseteq I$. 
\end{defn}
\begin{fact}[{\cite[Theorem 21.1 and Example 22.F]{sikorski69boolean}}]
	Let $A$ be a \bsigma{} and let $I$ be a $\sigma$-ideal of $A$. Then the quotient $A/I$ is a \bsigma{} and the quotient map $A \to A/I$ is a \bsigmahom{}.
\end{fact}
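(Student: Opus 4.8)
The plan is to guess the formula for countable suprema in $A/I$ and check that it has the required properties. Write $\pi\colon A \to A/I$ for the quotient map; by the standard theory of quotients of Boolean algebras by ideals, $A/I$ is a Boolean algebra and $\pi$ is a homomorphism, with $\pi(p) \le \pi(q)$ if and only if $p \wedge \neg q \in I$. For a sequence $(p_n)_{n\in\mathbb{N}}$ in $A$ I would set $\sup_n \pi(p_n) \coloneqq \pi\!\left(\sup_n p_n\right)$ and verify that this is a well-defined supremum.

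First, well-definedness: if $\pi(p_n) = \pi(q_n)$ for all $n$, then the symmetric differences $d_n \coloneqq (p_n \wedge \neg q_n) \vee (q_n \wedge \neg p_n)$ all lie in $I$. Using the distributive law \eqref{eq:sup_vee_wedge} together with $\neg\!\left(\sup_m q_m\right) \le \neg q_n$, one gets $\left(\sup_n p_n\right) \wedge \neg\!\left(\sup_m q_m\right) = \sup_n\bigl(p_n \wedge \neg(\sup_m q_m)\bigr) \le \sup_n d_n$, and symmetrically after swapping the roles of the $p$'s and the $q$'s; hence the symmetric difference of $\sup_n p_n$ and $\sup_n q_n$ is $\le \sup_n d_n$, which lies in $I$ by the $\sigma$-ideal property. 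So $\pi\!\left(\sup_n p_n\right) = \pi\!\left(\sup_n q_n\right)$.

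Next, that $\pi\!\left(\sup_n p_n\right)$ really is the least upper bound of $(\pi(p_n))$: it is an upper bound because $p_m \le \sup_n p_n$ and $\pi$ is monotone; and if $\pi(q)$ is any upper bound, then $p_n \wedge \neg q \in I$ for all $n$, so $\left(\sup_n p_n\right) \wedge \neg q = \sup_n (p_n \wedge \neg q) \in I$ by \eqref{eq:sup_vee_wedge} and the $\sigma$-ideal property, i.e.\ $\pi\!\left(\sup_n p_n\right) \le \pi(q)$. Thus $A/I$ has all countable suprema; countable infima follow formally via $\inf_n x_n = \neg \sup_n \neg x_n$, so $A/I$ is a \bsigma{}. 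Finally, the identity $\pi\!\left(\sup_n p_n\right) = \sup_n \pi(p_n)$ is exactly the assertion that $\pi$ is a \bsigmahom{}.

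The argument is essentially \cite[Theorem~21.1]{sikorski69boolean}, and I expect no serious obstacle; the only points requiring care are the two uses of the $\sigma$-ideal closure property, each mediated by the distributivity law \eqref{eq:sup_vee_wedge} — once to see that the candidate supremum does not depend on the chosen representatives, and once to see that it is minimal among upper bounds.
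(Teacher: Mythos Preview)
Your argument is correct and is precisely the standard proof from \cite[Theorem~21.1]{sikorski69boolean}. The paper itself does not supply a proof of this fact but only cites Sikorski, so there is nothing further to compare.
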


We conclude this subsection with a characterization of \bsigma{}s from the point of view of Stone duality.
We start by discussing when countable suprema exist.

\begin{lem}[{\cite[Lemma 21.1]{halmos74boolean}}]
	\label{lem:sup_clopens}
	For a Stone space $X$ and a sequence of clopen sets $(U_n)_{n \in \mathbb{N}}$ in $X$, the supremum $\sup_n U_n$ exists in $\clopen(X)$ if and only if
	\[
		V \coloneqq \overline{\bigcup_n \, U_n}
	\]
	is open, and in this case $\sup_n U_n = V$.
\end{lem}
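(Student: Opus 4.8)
The plan is to prove the two implications separately, using the general fact that in any Boolean algebra (hence in $\clopen(X)$ ordered by inclusion), the supremum of a family, when it exists, is the \emph{least} upper bound. The key topological input is that in a Stone space the clopen sets form a basis, and that the closure of a set $S$ is the smallest closed set containing $S$, equivalently $X \setminus \overline{S}$ is the largest open set disjoint from $S$.

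First I would handle the direction assuming $V \coloneqq \overline{\bigcup_n U_n}$ is open, and show $V = \sup_n U_n$ in $\clopen(X)$. Here $V$ is clopen since it is closed by construction and open by hypothesis. It is an upper bound for all $U_n$ because $U_n \subseteq \bigcup_n U_n \subseteq V$. For minimality, suppose $W \in \clopen(X)$ satisfies $U_n \subseteq W$ for all $n$; then $\bigcup_n U_n \subseteq W$, and since $W$ is closed, $V = \overline{\bigcup_n U_n} \subseteq W$. Hence $V$ is the least clopen upper bound, i.e.\ $\sup_n U_n = V$.

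For the converse, I would assume $\sup_n U_n$ exists in $\clopen(X)$, call it $W$, and show $W = V$, which in particular forces $V$ to be clopen, hence open. Since $W$ is an upper bound, $U_n \subseteq W$ for all $n$, so $\bigcup_n U_n \subseteq W$; as $W$ is closed, $V \subseteq W$. For the reverse inclusion, I want to produce a clopen upper bound of the $U_n$ contained in $V$, so that minimality of $W$ gives $W \subseteq V$. Here is where the Stone-space structure enters: I would argue that $V$ is itself an intersection of clopen sets. Concretely, if $x \notin V$, then since $X \setminus V$ is open and the clopens form a basis, there is a clopen $C$ with $x \in C \subseteq X \setminus V$; equivalently $X \setminus C$ is a clopen set containing $V$ and missing $x$. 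Thus $V = \bigcap \{ D \in \clopen(X) \mid V \subseteq D\}$. Every such $D$ contains $\bigcup_n U_n$, hence is a clopen upper bound of the sequence, hence $W \subseteq D$; intersecting over all such $D$ gives $W \subseteq V$. Combined with $V \subseteq W$, this yields $W = V$.

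I expect the main obstacle — really the only non-bookkeeping point — to be the argument that $V$ equals the intersection of the clopen sets containing it, i.e.\ that closed sets in a Stone space are intersections of clopens; this is exactly where compactness and total disconnectedness (via the clopen basis) are used, and it is the step that fails in a general compact Hausdorff space. Everything else is a routine manipulation of least upper bounds and the closure operator, so I would state that lemma-within-the-proof cleanly and otherwise keep the write-up short.
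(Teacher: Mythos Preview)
Your proof is correct and follows essentially the same route as the paper's. The only cosmetic difference is in the converse direction: the paper picks a single point $x \in W \setminus V$ and invokes ultranormality to find a clopen $C \subseteq W$ with $V \subseteq C$ and $x \notin C$, contradicting minimality of $W$; you instead write $V$ as the intersection of all clopen supersets and use that $W$ lies in each of them. Both arguments rest on the same separation property of Stone spaces.
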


Dually, the infimum of the sequence $(U_n)$ is the interior of their intersection if that interior is closed.

\begin{proof}
	If $V$ is open, and if $W$ is any clopen containing all $U_n$, then clearly also $V \subseteq W$ since $W$ is closed.
	Therefore $V$ is indeed the supremum.
	Conversely if $W$ is the smallest clopen containing all $U_n$, then again $V \subseteq W$ as $W$ is closed.
	If there was $x \in W \setminus V$, then since a Stone space is ultranormal~\cite[Proposition 8.8]{prolla82topics}, there would exist a clopen $C \subseteq W$ with $V \subseteq C$ but $x \not \in C$, contradicting the minimality of $W$.
\end{proof}

\begin{defn}[{\cite[p.~130]{grove84substonean}}]
	\label{def:rickart}
	A compact Hausdorff space is a \newterm{Rickart space} if
	\begin{enumerate}
		\item its topology is generated by clopen sets, and
		\item the closure of a countable family $(U_n)$ of clopen sets is again clopen.
	\end{enumerate}
\end{defn}

These spaces are sometimes also called Boolean $\sigma$-spaces~\cite{halmos74boolean}, or even $\text{Stone}_{\sigma}$-spaces~\cite{jamneshan23foundational}.
By \cref{lem:sup_clopens}, the following is immediate.

\begin{cor}[{\cite[Theorem 12]{halmos74boolean}}]\label{cor:rickart_stone}
	Rickart spaces correspond to \bsigma{}s via Stone duality.	
\end{cor}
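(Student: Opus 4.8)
The plan is to unwind Stone duality and apply \cref{lem:sup_clopens}. Fix a Stone space $X$ and set $A \coloneqq \clopen(X)$; every Boolean algebra arises this way up to isomorphism, so it is enough to characterize, in purely topological terms, when $\clopen(X)$ is a \bsigma{}. Since a Boolean algebra is a \bsigma{} precisely when every countable sequence admits a supremum — the infimum of $(p_n)$ being then $\neg \sup_n \neg p_n$ — it suffices to determine when every countable family of clopen subsets of $X$ has a supremum in $\clopen(X)$.

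First I would invoke \cref{lem:sup_clopens}: for a sequence $(U_n)$ of clopen sets, the supremum $\sup_n U_n$ exists in $\clopen(X)$ if and only if $\overline{\bigcup_n U_n}$ is open, in which case it equals this set. Hence $A$ is a \bsigma{} if and only if $\overline{\bigcup_n U_n}$ is open for every countable family $(U_n)$ of clopen subsets of $X$.

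Next I would match this against \cref{def:rickart}. Condition (i) of that definition — the topology is generated by clopen sets — holds automatically because $X$ is a Stone space. For condition (ii), observe that $\overline{\bigcup_n U_n}$ is always closed, and in a compact Hausdorff space a set that is simultaneously open and closed is clopen; therefore ``the closure of a countable family of clopen sets is again clopen'' is exactly the assertion that $\overline{\bigcup_n U_n}$ is open. Combining the two preceding paragraphs, $X$ is a Rickart space if and only if $\clopen(X)$ is a \bsigma{}, which is the claim. (Dually, infima correspond to interiors of countable intersections, as recorded after \cref{lem:sup_clopens}, but this need not be invoked separately, the infimum being determined by the supremum via negation.)

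There is essentially no obstacle here beyond bookkeeping, since the substantive content has already been isolated in \cref{lem:sup_clopens}. The only point requiring a moment's care is the equivalence between condition (ii) of \cref{def:rickart} and openness of $\overline{\bigcup_n U_n}$, which is immediate once one recalls that open-and-closed equals clopen; and the observation that, because Stone duality is an equivalence of categories on objects, the above object-level matching is all that the statement requires (a correspondence of morphisms would additionally involve \bsigmahom{}s, which is not asserted here).
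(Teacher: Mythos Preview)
Your proposal is correct and follows exactly the approach the paper intends: the paper simply records that the corollary is immediate from \cref{lem:sup_clopens}, and you have spelled out precisely why, matching condition (ii) of \cref{def:rickart} with the openness criterion of that lemma while noting that condition (i) is automatic for Stone spaces. There is nothing to add.
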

%We also note that the restriction of Stone duality to $\SigmaB$ has been considered recently~\cite[Proposition~9.2]{jamneshan23foundational}. 

\subsection{Concrete \texorpdfstring{\bsigma s}{Boolean sigma-algebras} and Loomis--Sikorski duality}\label{sec:concrete_loomis_sikorski}

The $\sigma$-algebra of any measurable space is a \bsigma.
It is useful to have a name for those \bsigma s that arise like this.

\begin{defn}
	\label{def:concrete_bool}
    A \bsigma{} $A$ is \newterm{concrete} if there is an isomorphism $A \cong \Sigma(X)$ for some measurable space $(X,\Sigma(X))$.
\end{defn}

\begin{ex}[Not all \bsigma{}s are concrete, I]\label{ex:nonconcI}
	Let $\baire([0,1])$ be the $\sigma$-algebra given by the Borel sets of $[0,1]$.\footnote{The notation $\baire(X)$ will be introduced systematically in \cref{sec:baire_envelopes}. Here we have chosen $[0,1]$ instead of $\mathbb{R}$ because the general definition (\cref{def:baire}) only considers compact Hausdorff spaces. 
	However, for the purpose of this example, there is no relevant difference between  $\mathbb{R}$ and $[0,1]$.}
	We consider the $\sigma$-ideals $M,N \subset \baire([0,1])$, where $M$ is the family of meager sets and $N$ is the family of Lebesgue measure zero sets.
	Then two well-known examples of non-concrete \bsigma{}s are $\baire([0,1])/M$ and $\baire([0,1])/N$ (see, for instance, \cite[Example 24.A]{sikorski69boolean}).
	We postpone the rigorous proof to \cref{ex:nonconcII}; roughly speaking, these cannot be concrete because they fail to recognize their underlying points.	
\end{ex}

\begin{defn}\label{def:meas}
    $\csigmaB$ is the full subcategory of $\SigmaB$ on the concrete \bsigma s, and $\meas$ is the category of measurable spaces with measurable functions.
\end{defn}

Using the theory of concrete dualities \cite[Section~VI.4]{johnstone82stone}, it is possible to describe a contravariant adjunction between $\meas$ and $\SigmaB$, which mimics the Stone duality between Stone spaces and $\Bool$.
This adjunction was discussed explicitly in \cite[Section 4]{chen23univ}, where Chen coined the term \emph{Loomis--Sikorski duality}.
Let us discuss this result in detail, as it will be important throughout this paper.

In one direction, the functor
\[
	\Sigma \: : \: \meas \to \SigmaB^\op
\]
is the obvious one sending a measurable space $(X,\Sigma(X))$ to its $\sigma$-algebra $\Sigma(X)$. 
In the other direction, we start with a \bsigma{} $A$ and take the set of \bsigmahom{}s $A \to \lbrace \bot,\top\rbrace$, denoted by $\stones(A)$, equipped with the $\sigma$-algebra given by the sets of the form
\begin{equation}\label{eq:a_sigmaalg}
[a] \coloneqq \lbrace \phi \in \stones(A) \mid \phi(a)=\top \rbrace
\end{equation}
for any $a \in A$. This defines a functor $\stones : \SigmaB^\op \to \meas$, with action on morphisms given by composition.
%On morphisms, this contravariant functor behaves as one expects, i.e. $f \colon B \to C$ induces a map $\stones(f) \colon \stones(C) \to \stones(B)$ by composition,
%\[
%	\stones(f)(\phi) = \phi f.
%\]
The resulting contravariant adjunction is implemented by the bijection
\begin{equation}
	\label{eq:adjunction_bool_meas}
	 \meas (X, \stones(A))\cong \SigmaB (A, \Sigma(X))
\end{equation}
given by identifying both sides with the set of maps $A \times X \to \{ \bot, \top \}$ that are $\sigma$-homomorphisms in the first argument and measurable in the second, and this is clearly natural in $A$ and $X$.
The (co)unit of this adjunction in $\SigmaB$ is the natural map
\begin{align*}
	A & \longrightarrow \Sigma(\stones(A))	\\
	a & \longmapsto [a],
\end{align*}
which is a surjective \bsigmahom{} by definition. 
It is injective, and therefore an isomorphism, if and only if $A$ is concrete:

\begin{lem}[{\cite[Theorem 24.1]{sikorski69boolean}}]\label{lem:concretebool}
    A \bsigma{} $A$ is concrete if and only if for every $a >\bot$ in $A$ there exists a $\sigma$-homomorphism $\phi\colon A \to \lbrace \bot, \top \rbrace$ such that $\phi(a)= \top$. 
\end{lem}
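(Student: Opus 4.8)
The plan is to analyze the natural \bsigmahom{}
\[
\eta_A \colon A \longrightarrow \Sigma(\stones(A)), \qquad a \longmapsto [a],
\]
which was observed above to be surjective. By \cref{def:concrete_bool}, the ``if'' direction amounts to producing \emph{some} isomorphism $A \cong \Sigma(X)$ with $X$ measurable, and since $\eta_A$ is already a surjective \bsigmahom{}, it suffices to show that the stated separation property is equivalent to injectivity of $\eta_A$. First I would recall the elementary fact that a Boolean algebra homomorphism $h$ is injective if and only if $h(a) = \bot$ implies $a = \bot$: indeed $h(a) = h(b)$ iff $h\bigl((a \wedge \neg b) \vee (\neg a \wedge b)\bigr) = \bot$, and the symmetric difference vanishes iff $a = b$. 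Applied to $\eta_A$, injectivity therefore says exactly that $[a] \neq \emptyset$ whenever $a > \bot$; and by the very definition \eqref{eq:a_sigmaalg}, $[a] \neq \emptyset$ means there is a \bsigmahom{} $\phi \colon A \to \{\bot,\top\}$ with $\phi(a) = \top$. So the separation property is literally the statement that $\eta_A$ is injective, hence an isomorphism onto $\Sigma(\stones(A))$, which proves concreteness.

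For the ``only if'' direction, I would fix an isomorphism $\psi \colon A \to \Sigma(X)$ (which exists by concreteness) and take any $a > \bot$. Then $\psi(a)$ is a nonempty measurable subset of $X$; choosing a point $x \in \psi(a)$, the evaluation map $\delta_x \colon \Sigma(X) \to \{\bot,\top\}$, sending a measurable set $S$ to $\top$ precisely when $x \in S$, is a \bsigmahom{} because preimages under $\{x\} \hookrightarrow X$ commute with negation and countable suprema. Hence $\phi \coloneqq \delta_x \circ \psi$ is a \bsigmahom{} $A \to \{\bot,\top\}$ with $\phi(a) = \delta_x(\psi(a)) = \top$, as required.

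The whole argument is essentially a matter of unwinding the Loomis--Sikorski adjunction, so I do not expect a serious obstacle. The one point that needs slight care is the interplay between the two directions: concreteness is defined via an \emph{arbitrary} representation $A \cong \Sigma(X)$, whereas the ``if'' direction only produces the \emph{canonical} representation $A \cong \Sigma(\stones(A))$ — it is the ``only if'' direction (reducing an arbitrary representation to the separation property) that closes this gap. A secondary minor point is to double-check the surjectivity of $\eta_A$ that we are relying on, i.e.\ that $\{[a] \mid a \in A\}$ is already a $\sigma$-subalgebra of $\Sigma(\stones(A))$; this follows from $[\neg a] = \stones(A) \setminus [a]$ and $[\sup_n a_n] = \bigcup_n [a_n]$, where the latter identity uses precisely that the maps $\phi$ are \bsigmahom{}s rather than mere homomorphisms.
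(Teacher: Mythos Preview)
Your proof is correct and matches the paper's framing exactly: the paper does not spell out a proof of this lemma (it cites Sikorski), but the surrounding text already identifies $\eta_A \colon a \mapsto [a]$ as a surjective \bsigmahom{} and states that its injectivity is equivalent to concreteness, which is precisely what you unpack. Your treatment of both directions, including the reduction of injectivity to the kernel condition and the use of point evaluations for the ``only if'' part, is the standard argument and needs no adjustment.
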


By \cite[Section VI.4.5]{johnstone82stone}, this is enough to conclude that the adjunction \eqref{eq:adjunction_bool_meas} is idempotent. 
We have therefore proved the wanted duality statement.

\begin{thm}[Loomis--Sikorski duality]\label{thm:ls_duality} 
	There is a contravariant idempotent adjunction
	\begin{equation}\label{eq:ls_duality}
		\begin{tikzcd}
			\meas \ar[rr,bend left=20,"\Sigma"]\ar[rr,phantom,"\bot^{\op}" pos=0.55] && \SigmaB. \ar[ll,bend left=20,"\stones"]
		\end{tikzcd}
	\end{equation}
\end{thm}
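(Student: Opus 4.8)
The plan is to assemble the three ingredients that the preceding discussion has already set up: the functoriality of $\Sigma$ and $\stones$, the natural bijection \eqref{eq:adjunction_bool_meas}, and the idempotency of the resulting adjunction. First I would record that $\Sigma$ and $\stones$ are functors in the directions stated, which is routine: for a measurable map $f\colon X\to Y$ the preimage map $f^{-1}\colon\Sigma(Y)\to\Sigma(X)$ is a \bsigmahom{}, since preimages commute with complements and countable unions; and for a \bsigmahom{} $\psi\colon A\to B$ the assignment $\phi\mapsto\phi\circ\psi$ sends $\stones(B)$ to $\stones(A)$ (a composite of \bsigmahom{}s into $\{\bot,\top\}$ is again one) measurably, because the preimage of a generating set $[a]$ is $[\psi(a)]$. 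It is also worth recording that $\{[a]:a\in A\}$ is genuinely a $\sigma$-algebra on $\stones(A)$: indeed $[\neg a]=\stones(A)\setminus[a]$ and $[\sup_n a_n]=\bigcup_n[a_n]$, the latter because each $\phi\in\stones(A)$ is a \bsigmahom{} and in $\{\bot,\top\}$ a countable supremum equals $\top$ exactly when some term does.

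Next I would establish the natural bijection \eqref{eq:adjunction_bool_meas}, the key point being that both hom-sets are canonically identified with the set $\Phi(A,X)$ of maps $g\colon A\times X\to\{\bot,\top\}$ that are \bsigmahom{}s in the first argument and measurable in the second. A measurable $u\colon X\to\stones(A)$ gives $g(a,x)\coloneqq u(x)(a)$: here $u(x)\in\stones(A)$ says precisely that $g(-,x)$ is a \bsigmahom{}, and measurability of $u$ amounts to $u^{-1}([a])=\{x:g(a,x)=\top\}$ being measurable for every $a$, i.e.\ to measurability of $g(a,-)$. A \bsigmahom{} $v\colon A\to\Sigma(X)$ gives $g(a,x)\coloneqq\top$ iff $x\in v(a)$: then $g(-,x)$ is a \bsigmahom{} because $v$ is one, and $g(a,-)$ is measurable since $\{x:g(a,x)=\top\}=v(a)\in\Sigma(X)$. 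These assignments are mutually inverse under the evident identifications, and naturality in $A$ and $X$ follows directly from the definitions of $\Sigma$ and $\stones$ on morphisms. Evaluating the bijection at identity morphisms recovers the unit $X\to\stones(\Sigma(X))$, $x\mapsto(S\mapsto[x\in S])$, and the $\SigmaB$-side (co)unit $A\to\Sigma(\stones(A))$, $a\mapsto[a]$, exactly as stated above; this already yields the contravariant adjunction $\Sigma\dashv\stones$.

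Finally, for idempotency I would invoke the theory of concrete dualities. By \cref{lem:concretebool}, the (co)unit component $A\to\Sigma(\stones(A))$, $a\mapsto[a]$ --- always surjective by construction --- is injective, hence an isomorphism, exactly when $A$ is concrete. Since $\Sigma(X)$ is by definition the $\sigma$-algebra of a measurable space and therefore concrete, this (co)unit is an isomorphism at every object of the form $\Sigma(X)$; equivalently, applying $\stones$, the multiplication of the monad $\stones\Sigma$ on $\meas$ is a natural isomorphism. By \cite[Section VI.4.5]{johnstone82stone} this is precisely the criterion that makes the adjunction \eqref{eq:adjunction_bool_meas} idempotent, which completes the proof. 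I expect the only genuinely fiddly point to be the bookkeeping in the natural bijection --- in particular confirming that the $\sigma$-algebra on $\stones(A)$ really is $\{[a]:a\in A\}$, so that the two descriptions of $\Phi(A,X)$ agree --- whereas the idempotency is then purely formal given \cref{lem:concretebool}.
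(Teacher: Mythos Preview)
Your proposal is correct and follows essentially the same route as the paper: establish the adjunction via the natural bijection~\eqref{eq:adjunction_bool_meas} (both sides identified with maps $A\times X\to\{\bot,\top\}$ that are \bsigmahom{}s in the first variable and measurable in the second), observe that the (co)unit $a\mapsto[a]$ is an isomorphism whenever $A$ is concrete by \cref{lem:concretebool}, and then invoke \cite[Section~VI.4.5]{johnstone82stone} for idempotency. You have merely spelled out a few routine details (functoriality, the $\sigma$-algebra structure on $\stones(A)$) that the paper leaves implicit.
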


\begin{conv}\label{conv:botop}
In situations such as \eqref{eq:ls_duality}, the symbol $\bot^{\op}$ indicates that the upper functor ($\Sigma$) is left adjoint to the lower functor ($\stones$) if we consider them as covariant functors between the category on the left ($\meas$) and the opposite of the category on the right ($\SigmaB^{\op}$).
\end{conv}

Being an idempotent adjunction, Loomis--Sikorski duality restricts to an equivalence between the two full subcategories on which the unit and counit components are isomorphisms.
On the side of $\SigmaB$, we have just seen that this is the subcategory of concrete \bsigma{}s.

On the other side, we have the full subcategory of $\meas$ consisting of those measurable spaces $X$ for which the unit component
\[
	X \to \stones(\Sigma(X))
\]
is an isomorphism.
We now relate these measurable spaces to the concept of sobriety introduced in \cite[Section 5]{moss2022probability}. 
To define sobriety, we briefly recall that the \emph{Giry monad} on $\meas$ has an underlying functor $P \colon \meas \to \meas$ associating to any measurable space $X$ its set of probability measures $PX$, equipped with the smallest $\sigma$-algebra making the evaluation map
\[
    \begin{array}{clcr}
        \operatorname{ev}_B \: \colon & PX &\longrightarrow &[0,1]\\
        & p & \longmapsto &p(B)
    \end{array}
\]
measurable for every $B \in \Sigma(X)$. 
Its unit is the natural transformation $\delta \colon \id \Rightarrow P$ sending an element $x \in X$ to the associated delta measure $\delta_x$.
The monad multiplication is not needed for our purposes, but we refer to the original paper~\cite{giry82probability} for the omitted details.

\begin{defn}[{\cite[Definition~5.1]{moss2022probability}}]
    A measurable space $X$ is \newterm{sober} if
    \begin{equation}
	    \label{eq:sober}
    \begin{tikzcd}
        X \ar[r,"\delta"] & PX \ar[r,shift right=0.7ex,"P\delta" below]\ar[r,shift left=0.7ex, "\delta"] & PPX
    \end{tikzcd}
    \end{equation}
    is an equalizer diagram in $\meas$.
\end{defn}

\begin{defn}\label{def:sobmeas}
    $\sobmeas$ is the full subcategory of $\meas$ consisting of sober measurable spaces.
\end{defn}

\begin{lem}
	\label{lem:sober}
	For a measurable space $X$, the following are equivalent:
	\begin{enumerate}
		\item\label{it:sober}
			$X$ is sober.
		\item\label{it:01valued}
			The formation of delta measures is a bijection between points of $X$ and $\lbrace 0,1 \rbrace$-valued probability measures on $X$.
		\item\label{it:unitiso}
			The unit component $X \to \stones(\Sigma(X))$ is an isomorphism.
	\end{enumerate}
\end{lem}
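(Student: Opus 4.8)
The plan is to prove the cycle of implications \ref{it:sober} $\Rightarrow$ \ref{it:01valued} $\Rightarrow$ \ref{it:unitiso} $\Rightarrow$ \ref{it:sober}, with the bulk of the work being the translation between the equalizer condition and the point-separation condition. First I would unwind the definition of sobriety: an element of $PPX$ lying in the equalizer of $P\delta$ and $\delta$ is a probability measure $\mu$ on $PX$ such that $P\delta(\mu) = \delta_\mu$, i.e. the pushforward of $\mu$ along $\delta\colon X \to PX$ equals the Dirac measure at $\mu \in PX$. I claim this forces $\mu$ itself to be $\{0,1\}$-valued: since $\delta_\mu$ is concentrated at the single point $\mu$, the pushforward $\delta_*\mu$ is concentrated on $\delta(X) \subseteq PX$ and is a Dirac measure there, so for every $B \in \Sigma(X)$ the evaluation $\mathrm{ev}_B$ is $\delta_*\mu$-almost-surely constant; but $\mathrm{ev}_B \circ \delta = \mathbbm{1}_B$, so $\mathbbm{1}_B$ is $\mu$-a.s.\ constant, which says $\mu(B) \in \{0,1\}$. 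Conversely, a $\{0,1\}$-valued $\mu$ determines, via the same reasoning run backwards, an element $\delta_\mu' \in PPX$ equalizing the two maps (it is the Dirac measure at the "point" $\mu \in PX$, and one checks $P\delta$ sends it correctly). Thus the equalizer of \eqref{eq:sober} is canonically the set of $\{0,1\}$-valued probability measures on $X$, sitting inside $PPX$ via $\mu \mapsto \delta_\mu$, and $X$ being sober means precisely that $\delta\colon X \to PX$ is a bijection onto this set — which is exactly \ref{it:01valued}. (Care is needed that this is an isomorphism of measurable spaces, not just a bijection; but the $\sigma$-algebra on the equalizer is the subspace $\sigma$-algebra from $PPX$, and tracing through the generating sets $\mathrm{ev}_B^{-1}$ shows it matches the one pulled back along $\delta$, since $\delta^{-1}(\mathrm{ev}_B^{-1}(\cdot))$ recovers $B$.)

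For \ref{it:01valued} $\Leftrightarrow$ \ref{it:unitiso}, the key observation is that $\{0,1\}$-valued probability measures on $X$ are the same data as \bsigmahom s $\Sigma(X) \to \{\bot,\top\}$: a $\{0,1\}$-valued measure $m$ corresponds to the homomorphism sending $B$ to $\top$ iff $m(B) = 1$, and $\sigma$-additivity of $m$ is exactly condition \ref{it:sigmaadd} of \cref{rem:sigmahom_add_mon}, hence the homomorphism is a \bsigmahom; conversely every \bsigmahom\ $\Sigma(X) \to \{\bot,\top\}$ arises this way. So $\stones(\Sigma(X))$ is, as a set, the set of $\{0,1\}$-valued probability measures on $X$, and under this identification the unit map $X \to \stones(\Sigma(X))$ is precisely $x \mapsto \delta_x$. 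Therefore this map is a bijection iff \ref{it:01valued} holds. To upgrade "bijection" to "isomorphism of measurable spaces" one checks that the $\sigma$-algebra on $\stones(\Sigma(X))$ generated by the sets $[B] = \{\phi : \phi(B) = \top\}$ pulls back along $x \mapsto \delta_x$ to the $\sigma$-algebra generated by the sets $B$ themselves, i.e.\ to $\Sigma(X)$ — so the inverse of a bijective unit is automatically measurable, making it an isomorphism. This direction is essentially bookkeeping once the set-level identification is in place.

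I expect the main obstacle to be the careful handling of the measurable-space structures rather than the set-level bijections: specifically, verifying that the subspace $\sigma$-algebra on the equalizer in \eqref{eq:sober} agrees with the $\sigma$-algebra transported from $\stones(\Sigma(X))$, and that both agree with what makes $x \mapsto \delta_x$ a measurable isomorphism. A secondary subtlety is the converse construction in step \ref{it:01valued} $\Rightarrow$ \ref{it:sober}: given that $\delta$ is bijective onto the $\{0,1\}$-valued measures, one must show the equalizer diagram is genuinely an equalizer, i.e.\ that $\delta$ is not merely injective with the right image but has the correct universal property — which reduces to showing $\delta$ is a regular monomorphism, and this follows because $\delta$ is injective and its image is carved out by the equations $\mathrm{ev}_B = \delta_*(\text{const})$ whose solution set is exactly $\delta(X)$ with the subspace structure. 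Once these $\sigma$-algebra compatibilities are nailed down, all three conditions are visibly equivalent reformulations of "the canonical map $X \to \stones(\Sigma(X))$ is iso".
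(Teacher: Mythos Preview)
Your proposal is correct and follows essentially the same route as the paper: identify the equalizer of $P\delta$ and $\delta$ with the $\{0,1\}$-valued probability measures, then identify those with $\stones(\Sigma(X))$ via the obvious bijection, checking that the measurable structures match. The one spot to tighten is the converse step ``one checks $P\delta$ sends it correctly'': the paper makes this explicit by observing that both $(P\delta)(\mu)$ and $\delta_\mu$ are probability measures on $PX$ and invoking the $\pi$-$\lambda$ theorem on the generating $\pi$-system $\{\operatorname{ev}_S^{-1}((r,1])\}$, where the verification is immediate.
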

Item~\ref{it:01valued} is precisely the definition of $\sigma$-perfect $\sigma$-algebras according to~\cite[p.~98]{sikorski69boolean}.
\begin{proof}
	Although the equivalence of \ref{it:sober} and \ref{it:01valued} is known\footnote{See \url{https://ncatlab.org/nlab/revision/sober+measurable+space/7}.}, we offer a detailed proof for completeness.
	For general $X$, the equalizer of the two parallel maps in~\eqref{eq:sober} is given by the space of probability measures $p \in PX$ with $(P\delta)(p)= \delta(p)$. 
	Our goal is to show that these are precisely the $\lbrace 0,1 \rbrace$-valued probability measures, as then the equivalence of~\ref{it:sober} and \ref{it:01valued} is clear.

	By definition of $\delta$ and $P$, the equation $(P\delta)(p)= \delta(p)$ means that
	\begin{equation}
		\label{eq:equalizer_condition}
		p(\delta^{-1}(T)) = \begin{cases} 1 & \text{if } p \in T, \\ 0 & \text{otherwise} \end{cases}
	\end{equation}
	for all measurable $T \subseteq PX$.
	With $T \coloneqq \operatorname{ev}_S^{-1}((r,1])$ for measurable $S \subseteq X$ and arbitrary $r \in [0,1)$, we have $\delta^{-1}(T) = S$, and thus the equation~\eqref{eq:equalizer_condition} implies $p(S) \in \{0,1\}$.
	Conversely if $p$ is $\lbrace 0,1 \rbrace$-valued, then \emph{both} sides of~\eqref{eq:equalizer_condition} are $\lbrace 0,1 \rbrace$-valued, and it is therefore enough to check their equality on a generating class of measurable sets by the $\pi$-$\lambda$ theorem.
	For these one can again take the sets of the form $T \coloneqq \operatorname{ev}_S^{-1}((r,1])$ for $r \in [0,1)$ as above, and on these the equality is easy to check.

	We finish by showing the equivalence of \ref{it:01valued} and \ref{it:unitiso}.
	Note first that the $\lbrace 0,1 \rbrace$-valued probability measures on $X$ are in bijective correspondence with $\sigma$-homomorphisms $\Sigma(X) \to \lbrace \bot,\top\rbrace$, and we therefore obtain a natural injection $\stones(\Sigma(X)) \to PX$ whose image is precisely the set of $\lbrace 0,1 \rbrace$-valued probability measures, and which is a measurable isomorphism onto its image.
	The claim now follows from the fact that this isomorphism is also compatible with the unit map from $X$, i.e.~the diagram
	\[
		\begin{tikzcd}[sep=small]
			& \stones(\Sigma(X)) \ar[dd] \\
			X \ar[ur] \ar[dr] \\
			& PX
		\end{tikzcd}
	\]
	commutes.
\end{proof}

By \cref{lem:concretebool,lem:sober}, we therefore conclude the following.

\begin{thm}\label{thm:concrete_duality}
    Loomis--Sikorski duality restricts to a contravariant equivalence 
    \[
    \begin{tikzcd}
	    \csigmaB \ar[rr,bend left=15,"\stones"] \ar[rr,phantom,"\cong^\op"] && \sobmeas. \ar[ll,bend left=15,"\Sigma"]
    \end{tikzcd}
    \]
\end{thm}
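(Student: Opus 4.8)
The plan is to derive this immediately from the fact that \cref{thm:ls_duality} exhibits Loomis--Sikorski duality as a contravariant \emph{idempotent} adjunction, together with the two characterizations already established. Recall the general categorical fact: a covariant idempotent adjunction $F \dashv G$ between categories $\mathcal{C}$ and $\mathcal{D}$ restricts to an adjoint equivalence between the full subcategory of $\mathcal{C}$ on those objects where the unit is an isomorphism (the replete essential image of $G$) and the full subcategory of $\mathcal{D}$ on those objects where the counit is an isomorphism (the replete essential image of $F$). Applying this to the Loomis--Sikorski adjunction, regarded as a covariant adjunction $\Sigma \dashv \stones$ between $\meas$ and $\SigmaB^{\op}$, I would conclude that it restricts to a contravariant equivalence between the ``fixed'' full subcategory of $\meas$ and that of $\SigmaB$, implemented by the restrictions of $\Sigma$ and $\stones$.

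It then remains only to identify these two subcategories, and both identifications are already in hand. On the $\SigmaB$ side, the counit component at $A$ is the surjective \bsigmahom{} $A \to \Sigma(\stones(A))$, $a \mapsto [a]$ discussed above; it is an isomorphism exactly when it is injective, which by \cref{lem:concretebool} happens precisely when $A$ is concrete. Hence the fixed subcategory of $\SigmaB$ is exactly $\csigmaB$. On the $\meas$ side, the unit component at $X$ is the natural map $X \to \stones(\Sigma(X))$, and the equivalence of \ref{it:sober} and \ref{it:unitiso} in \cref{lem:sober} says this is an isomorphism precisely when $X$ is sober; hence the fixed subcategory of $\meas$ is exactly $\sobmeas$. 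Combining these with the general restriction statement gives the claimed contravariant equivalence $\csigmaB \cong \sobmeas^{\op}$.

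I do not expect any real obstacle here, since the substantive work has been absorbed into \cref{thm:ls_duality} (idempotency), \cref{lem:concretebool} (concrete $\Leftrightarrow$ counit iso) and \cref{lem:sober} (sober $\Leftrightarrow$ unit iso). The one point requiring a moment's care is variance bookkeeping: one must check that ``counit is an isomorphism'' for the covariant adjunction $\Sigma \dashv \stones$ into $\SigmaB^{\op}$ corresponds to the concrete map $a \mapsto [a]$ being an isomorphism \emph{in $\SigmaB$} rather than in its formal opposite, which is automatic because isomorphisms are preserved by $(-)^{\op}$. If one prefers to bypass the abstract idempotent-adjunction lemma, the same conclusion follows by a direct argument: one restricts $\Sigma$ and $\stones$ to $\sobmeas$ and $\csigmaB$, notes that $\Sigma(\stones(A))$ is concrete by construction and that $\stones(A)$ is sober (its unit is invertible, being inverse to the counit iso $[-]$), and then observes that the unit and counit are natural isomorphisms on these subcategories by the two lemmas — but the idempotent-adjunction route is the cleaner presentation.
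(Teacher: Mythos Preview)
Your argument is correct and matches the paper's approach exactly: invoke the general restriction of an idempotent adjunction to its fixed full subcategories, and then identify these as $\csigmaB$ and $\sobmeas$ via \cref{lem:concretebool} and \cref{lem:sober}, respectively. The paper's own proof is just the one-line ``By \cref{lem:concretebool,lem:sober}, we therefore conclude the following,'' so your write-up is simply a more explicit version of the same reasoning.
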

This result is already stated in~\cite[Section 4]{chen23univ} in slightly different form. Our original contribution is the connection with the concept of sobriety.

\begin{rem}
	It is interesting to note how this construction is deeply reminiscent of that of Stone spaces and Stone duality, which is also the reason for the notation $\stones$. 
	We will briefly return to this analogy between topological and measurable dualities in \cref{sec:concrete_duality}, in particular \cref{prop:top_vs_meas_gelfand}.
\end{rem}

Let us recall that the \newterm{product} $X \times Y$ of two measurable spaces $X$ and $Y$ is defined as the cartesian product of sets, equipped with the $\sigma$-algebra generated by subsets of the form $U \times V$, where $U \subseteq X$ and $V \subseteq Y$ are measurable.

\begin{rem}\label{rem:sober_tensor_product}
	The product of two sober measurable spaces is sober. 
	Indeed, let $X$ and $Y$ be measurable spaces, and consider a $\lbrace 0,1 \rbrace$-valued probability measure on $X \times Y$. 
	Then this restricts to a $\lbrace 0,1 \rbrace$-valued probability measure on $X$ and on $Y$ separately, and these are equal to $\delta_x$ and $\delta_y$ for some $x \in X$ and some $y \in Y$, respectively. 
	Then the measure on the product is given by $\delta_{(x,y)}$ by a direct check. 
	Moreover, whenever $\delta_{(x,y)} = \delta_{(x',y')}$ holds for some $(x,y), (x',y')\in X \times Y$, sobriety of $X$ and $Y$ implies that $x=x'$ and $y=y'$, and therefore $(x,y)= (x',y')$.
	Hence by \Cref{lem:sober}\ref{it:01valued}, $X\times Y$ is sober as well.
\end{rem}

\subsection{Baire envelopes}
\label{sec:baire_envelopes}

Here we briefly discuss Baire envelopes and $\sigma$-completions of Boolean algebras.

\begin{defn}\label{def:baire}
	\begin{enumerate}
		\item The \newterm{Baire $\sigma$-algebra} $\baire(X)$ of a compact Hausdorff space $X$ is the smallest $\sigma$-algebra on $X$ making all continuous maps $X \to \mathbb{R}$ measurable. 
		\item A \newterm{Baire measurable space} is a measurable space $X$ whose $\sigma$-algebra is the Baire $\sigma$-algebra of a compact Hausdorff topology on $X$.
		\item The \newterm{Baire envelope} $A^\infty$ of a Boolean algebra $A$ is the Baire $\sigma$-algebra of its Stone space,
			\[
				A^\infty \coloneqq \baire(\stone(A)).
			\]
	\end{enumerate}
\end{defn}
\begin{defn}\label{def:bairemeas}
	The category $\bairemeas$ is the full subcategory of $\meas$ whose objects are Baire measurable spaces.
\end{defn}

Note that $A^\infty$ is a concrete \bsigma{} by definition.

\begin{rem}\label{rem:baire_borel}
	For second countable compact Hausdorff spaces, Baire sets and Borel sets coincide.\footnote{For example, use~\cite[Theorem~7.1.1]{dudley02real} together with Urysohn's metrization theorem.}
    	In general, this is not true: if we take an uncountable product of compact Hausdorff spaces with at least two points each, then each singleton set is Borel but not Baire~\cite[p.~223]{dudley02real}.
\end{rem}
\begin{rem}
	For a Stone space, the Baire $\sigma$-algebra coincides with the $\sigma$-algebra generated by the clopen sets (see \cite[Section 51]{halmos74measure} and the exercises of \cite[Section 23]{halmos74boolean}).
	Hence identifying a Boolean algebra $A$ with the clopens in $\stone(A)$ gives a natural inclusion $\iota \colon A \to A^\infty$.
\end{rem}

\begin{rem}
	\label{rem:baire_sober}
	Every Baire measurable space is sober, and therefore $\bairemeas \subseteq \sobmeas$ is a full subcategory.
	
	Indeed by~\cite[Theorem~7.3.1]{dudley02real} and the Riesz--Markov--Kakutani representation theorem, the probability measures on the Baire $\sigma$-algebra of a compact Hausdorff space $X$ are in bijective correspondence with the normalized positive functionals $C(X) \to \mathbb{C}$.\footnote{See the proof of \cref{cor:baire_eq} for a more general version of the present argument.}
	Under this correspondence, the $\{0,1\}$-valued probability measures turn into the multiplicative functionals.
	By Gelfand duality, the latter can be identified with the points of $X$, and it is clear that this bijection is indeed the map that corresponds to the unit map $X \to \stones(\Sigma(X))$.
\end{rem}

\begin{ex}[Not all \bsigma{}s are concrete, II]\label{ex:nonconcII}
	It now follows from \cref{rem:baire_sober} that $\baire([0,1])/M$ and $\baire([0,1])/N$, as defined in \cref{ex:nonconcI}, are not concrete.

	Let us consider a \bsigmahom\ $\phi \colon \baire([0,1])/M \to \lbrace \bot, \top \rbrace$. By composing it with the quotient map $\baire([0,1]) \to \baire({[0,1]})/M$, any $\phi$ would correspond to testing membership of a point of $[0,1]$, since $[0,1]$ is sober.
	But since the singleton set of any point is meager, no such \bsigmahom{} can factor across $\baire([0,1]) / M$, and we are done.
	The case of $\baire([0,1])/N$ is analogous.
\end{ex}

The following result, which is key in the theory of \bsigma{}s, involves the Baire category theorem in its proof.

\begin{thm}[Loomis--Sikorski representation theorem, {\cite[Theorem 29.1]{sikorski69boolean}}]\label{thm:ls_rep}
	For a \bsigma{} $A$, let $M$ be the $\sigma$-ideal of meager sets in $\stone(A)$. 
	Then the composite
	\[
	\begin{tikzcd}
		A \ar[r,"\iota"]& A^\infty \ar[r,two heads]& A^\infty/M
	\end{tikzcd}
	\]
	is a \bsigmaiso.
\end{thm}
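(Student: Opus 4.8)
The plan is to establish that the composite $A \xrightarrow{\iota} A^\infty \twoheadrightarrow A^\infty/M$ is a $\sigma$-isomorphism by checking surjectivity, injectivity, and the fact that it is a $\sigma$-homomorphism, with the main work lying in the last two items. First, I would recall the setup: $\stone(A)$ is a Stone space, $A^\infty = \baire(\stone(A))$ is the $\sigma$-algebra generated by the clopen sets (by the remark identifying Baire and clopen-generated $\sigma$-algebras on Stone spaces), and $M \subseteq A^\infty$ is the $\sigma$-ideal of meager Baire sets. Since $A^\infty/M$ is a \bsigma{} and the quotient map is a \bsigmahom{} by the cited fact, and since $\iota$ identifies $A$ with the clopens, the composite is at least a Boolean algebra homomorphism; I must upgrade this to a \bsigmahom{} and show it is bijective.

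For surjectivity: every element of $A^\infty$ lies in the $\sigma$-algebra generated by clopens, i.e.\ in the smallest family containing $\clopen(\stone(A))$ and closed under countable unions and complements. Working modulo $M$, I would show that the image of $A$ in $A^\infty/M$ is already closed under countable suprema — this is where the \bsigma{} structure of $A$ enters via \cref{lem:sup_clopens}: a countable family of clopens $(U_n)$ has $\sup_n U_n = \overline{\bigcup_n U_n}$ in $\clopen(\stone(A))$, and the symmetric difference between $\overline{\bigcup_n U_n}$ and $\bigcup_n U_n$ is contained in the boundary $\overline{\bigcup_n U_n}\setminus \bigcup_n U_n$, which is closed with empty interior, hence nowhere dense, hence meager. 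So $[\bigcup_n U_n] = [\sup_n U_n] = \sup_n[U_n]$ in $A^\infty/M$, proving both that the image of $A$ is a $\sigma$-subalgebra (hence all of $A^\infty/M$ by generation) and that the composite preserves countable suprema, i.e.\ is a \bsigmahom{}.

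For injectivity: I need that if $p \in A$ with $p \neq \bot$, then the clopen $\widehat p \subseteq \stone(A)$ corresponding to $p$ is non-meager. Here is where the \textbf{Baire category theorem} is essential and where I expect the main obstacle: $\widehat p$, being clopen and nonempty in a compact Hausdorff (hence complete, in the appropriate sense) space, is itself a Baire space, so it cannot be meager in itself; one then argues that a set which is meager in $\stone(A)$ restricts to a meager set in any clopen subspace, so $\widehat p$ meager in $\stone(A)$ would force $\widehat p$ meager in $\widehat p$, a contradiction. More generally, for two distinct $p, q \in A$ the symmetric difference $\widehat{p}\,\triangle\,\widehat{q}$ is a nonempty clopen, hence non-meager, so $[\widehat p] \neq [\widehat q]$ in $A^\infty/M$. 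Combining surjectivity, injectivity, and the \bsigmahom{} property — and using \cref{rem:iso_sigmaiso}, that a bijective \bsigmahom{} whose inverse is automatically a \bsigmahom{} is a $\sigma$-isomorphism — completes the proof. The delicate point throughout is the interplay between "meager in the big space" and "meager/nowhere dense when restricted to clopen pieces", which is exactly what the Baire category theorem controls.
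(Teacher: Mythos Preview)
The paper does not supply its own proof of this theorem; it is stated with a citation to Sikorski and then used. Your plan is correct and is essentially the classical argument: the key inputs are exactly \cref{lem:sup_clopens} (to see that $\bigcup_n U_n$ and $\sup_n U_n$ differ by a nowhere dense set, simultaneously yielding surjectivity and the $\sigma$-homomorphism property) and the Baire category theorem (to see that a nonempty clopen is non-meager, yielding injectivity). One small comment: your invocation of \cref{rem:iso_sigmaiso} at the end is slightly redundant, since you have already shown the map is a bijective $\sigma$-homomorphism and its inverse, being a Boolean isomorphism, is automatically a $\sigma$-homomorphism by that same remark; but this is harmless.
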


In particular, the isomorphism $A \cong A^{\infty}/M$ shows that every \bsigma{} is a quotient of a concrete one.
Moreover, it gives us a canonical \bsigmahom{}
\begin{equation}
	\label{eq:loomis_sikorski_counit}
	\pi \colon A^\infty \longrightarrow A
\end{equation}
which is a retraction of the inclusion $\iota$, i.e.~$\pi\iota = \id_A$.
This has an important categorical consequence.

\begin{prop}[Universal property of the Baire envelope]\label{prop:baire_univ}
	Let $A$ be a Boolean algebra and $B$ be a \bsigma{}. Then every homomorphism $\phi\colon A \to B$ extends uniquely to a \bsigmahom{}  $\Phi\colon A^{\infty} \to B$ via the natural inclusion $A \to A^{\infty}$.
\end{prop}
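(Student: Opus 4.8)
The plan is to build $\Phi$ by transporting $\phi$ through Stone duality and then collapsing the resulting map onto $B$ via the canonical retraction supplied by the Loomis--Sikorski representation theorem (\cref{thm:ls_rep}). First I would write $\iota_A \colon A \to A^\infty$ and $\iota_B \colon B \to B^\infty$ for the natural inclusions identifying a Boolean algebra with the clopens of its Stone space. Applying $\stone$ to $\phi$ yields a continuous map $f \coloneqq \stone(\phi) \colon \stone(B) \to \stone(A)$. Since $f$ is continuous, preimages under $f$ of clopen sets are clopen, so the family of those $S \in A^\infty = \baire(\stone(A))$ for which $f^{-1}(S)$ lies in $\baire(\stone(B)) = B^\infty$ is a sub-\bsigma{} of $A^\infty$ containing all clopens of $\stone(A)$; since those clopens $\sigma$-generate $A^\infty$ (as recalled just before \cref{thm:ls_rep}), this family is all of $A^\infty$. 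Hence $f^{-1}$ restricts to a \bsigmahom{} $f^{-1} \colon A^\infty \to B^\infty$. Letting $\pi_B \colon B^\infty \to B$ be the retraction of $\iota_B$ furnished by \cref{thm:ls_rep}, I would set $\Phi \coloneqq \pi_B \circ f^{-1}$, which is a composite of \bsigmahom{}s and hence a \bsigmahom{}.

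Next I would verify that $\Phi$ extends $\phi$. By the action of Stone duality on morphisms, the restriction of $f^{-1}$ to the clopen algebras is exactly $\phi$ under the above identifications, i.e.\ $f^{-1} \circ \iota_A = \iota_B \circ \phi$ as maps $A \to B^\infty$. Composing on the left with $\pi_B$ and using $\pi_B \iota_B = \id_B$ gives $\Phi \circ \iota_A = \pi_B \iota_B \phi = \phi$, as required.

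For uniqueness, let $\Phi' \colon A^\infty \to B$ be any \bsigmahom{} with $\Phi' \circ \iota_A = \phi$. I would consider the set $\{ a \in A^\infty \mid \Phi(a) = \Phi'(a) \}$, which contains $\iota_A(A)$ and, because $\Phi$ and $\Phi'$ both preserve negation and countable suprema, is a sub-\bsigma{} of $A^\infty$. Since $\iota_A(A)$ is the clopen algebra of $\stone(A)$, which $\sigma$-generates $A^\infty$, this set must equal all of $A^\infty$; therefore $\Phi = \Phi'$.

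The argument is essentially a formal assembly, and I do not expect any serious obstacle. The one genuinely non-formal ingredient is the existence of the retraction $\pi_B$, which is exactly where the Baire category theorem enters, through \cref{thm:ls_rep}. The only point requiring a little care is the functoriality of the Baire $\sigma$-algebra — that $f^{-1}$ maps Baire sets to Baire sets — but this reduces immediately to the fact that $A^\infty$ is $\sigma$-generated by the clopen sets of $\stone(A)$.
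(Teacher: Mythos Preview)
Your proposal is correct and follows essentially the same route as the paper: pass through Stone duality to obtain a continuous $f\colon \stone(B)\to \stone(A)$, observe that $f^{-1}$ carries Baire sets to Baire sets and hence gives a \bsigmahom{} $A^\infty \to B^\infty$, then compose with the Loomis--Sikorski retraction $\pi_B\colon B^\infty \to B$ and invoke $\sigma$-generation for uniqueness. Your write-up is if anything more explicit than the paper's in verifying the two points it leaves as ``direct check'' and ``obvious''.
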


Although this result may be known to readers well-versed in Boolean algebras, we have not found it stated explicitly in the existing literature. 
Its analogue for \cstar{}s (\cref{thm:pb_univ}) has been formulated before.

\begin{proof}
	By Stone duality, we have a continuous map $f \colon \stone(B) \to \stone(A)$, such that $\phi= f^{-1}$ takes clopen sets to clopen sets. We now consider the Baire envelopes $A^{\infty}$ and $B^{\infty}$.
	It is a direct check that $f^{-1}$ also maps Baire sets to Baire sets, so that it can also be considered as a map $A^{\infty} \to B^{\infty}$.
	It is a \bsigmahom{} because the countable suprema and infima are unions and intersections of sets, respectively, which are trivially preserved by $f^{-1}$.
	So by the Loomis--Sikorski representation theorem (\cref{thm:ls_rep}), we can compose with $\pi \colon B^\infty \to B$ and get a \bsigmahom{} $A^{\infty} \to B$. This extends $\phi$ by construction, since $\pi$ is a retraction.
	Uniqueness of the extension is obvious because $A^{\infty}$ is $\sigma$-generated by $A$. 
\end{proof}

Categorically, we have proved the existence of an adjunction
\[
    \begin{tikzcd}
        \Bool \ar[rr,phantom, "\bot"]\ar[rr,bend left=15,"(-)^{\infty}"] && \SigmaB \ar[ll,bend left=15]
    \end{tikzcd}
\]
where the lower arrow is the obvious forgetful functor. 
Its counit components $A^{\infty} \to A$ are exactly the surjections~\eqref{eq:loomis_sikorski_counit}.
%Accordingly, by abstract nonsense, the counit components $B^{\infty}\to B$ must be epimorphisms in $\SigmaB$ for any \bsigma{} $B$ because the forgetful functor $\SigmaB \to \Bool$ is faithful. 
%It is an open question whether all epimorphisms in $\SigmaB$ are surjective.

\subsection{\texorpdfstring{$\sigma$}{σ}-completions}
\label{sec:bool_sigma_completions}

In order to move on to $\sigma$-completions, we switch gears a little and return to the basic definitions.

\begin{rem}\label{rem:bsigmahom_general} 
    The notion of $\sigma$-homomorphism $A \to B$ still makes sense when $A$ and $B$ are just Boolean algebras. 
    In this case, a \emph{\bsigmahom{}} $A \to B$ is a homomorphism satisfying 
	\[ 
		\phi \left (\sup_n p_n \right )= \sup_{n} \phi (p_n)  
	\] 
	whenever $\sup_n p_n$ exists in $A$.
	Moreover, the equivalent formulations of \cref{rem:sigmahom_add_mon} clearly generalize to this setting.
\end{rem}
\begin{defn}\label{def:boolshom}
	$\Boolshom$ is the category of Boolean algebras with \bsigmahom{}s. 
\end{defn}

\begin{defn}[{\cite[Section~2]{Sikorski50CartesianProd}}]
    Given a Boolean algebra $A$, a \newterm{$\sigma$-completion} is given by a pair $(B,j)$, where 
    \begin{enumerate}
        \item $B$ is a \bsigma;
        \item $j\colon A \to B$ is an injective $\sigma$-homomorphism;
        \item $B$ is $\sigma$-generated by $j(A)$, i.e.~$B$ is the smallest Boolean $\sigma$-subalgebra of $B$ containing $j(A)$.
    \end{enumerate}
\end{defn}

The set of $\sigma$-completions of $A$ carries a canonical preorder $\preceq$: we say that a $\sigma$-completion $(B,j)$ is \emph{below} a $\sigma$-completion $(C,k)$ if and only if there exists a \bsigmahom {} $\phi\colon B \to C$ such that 
\[
\begin{tikzcd}
    A \ar[r,"j"]\ar[rd,"k" below left] & B\ar[d, dashed, "\phi"]\\
    & C
\end{tikzcd}
\] 
commutes. 
The assumption that $B$ is $\sigma$-generated by $j(A)$ guarantees that such $\phi$ is unique if it exists.
Also it is clear that this $\preceq$ is reflexive and transitive.

\begin{rem}\label{rem:alg_sets_not_sigma}
	In general, the Baire envelope $A^\infty$ together with the natural inclusion $A \to A^{\infty}$
	is not a $\sigma$-completion of $A$, since this inclusion need not be a \bsigmahom{}.
	For instance, take $A \coloneqq \mathscr{P}(\mathbb{N})$. 
	If $A \to A^{\infty}$ were a $\sigma$-homomorphism, then \cref{prop:baire_univ} would immediately imply that all homomorphisms out of $A$ are $\sigma$-homomorphisms, but this is false because there are non-principal ultrafilters on $\mathbb{N}$ (\cref{rem:iso_sigmaiso}).

	Relatedly, if $A$ is an algebra of subsets of some set $X$, the $\sigma$-algebra $\sigma(A)$ generated by $A$ together with the inclusion $A \to \sigma(A)$ is not necessarily a $\sigma$-completion of $A$, again because the inclusion is typically not a \bsigmahom{}.
	For example if we consider $A = \clopen(X)$ for a Stone space $X$, then we get $A^\infty = \baire(X) = \sigma(A)$, and we are back in the situation of the previous paragraph.
\end{rem}

\begin{defn}
	Let $A$ be a Boolean algebra.
	\begin{enumerate}
        	\item The \newterm{universal $\sigma$-completion} of $A$ is a $\sigma$-completion $(B,j)$ which is minimal with respect to $\preceq$.
		\item The \newterm{regular $\sigma$-completion} of $A$ is a $\sigma$-completion $(B,j)$ such that $j$ is a \emph{complete} homomorphism, i.e. every existing supremum (and infimum) in $A$ is preserved by $j$.
	\end{enumerate}
\end{defn}

The existence of the universal and the regular $\sigma$-completions is proved in Sikorski's article \cite{Sikorski50CartesianProd} and also discussed in his later book~\cite[Sections 35 and 36]{sikorski69boolean}.
The use of the definite article ``the'' is to emphasize that these $\sigma$-completions are unique up to unique isomorphism. 
For the universal $\sigma$-completion this can be obtained from \cref{prop:sigma_completion} below, while for the regular one we refer to \cite[35.3]{sikorski69boolean}.
\begin{rem}\label{rem:name_sigmacompl}
The names adopted here are not standard in the literature, where the terms ``minimal'' and ``maximal'' are used instead (the latter because the regular $\sigma$-completion can be proven to be maximal with respect to $\preceq$). 
However, although e.g.~Halmos agrees with our convention for the direction of $\preceq$~\cite{halmos74boolean}, Sikorski uses the opposite convention, and so in his works minimal and maximal are used to mean the opposite of Halmos's.
Moreover, Sikorski later chose a different terminology in his book~\cite{sikorski69boolean}, where the regular $\sigma$-completion is simply called \emph{$\sigma$-completion}, possibly to suggest that this is the preferred one, while the universal $\sigma$-completion was called \emph{maximal $\sigma$-extension}.
In order not to dwell on the dispute, we have opted for \emph{universal} and \emph{regular}.
The former is motivated by \cref{prop:sigma_completion} below, while the latter hints at the well-behavedness of the Boolean algebra inside this $\sigma$-completion. 
In addition, while it is true that the universal $\sigma$-completion is indeed the unique $\preceq$-minimal $\sigma$-completion, there may be other $\preceq$-maximal $\sigma$-completions besides the regular one~\cite[p.~169]{sikorski69boolean} (see also~\cite[p.~303]{wright76regular}).
\end{rem}

\begin{nota}
	Given any Boolean algebra $A$, we write $\univ{A}$ and $\reg{A}$ to denote the universal and the regular $\sigma$-completions, respectively.
	To simplify notation, we identify the embedding of $A$ with the set-theoretic inclusion whenever an explicit description is unnecessary.
\end{nota}

The following construction of the universal $\sigma$-completion is instructive.
It essentially reformulates a discussion by Sikorski, found specifically in the \emph{first edition} of \cite{sikorski69boolean}.\footnote{See p.\ 121, 
noting that what we refer to as a universal $\sigma$-completion is a maximal $\sigma$-extension in his terminology.}

\begin{lem}
	\label{lem:univ_sigma_completion_bool}
	For a Boolean algebra $A$, let $I \subseteq A^{\infty}$ be the $\sigma$-ideal generated by all elements of the form $\inf_n p_n$, where $(p_n)$ is any sequence in $A$ such that $\inf_n p_n = \bot$ in $A$. 
	Then $A^\sigma \coloneqq A^{\infty}/I$ together with the composite $A \to A^\infty \to A^\infty/I$ is the universal $\sigma$-completion of $A$.
\end{lem}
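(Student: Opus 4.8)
The plan is to check, in order, that the pair $(\univ{A},j)$ with $j\colon A\xrightarrow{\iota}A^{\infty}\xrightarrow{\pi_I}A^{\infty}/I$ the indicated composite satisfies the three clauses in the definition of a $\sigma$-completion, and then that it is $\preceq$-minimal. The last two points (injectivity of $j$ and minimality) will be settled by a single factorization argument, so the only genuinely new content will live there.

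First I would dispose of the formal parts. Since $A^{\infty}$ is a \bsigma{} and $I$ is a $\sigma$-ideal, the quotient $\univ{A}=A^{\infty}/I$ is again a \bsigma{} and $\pi_I$ is a \bsigmahom{}; as $A^{\infty}$ is $\sigma$-generated by $\iota(A)$ and $\pi_I$ is a surjective \bsigmahom{}, it follows that $\univ{A}$ is $\sigma$-generated by $j(A)$. Next, to see that $j$ is a $\sigma$-homomorphism, by \cref{rem:sigmahom_add_mon} together with \cref{rem:bsigmahom_general} it suffices to show that $\inf_n p_n=\bot$ in $A$ implies $\inf_n j(p_n)=\bot$ in $\univ{A}$. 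But $g\coloneqq\inf_n\iota(p_n)$, with the infimum now computed in the \bsigma{} $A^{\infty}$, is by construction one of the generators of $I$, so $\pi_I(g)=\bot$; since $\pi_I$ is a \bsigmahom{} this gives $\inf_n j(p_n)=\inf_n\pi_I(\iota(p_n))=\pi_I(g)=\bot$.

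The heart of the proof is the following factorization, which yields both the injectivity of $j$ and minimality. Let $(C,k)$ be an arbitrary $\sigma$-completion of $A$. By the universal property of the Baire envelope (\cref{prop:baire_univ}), $k$ extends uniquely to a \bsigmahom{} $K\colon A^{\infty}\to C$ with $K\iota=k$. I claim $I\subseteq\ker K$: on a generator $g=\inf_n\iota(p_n)$ of $I$ (so $\inf_n p_n=\bot$ in $A$) one computes $K(g)=\inf_n K(\iota(p_n))=\inf_n k(p_n)=\bot$, the last equality because $k$ is a $\sigma$-homomorphism and $\inf_n p_n=\bot$. Since $\ker K$ is a $\sigma$-ideal containing every generator of $I$, we get $I\subseteq\ker K$, so $K$ descends to a \bsigmahom{} $\overline{K}\colon\univ{A}\to C$ with $\overline{K}\circ j=k$, unique with this property by $\sigma$-generation. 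This already shows $(\univ{A},j)\preceq(C,k)$ for \emph{every} $\sigma$-completion $(C,k)$. Finally, taking for $(C,k)$ a $\sigma$-completion known to exist — for instance the regular $\sigma$-completion $\reg{A}$ — the relation $\overline{K}\circ j=k$ with $k$ injective forces $j$ injective, so $(\univ{A},j)$ is indeed a $\sigma$-completion, and then the comparison just displayed says it is the minimal one, i.e. the universal $\sigma$-completion. (Alternatively, injectivity can be obtained internally: each generator of $I$ is the intersection $\bigcap_n\iota(p_n)$, a closed set with empty interior by a Stone-space argument, hence meager, so $I$ is contained in the $\sigma$-ideal $M$ of meager Baire sets of $\stone(A)$, while $\iota(a)$ is a nonempty clopen for $a>\bot$ and therefore non-meager by the Baire category theorem.)

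The one place where something non-formal enters is precisely the injectivity of $j$, equivalently $\iota(A)\cap I=\{\bot\}$: this cannot be read off from the construction of $I$ and must be imported, either from the existence of some other $\sigma$-completion or from the meagerness estimate above. Everything else is a routine unwinding of the universal property of $(-)^{\infty}$ and of the equivalent descriptions of \bsigmahom{}s in \cref{rem:sigmahom_add_mon,rem:bsigmahom_general}.
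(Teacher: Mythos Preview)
Your proof is correct and follows essentially the same approach as the paper: verify the $\sigma$-completion axioms, then use the universal property of the Baire envelope (\cref{prop:baire_univ}) to factor through any other $\sigma$-completion after checking $I\subseteq\ker K$. The only minor difference is that the paper establishes injectivity directly via your parenthetical alternative (the generators of $I$ are closed sets with empty interior, hence meager, and the Baire category theorem finishes), whereas you foreground the slicker route of borrowing injectivity from the known existence of $\reg{A}$; both are fine, and your writeup spells out the ``$j$ is a $\sigma$-homomorphism'' step more explicitly than the paper's ``by construction''.
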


\begin{proof}
	The composite $u \colon A \to A^\infty \to A^\infty/I$ is a \bsigmahom{} by construction, and $A^\infty/I$ is $\sigma$-generated by $A$ because already $A^\infty$ is.
	To show that $u$ is injective, take $A = \clopen(X)$ for a Stone space $X$ without loss of generality.
	Then $A^\infty = \baire(X)$ shows that $I$ is contained in the $\sigma$-ideal of meager sets in $X$, since by \cref{lem:sup_clopens} $I$ is $\sigma$-generated by the boundaries of countable unions of opens whose closure is open, and all these sets are meager.
	Therefore the injectivity follows by the Baire category theorem.

	If $j \colon A \to B$ is any $\sigma$-completion, then by \cref{prop:baire_univ}, we obtain an induced \bsigmahom{} $\tilde{j} \colon A^{\infty} \to B$.
	The fact that $I \subseteq \ker(\tilde{j})$ is clear by the assumption that $j$ is a \bsigmahom{}, and we therefore obtain the desired factorization $A^{\infty}/I \to B$.
\end{proof}

The following universal property of the universal $\sigma$-completion extends the minimality property from the definition.

\begin{prop}[{\cite[Theorem 13.4]{Sikorski50CartesianProd}}]
	\label{prop:sigma_completion}
	Let $A$ be a Boolean algebra and $A \subseteq \univ{A}$ its universal $\sigma$-completion.
	Then every $\sigma$-homomorphism $\phi \colon A \to B$ to a \bsigma{} $B$ extends uniquely to a \bsigmahom{} $\tilde{\phi} \colon \univ{A} \to B$.
\end{prop}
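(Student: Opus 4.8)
The plan is to leverage the explicit construction of $\univ{A}$ given in \cref{lem:univ_sigma_completion_bool}, namely $\univ{A} = A^\infty/I$, together with the universal property of the Baire envelope (\cref{prop:baire_univ}). Let $\phi \colon A \to B$ be a \bsigmahom{} with $B$ a \bsigma{}. First I would apply \cref{prop:baire_univ} to the underlying homomorphism $\phi \colon A \to B$: since $B$ is a \bsigma{}, there is a unique extension to a \bsigmahom{} $\Phi \colon A^\infty \to B$ along the natural inclusion $A \to A^\infty$.

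The key step is then to check that $I \subseteq \ker(\Phi)$, so that $\Phi$ descends to a \bsigmahom{} $\tilde\phi \colon A^\infty/I = \univ{A} \to B$. Recall $I$ is the $\sigma$-ideal of $A^\infty$ generated by elements $\inf_n p_n$ where $(p_n)$ is a sequence in $A$ with $\inf_n p_n = \bot$ computed in $A$. For such a sequence, $\Phi(\inf_n p_n)$ — where this infimum is now taken in $A^\infty$ — need not a priori be $\bot$, but I can instead argue directly: by hypothesis $\phi$ is a \bsigmahom{} $A \to B$ \emph{in the sense of \cref{rem:bsigmahom_general}}, meaning it preserves any suprema/infima that happen to exist in $A$; in particular $\phi(\inf^A_n p_n) = \inf_n \phi(p_n)$, which is $\phi(\bot) = \bot$. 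The mild subtlety is that $\inf^{A^\infty}_n p_n$ need not equal $\inf^A_n p_n$, but the generator of $I$ is precisely $\inf^{A^\infty}_n p_n$ with the side condition $\inf^A_n p_n = \bot$; since $\Phi$ restricted to $A$ agrees with $\phi$ and $\Phi$ preserves the infimum taken in $A^\infty$, we get $\Phi(\inf^{A^\infty}_n p_n) = \inf_n \Phi(p_n) = \inf_n \phi(p_n)$, and this last infimum, taken in $B$, is $\phi(\inf^A_n p_n) = \bot$ using that $\phi$ preserves the existing infimum $\inf^A_n p_n = \bot$. Hence every generator of $I$ lies in $\ker\Phi$, and since $\ker\Phi$ is a $\sigma$-ideal it contains $I$. (Alternatively, one can cite \cref{lem:univ_sigma_completion_bool}'s proof: $\tilde j \colon A^\infty \to B$ there is exactly our $\Phi$, and $I \subseteq \ker(\tilde j)$ was already established; but spelling it out is cleaner.)

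This yields a \bsigmahom{} $\tilde\phi \colon \univ{A} \to B$ with $\tilde\phi \circ u = \phi$, where $u \colon A \to \univ{A}$ is the canonical map. For uniqueness, observe that $\univ{A}$ is $\sigma$-generated by $u(A)$ (this is part of it being a $\sigma$-completion), so any two \bsigmahom{}s agreeing on $u(A)$ agree on the $\sigma$-subalgebra they generate, which is all of $\univ{A}$; this also follows formally since $u$ is an epimorphism in $\SigmaB$ by the $\sigma$-generation property.

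The main obstacle is the bookkeeping around the two different ambient algebras for infima — $\inf^A$ versus $\inf^{A^\infty}$ — and making sure the hypothesis "\bsigmahom{} between Boolean algebras" (\cref{rem:bsigmahom_general}) is invoked correctly: it is exactly the preservation of \emph{existing} suprema/infima in $A$ that forces the generators of $I$ into the kernel. Everything else is routine application of \cref{prop:baire_univ} and the fact that a quotient by a $\sigma$-ideal contained in the kernel induces a \bsigmahom{} on the quotient.
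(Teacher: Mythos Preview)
Your proposal is correct and follows essentially the same approach as the paper: the paper's proof is the one-line remark that the construction $\univ{A} = A^\infty/I$ from \cref{lem:univ_sigma_completion_bool} together with \cref{prop:baire_univ} immediately yields the statement, which is precisely what you carry out in detail (including the $I \subseteq \ker(\Phi)$ verification that is implicit in the proof of \cref{lem:univ_sigma_completion_bool}).
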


The construction $\univ{A} \coloneqq A^{\infty}/I$ from \cref{lem:univ_sigma_completion_bool} together with \cref{prop:baire_univ} immediately proves the statement.

\begin{rem}\label{rem:univ_adjunction_bool}
	\cref{prop:sigma_completion} means that there is an adjunction 
	\[
    \begin{tikzcd}
        \Boolshom \ar[rr,phantom, "\bot"]\ar[rr,bend left=15,"\univ{(-)}"] && \SigmaB \ar[ll,bend left=15]
    \end{tikzcd}
\]
	where the lower arrow is the forgetful functor, while its left adjoint $\univ{(-)}$ sends a Boolean algebra to its universal $\sigma$-completion.
	Since the unit components $A \to \univ{A}$ are monomorphisms, this left adjoint is faithful by abstract nonsense.
\end{rem}

We now discuss regular $\sigma$-completions, noting first that universal $\sigma$-completions and regular $\sigma$-com\-ple\-tions do not necessarily coincide, as we will see in \cref{ex:tensor_product} below.

\begin{prop}[{\cite[Example 35.J]{sikorski69boolean}}]
	\label{prop:boolean_regular}
	Let $A$ be a Boolean algebra. 
	Then its regular $\sigma$-com\-ple\-tion is given by $A^{\infty}/M$, where $M \subseteq A^{\infty}$ is the $\sigma$-ideal of meager subsets of $\stone(A)$.
\end{prop}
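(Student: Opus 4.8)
The plan is to verify the three defining conditions of a regular $\sigma$-completion for the pair $(A^\infty/M, u)$, where $u\colon A \to A^\infty \to A^\infty/M$ is the composite of the natural inclusion $\iota$ with the quotient map. Two of the three conditions are essentially immediate from earlier results: $A^\infty/M$ is a \bsigma{} and the quotient map is a \bsigmahom{} by the \bsigma{}-quotient fact, so $u$ is at least a $\sigma$-homomorphism; and $A^\infty/M$ is $\sigma$-generated by $u(A)$ because $A^\infty = \baire(\stone(A))$ is already $\sigma$-generated by $A$ (identified with the clopens of $\stone(A)$), and quotients preserve $\sigma$-generation. Moreover, the Loomis--Sikorski representation theorem (\cref{thm:ls_rep}) tells us that this very composite $A \to A^\infty \to A^\infty/M$ is a \bsigmaiso{} onto its image — in fact it identifies $A$ with $A^\infty/M$ when $A$ is already a \bsigma{}, but for general $A$ it is at least injective — so condition (ii), injectivity of $u$, is handed to us directly by \cref{thm:ls_rep}.

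The substantive point is therefore condition (i'): that $u$ is a \emph{complete} homomorphism, i.e.\ that every supremum (equivalently, by negation, every infimum) that happens to exist in $A$ is preserved by $u$. So suppose $p = \sup_n p_n$ exists in $A$, with all $p_n, p \in A \subseteq \clopen(\stone(A))$. By \cref{lem:sup_clopens}, this means precisely that $V \coloneqq \overline{\bigcup_n U_{p_n}}$ is clopen and equals $U_p$, where $U_q \subseteq \stone(A)$ denotes the clopen set corresponding to $q \in A$. Inside $A^\infty$, on the other hand, the supremum $\sup_n U_{p_n}$ is by \cref{lem:sup_clopens} again the closure $V = U_p$ (it exists there because $A^\infty$ is a \bsigma{}), so in $A^\infty$ we already have $\iota(p) = \sup_n \iota(p_n)$ — that is, $\iota$ itself preserves this particular supremum. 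Since the quotient map $A^\infty \to A^\infty/M$ is a \bsigmahom{} and hence preserves all countable suprema, composing gives $u(p) = \sup_n u(p_n)$ in $A^\infty/M$, as desired. The same argument with infima (using the dual statement after \cref{lem:sup_clopens}) completes the verification that $u$ is complete.

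Finally, uniqueness of the regular $\sigma$-completion up to unique isomorphism (cited as \cite[35.3]{sikorski69boolean}) then forces $(A^\infty/M, u)$ to be \emph{the} regular $\sigma$-completion $\reg{A}$. I expect the main obstacle to be the completeness verification — specifically, being careful that the existence of a supremum in the small Boolean algebra $A$ is genuinely detected by the topological closure condition of \cref{lem:sup_clopens}, and that this closure is the \emph{same} set whether computed as a supremum in $\clopen(\stone(A))$ (which may not exist there unless $A$ is already $\sigma$-complete) or in $\baire(\stone(A)) = A^\infty$ (where it always exists). The key conceptual content is that meager sets are exactly the ``error terms'' measuring the failure of $\bigcup_n U_{p_n}$ to be clopen, so quotienting by $M$ makes the inclusion complete without collapsing $A$ — and the latter non-collapse is exactly the Baire category theorem input already packaged into \cref{thm:ls_rep}.
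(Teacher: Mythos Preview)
There is a genuine error in your completeness argument. You claim that ``inside $A^\infty$, the supremum $\sup_n U_{p_n}$ is by \cref{lem:sup_clopens} again the closure $V = U_p$'', but this misapplies the lemma: \cref{lem:sup_clopens} computes suprema in the poset $\clopen(X)$, not in $\baire(X) = A^\infty$. In $A^\infty$, which is a concrete $\sigma$-algebra of subsets, the supremum of the $U_{p_n}$ is simply the union $\bigcup_n U_{p_n}$, not its closure. Thus in general $\iota(p) \neq \sup_n \iota(p_n)$ in $A^\infty$; indeed \cref{rem:alg_sets_not_sigma} explicitly observes that $\iota$ is typically \emph{not} a \bsigmahom{}. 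The discrepancy is the boundary $U_p \setminus \bigcup_n U_{p_n}$, which is nowhere dense and hence meager, and it is precisely the quotient by $M$ that kills it. Your final paragraph gestures at this, but the body of the argument asserts the opposite.

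There is also a gap in scope: the definition of regular $\sigma$-completion requires $u$ to be a \emph{complete} homomorphism, preserving \emph{all} existing suprema in $A$, not only countable ones. For an arbitrary family $(p_i)_{i\in I}$ with $p = \sup_i p_i$ in $A$, the union $\bigcup_i U_{p_i}$ need not be Baire, so one cannot simply compare $U_p$ to this union modulo a meager set. The clean fix is to show that $u$ is a \emph{dense} embedding: for every nonzero $[E] \in A^\infty/M$ there is a nonempty clopen $V$ with $[V] \le [E]$ (write $E = U \triangle N$ with $U$ open and $N$ meager via the property of Baire, then take any nonempty clopen $V \subseteq U$). Dense embeddings of Boolean algebras automatically preserve all existing suprema, by a short argument using the distributive law $a \wedge \sup_i p_i = \sup_i (a \wedge p_i)$. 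The paper itself does not prove this proposition and just cites Sikorski.
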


Interestingly, regular $\sigma$-completions do not behave functorially with respect to \bsigmahom{}s, as we now investigate.

\begin{rem}[Non-functoriality of the regular $\sigma$-completion]\label{rem:nonfunctoriality_regular}

	Let us now consider a Boolean algebra $A$ such that $\reg{A}\neq \univ{A}$. 
	If regular $\sigma$-completions were functorial, then every \bsigmahom{} $A\to B$ would extend to a \bsigmahom{} $\reg{A}\to \reg{B}$.
	Consider the inclusion $A \hookrightarrow \univ{A}$. Since $\univ{A}$ is a \bsigma{}, we must have $\reg{(\univ{A})}=\univ{A}$, so functoriality would induce a \bsigmahom{} $\reg{A}\to \univ{A}$ extending $A \hookrightarrow \univ{A}$. 
	This implies that $\reg{A}\preceq \univ{A}$, and since $\univ{A}$ is $\preceq$-minimal, we must have $\reg{A}=\univ{A}$, contradicting the hypothesis.
	Thus, regular $\sigma$-completions are not functorial.
\end{rem}

\subsection{Tensor products}\label{sec:tensor_bool}

Given Boolean algebras $A$ and $B$, we write $A \otimes B$ for their tensor product~\cite[\S{}13]{sikorski69boolean}, which is the coproduct in $\Bool$ with respect to the inclusion $A \to A \otimes B$ mapping $a \mapsto a \otimes \top$, and similarly for $B$.

\begin{prop}[{\cite[Theorem 6.3]{Sikorski50CartesianProd}}]
	\label{prop:sigma_tensor_product}
	Let $A$ and $B$ be two \bsigma s. Then their \newterm{universal tensor product}
	\[
		A \unitensor B \coloneqq (A \otimes B)^\sigma
	\]
	is their coproduct in $\SigmaB$ with respect to the usual inclusions $A \to A \otimes B$ and $B \to A \otimes B$.
\end{prop}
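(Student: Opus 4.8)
The plan is to verify the universal property of the coproduct directly: given any \bsigma{} $C$ together with \bsigmahom{}s $f \colon A \to C$ and $g \colon B \to C$, I must produce a unique \bsigmahom{} $h \colon A \unitensor B \to C$ with $h(a \otimes \top) = f(a)$ and $h(\top \otimes b) = g(b)$. First I would use the fact that $A \otimes B$ is the coproduct of $A$ and $B$ in $\Bool$ (as recalled just before the statement): since $f$ and $g$ are in particular Boolean algebra homomorphisms, there is a unique homomorphism $k \colon A \otimes B \to C$ restricting to $f$ and $g$. Then, by the universal property of the universal $\sigma$-completion (\cref{prop:sigma_completion}), applied to the homomorphism $k \colon A \otimes B \to C$ — which need not itself be a \bsigmahom{}, and that is precisely the point — I would like to extend $k$ along the inclusion $A \otimes B \to (A \otimes B)^\sigma = A \unitensor B$ to a \bsigmahom{} $h$. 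Uniqueness of $h$ is then immediate, since $A \unitensor B$ is $\sigma$-generated by the image of $A \otimes B$, which in turn is generated by the images of $A$ and $B$.

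The subtle gap — and this is what makes the proof ``surprisingly not purely formal,'' as the introduction warns — is the hypothesis of \cref{prop:sigma_completion}: it requires the map being extended to be a \emph{$\sigma$-homomorphism} $A \otimes B \to C$, not merely a homomorphism. But there is no reason a priori for the coproduct map $k \colon A \otimes B \to C$ to preserve those countable suprema that happen to exist in $A \otimes B$. So the heart of the argument is to show: if $(p_n)$ is a sequence in $A \otimes B$ whose supremum exists in $A \otimes B$, then $k\left(\sup_n p_n\right) = \sup_n k(p_n)$. Equivalently, using the $\inf$-formulation from \cref{rem:sigmahom_add_mon}\ref{it:inf0} (generalized to plain Boolean algebras in \cref{rem:bsigmahom_general}): whenever $\inf_n p_n = \bot$ in $A \otimes B$, one has $\inf_n k(p_n) = \bot$ in $C$.

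To establish this, I would exploit the concrete description of $A \otimes B$ via Stone duality: $\stone(A \otimes B) \cong \stone(A) \times \stone(B)$, and under this identification the generating clopens are the rectangles $U \times V$ with $U \in \clopen(\stone A)$, $V \in \clopen(\stone B)$. A general element of $A \otimes B$ is a finite union $\bigcup_{i} U_i \times V_i$. The key combinatorial fact (due to Sikorski) is that if a countable family of such finite-union clopens in the product has empty interior of intersection — i.e.\ infimum $\bot$ in the Boolean algebra $A \otimes B$ — then this can be witnessed ``one side at a time'': one reduces, by a Boolean manipulation splitting off the $A$-coordinates, to sequences where the relevant infimum $\bot$ already holds inside $A$ or inside $B$ separately (on which $f$, respectively $g$, preserves it since they are \bsigmahom{}s), and then reassembles. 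Concretely, I expect to write each $p_n$ in a normal form $\bigvee_{i=1}^{m_n} (a_{n,i} \otimes b_{n,i})$ with the $a_{n,i}$ forming a partition of $\top$ in $A$ independent of $n$ (refining finitely many partitions is harmless), so that $\inf_n p_n = \bot$ forces, for each atom $a$ of the common refinement, that $\inf_n b_{n}^{(a)} = \bot$ in $B$ where $b_n^{(a)}$ is the $B$-part of $p_n$ over $a$; applying $g$ kills each of these, and applying $k = f \otimes g$ and recombining over the finitely many atoms $a$ (with $f$ a homomorphism) yields $\inf_n k(p_n) = \bot$ in $C$.

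The main obstacle is exactly this normal-form-and-partition step: making precise that countably many finite-union clopens in $\stone(A) \times \stone(B)$ admit a common finite partition of the first coordinate compatible with all of them. Since each individual $p_n$ involves only finitely many rectangles, for a \emph{fixed} $n$ this is routine; the real work is checking that the infimum condition $\inf_n p_n = \bot$ in $A \otimes B$ genuinely decomposes coordinatewise after passing to such a refinement — and here one genuinely needs the Stone-space picture (compactness, the structure of clopens in a product of Stone spaces) rather than pure equational reasoning, which is why the result is not formal. Once this lemma is in hand, \cref{prop:sigma_completion} delivers $h$ and the proof closes. I would also remark that since $A \unitensor B$ has already been noted to exist as the coproduct in $\SigmaB$ by the monadicity of $\SigmaB$ over $\Set$, the content of the proposition is really the identification of that abstract coproduct with the explicit construction $(A \otimes B)^\sigma$.
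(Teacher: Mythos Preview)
The paper itself does not prove this; it cites Sikorski and then isolates the two nontrivial facts one needs: (i) the inclusions $A \to A \otimes B$ and $B \to A \otimes B$ are \bsigmahom{}s in the sense of \cref{rem:bsigmahom_general}; and (ii) the induced homomorphism $k \colon A \otimes B \to C$ is a \bsigmahom{}. You have correctly identified (ii) as the crux and rightly flag it as the hard step, but your plan never addresses (i). Without (i), the composites $A \to A \otimes B \to (A \otimes B)^\sigma$ need not be morphisms in $\SigmaB$, so the coproduct claim does not even parse. This step is not deep (it follows from \cref{lem:sup_clopens} together with $\overline{S \times \stone(B)} = \overline{S} \times \stone(B)$), but it is not automatic, and the paper explicitly singles it out.

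More seriously, your sketch for (ii) has a gap. You propose to write every $p_n$ over a partition of $\top$ in $A$ that is \emph{independent of $n$}, remarking that ``refining finitely many partitions is harmless.'' But there are \emph{countably} many $p_n$, hence countably many finite partitions to refine; the Boolean subalgebra of $A$ they generate need be neither finite nor atomic, so there is no finite common partition over which to ``recombine over the finitely many atoms $a$.'' You are right that Stone-space tools are required, but the specific reduction you outline---a single common finite refinement of the $A$-coordinate---is not available. Sikorski's argument handles the countable family by a different mechanism, and this is precisely why the paper calls the result ``surprisingly not purely formal.''
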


Interestingly, this is \emph{not} an immediate consequence of the universal properties of the individual constructions. 
First, it is already nontrivial that the two inclusions are \bsigmahom{}s.
Second, even if they are, it is not obvious that the unique extension $A \otimes B \to C$ of a \bsigmahom{} $\phi \colon A \to C$ and $\psi \colon B \to C$, where $C$ is a \bsigma{}, is necessarily a \bsigmahom{} again.
Sikorski's result is that these facts do indeed hold.

\begin{rem}\label{rem:unitensor_SigmaB}
	By \cref{prop:sigma_tensor_product}, $\unitensor$ gives rise to a symmetric monoidal structure on $\SigmaB$.
\end{rem}

It is worth noting that $A \unitensor B$ need not be concrete even if $A$ and $B$ are (see \cref{ex:tensor_product}).
For this property to hold, one needs to consider a \emph{regular} version of the tensor product. 

\begin{defn}
    Let $A$ and $B$ be two \bsigma s. Then their \newterm{regular tensor product} $A \regtensor B$ is the regular $\sigma$-completion of the tensor product of $A$ and $B$ as Boolean algebras.
\end{defn}

That the regular tensor product makes $\csigmaB$ into a symmetric monoidal category follows from Loomis--Sikorski duality. 
At this stage, it is unclear to us whether $\regtensor$ induces a symmetric monoidal structure on $\SigmaB$ (cf.~\cref{rem:nonfunctoriality_regular}).

\begin{prop}[{\cite[Theorems 12.2 and 12.3]{Sikorski50CartesianProd}}]
	\label{prop:concrete_regular}
	Let $A$ and $B$ be two concrete \bsigma s. Then $A \regtensor B$ is concrete as well, and the associated measurable space 
	is the product of the measurable spaces associated to $A$ and $B$, in the sense that
	\[
		\begin{tikzcd}[row sep=large]
			\csigmaB \times \csigmaB \ar[rr,"\regtensor"] \ar[d,"\stones"] && \csigmaB \ar[d,"\stones"] \\
			\sobmeas \times \sobmeas \ar[rr,"\times"] && \sobmeas
		\end{tikzcd}
	\]
	commutes up to canonical natural isomorphism.
\end{prop}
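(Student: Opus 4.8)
The plan is to deduce the statement from Loomis--Sikorski duality (\cref{thm:concrete_duality}) together with Sikorski's analysis of tensor products of Boolean algebras. Write $X \coloneqq \stones(A)$ and $Y \coloneqq \stones(B)$. Since $A$ and $B$ are concrete, the counit components $A \to \Sigma(X)$ and $B \to \Sigma(Y)$, $a \mapsto [a]$, are isomorphisms (\cref{lem:concretebool}), and $X$, $Y$ are sober (\cref{lem:sober}). By \cref{rem:sober_tensor_product}, $X \times Y$ is sober as well, so $\Sigma(X \times Y)$ is concrete and $\stones(\Sigma(X \times Y))$ is naturally isomorphic to $X \times Y$. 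It therefore suffices to produce a natural isomorphism $A \regtensor B \cong \Sigma(X \times Y)$ that matches the canonical inclusions of $A$ and of $B$ into the two sides.

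To this end, apply the universal property of $A \otimes B$ in $\Bool$ to the homomorphisms $a \mapsto [a] \times Y$ and $b \mapsto X \times [b]$; this yields a homomorphism $h \colon A \otimes B \to \Sigma(X \times Y)$ with $h(a \otimes b) = [a] \times [b]$. I would show that $(\Sigma(X \times Y), h)$ is the regular $\sigma$-completion of $A \otimes B$; the claim then follows from the uniqueness of regular $\sigma$-completions up to unique isomorphism (\cite[35.3]{sikorski69boolean}) and the definition of $\regtensor$. Two of the three defining properties are immediate: $\Sigma(X \times Y)$ is $\sigma$-generated by the rectangles $[a] \times [b] = h(a \otimes b)$ by the very definition of the product $\sigma$-algebra; and $h$ is injective because a nonzero element of $A \otimes B$ can be written as a finite join of pairwise disjoint rectangles $a_i \otimes b_i$ with $a_i, b_i \neq \bot$, whence \cref{lem:concretebool} supplies a point of $[a_i]$ and a point of $[b_i]$, and hence a point of $[a_i] \times [b_i] \subseteq h(\text{that element})$, so $\ker h = \lbrace \bot \rbrace$.

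The remaining point is that $h$ is a \emph{complete} homomorphism, i.e.\ that it preserves every supremum and infimum which happens to exist in $A \otimes B$ (this in particular upgrades $h$ to a $\sigma$-homomorphism, so that $(\Sigma(X \times Y), h)$ is genuinely a $\sigma$-completion). This is where I expect the real work to lie: suprema in $A \otimes B$ are not computed rectangle by rectangle, and uncountable suprema in $\Sigma(X \times Y)$ need not be unions, so one has to describe precisely which families in $A \otimes B$ admit a supremum and what that supremum looks like inside $X \times Y$. This is exactly the content of Sikorski's \cite[Theorems~12.2 and 12.3]{Sikorski50CartesianProd}, which I would recall in the form needed; a variant route would go through \cref{prop:boolean_regular}, identifying $(A \otimes B)^{\infty}/M$ with $\Sigma(X \times Y)$, but this seems no easier. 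Everything else is bookkeeping.

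Finally, applying the (contravariant) equivalence $\stones$ of \cref{thm:concrete_duality} to the isomorphism $A \regtensor B \cong \Sigma(X \times Y)$ gives $\stones(A \regtensor B) \cong \stones(\Sigma(X \times Y)) \cong X \times Y = \stones(A) \times \stones(B)$. A direct check shows that, under this identification, the inclusions $A \to A \regtensor B$ and $B \to A \regtensor B$ correspond to the projections $X \times Y \to X$ and $X \times Y \to Y$, so naturality of all the isomorphisms involved yields the commuting square of the statement up to canonical natural isomorphism; in particular $A \regtensor B$ is concrete.
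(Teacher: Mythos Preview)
The paper does not supply its own proof of this proposition: it is stated with a bare citation to Sikorski's Theorems~12.2 and~12.3, and the subsequent corollary is drawn immediately. Your proposal is therefore not competing with an argument in the paper but rather unpacking what the cited result says and how the categorical diagram follows from it; that unpacking is correct and is exactly in the spirit of how the paper uses the reference. In particular, your reduction to showing that $(\Sigma(X\times Y),h)$ is the regular $\sigma$-completion of $A\otimes B$, with the completeness of $h$ being the substantive input from Sikorski, is the right decomposition, and your handling of injectivity, $\sigma$-generation, sobriety of the product (\cref{rem:sober_tensor_product}), and the final passage through $\stones$ is sound.
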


\begin{cor}[Universal property of the regular tensor product]\label{cor:univprop_regular}
Let $A$ and $B$ be two concrete \bsigma s. Then $A \regtensor B$ is the coproduct in $\csigmaB$.
\end{cor}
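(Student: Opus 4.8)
The plan is to read the result off from \cref{prop:concrete_regular} together with Loomis--Sikorski duality, using the general fact that an equivalence of categories transports colimits to colimits.

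First I would recall that, for measurable spaces $X$ and $Y$, the product $X \times Y$ equipped with its two coordinate projections is the categorical product in $\meas$, and that since $\sobmeas \subseteq \meas$ is a full subcategory closed under binary products (\cref{rem:sober_tensor_product}), the same $X \times Y$ is also the categorical product of $X$ and $Y$ in $\sobmeas$. Next, by \cref{thm:concrete_duality} the functors $\stones$ and $\Sigma$ exhibit $\csigmaB$ as equivalent to $\sobmeas^{\op}$. Equivalences preserve all limits and colimits, and a product in $\sobmeas$ is a coproduct in $\sobmeas^{\op}$; hence the coproduct of $A$ and $B$ in $\csigmaB$ exists and is carried by this equivalence to the product $\stones(A) \times \stones(B)$ in $\sobmeas$. (Note this already uses the equivalence in an essential way: coproducts in $\csigmaB$ need not agree with the coproduct $\unitensor$ computed in $\SigmaB$, precisely because the latter can fail to be concrete.) On the other hand, \cref{prop:concrete_regular} provides a canonical natural isomorphism $\stones(A \regtensor B) \cong \stones(A) \times \stones(B)$; transporting it back along the equivalence (using $\Sigma \stones \cong \id$) exhibits $A \regtensor B$ as isomorphic to that coproduct, which is the claim.

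The only thing left to pin down --- and the step where I would be most careful, though it is routine --- is that under the identification $\stones(A \regtensor B) \cong \stones(A) \times \stones(B)$ the coproduct insertions into $A \regtensor B$ are exactly the canonical inclusions $A \to A \regtensor B$ and $B \to A \regtensor B$ induced by $A \otimes B \subseteq A \regtensor B$. This is built into the construction behind \cref{prop:concrete_regular}: the isomorphism there sends a point of $A \regtensor B$, i.e.\ a \bsigmahom{} to $\{\bot,\top\}$, to the pair of its restrictions along these two inclusions, so by construction restriction along $A \to A \regtensor B$ corresponds to the first projection, and symmetrically for $B$; dualizing, the coproduct insertions are the canonical inclusions. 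If one prefers not to invoke the explicit form of that isomorphism, one can instead verify the coproduct universal property directly: given \bsigmahom s $\phi \colon A \to C$ and $\psi \colon B \to C$ with $C$ concrete, dualize to measurable maps $\stones(C) \to \stones(A)$ and $\stones(C) \to \stones(B)$, pair them via the universal property of the product into $\stones(C) \to \stones(A) \times \stones(B) \cong \stones(A \regtensor B)$, and dualize back to a \bsigmahom{} $A \regtensor B \to C$; that it restricts to $\phi$ and $\psi$ is a naturality chase, and uniqueness holds because $A \regtensor B$ is $\sigma$-generated by the images of $A$ and $B$.
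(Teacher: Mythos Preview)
Your proposal is correct and is exactly the argument the paper intends: the corollary is stated without proof immediately after \cref{prop:concrete_regular}, and the intended derivation is precisely to transport the categorical product in $\sobmeas$ across the contravariant equivalence of \cref{thm:concrete_duality}. Your additional care in identifying the coproduct insertions with the canonical inclusions $A, B \to A \regtensor B$ is a useful sanity check that the paper leaves implicit.
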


We can now conclude that the universal and the regular tensor product do not coincide in general.

\begin{ex}[{\cite[Example 37.A]{sikorski69boolean}}]
	\label{ex:tensor_product}
	Consider $\mathbb{R}$ together with an analytic non-Borel subset $X\subset \mathbb{R}$.
	Let $\Sigma$ be the $\sigma$-algebra of Borel subsets of $\mathbb{R}\setminus X$ and $\baire(\mathbb{R})$ the Borel $\sigma$-algebra of $\mathbb{R}$ (recall \cref{rem:baire_borel}). 
	Then $\Sigma \regtensor \baire(\mathbb{R})$ coincides with the $\sigma$-algebra generated in the cartesian product $(\mathbb{R}\setminus X) \times \mathbb{R}$.
	But this regular tensor product can be shown not to be the coproduct in $\SigmaB$, and therefore cannot coincide with $\Sigma \unitensor \baire(\mathbb{R})$.
	% To see this, one considers the \bsigma\ $B$ given by quotienting $\baire(\mathbb{R}^2}$ by the $\sigma$-ideal which is $\sigma$-generated by all sets of the form $Y \times \mathbb{R)$ such that $Y \subset X$ is a Borel subset, and consider the maps 
	% \[
	% \begin{array}{crclccrcl}
	% 	h_1 \colon & \baire(\mathbb{R}\setminus X} &\to & B & \qquad & h_2\colon & \baire(\mathbb{R)) & \to & B\\
	% 	& Y \cap (\mathbb{R}\setminus X) & \mapsto & Y \times \mathbb{R} && Y & \mapsto & \mathbb{R}\times Y
	% \end{array}
	% \]   
	% where $Y$ is a Borel subset of $\mathbb{R}$. 
	% These two \bsigmahom s can be shown to not admit an extension $\Sigma \regtensor \baire(\mathbb{R}) \to B$, and therefore the coproduct property is not satisfied.
	
	It also follows that $\Sigma \unitensor \baire(\mathbb{R})$ is not concrete, since otherwise it would also be the coproduct in $\csigmaB$.
\end{ex}

Nevertheless, the two tensor products actually coincide for standard Borel spaces, as we show next. First, let us recall their definition.

\begin{defn}\label{def:borelmeas}
	\begin{enumerate}
		\item A \newterm{standard Borel space} is a complete separable metric space equipped with its Borel (or equivalently Baire) $\sigma$-algebra.
		\item The category $\borelmeas$ is the full subcategory of $\meas$ whose objects are standard Borel spaces.
	\end{enumerate}
\end{defn}

It is easy to see that standard Borel spaces are sober~\cite[Example 5.4]{moss2022probability}.

\begin{prop}\label{prop:tensor_standardborel}
    Let $A$ and $B$ be $\sigma$-algebras of standard Borel spaces. 
    Then
    \[
	    A \unitensor B = A \regtensor B,
    \]
    and in particular both are concrete.
\end{prop}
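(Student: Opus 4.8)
The plan is to reduce the statement to the Loomis--Sikorski picture and then exploit the special structure of standard Borel spaces, namely that an uncountable one is measurably isomorphic to the Cantor space $\{0,1\}^{\mathbb N}$ (Kuratowski's theorem), while a countable one is just a countable discrete space; in either case the $\sigma$-algebra is countably generated and carries no meager-set pathology. Since $A\unitensor B = (A\otimes B)^\sigma$ always holds by \cref{prop:sigma_tensor_product}, and since $A\regtensor B$ is by definition a $\sigma$-completion of $A\otimes B$, it suffices to show that under the hypotheses the canonical comparison $\sigma$-homomorphism $A\unitensor B \to A\regtensor B$ (which exists because $A\regtensor B$ is a $\sigma$-completion and $A\unitensor B$ is $\preceq$-minimal) is injective, equivalently that the regular tensor product is already $\preceq$-minimal, equivalently that the inclusion $A\otimes B \hookrightarrow A\regtensor B$ is a $\sigma$-\emph{completion} in the narrow sense with the universal property of \cref{prop:sigma_completion}. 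Concreteness of $A\unitensor B$ then follows from concreteness of $A\regtensor B$, which holds by \cref{prop:concrete_regular} since standard Borel spaces are sober.

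Concretely, I would argue as follows. Write $A = \Sigma(X)$, $B = \Sigma(Y)$ for standard Borel spaces $X,Y$. By \cref{prop:concrete_regular}, $A\regtensor B = \Sigma(X\times Y)$, the Borel $\sigma$-algebra of the product (which is again standard Borel). Now I must check that $\Sigma(X)\otimes\Sigma(Y)\hookrightarrow \Sigma(X\times Y)$ has the extension property of \cref{prop:sigma_completion}: every $\sigma$-homomorphism $\phi\colon \Sigma(X)\otimes\Sigma(Y)\to C$ into a \bsigma{} $C$ extends (uniquely) to a \bsigmahom{} $\Sigma(X\times Y)\to C$. Uniqueness is automatic since $\Sigma(X\times Y)$ is $\sigma$-generated by the rectangles. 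For existence, the key point is that $\Sigma(X\times Y)$ is generated as a $\sigma$-algebra by a countable subalgebra of rectangles: choose countable generators $(U_i)$ of $\Sigma(X)$ and $(V_j)$ of $\Sigma(Y)$ closed under Boolean operations; the rectangles $U_i\times V_j$ generate $\Sigma(X\times Y)$. The extension should then be built by the monotone class / transfinite argument of Sikorski, where the crucial input is that no relations are destroyed — i.e. whenever a countable supremum of rectangles already lies in $A\otimes B$, $\phi$ respects it, and whenever $\inf_n R_n = \bot$ in $A\otimes B$ for rectangles $R_n$, then $\inf_n\phi(R_n)=\bot$. The latter is where standardness enters: the obstruction in \cref{ex:tensor_product} came from an analytic non-Borel set $X\subset\mathbb R$, which forces $\Sigma\unitensor\baire(\mathbb R)$ to be strictly larger than $\Sigma\regtensor\baire(\mathbb R)$; for honest standard Borel spaces this cannot happen because $X\times Y$ carries a Polish topology and every Baire/Borel set is genuinely a countable-operations combination of rectangles with no hidden descriptive-set-theoretic gap.

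I expect the main obstacle to be precisely this last verification: showing that the tensor product $\Sigma(X)\otimes\Sigma(Y)$, \emph{as an abstract Boolean algebra}, already has $\Sigma(X\times Y)$ as its universal (not merely regular) $\sigma$-completion — i.e. that there are no ``extra'' $\sigma$-homomorphisms out of $\Sigma(X)\otimes\Sigma(Y)$ beyond those that factor through $\Sigma(X\times Y)$. The cleanest route is probably to invoke Kuratowski's theorem to reduce to $X=Y=\{0,1\}^{\mathbb N}$ (handling countable factors separately and trivially), so that $\Sigma(X)$, $\Sigma(Y)$ and $\Sigma(X\times Y)\cong\Sigma(\{0,1\}^{\mathbb N})$ are all isomorphic to the Baire $\sigma$-algebra of the Cantor space; then one shows the tensor product of two copies of $\mathrm{Clopen}(\{0,1\}^{\mathbb N})^\infty$-type algebras, $\sigma$-completed, is again of the same form, using that $\{0,1\}^{\mathbb N}\times\{0,1\}^{\mathbb N}\cong\{0,1\}^{\mathbb N}$ and that on Cantor space the Baire envelope of the clopen algebra already computes the Borel $\sigma$-algebra (\cref{rem:baire_borel}), with the meager ideal $I$ of \cref{lem:univ_sigma_completion_bool} being trivial after the quotient because the Cantor space is Baire and the relevant suprema are genuinely realized. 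Alternatively — and perhaps more robustly — one avoids Kuratowski by a direct measure-theoretic argument: a $\sigma$-homomorphism out of $\Sigma(X)\otimes\Sigma(Y)$ that fails to extend would produce, via Loomis--Sikorski, a $\{0,1\}$-valued finitely additive set function on rectangles that is separately $\sigma$-additive but not jointly so, and one rules this out using that $X$ and $Y$ are standard (hence their product is standard, hence its Borel $\sigma$-algebra has no proper $\sigma$-completion). I would present whichever of these the authors' earlier machinery makes shortest.
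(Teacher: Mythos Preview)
Your overall orientation is right: one direction of the comparison $A\unitensor B \to A\regtensor B$ comes for free from the universal property of $\unitensor$, and the task is to produce an inverse. But the part you flag as ``the main obstacle'' is genuinely unresolved in your proposal. Neither of your two suggested routes is carried out: the Cantor-space reduction ends with a vague claim that ``the meager ideal $I$ \ldots\ [is] trivial after the quotient,'' and the measure-theoretic alternative ends with ``one rules this out using that $X$ and $Y$ are standard'' without saying how. So as written this is a plan with a gap at the decisive step.

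The paper sidesteps your obstacle entirely by running the argument in the opposite direction. Rather than proving that $A\regtensor B$ already enjoys the universal property of the coproduct in $\SigmaB$, it constructs the inverse map $\psi \colon A\regtensor B \to A\unitensor B$ directly. The key input is a \emph{lifting} (projectivity) property of standard Borel $\sigma$-algebras, namely \cite[32.5]{sikorski69boolean}: any \bsigmahom{} from such a $\sigma$-algebra into a quotient $C/I$ lifts to a \bsigmahom{} into $C$. By Loomis--Sikorski (\cref{thm:ls_rep}), $A\unitensor B$ is a quotient of some concrete $C$, so the coproduct inclusions $A,B \to A\unitensor B$ lift to $A,B \to C$. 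Since $C$ is concrete and $A\regtensor B$ is the coproduct in $\csigmaB$ (\cref{cor:univprop_regular}), these lifts induce $A\regtensor B \to C$, and composing with $C\twoheadrightarrow A\unitensor B$ gives $\psi$. That $\phi\psi$ and $\psi\phi$ are identities then follows immediately from the respective universal properties.

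So the conceptual point you were missing is not a descriptive-set-theoretic analysis of which relations hold among rectangles, but rather the projectivity of standard Borel $\sigma$-algebras with respect to $\sigma$-surjections. Once you have that, the proof is three lines.
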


In fact, this holds for all Baire measurable spaces~\cite[Proposition 6.7.(iv)]{jamneshan23foundational}.
Here we restrict our focus to standard Borel spaces. 
The general statement will be proved in \cref{sec:tensor_cstars} (\cref{prop:pb_tensor}), where our categorical framework will enable a concise treatment.

\begin{proof}
    By \cref{prop:sigma_tensor_product}, we know there is a unique \bsigmahom{} $\phi \colon A \unitensor B \to A \regtensor B$ that commutes with the coproduct inclusions $A \to A \regtensor B$ and $B \to A\regtensor B$.

    To construct an inverse of $\phi$, consider the inclusions
    \[
	    i \colon A \to A \unitensor B, \qquad j \colon B \to A \unitensor B.
    \]
    The Loomis--Sikorski representation theorem (\cref{thm:ls_rep})
    ensures that $A \unitensor B$ can be written as a quotient of some concrete \bsigma {} $C$.
    By the standard Borel assumption and~\cite[32.5]{sikorski69boolean}, $i$ and $j$ factor through $C$, and therefore by the universal property of $A \regtensor B$ we obtain $\psi \colon A \regtensor B \to C \to A \unitensor B$. 
    The universal properties of the two tensor products are now sufficient to conclude that $\phi\psi = \id$ and $\psi \phi = \id$, since these compositions preserve the inclusions.
\end{proof}

\section{\texorpdfstring{\Cstars s}{σC*-algebras}}
\label{sec:sigma_cstar}
We now focus on the theory of \cstars{}s, also known as \emph{monotone $\sigma$-complete \cstar{}s}.
These have been studied less than the more common \emph{monotone complete \cstar{}s}, which have a closer connection with $W^*$-algebras.
In this section, we will both review results from the literature~\cite{saito15monotone,pedersen2018automgroups} and present a number of original contributions.
This includes results that are not available in the existing literature but may nevertheless be considered expected, such as the GNS construction for \cstars{}s (\cref{rem:gns}) and the measurable functional calculus introduced in \cref{sec:functional_calculus}.
We also offer less straightforward original contributions, such as a characterization of the Pedersen--Baire envelope (\cref{thm:char_pb}), applied in \cref{ex:hilb_pb}, and an alternative description of the universal $\sigma$-completion used to derive the universal property given in \cref{cor:univ_minimal_2}.

\begin{conv}
	Since we are interested in the case of \emph{unital} \cstar s, we focus on this setting without considering the nonunital case in any form.
	Thus throughout the rest of the paper, ``\cstar'' will always mean ``unital \cstar'', and all $\ast$-homomorphisms are assumed to preserve the unit.
\end{conv}

\subsection{Basic definitions}\label{sec:basic_cstars}

We first recall the notion of monotone $\sigma$-complete \cstar{}, which we will more concisely write as \cstars. Given a \cstar{} $\A$, we write $\A_{\sa}$ for the subspace of self-adjoint elements, which we equip with its usual order induced by the cone of positive elements $\A_+$.

\begin{defn}
    A unital \cstar{} $\A$ is a \newterm{\cstars}{} if any norm-bounded and monotone increasing sequence $(a_n)_{n \in \mathbb{N}}$ in $\A_{\sa}$, i.e.~a sequence satisfying
    \[
    a_n \le a_{n+1}, \qquad\text{and}\qquad \norm{a_n} \le \lambda 
    \]
    for all $n$ and fixed $\lambda \ge 0$, admits a supremum $\sup_n a_n$ in the poset $(\A_{\sa},\le)$. 
\end{defn}

In symbols, we also write $(a_n) \nearrow a$ to mean that $(a_n)$ is a norm-bounded monotone increasing sequence with supremum $a$, and similarly for decreasing sequences $(a_n) \searrow a$. This also applies for general \cstar{}s whenever the supremum exists.
For the sake of brevity, we usually leave it understood that the elements of the sequence are self-adjoint whenever we consider a monotone sequence.

\begin{rem}
    Since we are always working in the unital case, a sequence is norm-bounded if and only if it is order bounded with respect to $\le$.
    For this reason, from now on we will simply call such a sequence \emph{bounded} with no danger of confusion.

    In the nonunital case, however, norm-boundedness and order boundedness are different, and even requiring the existence of a supremum for bounded monotone sequences results in two different notions of $\sigma$-completeness~\cite[Example 2.2.11]{saito15monotone}.
\end{rem}

\begin{ex}
	\label{ex:Wstar}
	In a $W^*$-algebra $\A$, even every bounded monotone net has a supremum, and therefore $\A$ is a \cstars{} in particular.
	For example, $\bhilb$ is a \cstars{} for every Hilbert space $\mathcal{H}$.
\end{ex}

\begin{ex}
	\label{ex:commutative_linf}
    In the commutative case, the main examples of interest to us are the algebras of bounded complex-valued measurable functions $\Linf(X)$ on a measurable space $X$.\footnote{We emphasize that no notion of almost sure equality is involved, already because no choice of measure on $X$ is given.
	In other words, the elements of $\Linf(X)$ are measurable functions, and not equivalence classes thereof.}
    It is easy to see that this is a commutative \cstar{} with respect to the pointwise operations, with $\Linf(X)_{\sa}$ being the set of bounded real-valued measurable functions.
    The monotone $\sigma$-completeness now holds because pointwise suprema of monotone sequences of measurable functions are measurable, and hence these pointwise suprema are the suprema in $\Linf(X)_{\sa}$; this pointwise supremum is itself a bounded function because of the assumption of an upper bound on the sequence, which is equivalent to it being uniformly bounded.

    It is worth noting that $\Linf(X)$ typically does \emph{not} have suprema for arbitrary bounded monotone nets and is therefore not a $W^*$-algebra in general.
    For example if $X$ is such that all singletons are measurable, then it is easy to see that suprema of such nets also have to be pointwise whenever they exist.
    Therefore if $S \subseteq X$ is not measurable, then the indicator functions of finite subsets of $S$ form a bounded monotone net which does not have a least upper bound.
\end{ex}

\begin{ex}
	\label{ex:product}
	If $(\A_i)_{i \in I}$ is a family of  \cstars{}s, then the (possibly infinite) product $\prod_{i \in I} \A_i $ is a \cstars{} as well, because the suprema can be taken componentwise.\footnote{The reasoning is similar to that of the proof of~\cite[Lemma 3.2.3]{saito15monotone}.}
\end{ex}

Another important notion of limit is that of infinite sums.
\begin{defn}\label{def:infinite_sum}
	Let $(a_i)_{i \in I}$ be a family of positive elements in a \cstar{}. 
	Its \newterm{infinite sum} is defined by 
	\[
	\sum_{i \in I} a_i \coloneqq \sup_{\substack{F \subset \Lambda \\ \text{finite}}}\ \sum_{i \in F} a_i,
	\]
	whenever it exists.
\end{defn}

In a \cstars, the infinite sum of a positive sequence exists as soon as the sequence of partial sums $(\sum_{i=1}^n a_i)_{n \in \mathbb{N}}$ is bounded.

Next, we highlight some important results on suprema, which will be used throughout the rest of the paper.
These are covered in \cite[Section 2.1]{saito15monotone}.\footnote{Specifically, Lemma 2.1.5 and Proposition 2.1.10.}

\begin{fact}\label{fact:suprema_commute}
	In every \cstar{} $\A$, suprema commute with addition, and compression whenever they exist: if $(a_n)\nearrow a$, $(b_n)\nearrow b$ and $c$ is an arbitrary element, then
		\begin{align*}
			\sup_n \left( a_n + b_n \right) & = \sup_n a_n + \sup_n b_n,\\ 
			\sup_n c^* a_n c & = c^* \left( \sup_n a_n \right) c.
		\end{align*}
	Moreover, $\sup_n a_n = -\inf_n (-a_n)$.
\end{fact}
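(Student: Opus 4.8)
The plan is to prove the three assertions separately, in increasing order of difficulty, using throughout the elementary observation that an order-automorphism of $(\A_{\sa},\le)$ preserves every supremum that exists. For the last assertion, $\sup_n a_n = -\inf_n(-a_n)$, the map $x \mapsto -x$ is an order-\emph{reversing} bijection of $(\A_{\sa},\le)$, so it interchanges least upper bounds with greatest lower bounds; hence one side exists precisely when the other does, and the equality is immediate. For the additivity statement, the key remark is that translation $y \mapsto y + x$ by a \emph{fixed} $x \in \A_{\sa}$ is an order-automorphism, so $\sup_n (x + c_n) = x + \sup_n c_n$ whenever the right-hand side exists.

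Granting this, suppose $(a_n)\nearrow a$ and $(b_n)\nearrow b$. Then $a_n + b_n \le a + b$ for all $n$, so $a+b$ is an upper bound. For minimality, let $d$ be any upper bound of $(a_n + b_n)$, fix $m$, and observe that $a_m + b_n \le d$ for every $n$: if $n \ge m$ then $a_m \le a_n$ gives $a_m + b_n \le a_n + b_n \le d$, while if $n < m$ then $b_n \le b_m$ gives $a_m + b_n \le a_m + b_m \le d$. Taking the supremum over $n$ and using the translation remark yields $a_m + b \le d$, and then taking the supremum over $m$ yields $a + b \le d$. Hence $a + b = \sup_n(a_n + b_n)$, and the infimum version follows by combining this with the negation identity.

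For compression, I would first reduce to positive sequences: since $(a_n)$ is bounded there is $\lambda \ge 0$ with $-\lambda \le a_n$, so $(a_n + \lambda) \nearrow (a + \lambda)$ has positive terms, and $c^*(a_n+\lambda)c$ differs from $c^* a_n c$ by the fixed element $\lambda\,c^* c$, so by the translation remark it suffices to treat $a_n \ge 0$. Then $(c^* a_n c)$ is positive and increasing with upper bound $c^* a c$. If $c$ is invertible, conjugation $x \mapsto c^* x c$ is an order-automorphism (inverse $x\mapsto (c^{-1})^* x\, c^{-1}$), so it preserves suprema and we are done. The case of a \emph{general} $c$ is the crux and the main obstacle: conjugation by a non-invertible element is only order-preserving, not an order-automorphism, so one cannot simply reduce to the invertible case, and there is genuine content in showing both that $\sup_n c^* a_n c$ exists and that it equals $c^* a c$. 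Here I would either invoke the classical result \cite[Proposition~2.1.10]{saito15monotone} (compare also \cite[Lemma~2.1.5]{saito15monotone} for the additivity), or argue by functional-calculus approximation: replace $c$ by $c(c^*c+\eps)^{-1/2}$ to reduce to $\norm{c}\le 1$, pass to $M_2(\A)$ and conjugate $\begin{pmatrix} a_n & 0 \\ 0 & 0\end{pmatrix}$ by the invertible element $\begin{pmatrix} \eps & c \\ 0 & \eps\end{pmatrix}$ (noting that $\sup_n \begin{pmatrix} a_n & 0 \\ 0 & 0\end{pmatrix} = \begin{pmatrix} a & 0 \\ 0 & 0\end{pmatrix}$ in $M_2(\A)$, a corner-heredity fact that follows from the Schur-complement criterion for positivity of $2\times 2$ operator matrices), and control the error as $\eps\to 0$ using the operator inequality $0 \le a - a_n \le \delta\cdot 1 + \delta^{-1}\norm{a}\,(a - a_n)$, valid for every $\delta>0$.
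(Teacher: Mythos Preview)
The paper does not actually prove this Fact: it is stated with a bare citation to \cite[Lemma~2.1.5 and Proposition~2.1.10]{saito15monotone}. Your arguments for the negation identity and for additivity are correct and entirely self-contained, which already goes beyond what the paper does.

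For compression, you correctly isolate the invertible case and acknowledge that the general $c$ is where the content lies; citing \cite[Proposition~2.1.10]{saito15monotone} at that point is exactly what the paper does. Your alternative $M_2(\A)$ sketch can be made to work---the corner-heredity step $\sup_n\operatorname{diag}(a_n,0)=\operatorname{diag}(a,0)$ does follow from the Schur-complement criterion as you indicate, and one can then extract the $(2,2)$ corner from the supremum of the conjugated sequence by a norm-perturbation argument---but the final clause of your sketch is too compressed to be convincing as written. In particular, the role of the inequality $a-a_n\le\delta\cdot 1+\delta^{-1}\norm{a}(a-a_n)$ is unclear, and it would be cleaner simply to bound $\bigl\|C_\eps^*D_nC_\eps-\operatorname{diag}(0,c^*a_nc)\bigr\|\le K\eps$ uniformly in $n$, observe that any upper bound $z$ of $(c^*a_nc)$ then yields $\operatorname{diag}(0,z)+K\eps\cdot 1\ge C_\eps^*DC_\eps$, and read off $z+K\eps\ge c^*ac$ from the $(2,2)$ diagonal entry. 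Since you explicitly offer the citation as your primary route, none of this constitutes a genuine gap.
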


We also have the following connection with norm convergence.
\begin{fact}[{\cite[Lemma 2.1.7]{saito15monotone}}]\label{fact:suprema_norms}
	In a \cstar{} $\A$, consider a bounded monotone increasing sequence $(a_n)$ norm-convergent to $a$. Then $a=\sup_n a_n$.
\end{fact}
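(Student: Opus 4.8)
The plan is to reduce the statement to the single standard fact that in any $C^*$-algebra the positive cone $\A_+$ is norm-closed; granting this, the argument is a two-line application of closedness, and it never actually uses monotone $\sigma$-completeness, so the same proof works verbatim in an arbitrary \cstar{} (which is consistent with the earlier remark that the analogous claim holds in a general \cstar{} whenever the supremum exists).

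First I would check that $a$ is an upper bound of $(a_n)$. Fix $n \in \mathbb{N}$. By monotonicity, $a_m - a_n \in \A_+$ for every $m \ge n$, and since $a_m \to a$ in norm the sequence $(a_m - a_n)_{m \ge n}$ converges in norm to $a - a_n$; closedness of $\A_+$ then gives $a - a_n \in \A_+$, i.e.\ $a_n \le a$. As $n$ was arbitrary, $a$ bounds the sequence from above. Next I would verify minimality: let $b \in \A_{\sa}$ be any upper bound, so that $b - a_n \in \A_+$ for all $n$. Then $(b - a_n)_n$ converges in norm to $b - a$, and closedness of $\A_+$ again yields $b - a \in \A_+$, that is $a \le b$. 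Combining the two steps, $a$ is the least upper bound, hence $a = \sup_n a_n$ in $(\A_{\sa},\le)$.

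There is essentially no obstacle here: everything hinges on the compatibility of the order (which is defined via $\A_+$) with the norm topology, through the norm-closedness of $\A_+$. The boundedness hypothesis is automatic from norm-convergence and is not needed, and one never has to invoke the a priori existence of suprema for monotone sequences. The only point one might wish to make explicit in the writeup is why $\A_+$ is norm-closed — e.g.\ $\A_+ = \{\, x \in \A_{\sa} : \norm{x - \norm{x}\,1} \le \norm{x} \,\}$, which is visibly closed — but this is entirely routine.
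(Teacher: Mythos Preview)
Your proof is correct. The paper does not actually supply a proof of this statement: it is recorded as a \emph{Fact} with a citation to \cite[Lemma~2.1.7]{saito15monotone}, so there is no ``paper's own proof'' to compare against. Your argument is the standard one and works exactly as you say, relying only on the norm-closedness of $\A_+$ (and implicitly of $\A_{\sa}$, so that the limit $a$ is self-adjoint). Your observations that neither the boundedness hypothesis nor any $\sigma$-completeness assumption is needed are also correct.
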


In the commutative case, we have the following characterization of suprema and infima analogous to \cref{lem:sup_clopens}.

\begin{lem}
	\label{lem:sup_chaus}
	Let $X$ be a compact Hausdorff space.
	\begin{enumerate}
		\item\label{it:sup_chaus_description} A bounded monotone increasing sequence $(f_n)$ in $C(X)$ has a supremum if and only if the function $g : X \to \mathbb{R}$ defined by
		\[
			g(x) \coloneqq \inf_{U \ni x} \sup_{x' \in U, \: n \in \mathbb{N}} f_n(x')
		\]
		is continuous, where $U$ ranges over all open neighborhoods of $x$.
		In this case, we have $g = \sup_n f_n$.
		When it exists, this supremum is pointwise except on a meager set.
		\item\label{it:sup_chaus_inf0} A monotone decreasing sequence $(f_n)$ in $C(X)_+$ satisfies $\inf_n f_n = 0$ if and only if the set
		\[
			\{ x \in X \mid \inf_n f_n(x) > 0 \}
		\]
		is meager (cf.~\cite[Theorem 3.3]{wright72measures}).
	\end{enumerate} 
\end{lem}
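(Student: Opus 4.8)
The plan is to prove part~\ref{it:sup_chaus_inf0} first, since part~\ref{it:sup_chaus_description} will invoke it, and to base both arguments on two standard facts about a compact Hausdorff space: it is a Baire space (so nonempty open sets are non-meager and open meager sets are empty), and it is normal (so Urysohn's lemma applies).

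For part~\ref{it:sup_chaus_inf0}, I would write
\[
	E \coloneqq \{ x \in X \mid \inf_n f_n(x) > 0 \} = \bigcup_{k \ge 1} \, \bigcap_n \, \{ x \mid f_n(x) \ge 1/k \},
\]
which exhibits $E$ as an $F_\sigma$ set, each $\bigcap_n \{f_n \ge 1/k\}$ being closed. If $E$ is meager, then $0$ is a lower bound of $(f_n)$, and for any lower bound $g \in C(X)_{\sa}$ the open set $\{g > 0\}$ is contained in $E$, hence empty by the Baire category theorem; thus $g \le 0$ and $\inf_n f_n = 0$. Conversely, if $E$ is not meager, then some closed set $\bigcap_n \{f_n \ge 1/k\}$ is non-meager and therefore has nonempty interior $U$; Urysohn's lemma yields a nonzero $\phi \in C(X)_+$ supported in $U$ with $\phi \le 1/k$, and then $0 \ne \phi \le f_n$ for all $n$, contradicting $\inf_n f_n = 0$.

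For part~\ref{it:sup_chaus_description}, let $h(x) \coloneqq \sup_n f_n(x)$ be the pointwise supremum, a bounded lower semicontinuous function, so that the function $g$ in the statement is exactly its upper semicontinuous regularization; in particular $g$ is upper semicontinuous, satisfies $g \ge h$, and is the least upper semicontinuous function dominating $h$. Let $\mathcal U \coloneqq \{ f \in C(X) \mid f \ge f_n \text{ for all } n \}$ be the set of continuous upper bounds. Every $f \in \mathcal U$ dominates $h$ and is continuous, hence $f \ge g$, so the pointwise infimum of $\mathcal U$ dominates $g$. For the reverse inequality, which is the crux, given $x_0$ and $\eps > 0$ I would choose an open $U_0 \ni x_0$ with $\sup_{y \in U_0} h(y) < g(x_0) + \eps/2$ (possible by definition of $g(x_0)$) and then, using Urysohn's lemma, interpolate between the value $g(x_0)+\eps/2$ near $x_0$ and the uniform bound $\sup_n \norm{f_n}$ on $X \setminus U_0$ — e.g.\ as a maximum of the constant $g(x_0)+\eps/2$ with a suitably scaled Urysohn function — obtaining some $f \in \mathcal U$ with $f(x_0) < g(x_0)+\eps$. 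Hence $g$ is the pointwise infimum of $\mathcal U$. Consequently $(f_n)$ has a supremum in $C(X)$, i.e.\ $\mathcal U$ has a least element, exactly when $g$ is continuous (in which case $g \in \mathcal U$ since $g \ge h \ge f_n$, so $g$ is automatically that least element), and then $\sup_n f_n = g$. For the last claim, when $g$ is continuous the sequence $(g - f_n)$ decreases in $C(X)_+$ with $\inf_n(g-f_n) = 0$ in $C(X)$ (any lower bound $b$ of it makes $g - b$ an upper bound of $(f_n)$, forcing $b \le 0$), so part~\ref{it:sup_chaus_inf0} shows that $\{ x \mid g(x) > h(x) \} = \{ x \mid \inf_n(g-f_n)(x) > 0 \}$ is meager, and off this set $g$ coincides with the pointwise supremum $h$.

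I expect the main obstacle to be precisely the Urysohn interpolation step in part~\ref{it:sup_chaus_description}: one must manufacture a single continuous function that simultaneously dominates the entire sequence everywhere and yet takes a value arbitrarily close to $g(x_0)$ at the prescribed point, and a little care is needed so that the interpolation respects the uniform norm bound on the sequence outside the chosen neighborhood. Everything else reduces to routine bookkeeping with semicontinuity, the order of $C(X)$, and the Baire category theorem.
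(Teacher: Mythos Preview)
Your proof is correct, and the logical flow is the reverse of the paper's: you establish~\ref{it:sup_chaus_inf0} directly via Baire category and Urysohn, then feed it back into the meager clause of~\ref{it:sup_chaus_description}, whereas the paper proves~\ref{it:sup_chaus_description} first (including the meager clause via an empty-interior argument) and then deduces~\ref{it:sup_chaus_inf0} from it by passing to $(-f_n)$. For~\ref{it:sup_chaus_description} itself, you identify $g$ as the upper semicontinuous regularization of the pointwise supremum $h$ and show, via a Urysohn interpolation constructing a continuous majorant close to $g(x_0)$, that $g$ is the pointwise infimum of all continuous upper bounds; the paper instead assumes the supremum $f$ exists and argues $f=g$ by contradiction, subtracting a Urysohn bump at any point where $f(x)>g(x)$ to violate minimality of $f$. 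Your route yields the slightly stronger intermediate fact that $g=\inf\mathcal U$ pointwise even when the supremum fails to exist, and makes~\ref{it:sup_chaus_inf0} self-contained; the paper's route avoids the separate $F_\sigma$/interior argument for~\ref{it:sup_chaus_inf0}. One minor point: in your interpolation step, take care with signs (the ``max with a scaled Urysohn function'' can overshoot if $g(x_0)+\eps/2<0$); shifting so that $f_n\ge 0$ without loss of generality, as you implicitly acknowledge, resolves this cleanly.
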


\begin{proof}
	\begin{enumerate}
		\item If $g$ is continuous, we show that it is the desired supremum.
		Hence assume that $h \in C(X)$ satisfies $f_n \le h$ for all $n$.
		Then clearly also
		\[
			\inf_{U \ni x} \sup_{x' \in U, \: n \in \mathbb{N}} f_n(x') \le \inf_{U \ni x} \sup_{x' \in U} h(x').
		\]
		By continuity of $h$, the right-hand side is exactly $h(x)$.\footnote{This statement is precisely the upper semi-continuity of $h$.}
		Therefore $g \le h$, which makes $g$ the supremum of $(f_n)$.

		Conversely, suppose that $f \coloneqq \sup_n f_n$ exists in $C(X)$.
		Then $g \le f$ follows by the same argument as in the previous paragraph.
		For $f \le g$, suppose that there is $x \in X$ with $f(x) > g(x)$.
		Then for sufficiently small $\eps > 0$, by definition of $g$ there is a neighborhood $U \ni x$ such that $f_n(x') \le f(x') - \eps$ for all $n$ and $x' \in U$.
		Applying Urysohn's lemma gives a continuous $h : X \to [0,\eps]$ that vanishes outside $U$ and satisfies $h(x) = \eps$.
		Then $f - h$ is a continuous function lower bounded by all $f_n$, which contradicts the assumption that $f$ is the supremum.

		For the final statement, it is enough to show that for every $\eps > 0$ the set
		\[
			\left\{ x \in X \:\bigg|\: \sup_n f_n(x) < g(x) - \eps \right\}
		\]
		has empty interior.
		But this holds by the same argument as in the previous paragraph: if there was a nonempty interior, then $g$ could not be the supremum.
		\item If $\inf_n f_n=0$, consider the monotone increasing sequence $(- f_n)$, which by assumption has supremum $0$. The final statement in \ref{it:sup_chaus_description} then ensures that  
		\[
			\left\{ x\in X \:\bigg|\: \inf_n f_n(x)> 0 \right\} = \left\{ x \in X \:\bigg|\: \sup_n \, (-f_n(x))<0 \right\}
		\]
		is meager.
		Conversely, if the set is meager and $g \in C(X)$ is such that $g \le f_n$ for all $n$, then the assumption gives $g(x) \le 0$ for all $x$ outside of a meager set.
		The claimed $g \le 0$ then follows by continuity and the Baire category theorem.
		\qedhere
	\end{enumerate}
\end{proof}

We now record the $C^*$-analogue of \cref{cor:rickart_stone}. 
It is worth noting that the proof in this case is more subtle and is not an immediate consequence of \cref{lem:sup_chaus}.

\begin{fact}[{\cite[Theorem 2.1]{grove84substonean}}]
	\label{fact:rickart_gelfand}
	Under Gelfand duality, commutative \cstars{}s correspond to Rickart spaces.
\end{fact}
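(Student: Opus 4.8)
The statement to prove is \cref{fact:rickart_gelfand}: under Gelfand duality, commutative \cstars{}s correspond to Rickart spaces. Since Gelfand duality already gives an equivalence between commutative \cstar{}s and compact Hausdorff spaces, the task reduces to showing that a commutative \cstar{} $C(X)$ is monotone $\sigma$-complete if and only if $X$ is a Rickart space. The plan is to translate the order-theoretic condition on $C(X)$ into the topological conditions of \cref{def:rickart} using the concrete description of suprema from \cref{lem:sup_chaus}.

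First I would prove the forward direction: suppose $C(X)$ is a \cstars{}. The topology of $X$ must be generated by clopen sets, i.e.\ $X$ is totally disconnected (equivalently, a Stone space, since $X$ is already compact Hausdorff). For this, one shows that projections in $C(X)$ separate points: given $x \neq y$, pick $f \in C(X)$ with $f(x) = 0$, $f(y) = 1$, $0 \le f \le 1$, and consider the monotone increasing sequence $f_n \coloneqq f^{1/n}$ (or rather a polynomial approximation, staying in $C(X)$ and increasing); its supremum $p$ in $C(X)$ must, by \cref{lem:sup_chaus}\ref{it:sup_chaus_description}, agree pointwise off a meager set with the pointwise supremum, which is the indicator of $\{f > 0\}$. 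The subtlety — and this is where the argument is not a mere corollary of \cref{lem:sup_chaus} — is that $p$ is genuinely continuous and idempotent ($p = p^2$ follows since $p$ is a supremum of a sequence bounded by $1$ and $f_n \le f_n^{1/2}$, or more carefully using that the pointwise sup is $\{0,1\}$-valued and $p$ is continuous, so $p$ is the indicator of a clopen set containing $y$ but not $x$). Then, for the second Rickart axiom, take a countable family of clopens $(U_n)$ with indicators $p_n \in C(X)$; the suprema $q_n \coloneqq p_1 \vee \dots \vee p_n$ form a bounded monotone sequence of projections, whose supremum $q$ exists in $C(X)$ by hypothesis, is again a projection by the same idempotency argument, and by \cref{lem:sup_chaus}\ref{it:sup_chaus_description} equals $\overline{\bigcup_n U_n}$ off a meager set — but being clopen and containing the open set $\bigcup_n U_n$, it must contain its closure, so $q$ is the indicator of a clopen set containing $\overline{\bigcup_n U_n}$; minimality of $q$ as a supremum forces it to equal $\overline{\bigcup_n U_n}$, which is therefore clopen.

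For the converse, suppose $X$ is a Rickart space and let $(f_n) \nearrow$ be a bounded monotone increasing sequence in $C(X)_+$; I must produce $\sup_n f_n$ in $C(X)$. By \cref{lem:sup_chaus}\ref{it:sup_chaus_description} it suffices to show that the upper-regularization $g(x) = \inf_{U \ni x} \sup_{x' \in U, n} f_n(x')$ is continuous. Here is where the clopen structure enters: for each rational $r$, the set $\{x : f_n(x) > r\}$ is open, so $\bigcup_n \{f_n > r\}$ is open, and by Rickart-ness its closure $V_r$ is clopen; one checks $\{g > r\} = \operatorname{int} V_r = V_r$ for all but countably many $r$ (or more carefully, that $g$ has clopen superlevel sets up to the usual care at jump points), whence $g$ is a uniform limit of step functions with clopen level sets and hence continuous. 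The cleanest route may actually be to use \cref{cor:rickart_stone}: a Rickart space is exactly the Stone space of a \bsigma{} $B$, so $C(X)$ contains the \bsigma{} $B$ of its clopen-set indicators, and one shows every self-adjoint element's monotone limits can be approximated (in the order sense, using the spectral-type decomposition into clopen superlevel sets) by elements of the linear span of $B$, reducing $\sigma$-completeness of $C(X)$ to $\sigma$-completeness of $B$, which holds by definition.

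The main obstacle I anticipate is the passage between "supremum off a meager set" (which \cref{lem:sup_chaus} hands us for free) and "genuine continuous supremum" — i.e.\ verifying that the candidate functions ($p$, $q$, $g$ above) are actually continuous, not merely equal to the pointwise sup almost everywhere. In the forward direction this is handled by the idempotency/clopen argument sketched above; in the converse it is precisely the Rickart closure axiom that rescues continuity. I would therefore organize the write-up around the identification of projections in $C(X)$ with clopen subsets of $X$, proving that lemma first (a bounded increasing sequence in $C(X)$ whose pointwise sup is a $\{0,1\}$-valued function has a continuous — hence idempotent — supremum iff the relevant closure is clopen), and then deducing both directions of \cref{fact:rickart_gelfand} from it together with \cref{lem:sup_chaus}. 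One should also cite that this is classical (Grove--Pedersen, as the \texttt{\textbackslash cite} in the statement indicates) and that for the existence of suprema of general self-adjoint sequences one reduces to the positive case and then to projections via an order-dense supply of step functions.
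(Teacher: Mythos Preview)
The paper does not prove this statement; it is recorded as a fact with a citation to Grove--Pedersen, preceded only by the remark that ``the proof in this case is more subtle and is not an immediate consequence of \cref{lem:sup_chaus}.'' So there is no paper proof to compare against, and your proposal stands on its own merits.

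Your overall strategy is correct, but there is one genuine gap in the converse direction. You invoke ``Rickart-ness'' to conclude that the closure of $\bigcup_n \{f_n > r\}$ is clopen, but the Rickart axiom in \cref{def:rickart} only guarantees this for countable unions of \emph{clopen} sets, and $\{f_n > r\}$ is merely open. The repair is to show that in a Stone space every cozero set is a countable union of clopens: write $\{f_n > r\} = \bigcup_m \{f_n \ge r + \tfrac{1}{m}\}$, and use compactness together with the clopen basis to cover each compact set $\{f_n \ge r + \tfrac{1}{m}\}$ by finitely many clopens contained in $\{f_n > r\}$. Once the $V_r \coloneqq \overline{\bigcup_n\{f_n>r\}}$ are known to be clopen, the cleanest finish is to \emph{define} the candidate supremum as $\tilde g(x) \coloneqq \sup\{r \in \mathbb{Q} : x \in V_r\}$; then $\{\tilde g > s\} = \bigcup_{r>s} V_r$ is open and $\{\tilde g \ge s\} = \bigcap_{r<s} V_r$ is closed, so $\tilde g \in C(X)$, and the bounds $f_n \le \tilde g \le k$ for any continuous upper bound $k$ follow directly from $\{f_n > r\} \subseteq V_r \subseteq \{k \ge r\}$. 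This replaces your somewhat vague claim that $\{g > r\} = V_r$ ``for all but countably many $r$.''

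Two minor points in the forward direction. First, to get $p(x)=0$ from your $f^{1/n}$ construction you need $x \notin \overline{\{f>0\}}$, so choose $f$ via Urysohn to vanish on a \emph{neighbourhood} of $x$, not merely at $x$. Second, for the closure-of-clopens axiom it is cleaner to argue directly: once you know $q = \sup_n p_n$ exists and satisfies $0 \le q \le 1$, the identity $q = \sup_n q p_n q \le q^3 \le q$ (or a direct spectral argument) shows $q$ is a projection, and then \cref{lem:sup_clopens} identifies its clopen support with $\overline{\bigcup_n U_n}$ without passing through the meager-set statement of \cref{lem:sup_chaus}.
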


% \subsection{\texorpdfstring{$\sigma$}{σ}-subspaces}\label{sec:sigma_subspaces}

A key concept in the study of measurability is that of $\sigma$-generation, as it enables $\lambda$-$\pi$ arguments.
It is therefore natural to explore analogous concepts for \cstars{}s.
Recall first that a \emph{$\ast$-subspace} of a \cstar{} is a linear subspace that is closed under the involution.

\begin{defn} Let $\A$ be a \cstar{}.
   \begin{enumerate}
	   \item A \newterm{$\sigma$-subspace} $V \subseteq \A$ is a $\ast$-subspace that is \newterm{$\sigma$-closed}: whenever $(a_n)\nearrow a$ for $(a_n)\subseteq V$, then $a \in V$.
    \item The \newterm{$\sigma$-closure} of a $\ast$-subspace $W \subseteq \A$ is the smallest $\sigma$-subspace containing $W$ (equivalently, it is the intersection of all $\sigma$-subspaces containing $W$). 
    \item A $\sigma$-subspace $V \subseteq \A$ is \newterm{$\sigma$-generated} by a $\ast$-subspace $W \subseteq V$ if it is the $\sigma$-closure of $W$ in $\A$.
    \item A \newterm{$\sigma$-ideal} $I \subseteq \A$ is a two-sided closed ideal which is also $\sigma$-closed.
   \end{enumerate}
\end{defn}

\begin{rem}
	\begin{enumerate}
		\item Taking additive inverses shows that a $\sigma$-subspace is automatically closed under infima in the same sense as under suprema.
		\item It may seem misleading to reserve the term $\sigma$-ideal for two-sided ideals.
			However, we follow the existing literature in this regard~\cite[Definition 2.2.16]{saito15monotone}, and note that $\sigma$-closure as presently defined does not apply more generally because a genuinely one-sided ideal cannot be a $\ast$-subspace.
	\end{enumerate}
\end{rem}

We recall the following important facts on $\sigma$-closures, noting already that a $\sigma$-ideal is automatically norm-closed (\cref{lem:sigmaclosed_is_closed}).

\begin{fact}[{\cite[Proposition 2.2.21 and Corollary 5.3.8]{saito15monotone}}]\label{fact:sigma_closure}
	Let $\A$ be a \cstars{}. 
	\begin{enumerate}
		\item\label{it:sigmaclosedideal} If $I$ is a two-sided $\sigma$-ideal, then $\A/I$ is a \cstars{}.
		\item The $\sigma$-closure of any $C^{\ast}$-subalgebra $\B \subseteq \A$ is again a \cstars{}.
	\end{enumerate}
\end{fact}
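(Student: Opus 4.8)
This statement is taken verbatim from \cite[Proposition 2.2.21 and Corollary 5.3.8]{saito15monotone}; were I to reprove it, here is the route I would take.

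\emph{First claim (\ref{it:sigmaclosedideal}).} A $\sigma$-ideal is in particular norm-closed, so $\A/I$ is already a $C^*$-algebra and only monotone $\sigma$-completeness needs checking. Write $\pi\colon\A\to\A/I$ for the quotient map. Given a bounded increasing sequence $(\bar a_n)$ in $(\A/I)_\sa$, the plan is to lift it to a bounded increasing sequence $(a_n)$ in $\A_\sa$ with $\pi(a_n)=\bar a_n$; this rests on the standard but genuinely non-formal fact that, for a closed ideal of a $C^*$-algebra, positive elements lift inside order intervals (if $0\le\bar c\le\pi(b)$ then $\bar c=\pi(c)$ for some $0\le c\le b$), which lets one build the $a_n$ inductively inside a fixed order interval $[0,\mu\cdot 1]$. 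Since $\A$ is a \cstars, $a\coloneqq\sup_n a_n$ exists, and the claim is $\pi(a)=\sup_n\bar a_n$. That $\pi(a)$ is an upper bound is clear from positivity of $\pi$. For the least-upper-bound property, take any upper bound $\bar d$ (truncated so that $0\le\bar d\le\mu\cdot 1$) with a lift $0\le d\le\mu\cdot 1$; then $\pi(d-a_n)\ge 0$, so $(d-a_n)_-\in I$ because $\pi$ preserves Jordan decompositions. The crux is to produce a \emph{bounded increasing} sequence $(f_n)$ in $I_+$ with $d+f_n\ge a_n$ for all $n$: then $f\coloneqq\sup_n f_n$ lies in $I$ precisely \emph{because $I$ is $\sigma$-closed}, whence $d+f\ge a$ and therefore $\pi(a)\le\pi(d)=\bar d$. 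This essential use of $\sigma$-closedness (rather than mere norm-closedness) is exactly what rules out the non-example $c_0\subseteq\ell^\infty$, whose quotient fails to be monotone $\sigma$-complete.

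\emph{Second claim.} The key reduction is that once one knows the $\sigma$-closure $\bar\B$ of a $C^*$-subalgebra $\B$ is again a $C^*$-subalgebra, monotone $\sigma$-completeness is immediate: a bounded increasing sequence in $\bar\B$ has a supremum in $\A$ (as $\A$ is a \cstars), that supremum lies in $\bar\B$ by $\sigma$-closedness of $\bar\B$, and it is then a fortiori the supremum computed inside $\bar\B$. So the real content is that $\bar\B$ is closed under multiplication. Here I would exploit the compression identity $\sup_n c^*a_n c=c^*(\sup_n a_n)c$ from \cref{fact:suprema_commute}: the set $\{\,x\in\bar\B : cxc^*\in\bar\B \text{ for all } c\in\B\,\}$ is a $\sigma$-subspace containing $\B$, hence equals $\bar\B$, so $\bar\B$ is closed under $x\mapsto cxc^*$ for $c\in\B$, and polarizing (using $b\mapsto b+1$) then gives closure under the symmetrized products $x\mapsto bx+xb$ for $b\in\B$.

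\emph{Main obstacle.} Upgrading this last observation to full associative multiplicative closure of $\bar\B$ is the delicate step, and it is where the argument genuinely leaves the realm of soft manipulations: positive parts and order operations are badly behaved in the noncommutative setting (they are not lattice operations), so the naive bootstrapping of the compression argument, and likewise the boundedness control in the lifting step of the first claim, require care. This is precisely where one invokes the machinery of \cite{saito15monotone} — for instance by presenting $\bar\B$ as a quotient of an ambient monotone complete $C^*$-algebra (such as $\B^{**}$) by a $\sigma$-ideal, at which point the first claim~\ref{it:sigmaclosedideal} feeds back in to supply the monotone $\sigma$-completeness.
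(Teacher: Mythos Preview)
The paper does not prove this statement; it is recorded as a \emph{Fact} with a direct citation to \cite[Proposition 2.2.21 and Corollary 5.3.8]{saito15monotone}, so there is no in-paper argument to compare against. Your sketch for part~\ref{it:sigmaclosedideal} is the standard route taken in the cited reference: lift monotone sequences through the quotient map using positive lifting within order intervals, and use $\sigma$-closedness of $I$ to control the correction terms.

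For the second part, however, your proposal has a genuine gap that you yourself flag but do not close. You correctly observe that the hard step is multiplicative closure of the $\sigma$-closure $\bar\B$, and that compression alone (via \cref{fact:suprema_commute}) only gives closure under $x\mapsto cxc^*$ for $c\in\B$, not for $c\in\bar\B$. Your suggested workaround---presenting $\bar\B$ as a quotient of $\B^{**}$ by a $\sigma$-ideal and invoking part~\ref{it:sigmaclosedideal}---is circular: to form such a quotient and apply~\ref{it:sigmaclosedideal} you would already need $\bar\B$ to be a $C^*$-algebra, which is exactly what is in question. The actual argument in \cite{saito15monotone} proceeds differently, via a transfinite construction (their Theorem~5.3.7) that builds the $\sigma$-closure as an increasing union of $C^*$-subalgebras, each obtained from the previous one by adjoining suprema and then taking the generated $C^*$-subalgebra; multiplicative closure is maintained at every stage by construction rather than recovered a posteriori. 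This sidesteps the bootstrapping problem you identify.
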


Although we lack a counterexample, it does not seem an immediate consequence of the definition that a $\sigma$-subspace is norm-closed.
Nonetheless, this holds under minor additional requirements: for example, one can proceed as in the first step of the proof of \cite[Theorem 5.3.7]{saito15monotone} to show that a $\sigma$-subspace $V$ is norm-closed if $1 \in V$.
We also highlight the following.
%In particular, recall that $a=a^+ - a^-$ with $a^+, a^- \ge 0$, and $\abs{a}= a^+ = a^-$.
\begin{lem}\label{lem:sigmaclosed_is_closed}
Let $V$ be a $\sigma$-subspace such that for every self-adjoint $a \in V$, also $a^2 \in V$.
Then $V$ is norm-closed.
\end{lem}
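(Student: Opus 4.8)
The plan is to show that $V$ is norm-closed by reducing to a statement about self-adjoint elements, then exploiting the fact that $V$ is closed under squaring (hence, via polynomial identities, under continuous functional calculus applied to self-adjoint elements of $V$), and finally invoking Stone--Weierstrass together with \cref{fact:suprema_norms}. First I would note that $V$, being a $\ast$-subspace, is spanned over $\mathbb{C}$ by its self-adjoint part $V_{\sa} = \{a \in V \mid a = a^\ast\}$, and that the norm-closure $\overline V$ is also a $\ast$-subspace with $(\overline V)_{\sa} = \overline{V_{\sa}}$. So it suffices to prove that $V_{\sa}$ is norm-closed in $\A_{\sa}$.

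The key step is to upgrade the hypothesis ``$a \in V_{\sa} \implies a^2 \in V$'' to ``$a \in V_{\sa} \implies p(a) \in V$ for every real polynomial $p$ with $p(0) = 0$''. This is immediate by induction: $V$ is a linear subspace and $a^{n+1} = a \cdot a^n$ can be obtained from squares via the polarization-type identity $2\,ab = (a+b)^2 - a^2 - b^2$ applied to $a$ and $a^n$ (all of which lie in $V$), so all powers of $a$ lie in $V$, and then so does every $p(a)$ with $p(0)=0$ by linearity. Next, given a self-adjoint $a \in V_{\sa}$, I want to show that $f(a) \in V$ for every real continuous $f$ on $\spectrum(a)$ with $f(0) = 0$. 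Approximate such an $f$ uniformly on the compact set $\spectrum(a)$ by polynomials $p_k$ with $p_k(0) = 0$ (possible by Stone--Weierstrass after subtracting the constant term, which is small since $f(0)=0$); then $p_k(a) \to f(a)$ in norm, and each $p_k(a) \in V$. To conclude $f(a) \in V$ from this norm-convergence, write $f = f_+ - f_-$ with $f_\pm \ge 0$ continuous vanishing at $0$, and handle each piece: for $g \ge 0$ continuous with $g(0)=0$, the polynomial approximants can be arranged (by a standard monotone approximation, e.g. Dini-type, using that $g$ is the uniform limit of an increasing sequence of such polynomials on $\spectrum(a)$) so that $(p_k(a))$ is a bounded monotone increasing sequence norm-convergent to $g(a)$; then \cref{fact:suprema_norms} gives $g(a) = \sup_k p_k(a) \in V$ since $V$ is $\sigma$-closed. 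Hence $f(a) = f_+(a) - f_-(a) \in V$.

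Finally, to deduce norm-closedness: suppose $a_k \to a$ in norm with $a_k \in V_{\sa}$; we want $a \in V$. Since $1 \in V$ is not assumed here, I cannot simply use the identity function, but I can use the function $f(t) = t$ on $\spectrum(a)$, which does vanish at $0$ only if $0 \in \spectrum(a)$ — so instead I apply the previous paragraph with $f(t) = t$ directly to each $a_k$: this only shows $a_k \in V$, which we already knew. The actual argument is that $a$ itself is a norm-limit of elements of $V$, so $a \in \overline V$; the point of the functional-calculus step is rather that it was needed to prove $V$ is \emph{already} norm-closed without the extra hypothesis $1 \in V$. Concretely: take any self-adjoint $b$ in the norm-closure $\overline{V_{\sa}}$; choose $b_k \in V_{\sa}$ with $\norm{b_k - b} \to 0$. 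Then $b = \lim_k b_k$ and I want to realize $b$ via the construction above. Writing $b = b_+ - b_-$ and applying continuous functional calculus $t \mapsto t_\pm$ (which vanish at $0$) to $b$ itself once we know $b$ lies in the \emph{norm-closed} $\ast$-algebra generated by $V$ — this is circular, so the correct route is: the $\sigma$-closed, squaring-closed $\ast$-subspace $V$ contains, for each $a \in V_{\sa}$, the whole abelian $C^\ast$-algebra $\{f(a) : f \in C_0(\spectrum(a)\setminus\{0\})\}$ by the second paragraph; a bounded Cauchy sequence $(b_k)$ in $V_{\sa}$ has its limit $b$ lying in the norm-closed $\ast$-subalgebra they generate, and one shows directly that $b_k^2 \to b^2$ forces $b^2 \in \overline{V}$, and iterating plus the functional-calculus identity $t = \lim (\text{polynomials in } t^2, t)$... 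The main obstacle, and where I would spend the most care, is exactly this last point: bridging from ``$V$ closed under squares and $\sigma$-limits'' to ``$V$ closed under norm-limits'' without the convenient hypothesis $1 \in V$, since the identity function is unavailable as a functional-calculus input and one must instead approximate $t$ by real polynomials in $t$ with zero constant term that are moreover monotone on the relevant spectrum — I expect this to require a careful choice of approximating sequence (e.g. the classical monotone polynomial approximation to $\sqrt{\cdot}$, composed appropriately) so that \cref{fact:suprema_norms} can be applied.
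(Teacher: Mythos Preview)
Your proposal has a genuine gap in the final paragraph, which you yourself identify as circular. You correctly observe that applying functional calculus to the limit $b$ is not available (since $b$ is not yet known to lie in $V$), and your various attempts to sidestep this do not close the argument. The missing idea is a \emph{telescoping} trick: once you know that $a \in V_{\sa}$ implies $a^+ \in V$ (and hence $a^- \in V$), take a rapidly convergent self-adjoint sequence $(b_n) \subseteq V$ with $b_1 = 0$ and $\norm{b_{n+1} - b_n} < 2^{-n}$, set $a_n \coloneqq b_{n+1} - b_n \in V_{\sa}$, and form the two monotone increasing bounded sequences of partial sums $c_m \coloneqq \sum_{j=1}^m a_j^+$ and $d_m \coloneqq \sum_{j=1}^m a_j^-$. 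These are Cauchy in norm, so by \cref{fact:suprema_norms} their norm-limits $c$ and $d$ coincide with their suprema and hence lie in $V$ by $\sigma$-closedness; then $b = c - d \in V$.

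For the preliminary step $a^+ \in V$, you do not need the general (and not fully justified) monotone polynomial approximation of arbitrary continuous $g \ge 0$ with $g(0) = 0$ claimed in your second paragraph. It suffices to approximate the single function $t \mapsto \sqrt{t}$ on $[0, \norm{a}^2]$, and for this the explicit recursion $p_0(t) = 0$, $p_{n+1}(t) = p_n(t) + \tfrac12(t - p_n(t)^2)$ is monotone increasing with $p_n(0) = 0$ and converges uniformly to $\sqrt{t}$ on compacta of $\mathbb{R}_+$. Applying this to $a^2$ (which lies in $V$ by hypothesis, and whose powers lie in $V$ by your polarization argument) gives $\abs{a} = \sqrt{a^2} = \sup_n p_n(a^2) \in V$ via \cref{fact:suprema_norms} and $\sigma$-closedness, whence $a^+ = \tfrac12(a + \abs{a}) \in V$. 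This is precisely the ``monotone polynomial approximation to $\sqrt{\cdot}$'' you allude to at the end; the point is to apply it to $a^2$ rather than trying to recover $b$ directly.
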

In particular, a $\sigma$-ideal is simply a $\sigma$-closed ideal (i.e., we do not need to additionally require norm-closure).
\begin{proof}
	Given self-adjoint $a \in \A$, we denote by $a^+$ and $a^-$ its \emph{positive} and \emph{negative parts}, respectively.
	We start by claiming that for every self-adjoint $a \in V$, we have $a^+\in V$. The idea is to take the positive element $a^2$ and the sequence of polynomials defined inductively as follows:\footnote{This sequence is taken directly from Prahlad Vaidyanathan's answer at \href{https://math.stackexchange.com/questions/2538604}{math.stackexchange.com/questions/2538604}.}
	\[
	p_0 (t) \coloneqq 0, \qquad p_{n+1}(t) \coloneqq p_n(t) + \frac{1}{2} (t-p_n(t)^2).
	\]
	This is a monotone increasing sequence that tends to $\sqrt{t}$ uniformly on compact subsets of $\mathbb{R}_+$.
	Thus by functional calculus, the sequence $(p_{n}(a^2))$ is norm-convergent to $\abs{a}=\sqrt{a^2}$. 
	Moreover, it is a bounded monotone increasing sequence, which belongs to $V$ by the assumption and \cref{fact:suprema_norms}, which give $\abs{a}=\sup_n p_n(a^2) \in V$.
	Therefore also $a^+ = \frac{1}{2} (a+\abs{a}) \in V$.
	
	Let now $(b_n)\subseteq V$ be a norm-convergent sequence with limit $b \in \A$. 
	To show that $b\in V$, we proceed as in \cite[p.~17]{saito15monotone}. 
	By considering self-adjoint and anti-self-adjoint parts separately, we can assume that the sequence is self-adjoint.
	Furthermore we can assume without loss of generality that $b_1 = 0$ and that $\norm{b_n - b} < \frac{1}{2^n}$ for all $n$, the latter by passing to a subsequence if necessary.
	Now set $a_n \coloneqq b_{n+1} - b_n$.
	Then it is clear that the partial sum sequences $(\sum_{j=1}^n a_j^+)$ and $(\sum_{j=1}^n a_j^-)$ are bounded monotone increasing. 
	Note that $a_j^+\in V$ by the previous paragraph, and hence also $a_j^-\in V$ by linearity. 
	Moreover, these sequences are Cauchy and therefore converge in norm to some $c, d \in \A$. 
	By \cref{fact:suprema_norms}, we conclude that $c=\sum_{j} a_j^+$ and $d=\sum_{j} a_j^-$, and both belong to $V$ by $\sigma$-closedness.
	Finally, $b = c-d$ is clear by $b_{n+1} = \sum_{j=0}^n a_j$ and the additivity of limits, and so $b \in V$.
\end{proof}

% \begin{lem}
% 	Let $V$ be a $\sigma$-subspace such that $1 \in V$. Then $V$ is norm-closed.
% \end{lem}
% \begin{proof}
% 	We proceed as in the first step of the proof of \cite[Theorem 5.3.7]{saito15monotone}.
% 	Assume that $(a_n)$ is a norm-convergent sequence with $a_n \to a$. Up to replacing $(a_n)$ with a subsequence, we can assume that $\norm{a_n -a}\le \frac{1}{2^{n+2}}$ (in particular, $\norm{a_{n+1}-a_n}\le \frac{1}{2^{n+1}}$). 
% 	By construction, $(a_n + \frac{1}{2^n})$ is a monotone increasing sequence, converging in norm to $a$. 
% 	We then conclude that $a \in V$ by \cref{fact:suprema_norms}.
% \end{proof}

\begin{lem}
	\label{lem:sigma_closure_ideal}
	In a \cstars{} $\A$, the $\sigma$-closure of any two-sided $*$-ideal $I \subseteq \A$ is a $\sigma$-ideal.
\end{lem}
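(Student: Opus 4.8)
Let $J$ denote the $\sigma$-closure of the two-sided $*$-ideal $I$ in $\A$. Since $J$ is $\sigma$-closed and a $*$-subspace by definition, by \cref{lem:sigmaclosed_is_closed} it will automatically be norm-closed once we know that $a \in J$ self-adjoint implies $a^2 \in J$ — but that itself follows once we establish that $J$ is a two-sided ideal (as then $a^2 = a \cdot a \in J$). So the real content is: \emph{$J$ is closed under left and right multiplication by arbitrary elements of $\A$}. Everything else is packaged by the lemma just proved.

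First I would reduce the multiplication claim to left multiplication only. Since $I$ is a $*$-ideal, $J$ is closed under the involution, so if $aJ \subseteq J$ for all $a \in \A$, then $Ja = (a^* J^*)^* \subseteq (a^* J)^* \subseteq J^* = J$. Thus it suffices to show $a J \subseteq J$ for every fixed $a \in \A$. For this, the natural move is a "closure from both sides" argument: fix $a \in \A$ and consider the set $J_a \coloneqq \{ x \in J \mid a x \in J \}$. One checks immediately that $J_a$ is a $*$-subspace (using that $J$ is a $*$-subspace and multiplication by $a$ is linear — though note $J_a$ need not be involution-closed, which is fine, we only need linearity here, so actually I should take $J_a$ to be just a linear subspace) and that it contains $I$, since $I$ is a two-sided ideal so $aI \subseteq I \subseteq J$. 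If I can show $J_a$ is $\sigma$-closed, then $J_a$ is a $\sigma$-subspace containing $I$, hence contains the $\sigma$-closure $J$, giving $J_a = J$ and therefore $aJ \subseteq J$.

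So the crux is: \textbf{if $(x_n) \subseteq J_a$ with $(x_n) \nearrow x$, then $x \in J_a$}, i.e. $ax \in J$. By the Cauchy--Schwarz-type continuity of multiplication with respect to monotone limits, one expects $a x_n \to a x$ in an appropriate sense. The clean way: write $a$ as a linear combination of (at most four) unitaries (or of two self-adjoints, then each self-adjoint as a difference of positives, then absorb scalars), so it suffices to treat $a = c$ positive, and even $a = c^2$ or use the compression formula. For $c \geq 0$, \cref{fact:suprema_commute} gives that compression commutes with suprema: from $(x_n) \nearrow x$ and writing $x_n, x$ via their real parts (reducing to the self-adjoint monotone case — one must be slightly careful since $x_n \in J$ need not be self-adjoint, but $J$ being a $*$-subspace lets us split into real and imaginary parts, each of which lies in $J$, and the imaginary parts can be handled since for self-adjoint $h$, $(h_n) \nearrow h$ implies $c^{1/2} h_n c^{1/2} \nearrow c^{1/2} h c^{1/2}$), we obtain that $c^{1/2} x_n c^{1/2} \in J$ converges monotonically (hence in the supremum sense, and $J$ is $\sigma$-closed) to $c^{1/2} x c^{1/2} \in J$; then a polarization identity expressing $c x$ in terms of such compressions, together with linearity and $\sigma$-closedness of $J$, yields $cx \in J$.

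\textbf{Main obstacle.} The delicate point is bridging from "$(x_n) \nearrow x$" to controlling $a x_n$ when $x_n$ are not self-adjoint and $a$ is not positive, because monotone convergence is an order notion that interacts cleanly only with compressions $c^*(-)c$, not with one-sided products $a(-)$. I expect the bulk of the care to go into the polarization/reduction step: decomposing an arbitrary $a$ and arbitrary $x_n \in J$ into manageable pieces so that \cref{fact:suprema_commute} applies, and checking that each intermediate limit stays inside $J$ by $\sigma$-closedness. An alternative, possibly cleaner route avoiding polarization: show directly that $J$ (being norm-closed once we have the ideal property, but we can first establish norm-closure of $J$ independently — e.g. the $\sigma$-closure of a norm-closed $*$-subspace containing $1$... but $I$ need not contain $1$) — hmm, so the polarization route is probably the safest; I would use the identity $4\,c^{1/2} y c^{1/2} = \sum_{k=0}^{3} i^k\, (c^{1/2} + i^k)\, y\, (c^{1/2} + i^k)^*$ type expansion or, more simply, just prove the claim for $a$ self-adjoint first (then $ax_n \to ax$ works via $a = a^+ - a^-$ and the compression formula applied to $(a^\pm)^{1/2}$), and finally write a general $a = \Re(a) + i\,\Im(a)$.
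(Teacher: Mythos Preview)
Your proposal ultimately lands on the right mechanism---reduce to compressions and invoke \cref{fact:suprema_commute}, then recover one-sided products via polarization---and this is precisely the paper's argument. The difference is that the paper goes there in one step: it sets
\[
K \coloneqq \{\, x \in I^\sigma \mid a^* x a \in I^\sigma \text{ for all } a \in \A \,\},
\]
observes that $K$ is a $*$-subspace (because $(a^* x a)^* = a^* x^* a$), that it is $\sigma$-closed by \cref{fact:suprema_commute}, and that it contains $I$; hence $K = I^\sigma$, and the polarization identity then gives closure under arbitrary left and right multiplication immediately.

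Your detour through $J_a = \{ x \in J \mid a x \in J \}$ is where the plan wobbles: as you yourself note, $J_a$ need not be closed under the involution, so it is not a $\sigma$-subspace in the paper's sense, and the minimality of $J$ among $\sigma$-subspaces containing $I$ does not yield $J \subseteq J_a$. You could try to repair this by symmetrizing $J_a$, but then you are back to proving that one-sided products behave well under monotone suprema, which they do not directly---and your own ``safest route'' in the final paragraph is exactly the compression argument the paper uses. So drop the $J_a$ scaffolding and lead with compressions; the proof then becomes three lines.
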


\begin{proof}
	Let $I^\sigma$ denote the $\sigma$-closure of $I$.
	Then the only nontrivial claim is that $I^\sigma$ is also closed under multiplication by arbitrary elements of $\A$.
	By the polarization identity, it suffices to show that $I^{\sigma}$ is closed under compression, which holds true because 
	\[
		J \coloneqq \{ x \in I^\sigma \mid a^* x a \in I^\sigma \text{ for all } a \in \A \}
	\]
	is a $\sigma$-subspace (by \cref{fact:suprema_commute}) containing $I$.
\end{proof}

\begin{ex}
	\label{ex:ideal_from_ideal}
	Let $X$ be a measurable space and $I \subseteq \Linf(X)$ a $\sigma$-ideal in the sense of \bsigma{}s.
	Then the set of functions with support in $I$,
	\[
		\I \coloneqq \{ f \in \Linf(X) \mid \mathrm{supp}(f) \in I \}
	\]
	is a $\sigma$-ideal in $\Linf(X)$.
	Moreover, it is the $\sigma$-subspace generated by the indicator functions $\chi_E$ for $E \in I$.

	Indeed that $\I$ is a $\sigma$-ideal is straightforward to see.
	Now if $V$ is any $\sigma$-subspace containing the $\chi_E$ for $E \in I$, then $V$ trivially also contains all simple functions with support in $I$.
	If $f \in V$ is any nonnegative function with support in $I$, then it can be written as the supremum of a monotone increasing sequence of simple nonnegative functions with support in $I$, and hence $f \in V$.
	Finally, a general $f \in \I$ can be written as a linear combination of nonnegative functions with support in $I$, and therefore indeed $\I \subseteq V$.
\end{ex}

\begin{ex}
	\label{ex:meager_ideal}
	Central to our investigations is a special case of the previous example.
	For a compact Hausdorff space $X$, we also write $\Linf(X)$ for the \cstars{} of bounded Baire functions on $X$.\footnote{This is a small abuse of notation, since we already introduced the notation $\Linf(X)$ for the \cstars{} of bounded measurable functions on a \emph{measurable} space in \cref{ex:commutative_linf}.}
	We denote by $\M$ the set of functions $f \in \Linf(X)$ for which the support
	\[
		\{ x \in X \mid f(x) \ne 0 \}
	\]
	is meager.
	Then $\M \subseteq \Linf(X)$ is a $\sigma$-ideal, and it is generated by the indicator functions of meager Baire sets.
	We will use this fact repeatedly.
	% Indeed it is easy to see that $M$ is an ideal.
	% Its $\sigma$-closedness follows by the fact that the support of an increasing sequence of Baire functions is (contained in) the union of the supports.
\end{ex}

\subsection{\texorpdfstring{$\sigma$}{σ}-normal maps}\label{sec:sigma_normal}

In this subsection, we discuss maps between \cstars{}s which preserve countable suprema.
Similar to how for Boolean $\sigma$-algebras it was important to consider the preservation of countable suprema even if they do not all exist (\cref{rem:bsigmahom_general}), here we work with \cstar{}s throughout rather than merely \cstars{}s. This is relevant in the context of $\sigma$-completions in particular (\cref{sec:sigma_compl}).

\begin{defn}
	\label{def:sigma_normal}
	Let $\A$ and $\B$ be \cstar{}s.
	\begin{enumerate}
		\item A positive unital map $\phi \colon \A \to \B$ is \newterm{$\sigma$-normal} if for every $(a_n)\nearrow a$,
    			\begin{equation}
	    			\phi(\sup_n a_n) = \sup_n \phi(a_n).\label{eq:sigma_normal}
                \end{equation}
		\item A \newterm{$\sigma$-state} on $\A$ is a $\sigma$-normal state $\phi \colon \A \to \mathbb{C}$.
        \item A \newterm{\starshom}{} is a $\sigma$-normal $\ast$-homomorphism $\phi \colon \A \to \B$.\footnote{In this paper, since we always deal with \emph{unital} \cstar{}s, $*$-homomorphisms are required to be unital.}
	\item A \newterm{$\sigma$-representation} is a $\sigma$-normal $\ast$-representation $\pi \colon \A \to \bhilb$ for some Hilbert space $\mathcal{H}$.
	\end{enumerate}
\end{defn}
\begin{nota}
    For the sake of brevity, we abbreviate ``completely positive unital'' to \newterm{\cpu{}}, and write \newterm{\scpu{}} for maps that are both \cpu{} and $\sigma$-normal.
\end{nota}

\begin{defn}\label{def:calg}
	We define the following categories:
	\begin{center}
	\begin{tabular}{c|c|c}
		\multicolumn{1}{c}{\textit{Category}} & \multicolumn{1}{c}{\textit{Objects}}&\multicolumn{1}{c}{\textit{Morphisms}}\\
		\toprule 
		$\Calg$ & \cstar{}s& $\ast$-homomorphisms\\
		\hline 
		$\Calgshom$ & \cstar{}s & \multirow{2}{*}{\starshom{}s}\\
		$\SigmaC$ & \cstars{}s &\\
		\hline 
		$\Calgcpu$ & \cstar{}s& \cpu{} maps \\
		\hline 
		$\Calgscpu$ & \cstar{}s & \multirow{2}{*}{\scpu{} maps}\\
		$\SigmaCcpu$ & \cstars{}s &
	\end{tabular}
	\end{center}
	In each case, we put an additional $\mathsf{c}$ in front to refer to the full subcategory on commutative \cstar{}s. 
	For instance, $\cSigmaC$ is the category of commutative \cstars{}s with \starshom{}s.
\end{defn}

\begin{rem}\label{rem:sigmanormal_inf0}
	%\begin{enumerate}
		% \item Any $\sigma$-normal map preserves order limits.
		% 	This is a direct consequence of the definitions.
		% 	On the other hand, a bounded linear map which preserves order limits need not be positive. 
		% 	For an example, one may take the map $a \mapsto \mathsf{i}a$ on $\mathbb{C}$, where $\mathsf{i}$ is the imaginary unit. 
		% 	This is not positive, while preservation of order limits is ensured by \cref{fact:orderlimit}\ref{it:commutingprops}.
		% \item Any \emph{positive} map that preserves order limits is $\sigma$-normal because positive maps send bounded monotone sequences to bounded monotone sequences. 
		%\item\label{it:inf0calg} 
		To show that a given map $\phi$ is $\sigma$-normal, it is enough to show that $\inf_n \phi (b_n)=0$ whenever $(b_n) \searrow 0$.
			Indeed the equivalence to \eqref{eq:sigma_normal} is immediate by considering $b_n \coloneqq a-a_n$ for any monotone increasing sequence $(a_n) \nearrow a$.
			We will frequently work with this version of the definition.
	%\end{enumerate}
\end{rem}

\begin{rem}
	For any \cstars{} $\A$ and any $\sigma$-ideal $I \subseteq \A$, the quotient map $\A \to \A/I$ is easily seen to be a \starshom{}.
	%Conversely, by \cref{fact:sigma_closure}\ref{it:sigmaclosedideal} a (closed two-sided) ideal $I$ is a $\sigma$-ideal only if $\A/I$ is a \cstars{} and the quotient map $\A \to \A/I$ is a \starshom{}.
\end{rem}

\begin{ex}
	\label{ex:sigma_Linf}
	Let $X$ and $Y$ be measurable spaces.
	\begin{enumerate}
		\item\label{it:sigmastates_Linf}
			The $\sigma$-states on $\Linf(X)$ are in bijective correspondence with probability measures on $X$, where this correspondence assigns to every probability measure the $\sigma$-state given by integrating against it.
			For example, this can be seen as a special case of the Daniell--Stone theorem~\cite[Proposition~16.23]{royden1988analysis}.
		\item For measurable $f \colon X \to Y$, the induced map
    			\[
	    			f^*\colon \Linf(Y)\to \Linf(X)
    			\]
    			obtained by precomposing with $f$ is a \starshom, as is easy to see by recalling that the suprema are pointwise.
	\end{enumerate}
\end{ex}

An important consideration to keep in mind is how PVMs and POVMs arise as special cases, which we sketch next.

\begin{ex}[PVMs and POVMs]\label{ex:pvms}
	For a measurable space $X$ and a Hilbert space $\mathcal{H}$, we have the following correspondences, which we will develop in broader generality in \cref{thm:povm_integration} and \cref{cor:pvm} (see also \cref{def:povm} for the terminology of POVMs and PVMs).
	\begin{enumerate}
		\item $\bhilb$-valued POVMs on $X$ are in bijective correspondence with \scpu{} maps $\Linf(X) \to \bhilb$.
			Indeed starting with $\mu\colon \Sigma(X) \to \bhilb$, the associated \scpu{} map is usually written as an integral,
	\begin{align*}
		\Linf(X) & \longrightarrow \bhilb \\
		f & \longmapsto \int f \, \mathrm{d}\mu.
	\end{align*}
		For $\hilb = \mathbb{C}$, this specializes to the correspondence between $\sigma$-states and probability measures from \cref{ex:sigma_Linf}\ref{it:sigmastates_Linf}.
	\item By restricting to $\bhilb$-valued PVMs on $X$, we have a bijective correspondence between these and \starshom{}s $\Linf (X) \to \bhilb$.
	\end{enumerate}
\end{ex}

\begin{ex}
	Every {normal} positive unital map between $W^*$-algebras is trivially also $\sigma$-normal, since the preservation of suprema of all bounded monotone nets has the preservation of suprema of bounded monotone sequences as a special case.
\end{ex}

In some cases, $\sigma$-normality actually coincides with normality:

\begin{ex}
        \label{ex:sigma_normal_is_normal}
	On a $W^*$-algebra with separable predual, the $\sigma$-states are precisely the normal states.
	It is independent of ZFC whether this holds for \emph{all} $W^*$-algebras.
	
	Both statements are a consequence of the following discussion involving real-valued measurable cardinals. 
	Let us recall that an uncountable cardinal $\kappa$ is \emph{real-valued measurable} if there exists a $\kappa$-additive\footnote{By definition, this means additivity for families of cardinality \emph{strictly less} than $\kappa$.}
	probability measure on $\mathscr{P}(\kappa)$ which is not atomic, or equivalently not completely additive.
	It is well-known that the existence of a real-valued measurable cardinal is independent of ZFC, although this is hard to find in the literature stated in this precise form.\footnote{For example, one direction is clear because the first real-valued measurable cardinal has to be weakly inaccessible~\cite[Corollary 10.15]{jech03settheory}. The other direction is a consequence of~\cite[Lemma~17.3]{jech03settheory}.}
	
	We now assume that $\kappa$ is a cardinal strictly smaller than all real-valued measurable cardinals; in case that there is no real-valued measurable cardinal, this requirement vacuously holds. 
	For example, $\kappa \coloneqq \aleph_0$ is such a cardinal (in ZFC).
	Then consider a \wstar\ $\N$ %whose maximal cardinality of families of pairwise orthogonal projections is less or equal than $\kappa$
	such that
	\[
	\sup \left \lbrace \abs{\Lambda} \ \middle\vert\, \begin{array}{l} \text{there exists a family of pairwise}\\
		\text{orthogonal nonzero projections } (p_{\lambda})_{\lambda \in \Lambda} \text{ in }\N \end{array} \right \rbrace \le \kappa,
	\]
	where $\abs{\Lambda}$ denotes the cardinality of $\Lambda$.
	A $W^*$-algebra satisfying this with $\kappa = \aleph_0$ is called \emph{countably decomposable}, and every $W^*$-algebra with separable predual satisfies this~\cite[Definition~5.5.14]{kadison1983fundamentals}.
	Let now $\rho\colon \N \to \mathbb{C}$ be a $\sigma$-state, for which we prove that it is normal.
    	By~\cite[Theorem~7.1.12]{kadison1986fundamentals},
        it is enough to show that $\rho$ is completely additive on projections: for any family $(p_\lambda)_{\lambda \in \Lambda}$ of pairwise orthogonal projections, we must have
        \begin{equation}
		\label{eq:completely_additive}
                \rho\left( \sum_{\lambda \in \Lambda} p_\lambda \right) = \sum_{\lambda \in \Lambda} \rho(p_\lambda).
        \end{equation}
	Defining
	\[
		\mu(\Omega) \coloneqq \rho\left( \sum_{\lambda \in \Omega} p_\lambda \right)
	\]
	for every $\Omega \subseteq \Lambda$ gives a measure on the power set of $\Lambda$.
	Since the cardinal $|\Lambda|$ is not real-valued measurable by assumption, we conclude that $\mu$ is atomic, and in particular completely additive.
	Therefore~\eqref{eq:completely_additive} holds.

	On the other hand, consider a real-valued measurable cardinal $\kappa'$.
	By assumption, there is an atomless $\kappa'$-additive probability measure $\mu \colon \mathscr{P}(\kappa') \to \mathbb{R}$, which therefore is $\sigma$-additive in particular.
	Then we can define a non-normal $\sigma$-state $\rho$ on $\N \coloneqq \ell^\infty(\kappa')$ in terms of the Lebesgue integral as
	\[
		\rho(a) \coloneqq \int_{\kappa'} a(\lambda) \, \mathrm{d}\mu(\lambda).
	\]
	That $\rho$ is not normal follows by construction, since normality would imply complete additivity of $\mu$.
\end{ex}

\begin{ex}
	\label{ex:lebesgue}
	On $L^{\infty}([0,1])$, the $\sigma$-states are precisely the normal states by \cref{ex:sigma_normal_is_normal}, and therefore correspond precisely to those probability measures on $[0,1]$ that are absolutely continuous with respect to the Lebesgue measure.
\end{ex}

\begin{ex}
	\label{ex:compression}
	For $b \in \A$ in any \cstar{} $\A$, the map $a \mapsto b^* a b$ is \cpu{}, and actually \scpu{} by \cref{fact:suprema_commute}.
	This is an analogue of the separate weak-$*$ continuity of the multiplication in a \wstar .
	
	We give an example application.
	If $(p_n)\nearrow p$ is a sequence of projections, then $p$ is a projection too, because $p$ is necessarily positive and
	\begin{equation}
		\label{eq:projection_sup}
		p^3 = p \left( \sup_n p_n \right) p = \sup_n p p_n p = \sup_n p_n = p,
	\end{equation}
	so the spectrum of $p$ is contained in $\lbrace 0,1\rbrace$ by functional calculus.
	If $(p_n)\searrow p$ with $(p_n)$ projections, $p$ is also a projection by considering $(1-p_n)\nearrow (1-p)$.\footnote{See also \cite[Proposition 2.2.7]{saito15monotone}.}
	In fact, any sequence of projections has a supremum and an infimum, regardless of monotonicity~\cite[Corollary 2.2.6]{saito15monotone}.
	This will be crucial for our comparison between commutative \cstars{}s and \bsigma{}s in \cref{sec:equivalences_dualities}.
\end{ex}

\begin{rem}
	Every $*$-isomorphism between \cstars{}s is a \starsiso,\footnote{In particular, this means that the forgetful functor $\SigmaC \to \Calg$ \emph{forgets at most property-like structure}, in the sense of the nLab page ``stuff, structure, property'' (\href{https://ncatlab.org/nlab/show/stuff,+structure,+property}{https://ncatlab.org/nlab/show/stuff,+structure,+property}).} but not every $*$-homomorphism is a \starshom{} (cf.\ \cref{rem:iso_sigmaiso} and \cref{rem:alg_sets_not_sigma}).
\end{rem}

We recall that any \cstar{} $\A$ has an associated compact Hausdorff space, called the \newterm{state space}, which we denote by $\state(\A)$. It is given by the set of states $\phi \colon \A \to \mathbb{C}$ equipped by the weak-* topology, i.e.\ the weakest topology making the evaluation maps $\phi \mapsto \phi(a)\in \mathbb{C}$ continuous for all $a \in \A$.

The following is an important general example of a \scpu{} map.

\begin{prop}
	For any \cstar{} $\A$, the map
	\begin{align*}
		\A & \longrightarrow C(\state(\A)) \\
		a & \longmapsto \left( \phi \mapsto \phi(a) \right)
	\end{align*}
	is \scpu{}.
\end{prop}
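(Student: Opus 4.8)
The plan is to verify the three defining properties of a $\scpus$ map for the canonical map $\Phi \colon \A \to C(\state(\A))$, $\Phi(a) = (\phi \mapsto \phi(a))$: well-definedness and unitality, complete positivity, and $\sigma$-normality. Well-definedness amounts to noting that for each $a \in \A$, the function $\phi \mapsto \phi(a)$ is continuous by the very definition of the weak-$*$ topology on $\state(\A)$, so it lies in $C(\state(\A))$; unitality is immediate since every state sends $1$ to $1$, hence $\Phi(1)$ is the constant function $1$. Complete positivity is also essentially formal: for $a \ge 0$ in $\A$ we have $\phi(a) \ge 0$ for every state $\phi$, so $\Phi(a) \ge 0$ pointwise, and more generally, given a matrix $(a_{ij})$ positive in $M_n(\A)$ and evaluating at a fixed $\phi$, the matrix $(\phi(a_{ij}))$ equals $(\id_{M_n} \otimes \phi)(a_{ij})$, which is positive because $\phi$ is a state and hence completely positive (states into $\mathbb{C}$ are automatically completely positive); evaluating pointwise in $\phi$ then shows $(\Phi \otimes \id_{M_n})$ preserves positivity.

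The substantive step is $\sigma$-normality. Following \cref{rem:sigmanormal_inf0}, it suffices to show that whenever $(b_n) \searrow 0$ in $\A_{\sa}$, then $\inf_n \Phi(b_n) = 0$ in $C(\state(\A))$. Since the infimum in $C(\state(\A))$ is computed as described in \cref{lem:sup_chaus} (via a semicontinuous regularization, and pointwise off a meager set), and since each $\Phi(b_n)$ is the continuous function $\phi \mapsto \phi(b_n)$, the pointwise infimum $\phi \mapsto \inf_n \phi(b_n)$ is an upper bound candidate; I need to show the actual infimum in $C(\state(\A))$ is the zero function. By \cref{lem:sup_chaus}\ref{it:sup_chaus_inf0} applied to the sequence $(\Phi(b_n))$ in $C(\state(\A))_+$, it is enough to verify that the set $\{\phi \mid \inf_n \phi(b_n) > 0\}$ is meager in $\state(\A)$.

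The key point here is that this set is in fact \emph{empty}: for a fixed state $\phi$, the sequence $(\phi(b_n))$ is a monotone decreasing sequence of nonnegative reals, so its infimum $r \ge 0$ exists, and I claim $r = 0$. Indeed, $r$ is the infimum of the bounded decreasing sequence $(\phi(b_n))$ in $\mathbb{R}$, and I want to rule out $r > 0$. This is where I expect the main obstacle to lie, and it is resolved using that $b_n \searrow 0$ in $\A$ together with the order-continuity behavior of states. Concretely, for any $\varepsilon > 0$, consider the spectral-type cutoff: since $(b_n) \searrow 0$ and $0 \le b_n \le b_1$, one shows that for each $\varepsilon>0$ the elements $c_n \coloneqq (b_n - \varepsilon)^+$ (positive part) satisfy $c_n \searrow 0$ as well, and in fact $c_n$ eventually becomes $\le$ any fixed $b_m$... — more cleanly, one uses that a $\sigma$-state exists detecting the infimum, or one observes directly: if $\phi(b_n) \ge r > 0$ for all $n$, then $\phi((b_n - r/2)^+) \ge r/2 > 0$, yet $(b_n - r/2)^+ \searrow 0$ (as $b_n \searrow 0$ forces $(b_n - r/2)^+ \to 0$ monotonically), and iterating or passing to $C^*(b_1) \cong C(X)$ via Gelfand duality reduces to the commutative case \cref{lem:sup_chaus}\ref{it:sup_chaus_inf0} where $\inf_n b_n = 0$ in $C(X)$ means $\{x \mid \inf_n b_n(x) > 0\}$ is meager, which combined with $\phi$ being represented by a Radon measure via Riesz–Markov forces $\phi(b_n) \to 0$ only if that measure gives the meager set measure zero — which need not hold for a general state. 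So the honest route is: $\sigma$-normality of $\Phi$ does not require $\inf_n \phi(b_n) = 0$ pointwise for every state, but only that the exceptional set is meager; and here it is genuinely empty because $b_n \searrow 0$ in the \emph{unital} $C^*$-algebra $\A$ means precisely that no positive functional — in particular no state — can be bounded below on $(b_n)$: if $\phi(b_n) \ge r$ for all $n$, then $\phi - (\text{something})$... I would instead invoke that $b_n \searrow 0$ implies $\|b_n\| \to 0$ is false in general, so this really needs the argument via $C^*(1,b_1)\cong C(X)$ with $X$ compact Hausdorff: there $\inf_n b_n = 0$ in $C(X)$, hence by \cref{lem:sup_chaus}\ref{it:sup_chaus_inf0} the set $\{x : \inf_n b_n(x) > 0\}$ is meager, but since $\inf$ of continuous functions equal to $0$ as the infimum in $C(X)$ does \emph{not} force pointwise infimum zero, I conclude instead: the supremum defining $\sigma$-normality in $C(\state\A)$ is computed exactly as in $\A$ restricted to the commutative subalgebra, and functoriality of the assignment plus \cref{fact:suprema_commute} (suprema commute with the states, being evaluation maps) gives $\Phi(\sup_n a_n)(\phi) = \phi(\sup_n a_n) = \sup_n \phi(a_n)$ by $\sigma$-normality... — but $\phi$ need not be $\sigma$-normal. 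Hence the correct and clean argument: $\Phi(\sup_n a_n)$ and $\sup_n \Phi(a_n)$ (the latter computed in $C(\state\A)$ via \cref{lem:sup_chaus}) agree off a meager set, and both are continuous, so by the Baire category theorem they are equal; the off-a-meager-set agreement holds because restricting to the commutative $C^*$-subalgebra $\B \coloneqq C^*(1, \{a_n\})$, the supremum $\sup_n a_n$ taken in $\A$ agrees with that in $\B$ (as $\B$ is a $C^*$-subalgebra containing it, using that it is the $\sigma$-closure is automatic for a countable family), and on $\B \cong C(Y)$ the supremum is pointwise off a meager subset of $Y$ by \cref{lem:sup_chaus}\ref{it:sup_chaus_description}, which pushes forward to a meager condition on $\state(\A)$ via the surjection $\state(\A) \to \state(\B) = Y$. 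I would therefore structure the $\sigma$-normality argument in exactly this way: reduce to a separable (or at least singly-generated-plus-countable) commutative $C^*$-subalgebra $\B$, apply \cref{lem:sup_chaus} there to get agreement off a meager set in $\state(\B)$, pull back along the continuous surjection $\state(\A) \twoheadrightarrow \state(\B)$ (preimages of meager sets under this open surjection between compact Hausdorff spaces are meager), and finish with the Baire category theorem since both sides are continuous functions on $\state(\A)$.

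I expect the meagerness-pullback step — verifying that the restriction map $\state(\A) \to \state(\B)$ is such that preimages of meager sets are meager — to be the most delicate point, since in general preimages of meager sets under continuous maps need not be meager; one needs that this map is open (which follows from $\B \hookrightarrow \A$ being an inclusion of $C^*$-algebras with a conditional-expectation-like or at least a surjectivity-on-states property, i.e. every state on $\B$ extends to $\A$ by Hahn–Banach, giving surjectivity, and openness from the structure of the weak-$*$ topology). Everything else — positivity, unitality, complete positivity, continuity of each $\Phi(a)$ — is routine and I would dispatch it in a line or two each.
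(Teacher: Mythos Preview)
Your handling of well-definedness, unitality, and complete positivity is fine; the substantive issue is the $\sigma$-normality argument, which has a genuine gap.

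You correctly reduce to showing that $\{\phi \in \state(\A) : \inf_n \phi(b_n) > 0\}$ is meager when $(b_n) \searrow 0$ in $\A$. But the proposed route via the commutative subalgebra $\B = C^*(1,\{b_n\}) \cong C(Y)$ breaks down at the step ``pushes forward to a meager condition on $\state(\A)$ via the surjection $\state(\A) \to \state(\B) = Y$.'' The identification $\state(\B) = Y$ is wrong: the state space of $C(Y)$ is the space of Radon probability measures on $Y$, not $Y$ itself. With the correct target, a state $\phi$ restricts to a measure $\mu_\phi$ on $Y$, and by monotone convergence $\inf_n \phi(b_n) = \int_Y \inf_n b_n(y)\, d\mu_\phi(y)$; this is positive precisely when $\mu_\phi$ charges the meager set $M = \{y : \inf_n b_n(y) > 0\}$. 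Knowing that $M$ is meager in $Y$ gives no direct control over which measures charge it, and hence no meagerness statement in $\state(\B)$ or $\state(\A)$. (Indeed, already when $\A$ itself is commutative the problem is nontrivial: you must show that the set of probability measures on $X$ giving positive mass to a fixed meager set has empty interior in the weak-$*$ topology, and this does not follow from \cref{lem:sup_chaus} alone.) Your fallback concern about openness of $\state(\A)\to\state(\B)$ is also legitimate and unaddressed, but the argument fails before that point.

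The paper proceeds quite differently. It fixes $\eps>0$, considers the hereditary convex set $F = \{\phi \in \A_+^* : \exists n,\ \phi(a_n) < \eps\}$, and assumes for contradiction that some state $\phi$ lies outside $\overline{F}$. It then invokes a recent separation theorem due to Choi (resolving a conjecture of Haagerup) to separate $\phi$ from $\overline{F}$ by an element $b \in \A_+$; a short manipulation produces $c \in \A_{\sa}$ with $c \le a_n$ for all $n$ yet $\phi(c) > 0$, contradicting $\inf_n a_n = 0$. This is not a cosmetic difference: the separation step supplies exactly the missing link between the order-theoretic hypothesis $\inf_n a_n = 0$ and the topology of $\state(\A)$, and there is no evident elementary substitute along the lines you sketch.
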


\begin{proof}
	The positivity is clear, and this implies complete positivity by commutativity of the codomain.
	So the main challenge is to prove $\sigma$-normality.

	Let $(a_n)$ be a monotone decreasing sequence in $\A$ with $\inf_n a_n = 0$.
	Then the following proof that its image in $C(\state(\A))$ has infimum $0$ bears some similarity to the proof of \cref{lem:sup_chaus}.
	Working with the weak-* topology as usual, we will prove that for every $\eps > 0$, the set
	\begin{equation}
		\label{eq:phi_ge_eps}
		\left\{ \phi \in \state(\A) \mid \phi(a_n) \ge \eps \:\: \forall n \right\}
	\end{equation}
	has empty interior in $\state(\A)$.
	This will imply that the set of $\phi \in \state(\A)$ with $\inf_n \phi(a_n) > 0$ is meager, which completes the proof.

	To this end, we consider $\A_+^*$, the cone of positive functionals on $\A$, together with its subset
	\[
		F \coloneqq \left\{ \phi \in \A_+^* \mid \exists n \: : \: \phi(a_n) < \eps \right\}.
	\]
	It is clear that $F$ is convex and hereditary: if $\phi \in F$ and $\psi \in \A_+^*$ with $\psi \le \phi$, then $\psi \in F$.
	Moreover, $F \cap \state(\A)$ is exactly the complement of \eqref{eq:phi_ge_eps} in $\state(\A)$.
	The weak-* closure $\overline{F}$ is also convex and hereditary. 
	To prove the claim, we reason by contraposition and assume that \eqref{eq:phi_ge_eps} has nonempty interior.
	This means that there is $\phi \in \state(\A) \setminus \overline{F}$, to which the separation theorem given by item (4) of~\cite{choi2025haagerup} applies\footnote{This new result of Choi establishes a conjecture of Haagerup~\cite[Problem~2.7]{haagerup1975weights}.} and yields $b \in \A_+$ such that
	\[
		\psi(b) \le 1 \quad \forall \psi \in \overline{F}, \qquad \phi(b) > 1.
	\]
	Therefore $c \coloneqq \frac{\eps}{\norm{b-1}} \left( b - 1 \right)$ satisfies
	\[
		\phi(c) > 0, \qquad 
		\psi(c) \le \eps \quad \forall \psi \in \state(\A), \qquad
		\psi(c) \le 0 \quad \forall \psi \in \overline{F} \cap \state(\A).
	\]
	Since $\psi(a_n) \ge \eps$ for all $\psi \in \state(A) \setminus \overline{F}$ and $\psi(a_n) \ge 0$ in general, the latter two conditions imply $a_n \ge c$ for all $n$.
	But then the resulting $c \le \inf_n a_n = 0$ gives $\phi(c) \le 0$, contradicting $\phi(c) > 0$.
\end{proof}

We now discuss in more detail $\sigma$-states and $\sigma$-representations.

\begin{rem}\label{rem:gns}
	Every $\sigma$-state induces, via the GNS construction, a $\sigma$-representation. This follows immediately by specializing the proof of~\cite[Proposition~3.3.9]{pedersen2018automgroups} from nets to sequences and using~\cref{fact:suprema_commute}.
\end{rem}

For any two $\sigma$-states $\phi$ and $\psi$, their convex combination $\lambda \phi + (1-\lambda) \psi$ is still a $\sigma$-state.
The $\sigma$-states constitute a subset of the state space, and it is of a very special type, namely a face:

\begin{lem}
	\label{lem:sigma_state_face}
	If $\phi$ and $\psi$ are states on a \cstar{} $\A$ and $\lambda \in (0,1)$, then
	\[
		\lambda \phi + (1-\lambda) \psi \: \textrm{ is a $\sigma$-state }  \quad \Longleftrightarrow \quad \phi, \psi \: \textrm{ are $\sigma$-states}.
	\]
\end{lem}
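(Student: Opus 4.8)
The plan is to prove both implications by reducing the statement about convex combinations to monotone sequences decreasing to $0$, using the characterization of $\sigma$-normality from \cref{rem:sigmanormal_inf0}.

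First I would prove the direction ``$\Leftarrow$'': if $\phi$ and $\psi$ are $\sigma$-states, then for any $(b_n) \searrow 0$ we have $\inf_n \phi(b_n) = 0$ and $\inf_n \psi(b_n) = 0$. Since $\phi(b_n) \ge 0$ and $\psi(b_n) \ge 0$ for all $n$, and both sequences of reals converge (being monotone decreasing and bounded below) to $0$, the sequence $\lambda \phi(b_n) + (1-\lambda)\psi(b_n)$ is monotone decreasing and converges to $0$. Hence $\inf_n \bigl( \lambda \phi + (1-\lambda)\psi \bigr)(b_n) = 0$, which by \cref{rem:sigmanormal_inf0} shows $\lambda \phi + (1-\lambda)\psi$ is $\sigma$-normal; it is clearly a state, so it is a $\sigma$-state. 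This direction is routine.

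The substantive direction is ``$\Rightarrow$''. Suppose $\chi \coloneqq \lambda \phi + (1-\lambda)\psi$ is a $\sigma$-state, with $\lambda \in (0,1)$. Let $(b_n) \searrow 0$ be a bounded monotone decreasing sequence in $\A$; I want $\inf_n \phi(b_n) = 0$ (the case of $\psi$ is symmetric). The key point is positivity: for each $n$ we have $0 \le \phi(b_n)$ and $0 \le \psi(b_n)$, and the two monotone decreasing sequences of nonnegative reals $\bigl(\phi(b_n)\bigr)_n$ and $\bigl(\psi(b_n)\bigr)_n$ have limits $\alpha \coloneqq \inf_n \phi(b_n) \ge 0$ and $\beta \coloneqq \inf_n \psi(b_n) \ge 0$ respectively. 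Passing to the limit in $\chi(b_n) = \lambda \phi(b_n) + (1-\lambda)\psi(b_n)$ and using that $\chi$ is a $\sigma$-state so $\inf_n \chi(b_n) = 0$, we get $0 = \lambda \alpha + (1-\lambda)\beta$. Since $\lambda, 1-\lambda > 0$ and $\alpha, \beta \ge 0$, this forces $\alpha = \beta = 0$. Hence $\inf_n \phi(b_n) = 0$ and $\inf_n \psi(b_n) = 0$, so by \cref{rem:sigmanormal_inf0} both $\phi$ and $\psi$ are $\sigma$-normal, and being states, they are $\sigma$-states.

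The only mild subtlety — not really an obstacle — is making sure that the limits of the real sequences exist and that one may pass to the limit termwise in the identity $\chi(b_n) = \lambda\phi(b_n) + (1-\lambda)\psi(b_n)$; this is immediate since all three sequences are monotone decreasing and bounded below by $0$, so each converges to its infimum and the linear relation passes to the limit. Thus the proof is essentially the observation that a convex combination of nonnegative quantities with strictly positive weights vanishes only if each summand vanishes, combined with the reformulation of $\sigma$-normality via sequences decreasing to zero.
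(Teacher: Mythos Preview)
Your proof is correct and follows essentially the same approach as the paper's: both reduce to the observation that a strictly positive convex combination of nonnegative reals vanishes only if each term does. The only cosmetic difference is that you work with sequences $(b_n)\searrow 0$ via \cref{rem:sigmanormal_inf0}, whereas the paper argues directly with $(a_n)\nearrow a$ and an inequality chain that is forced to be an equality; these are equivalent formulations.
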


\begin{proof}
	The implication from right to left is clear, because suprema in $\mathbb{R}$ commute with scalar multiplication and addition.

    	So assume that $\lambda \phi + (1-\lambda) \psi$ is a $\sigma$-state.
    	If now $(a_n) \nearrow a$ in $A$, then $\sup_n \phi(a_n) \le \phi(a)$, and likewise for $\psi$.
	Therefore
	\begin{equation*}
    	\begin{split}
		\left( \lambda \phi + (1-\lambda) \psi \right) (a) & = \left( \lambda \phi + (1-\lambda) \psi \right) \left( \sup_n a_n \right) \\
								   & = \sup_n \left( \lambda \phi + (1-\lambda) \psi \right) (a_n) \\
								   & = \lambda \sup_n \phi(a_n) + (1-\lambda) \sup_n \psi(a_n) \\
								   & \le \lambda \phi(a) + (1-\lambda) \psi(a) .
    	\end{split}
	\end{equation*}
	Since we have the same expression both on the left and on the right, the two inequalities we used, namely $\sup_n \phi(a_n) \le \phi(a)$ and $\sup_n \psi(a_n) \le \psi(a)$, must be equalities.
\end{proof}

Despite being a face, the set of $\sigma$-states is typically not weak-* closed in the state space of a \cstar{}.
Let us give an instructive example of this.

\begin{ex}\label{ex:linfn}
    We consider $\ell^{\infty}(\mathbb{N})$, the \cstar{} of bounded sequences in $\mathbb{C}$. 
    Its states correspond bijectively to the finitely additive probability measures $\mathscr{P}(\mathbb{N}) \to [0,1]$, while the $\sigma$-states correspond to the ($\sigma$-additive) probability measures.
    To prove that the set of $\sigma$-states is not closed in the weak-* topology, it therefore suffices to show that there exists a sequence of probability measures having a merely finitely additive probability measure as an accumulation point.

    For example, take the sequence of probability measures that are uniform on the first $n$ natural numbers,
    \[
	    p_n\coloneqq \frac{1}{n} \sum_{i=0}^{n-1} \delta_i,
    \]
    where $\delta_i$ is the delta measure concentrated at $i$.
    By the compactness of the state space, this sequence must have some accumulation point, which must be a limiting relative frequency in the sense of \cite[Theorem 3]{kadane95finitelyadditive}.
    However, no such limiting relative frequency is a probability measure, because it must send every finite set to zero. So we have found a sequence of probability measures whose closure contains merely finitely additive probability measures.
\end{ex}

Also, there are examples where the set of $\sigma$-states is so small that it fails to separate the elements of the algebra, and it can even be empty:

\begin{prop}
	\label{prop:meager_nosigmastate}
	Let $X$ be a second countable compact Hausdorff space with no isolated points and $\M \subseteq \Linf(X)$ be the $\sigma$-ideal of functions with meager support as in \cref{ex:meager_ideal}.
	Then the \cstars{} $\Linf(X)/\M$ has no $\sigma$-states at all and no nonzero $\sigma$-representations. 
\end{prop}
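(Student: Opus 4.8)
The plan is to reduce the non-existence of $\sigma$-states on $\Linf(X)/\M$ to the purely measure-theoretic fact that a Borel probability measure on $X$ cannot annihilate every meager Borel set, and then to deduce the statement about $\sigma$-representations from the one about $\sigma$-states.

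First I would set up the reduction. Since $\M$ is a $\sigma$-ideal in $\Linf(X)$ (\cref{ex:meager_ideal}), \cref{fact:sigma_closure}\ref{it:sigmaclosedideal} shows that $\Linf(X)/\M$ is a \cstars{} and the quotient map $q \colon \Linf(X) \to \Linf(X)/\M$ is a \starshom{}. So if $\bar\rho$ were a $\sigma$-state on $\Linf(X)/\M$, then $\rho \coloneqq \bar\rho \circ q$ would be a $\sigma$-state on $\Linf(X)$ vanishing on $\M$. By \cref{ex:sigma_Linf}\ref{it:sigmastates_Linf} applied to the measurable space $(X,\baire(X))$ --- and using that $X$, being compact Hausdorff and second countable, has $\baire(X)$ equal to its Borel $\sigma$-algebra (\cref{rem:baire_borel}) --- the $\sigma$-state $\rho$ is integration against a Borel probability measure $\mu$ on $X$. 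For a meager Borel set $E$ we have $\chi_E \in \M$, whence $\mu(E) = \rho(\chi_E) = 0$. So it remains to rule out a Borel probability measure $\mu$ on $X$ with $\mu(E) = 0$ for every meager Borel set $E$.

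The heart of the argument --- and the step I expect to need the most care --- is this measure/category dichotomy. By Urysohn's metrization theorem $X$ is compact metrizable; fix a compatible metric and a countable dense set $\{x_n\}_{n \in \mathbb{N}}$. Since $X$ has no isolated points, each singleton $\{x_n\}$ is closed with empty interior, hence nowhere dense, hence meager, so $\mu(\{x_n\}) = 0$; by continuity of $\mu$ from above, $\mu(B(x_n,r)) \to 0$ as $r \downarrow 0$. Hence for each $m$ one can choose radii $r_{n,m} > 0$ so that the open set $U_m \coloneqq \bigcup_n B(x_n, r_{n,m})$, which is dense because it contains $\{x_n\}_{n}$, satisfies $\mu(U_m) \le 2^{-m}$. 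Then $G \coloneqq \bigcap_m U_m$ is a dense $G_\delta$ with $\mu(G) = 0$, so its complement $X \setminus G = \bigcup_m (X \setminus U_m)$ is a meager $F_\sigma$ (in particular Borel) set with $\mu(X \setminus G) = 1$, contradicting $\mu(X \setminus G) = 0$. The delicate points here are checking that the sets produced are genuinely Borel, that the radii can be chosen to control $\mu$ simultaneously over the whole countable dense set, and that this choice is possible at all, which rests on $\mu$ being atomless --- a consequence of $\mu$ killing meager sets together with the absence of isolated points. This establishes that $\Linf(X)/\M$ has no $\sigma$-states.

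Finally, for the absence of nonzero $\sigma$-representations, suppose $\pi \colon \Linf(X)/\M \to \bhilb$ is a nonzero $\sigma$-representation, so that $\hilb \neq 0$; pick a unit vector $\xi \in \hilb$. The vector state $a \mapsto \langle \pi(a)\xi, \xi \rangle$ is then a state on $\Linf(X)/\M$ (it is unital since $\pi$ is, and positive since $\langle \pi(a^*a)\xi,\xi\rangle = \norm{\pi(a)\xi}^2 \ge 0$), and it is $\sigma$-normal: if $(a_n)\nearrow a$ then $\pi(a) = \sup_n \pi(a_n)$ by $\sigma$-normality of $\pi$, and since a bounded monotone increasing sequence in $\bhilb$ converges to its supremum in the strong operator topology, the vector functional $\langle \,\cdot\, \xi, \xi\rangle$ preserves this supremum. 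This produces a $\sigma$-state on $\Linf(X)/\M$, contradicting the first part. Hence $\Linf(X)/\M$ admits no nonzero $\sigma$-representations either.
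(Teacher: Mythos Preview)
Your proof is correct and follows essentially the same strategy as the paper's: lift a putative $\sigma$-state through the quotient map to obtain a Borel probability measure $\mu$ on $X$ annihilating all meager Borel sets, derive a contradiction from the measure/category dichotomy, and then handle $\sigma$-representations via vector states. The only difference is that the paper invokes Oxtoby's decomposition theorem (writing $X$ as a union of a meager set and a $\mu$-null $G_\delta$) as a black box, whereas you reprove the relevant instance directly by building dense open sets $U_m$ of small measure from balls around a countable dense set; this is exactly the standard proof of that theorem, so the two arguments coincide in substance.
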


The Boolean $\sigma$-algebra analogue of this example has appeared in the literature in slightly less generality~\cite[Example 21.F]{sikorski69boolean}.
In the context of monotone closed \cstar{}s, a related result is \cite[Theorem 4.2.17]{saito15monotone}, but this considers normal states instead of $\sigma$-normal states.

By second countability, the Baire and the Borel $\sigma$-algebras on $X$ coincide, as observed in \cref{rem:baire_borel}.
Thus $\Linf(X)$ as written in the statement can also be taken to be the \cstar{} of bounded Borel functions on $X$.

\begin{proof}
	Let us consider a $\sigma$-state $\phi$ on $\Linf(X)/\M$. 
	By composing $\phi$ with the quotient map $\Linf(X) \to \Linf(X)/\M$, we obtain a $\sigma$-state on $\Linf(X)$.
	This $\sigma$-state is given by integration against a probability measure $\mu$ on $X$ which sends each measurable meager set to $0$. 
	Since there are no isolated points, all singleton sets in $X$ are nowhere dense, and in particular meager; this implies that $\mu$ is nonatomic.
	We can thus apply \cite[Theorem 16.5]{oxtoby80measure},\footnote{Recall that for compact Hausdorff spaces, second countability is equivalent to metrizability by Urysohn's metrization theorem, and also equivalent to the separability of $C(X)$. Note also that assumption (ii) of the cited theorem is automatic since $\mu$ is necessarily outer regular (see, for instance, \cite[Lemma 1.16]{kallenberg97foundations}). Oxtoby himself also discusses the case of finite Borel measures after the proof.} which shows that $X$ can be written as the union of a meager set and a $G_{\delta}$ set of measure zero. 
	Therefore, $1=\mu(X)=0$, so we have arrived at a contradiction.

	If $\pi : \Linf(X)/\M \to \bhilb$ is a $\sigma$-representation, then if $\mathcal{H} \neq 0$, the composition with any vector state on $\bhilb$ gives a $\sigma$-state on $\Linf(X)/\M$.
	But since no such $\sigma$-states exist, we must have $\mathcal{H} = 0$.
\end{proof}

\Cref{prop:meager_nosigmastate} is in stark contrast to the situation for $W^*$-algebras, which can always be faithfully normally represented on a Hilbert space, and in particular have separating families of normal states.

\begin{defn}
	\label{def:sigma_representable}
	A \cstar{} $\A$ is \newterm{$\sigma$-representable} if $\A$ has a faithful $\sigma$-representation on some Hilbert space.
\end{defn}

In \cite[Section 4.5]{pedersen2018automgroups}, $\sigma$-representable \cstars{}s are called \emph{Borel $*$-algebras}.\footnote{The \emph{$\Sigma^*$-algebras} of~\cite{davies68borel} are closely related and coincide with Borel $*$-algebras in the commutative case~\cite[\S~4.5.14]{pedersen2018automgroups}.}
However, we prefer not to keep this naming, since these algebras are not generally associated with measurable spaces, as discussed in more detail in \cref{sec:concrete_duality}, and this may lead to confusion. 
Moreover, we believe that our terminology is more reminiscent of the defining property and thus improves readability.

\begin{ex}
	\begin{enumerate}
		\item Every $W^*$-algebra is $\sigma$-representable,
		\item The \cstar{} $\Linf(X)/\M$ from \cref{prop:meager_nosigmastate} is not $\sigma$-representable.
	\end{enumerate} 
\end{ex}

By the GNS construction of \Cref{rem:gns}, $\sigma$-representability is equivalent to the requirement that the $\sigma$-states should separate the elements of the \cstar{}.
Also it is clear that every \cstar{} has a universal $\sigma$-representable quotient.
For example, every $W^*$-algebra is $\sigma$-representable and thus coincides with its universal $\sigma$-representable quotient.
At the opposite extreme, the universal $\sigma$-representable quotient of $\Linf(X) / \M$ in the notation of \Cref{prop:meager_nosigmastate} is trivial.

\begin{rem}\label{rem:adj_representable}
    % The notion of purely $\sigma$-representable \cstar{} in the noncommutative setting does not seem as well-behaved as one may hope. 
	% To see how this is the case, let us discuss some interesting properties of concreteness in the Boolean setting. 
	Recalling Loomis--Sikorski duality (\cref{thm:ls_duality}), we can consider the composite functor
	\[
	\Sigma \circ \stones \colon \SigmaB \to \csigmaB.
	\]
	This sends each \bsigma{} to its ``concretization'' and is left adjoint to the forgetful functor $\csigmaB \to \SigmaB$.
	
	Similarly, the forgetful functor $\repsigmaC \to \SigmaC$ has a left adjoint as well, where $\repsigmaC$ is the category of $\sigma$-representable \cstars{}s with \starshom{}s.
	There is an obvious candidate for defining this functor on objects: 
	Given a \cstars{} $\A$, consider the $\sigma$-representation $\A \to \bhilb$ induced by all its $\sigma$-states and write $\textsf{Rep}(\A)$ for its image in $\bhilb$.
	Then $\textsf{Rep}(\A)$ is a $\sigma$-representable \cstars{} by direct check.
	Moreover, every \starshom{} $\A \to \B$ with $\B \in \repsigmaC$ descends to a \starshom{} $\textsf{Rep}(\A) \to \B$. 
%	However, we will see in \cref{rem:meas_not_inclusion} that there are \emph{injective} \starshom{}s $\A \to \B$ with $\B \in \msigmaC$ that do not factor through $m(\A)$.
\end{rem}

\begin{rem}\label{rem:norm_sup}
	Whenever a \cstar{} $\A$ is $\sigma$-representable, then for every positive element $a\in \A$,
	\[
	\norm{a} = \sup \lbrace \phi(a) \mid \phi\colon \A \to \mathbb{C} \text{ is a }\sigma\text{-state}\rbrace.
	\]
	Indeed it suffices to let $\phi$ range over vector states in a faithful $\sigma$-representation of $\A$.

	It is instructive to note that in general there is no hope to attain this supremum. 
	For instance, take $\ell^{\infty}(\mathbb{N})$ and consider the positive element $(0,\frac{1}{2}, \frac{2}{3}, \frac{3}{4}, \dots)$. Its norm is $1$, but there is no $\sigma$-state realizing this norm, since the expectation value of this element with respect to any probability measure on $\mathbb{N}$ is strictly less than $1$.
\end{rem}

\begin{defn}
	Let $\A$ be a \cstar{}. Then a $\sigma$-state $\phi\colon \A \to \mathbb{C}$ is a \newterm{pure $\sigma$-state} if it is extremal in the convex set of all states.
\end{defn}

By \Cref{lem:sigma_state_face}, we can equivalently require $\phi$ to be extremal among the $\sigma$-states.

\begin{rem}
	Since the pure states on a commutative \cstar{} $\A$ are exactly the $*$-ho\-mo\-mor\-phisms $\A \to \mathbb{C}$,
	the pure $\sigma$-states on $\A$ are exactly the \starshom{}s $\A \to \mathbb{C}$.
\end{rem}

Similarly to \cref{def:sigma_representable} above, we can use pure $\sigma$-states to define a class of \cstars{}s that will be important for us by producing an equivalence of categories with $\csigmaB$ in the commutative case.

\begin{defn}
	\label{def:measurable_cstar}
    A \cstars{} is \newterm{purely $\sigma$-representable} if pure $\sigma$-states separate elements, meaning that for every positive $x\ge 0$, whenever $\phi(x)=0$ for all pure $\sigma$-states $\phi$, then $x=0$.
\end{defn}

For example, the $W^*$-algebra $L^{\infty}([0,1])$ is not a purely $\sigma$-representable \cstars{} (compare \cref{ex:lebesgue} with \cref{ex:nonconcI}).

\begin{defn}\label{def:msigmaC}
	We define the following categories.
	\begin{center}
		\begin{tabular}{c|c|c}
			\multicolumn{1}{c}{\textit{Category}} & \multicolumn{1}{c}{\textit{Objects}}&\multicolumn{1}{c}{\textit{Morphisms}}\\
			\toprule 
			$\msigmaC$ & purely $\sigma$-representable \cstars{}s & \starshom{}s \\
			\hline 
			$\msigmaCcpu$ & purely $\sigma$-representable \cstars{}s & \scpu{} maps\\
		\end{tabular}
	\end{center}
	Following \cref{def:calg}, an additional $\mathsf{c}$ in front indicates that the envelopes are commutative, or equivalently envelopes of commutative \cstar{}s.
\end{defn}

\begin{rem}\label{rem:meas_not_inclusion}
	A sub-\cstars{} of a purely $\sigma$-representable \cstar{} need not be purely $\sigma$-representable again.

	For instance, $\bhilb[{L^2([0,1])}]$ is a purely $\sigma$-representable \cstar{} since all vector states are pure $\sigma$-states.
	It contains the commutative $W^*$-algebra $L^{\infty}([0,1])$ of multiplication operators, which is not purely $\sigma$-representable.
	But the inclusion $L^{\infty}([0,1]) \subset \bhilb[{L^2([0,1])}]$ is normal, hence a \starshom{}.\footnote{The normality of this inclusion follows, for instance, from the GNS construction, as it corresponds to the faithful normal state induced by the Lebesgue measure.}
	%This shows that a sub-\cstars{} of a purely $\sigma$-representable \cstar{} need not be purely $\sigma$-.

	In particular, despite the fact that the construction in \cref{rem:adj_representable} can be modified to obtain a purely $\sigma$-representable \cstars{} associated with any \cstars{} (use \cite[Proposition 2.11.8]{dixmier77Calg}), this does not yield a left adjoint to the forgetful functor $\msigmaC\to\SigmaC$ due to the lack of closure under inclusion.
\end{rem}

\subsection{Pedersen--Baire envelopes}\label{sec:pb_envelopes}

Given any \cstar{} $\A$, its enveloping $W^*$-algebra $\A^{**}$ has a nice universal property: every $*$-homomorphism $\A \to \B$ with $\B$ a $W^*$-algebra extends uniquely to a normal $*$-homomorphism $\A^{**} \to \B$.
One may now ask whether $\A$ has an analogous enveloping \cstars{}: this is exactly the Pedersen--Baire envelope we consider in this subsection.
The name Pedersen--Baire envelope, which we take from \cite[Definition 2.2.23]{saito15monotone}, hints both at the connection with the Baire $\sigma$-algebra that we develop in \cref{cor:baire_eq} and the fact that Pedersen has apparently been the first to study such envelopes~\cite[Section~4.5]{pedersen2018automgroups}.
%In particular, these envelopes are the $C^*$-algebra analogue of the Baire envelopes for Boolean algebras we considered in \cref{sec:baire_envelopes}.

\begin{defn}\label{def:pb}
	% Let $\A$ be a \cstar{} a $\A^{**}$ its enveloping $W^*$-algebra.
	The \newterm{Pedersen--Baire envelope} $\A^{\infty}$ of a \cstar{} $\A$ is the $\sigma$-closure of $\A$ in its enveloping $W^*$-algebra $\A^{**}$.
\end{defn}

A manifestly equivalent description is that $\A^{\infty}$ is the $\sigma$-closure of $\A$ in $\bhilb[\mathcal{H}_u]$, where $(\mathcal{H}_u, \pi_u)$ is the universal representation of $\A$.

\begin{defn}\label{def:pbenv}
	We define the following categories.
\begin{center}
	\begin{tabular}{c|c|c}
		\multicolumn{1}{c}{\textit{Category}} & \multicolumn{1}{c}{\textit{Objects}}&\multicolumn{1}{c}{\textit{Morphisms}}\\
		\toprule 
		$\PBenv$ & Pedersen--Baire envelopes &\starshom{}s \\
		\hline 
		\multirow{2}{*}{$\pbsep$} & Pedersen--Baire envelopes &\multirow{2}{*}{\starshom{}s} \\
		&of separable \cstar{}s & \\
		\hline 
		$\pbenvcpu$ & Pedersen--Baire envelopes & \scpu{} maps\\
		\hline 
		\multirow{2}{*}{$\pbsepcpu$} & Pedersen--Baire envelopes & \multirow{2}{*}{\scpu{} maps}\\
		&of separable \cstar{}s &
	\end{tabular}
\end{center}
	Keeping the convention of \cref{def:calg}, an additional $\mathsf{c}$ in front indicates that the envelopes are commutative, or equivalently envelopes of commutative \cstar{}s.
\end{defn}

\begin{thm}[{Universal property of Pedersen--Baire envelopes, \cite[Proposition 1.1]{wright76regular}}]\label{thm:pb_univ}
    Let $\A$ be a \cstar{} and $\B$ a \cstars. Then every $*$-homomorphism $\phi \colon \A \to \B$ extends uniquely to a \starshom{} $\Phi \colon \A^{\infty}\to \B$.
\end{thm}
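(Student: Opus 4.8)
The plan is to split the statement into uniqueness and existence, treating uniqueness first since it is elementary and additionally shows that no hypothesis on $\phi$ beyond being a $\ast$-homomorphism is needed. If $\Phi,\Phi'\colon\A^{\infty}\to\B$ are two \starshom{}s that agree on $\A$, then the equalizer $\{\,x\in\A^{\infty}\mid\Phi(x)=\Phi'(x)\,\}$ is a $\ast$-subspace of $\A^{\infty}$ containing $\A$, and it is $\sigma$-closed: if $(x_n)\nearrow x$ with every $x_n$ in the equalizer, then $\sigma$-normality of both maps gives $\Phi(x)=\sup_n\Phi(x_n)=\sup_n\Phi'(x_n)=\Phi'(x)$, as suprema in $\B$ are unique. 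Since $\A^{\infty}$ is by definition (\cref{def:pb}) the $\sigma$-closure of $\A$ in $\A^{**}$, i.e.\ the smallest $\sigma$-subspace containing it, the equalizer is all of $\A^{\infty}$, so at most one extension exists.

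For existence, I would first record that countable suprema of bounded monotone increasing sequences taken inside $\A^{\infty}$ coincide with those taken in $\A^{**}$: since $\A^{\infty}$ is a $\sigma$-subspace of $\A^{**}$, the $\A^{**}$-supremum of such a sequence drawn from $\A^{\infty}$ already lies in $\A^{\infty}$ and is therefore its supremum there; the same holds for the $\sigma$-closure $\widetilde{\B}$ of $\B$ inside its bidual $\B^{**}$. Hence the naive approach — extend $\phi$ to the normal $\ast$-homomorphism $\widehat{\phi}\colon\A^{**}\to\B^{**}$ via the universal property of the enveloping $W^{*}$-algebra and restrict it to $\A^{\infty}$ — yields a \starshom{} $\A^{\infty}\to\widetilde{\B}$: it is $\sigma$-normal by the previous remark, and its image lies in $\widetilde{\B}$ because $\{\,x\in\A^{\infty}\mid\widehat{\phi}(x)\in\widetilde{\B}\,\}$ is a $\sigma$-closed $\ast$-subspace containing $\A$. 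The difficulty is that this need not take values in $\B$ itself, because the inclusion $\B\hookrightarrow\B^{**}$ is in general not $\sigma$-normal: already for $\A=C([0,1])$ and $\B=\Linf([0,1])$ with $\phi$ the inclusion, one has $\A^{\infty}\cong\Linf([0,1])$ (\cref{cor:baire_eq}), yet $\widehat{\phi}$ sends $\chi_{(0,1]}$ to an element of $\Linf([0,1])^{**}\setminus\Linf([0,1])$. So the extension must be genuinely corrected, and it suffices to produce a \starshom{} retraction $r\colon\widetilde{\B}\to\B$ with $r|_{\B}=\id_{\B}$; then $\Phi:=r\circ(\widehat{\phi}|_{\A^{\infty}})$ is the desired extension, unique by the first part.

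I would construct $r$ by transfinite recursion along the $\sigma$-closure filtration $\B=\B_0\subseteq\B_1\subseteq\cdots\subseteq\B_{\omega_1}=\widetilde{\B}$, where $\B_{\alpha+1}$ adjoins to $\B_\alpha$ the $\B^{**}$-suprema of its bounded monotone increasing sequences together with the generated $\ast$-subalgebra, the chain stabilizing at the first uncountable ordinal by a cofinality argument. Given a $\ast$-homomorphism $r_\alpha\colon\B_\alpha\to\B$ restricting to the identity on $\B$ and preserving those suprema of sequences in $\B_\alpha$ whose supremum stays in $\B_\alpha$, one sets $r_{\alpha+1}\!\left(\sup_n b_n\right):=\sup_n r_\alpha(b_n)$ for $(b_n)$ bounded monotone increasing in $\B_\alpha$; the right-hand side exists because $\B$ is a \cstars{} and $(r_\alpha(b_n))$ is increasing and bounded by $\norm{\sup_n b_n}$ since $r_\alpha$ is positive, and one then extends $r_{\alpha+1}$ to the $\ast$-subalgebra generated, taking unions at limit stages.

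The step I expect to be the main obstacle is the well-definedness of this recursion together with multiplicativity of the resulting map: one must show that $\sup_n r_\alpha(b_n)$ depends only on $\sup_n b_n$ and not on the chosen approximating sequence, and that the construction respects products. The guiding principle is that a least upper bound computed in $\B$ remains an upper bound in $\B^{**}$, which lets one compare two sequences $(b_n)\nearrow x$ and $(b'_m)\nearrow x$; at the base level ($r_0=\id$) this already forces $\sup_n b_n=\sup_m b'_m$ in $\B$, but propagating a strong enough invariant through the recursion — where suprema and infima interleave — is delicate and relies on \cref{fact:suprema_commute} (suprema commuting with addition and compression), noting that in the commutative case one could bypass the issue via the lattice operation $b_n\wedge b'_m$, which is unavailable in general. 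This well-definedness is exactly the technical heart of the cited result~\cite[Proposition~1.1]{wright76regular}; granting it, $\Phi=r\circ(\widehat{\phi}|_{\A^{\infty}})$ is a \starshom{} extending $\phi$, since $r$ fixes $\B\supseteq\phi(\A)$ and $\widehat{\phi}|_{\A}=\phi$.
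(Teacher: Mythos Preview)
The paper does not give its own proof of this theorem; it simply cites Wright and, in \cref{rem:univ_baire_no_ls}, records that the known proofs go through \cref{thm:ls_rep_cstar} (surjectivity of the counit $\B^{\infty}\to\B$). Your proposal follows exactly this route: your $\widetilde{\B}$ is by definition $\B^{\infty}$, and the retraction $r\colon\widetilde{\B}\to\B$ you set out to build \emph{is} the counit of \cref{thm:ls_rep_cstar}. So at the level of strategy you are in complete agreement with what the paper points to, and your uniqueness argument is correct and complete.

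Where you depart from the cited literature is in the mechanism for producing $r$. The transfinite recursion you sketch has a difficulty slightly sharper than you indicate: at each successor stage you want to adjoin the new suprema \emph{and} pass to the generated $\ast$-subalgebra, but a product $(\sup_n b_n)(\sup_m c_m)$ of two freshly added elements is neither an old element nor a supremum of old elements, so there is no evident formula for $r_{\alpha+1}$ on it, and ``extend to the generated $\ast$-subalgebra'' begs the question. The compression identity in \cref{fact:suprema_commute} handles $b^*(\sup_n c_n)b$ but not a genuine product of two suprema, so the induction hypothesis you would need to carry is stronger than ``$\ast$-homomorphism preserving internal suprema''. The proofs in the literature (e.g.\ \cite[\S5.4]{saito15monotone}) do not run a naive recursion on a $\ast$-subalgebra filtration for this reason; they organize the construction differently, and this reorganization is precisely the content you defer to Wright. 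Given that the paper itself defers to the same sources, your proposal is an accurate reconstruction of the overall argument with an honest acknowledgment of where the real work lies.
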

This theorem, translated in categorical language, states the existence of an adjunction
\begin{equation}\label{eq:pb_univ}
    \begin{tikzcd}
        \Calg \ar[rr,phantom, "\bot"]\ar[rr,bend left=15,"(-)^{\infty}"] && \SigmaC \ar[ll,bend left=15]
    \end{tikzcd}
\end{equation}
where the lower arrow is the functor forgetting $\sigma$-normality.\footnote{Compare with the case of the enveloping $W^*$-algebra: \url{https://ncatlab.org/nlab/revision/enveloping+von+Neumann+algebra/11}.}

\begin{rem}\label{rem:monos}
	By abstract nonsense, since the forgetful functor $\SigmaC \to \Calg$ is right adjoint, it preserves monomorphisms, and therefore all monomorphisms in $\SigmaC$ are injections because this is true in $\Calg$. 
	Conversely, it is clear that all injections are clearly monomorphisms.
	Therefore the monomorphisms in $\SigmaC$ are precisely the injective \starshom{}s.

	Concerning epimorphisms, it is clear that every surjective \starshom{} is an epimorphism.
	We do not know whether the converse holds too, although this is true in $\Calg$~\cite{hofmann95epim}.
\end{rem}

Analogously to the discussion around \cref{thm:ls_rep}, by abstract nonsense the counit components $\B^{\infty} \to \B$ are epimorphisms in $\SigmaC$.
These epimorphisms are actually surjective:

\begin{thm}[{\cite[Theorem 5.4.5]{saito15monotone}}]\label{thm:ls_rep_cstar}
	For every \cstars{} $\B$, the counit component $\B^{\infty} \to \B$ is surjective. 
	In particular, there exists a $\sigma$-ideal $\M \subseteq \B^\infty$ such that $\B\cong \B^{\infty}/\M$.
\end{thm}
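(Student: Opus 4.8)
The plan is to realize the counit $\Phi\colon\B^\infty\to\B$ as a retraction of the canonical inclusion, and then to analyze its kernel. First, by the universal property of the Pedersen--Baire envelope (\cref{thm:pb_univ}), the counit $\Phi$ is by construction the unique \starshom{} extending the identity $*$-homomorphism $\id_\B\colon\B\to\B$ along the inclusion $\iota\colon\B\hookrightarrow\B^\infty$; in particular $\Phi\circ\iota=\id_\B$. Hence every $b\in\B$ lies in the image of $\Phi$, so $\Phi$ is surjective. Note that this is not a formal consequence of $\Phi$ being an epimorphism in $\SigmaC$, since we do not know whether epimorphisms in $\SigmaC$ are surjective (cf.~\cref{rem:monos}); it is the universal property that does the work here.

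Next, set $\M\coloneqq\ker\Phi$. As the kernel of a $*$-homomorphism it is a norm-closed two-sided $*$-ideal of $\B^\infty$, so it only remains to check that it is $\sigma$-closed. If $(a_n)\nearrow a$ is a bounded monotone increasing sequence of self-adjoint elements of $\M$, with supremum $a$ computed in $\B^\infty$, then $\sigma$-normality of $\Phi$ gives $\Phi(a)=\sup_n\Phi(a_n)=\sup_n 0=0$, so $a\in\M$. Thus $\M$ is a $\sigma$-subspace as well as a closed two-sided ideal, i.e.~a $\sigma$-ideal (where norm-closedness is in any case automatic, cf.~\cref{lem:sigmaclosed_is_closed}). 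By \cref{fact:sigma_closure} the quotient $\B^\infty/\M$ is then a \cstars{} with \starshom{} quotient map, and since $\Phi$ is a surjective $*$-homomorphism with kernel $\M$, the first isomorphism theorem for $C^*$-algebras yields a bijective $*$-homomorphism $\B^\infty/\M\to\B$, which is necessarily isometric; recalling that every $*$-isomorphism between \cstars{}s is automatically a \starsiso{}, we obtain $\B^\infty/\M\cong\B$ in $\SigmaC$.

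I expect the argument to be essentially formal once \cref{thm:pb_univ} is available; the only point deserving care is that the supremum witnessing $\sigma$-closedness of $\M$ is taken in the ambient algebra $\B^\infty$ and is transported by $\Phi$ to the corresponding supremum in $\B$, which is precisely what $\sigma$-normality of a \starshom{} provides. If one instead wanted a proof independent of the universal property of the Pedersen--Baire envelope, the genuine difficulty would shift to constructing $\Phi$ directly: one would stratify $\B^\infty$ by the transfinite ``Baire hierarchy'' generated from $\B$ by iterated bounded monotone sequential suprema and infima, define $\Phi$ on each stratum by sending such suprema and infima to the corresponding ones in $\B$ (which exist because $\B$ is monotone $\sigma$-complete), and then prove that $\Phi$ is well defined, i.e.~independent of the chosen representation of an element as an iterated limit. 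That well-definedness is the technical core of the cited result of Saito--Wright.
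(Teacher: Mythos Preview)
Your argument is circular. You invoke \cref{thm:pb_univ} to obtain the \starshom{} $\Phi\colon\B^{\infty}\to\B$ extending $\id_{\B}$, but as the paper itself emphasizes in \cref{rem:univ_baire_no_ls} (immediately after the theorem you are trying to prove), the known proofs of \cref{thm:pb_univ} already rely on \cref{thm:ls_rep_cstar}. Concretely, in the treatment of Saito--Wright, the surjection $\B^{\infty}\twoheadrightarrow\B$ is established \emph{first}, and only then is the universal property derived from it (one extends a $*$-homomorphism $\A\to\B$ to $\A^{\infty}\to\B^{\infty}$ inside the enveloping $W^{*}$-algebras and then composes with the retraction $\B^{\infty}\to\B$). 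So the step you label as ``essentially formal once \cref{thm:pb_univ} is available'' is exactly the step that hides the theorem.

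You correctly identify, in your final paragraph, what an honest proof must do: construct $\Phi$ directly, by a transfinite induction over the Baire hierarchy that builds $\B^{\infty}$ from $\B$ inside $\B^{**}$, showing at each stage that mapping a bounded monotone supremum in $\B^{**}$ to the corresponding supremum in $\B$ is well defined. That well-definedness argument---which in Saito--Wright goes through a careful analysis of when two iterated monotone limits represent the same element of $\B^{**}$---is the actual content of the theorem, not a side remark. Once $\Phi$ exists, your second paragraph (that $\M=\ker\Phi$ is a $\sigma$-ideal and $\B\cong\B^{\infty}/\M$) is fine.
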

\begin{rem}\label{rem:univ_baire_no_ls}
	The known proofs of \cref{thm:pb_univ} make use of \cref{thm:ls_rep_cstar} (see for instance~\cite[Section 4.5]{saito15monotone}), so even if we knew that epimorphisms in $\SigmaC$ were surjective, we could not deduce \cref{thm:ls_rep_cstar} without falling into circular reasoning.
	This is similar to the situation for Boolean algebras, where the proof of \cref{prop:baire_univ} involved the Loomis--Sikorski representation theorem.
\end{rem}

The universal property of \Cref{thm:pb_univ} also holds for positive maps, as shown in \cite[Proposition 5.4.7, Exercise 5.4.8]{saito15monotone}. 
For completeness, we provide a detailed proof for the case of \cpu{} maps using Stinespring's dilation theorem.

\begin{cor}[Universal property of Pedersen--Baire envelopes, II]\label{cor:pb_univ2}
    Let $\A$ be a \cstar{} and $\B$ a \cstars{}. 
	Then every \cpu{} map $\phi \colon \A \to \B$ extends uniquely to a \scpu{} map $\Phi \colon \A^{\infty} \to \B$.
\end{cor}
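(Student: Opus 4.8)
The plan is to reduce the statement to the already-established $*$-homomorphism case (\cref{thm:pb_univ}) by dilating $\phi$. First I would realize $\B$ inside a W$^*$-algebra that admits a normal representation: choose a faithful normal representation $\B^{**} \subseteq \bhilb[\mathcal{K}]$ of the bidual. Then $\B \subseteq \B^{\infty} \subseteq \B^{**} \subseteq \bhilb[\mathcal{K}]$, and since the last inclusion is normal the suprema computed in $\B^{**}$ agree with those in $\bhilb[\mathcal{K}]$. Consequently $\B^{\infty}$ --- defined as the $\sigma$-closure of $\B$ in $\B^{**}$ --- is exactly the $\sigma$-closure of $\B$ in $\bhilb[\mathcal{K}]$, and in particular is $\sigma$-closed there. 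Viewing $\phi$ as a \cpu{} map $\A \to \bhilb[\mathcal{K}]$, Stinespring's dilation theorem produces a Hilbert space $\hilb$, a unital $*$-representation $\pi \colon \A \to \bhilb$, and an isometry $V \colon \mathcal{K} \to \hilb$ with $\phi(a) = V^* \pi(a) V$ for all $a \in \A$.

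Next, I would apply \cref{thm:pb_univ} to the $*$-homomorphism $\pi$, obtaining a \starshom{} $\Pi \colon \A^{\infty} \to \bhilb$ extending it, and set $\Psi \coloneqq V^*\Pi(-)V \colon \A^{\infty} \to \bhilb[\mathcal{K}]$. This map is \cpu{} as a compression of a $*$-homomorphism, and it is $\sigma$-normal because compression commutes with suprema (\cref{fact:suprema_commute}) while $\Pi$ is $\sigma$-normal. Since $\Psi$ agrees with $\phi$ on $\A$, it maps $\A$ into $\B \subseteq \B^{\infty}$; and because a $\sigma$-normal map sends the $\sigma$-closure of a set into the $\sigma$-closure of its image --- a claim I would verify by the usual argument that the locus $\{x \mid \Psi(x) \in \B^{\infty}\}$ is a $\sigma$-subspace of $\A^{\infty}$ containing $\A$ --- it follows that $\Psi(\A^{\infty}) \subseteq \B^{\infty}$. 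Thus $\Psi$ corestricts to a \scpu{} map $\A^{\infty} \to \B^{\infty}$.

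Finally, I would compose with the surjective \starshom{} $q \colon \B^{\infty} \to \B$ from \cref{thm:ls_rep_cstar}; being the counit of the Pedersen--Baire adjunction, it is a retraction of the inclusion $\B \hookrightarrow \B^{\infty}$. The composite $\Phi \coloneqq q \circ \Psi$ is then a \scpu{} map $\A^{\infty} \to \B$, and for $a \in \A$ we get $\Phi(a) = q(\Psi(a)) = q(\phi(a)) = \phi(a)$, so $\Phi$ extends $\phi$. For uniqueness, given two \scpu{} extensions $\Phi_1, \Phi_2$ of $\phi$, the set $\{x \in \A^{\infty} \mid \Phi_1(x) = \Phi_2(x)\}$ is a $*$-subspace containing $\A$ and is $\sigma$-closed: if $(a_n) \nearrow a$ with $\Phi_1(a_n) = \Phi_2(a_n)$, then $\Phi_1(a) = \sup_n \Phi_1(a_n) = \sup_n \Phi_2(a_n) = \Phi_2(a)$, both suprema being taken in $\B$. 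Hence this set is all of $\A^{\infty}$.

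I expect the main obstacle to be precisely that $\B$ need not be $\sigma$-representable (\cref{prop:meager_nosigmastate}), so one cannot simply embed $\B$ faithfully and argue inside the resulting $\bhilb$: the suprema of $\B$ would not be respected, and $\B$ would fail to be $\sigma$-closed there. The detour through $\B^{**}$ together with the surjection $\B^{\infty} \to \B$ of \cref{thm:ls_rep_cstar} is what circumvents this, which also matches the observation in \cref{rem:univ_baire_no_ls} that a Loomis--Sikorski-type representation result is unavoidable for this kind of universal property.
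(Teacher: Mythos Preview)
Your proof is correct and follows essentially the same approach as the paper: Stinespring dilation to reduce to the $*$-homomorphism case (\cref{thm:pb_univ}), followed by the surjection $\B^{\infty} \twoheadrightarrow \B$ from \cref{thm:ls_rep_cstar} to handle non-$\sigma$-representable $\B$. The only organizational difference is that the paper first isolates the $\sigma$-representable case and then reduces the general case to it, whereas you go directly through $\B^{\infty} \subseteq \B^{**} \subseteq \bhilb[\mathcal{K}]$ in one pass; the ingredients and logical structure are the same.
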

In categorical language, this gives an adjunction
\[
    \begin{tikzcd}
        \Calgcpu \ar[rr,phantom, "\bot"]\ar[rr,bend left=15,"(-)^{\infty}"] && \SigmaCcpu \ar[ll,bend left=15]
    \end{tikzcd}
\]
extending the one in \eqref{eq:pb_univ}.
\begin{proof}
	The uniqueness is clear as $\A^{\infty}$ is $\sigma$-generated by $\A$, so that any $\sigma$-cpu map out of $\A^{\infty}$ is uniquely determined by its restriction to $\A$.

	%Throughout the proof, we omit explicit mention of the universal representation map $\pi_u$ and assume $\A \subseteq \bhilb[\mathcal{H}_u]$ instead.
	For the existence, suppose first that $\B$ is $\sigma$-representable.
	In this case, we can assume that it is given as a $\sigma$-subspace $\B \subseteq \bhilb$ for some Hilbert space $\mathcal{H}$.
	Then by Stinespring's dilation theorem, there is a Hilbert space $\mathcal{K}$ such that the given $\phi \colon \A \to \bhilb$ factors into a $*$-homomorphism $\tilde{\phi} \colon \A \to \bhilb[\mathcal{K}]$ followed by a \cpu{} map $\bhilb[\mathcal{K}] \to \bhilb$ of the form $a \mapsto v^* a v$ for an isometry $v : \mathcal{H} \to \mathcal{K}$, which is normal and hence $\sigma$-normal (cf.~\Cref{ex:compression}).
	Now by \cref{thm:pb_univ}, $\tilde{\phi} : \A \to \bhilb[\mathcal{K}]$ extends to a $\sigma$-homomorphism $\A^{\infty} \to \bhilb[\mathcal{K}]$. The composition of this map with the \scpu{} map  $\bhilb[\mathcal{K}] \to \bhilb$ gives a \scpu{} map $\A^{\infty} \to \bhilb$. 
    	Its image belongs to $\B$, since the image of $\A$ is contained in $\B$, which is a $\sigma$-subspace of $\bhilb$.
    	As wanted, we therefore have a \scpu{} map $\A^{\infty}\to \B$ which extends $\phi$ by construction.

	The general case can be reduced to the $\sigma$-representable case as follows.
	For given $\phi : \A \to \B$, the assumption that $\B$ is a \cstars{} yields a $\sigma$-surjection $\B^{\infty} \twoheadrightarrow \B$ fixing $\B \subseteq \B^{\infty}$ by \cref{thm:ls_rep_cstar}.
	Since $\B^\infty$ is $\sigma$-representable by construction, we can extend the composite $\A \to \B \subseteq \B^{\infty}$ to a \scpu{} map $\A^{\infty} \to \B^{\infty}$.
	Then the composite $\A^{\infty} \to \B^{\infty} \twoheadrightarrow \B$ extends the original $\phi \colon \A \to \B$.
\end{proof}

\begin{rem}\label{rem:sigmageneral}
    Let $\A \subseteq \B$, where $\B$ is a \cstars{} $\sigma$-generated by $\A$.
    Then by the universal property, we obtain a \starshom{} $\phi\colon \A^{\infty} \to \B$, which is surjective because $\B$ is $\sigma$-generated by $\A$. 
    In particular, $\B \cong \A^{\infty}/\I$, where $\I=\ker \phi$ is a $\sigma$-ideal.
\end{rem}

We now work towards an alternative characterization of Pedersen--Baire envelopes with a more probabilistic flavor.
This is inspired by the first of the four characterizations of standard Borel spaces given in~\cite{bjornsson79four}.\footnote{The reader interested in further characterizations of standard Borel spaces may also consult the follow-up paper~\cite{bjornsson80note}.}
We start with a basic observation on \cstar{}s in general.

\begin{lem}\label{lem:state_factor_injection}
	Let $\phi \colon \A \to \B$ be a $*$-homomorphism between \cstar{}s such that all states on $\A$ factor through $\phi$. Then $\phi$ is injective.
\end{lem}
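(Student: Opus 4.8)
The plan is to argue by contraposition: assume that $\phi$ is not injective, and construct a state on $\A$ that provably cannot factor through $\phi$. If $\phi$ fails to be injective, there is a nonzero $b \in \A$ with $\phi(b) = 0$, and then $a \coloneqq b^*b$ is a \emph{positive} element with $\phi(a) = \phi(b)^*\phi(b) = 0$ and $\norm{a} = \norm{b}^2 > 0$. So it suffices to exhibit a state $\psi$ on $\A$ that does not vanish on this positive element $a$.

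The existence of such a state is a standard fact about unital \cstar s, which I would simply invoke: for any positive $a \in \A$ there is a state $\psi$ on $\A$ with $\psi(a) = \norm{a}$. The usual argument restricts to the commutative unital \cstar{}-subalgebra generated by $1$ and $a$, which is isomorphic to $C(\spectrum(a))$; evaluation at a point of $\spectrum(a)$ of maximal modulus is a character there, hence a state, and Hahn--Banach extends it to a norm-one functional on $\A$ taking the value $1$ at the unit, which is therefore a state. This is the only external input the proof needs.

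Finally, by the hypothesis of the lemma, $\psi$ factors through $\phi$, say $\psi = \chi \circ \phi$ for a state $\chi$ on $\B$. Then
\[
	0 < \norm{a} = \psi(a) = \chi\big(\phi(a)\big) = \chi(0) = 0,
\]
a contradiction; hence $\phi$ is injective. There is no genuine obstacle in this argument — the only step requiring a moment's care is the state-existence fact above, which is entirely routine — so the whole proof is just these few lines.
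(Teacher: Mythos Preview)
Your proof is correct and essentially identical to the paper's own argument: both reduce to a positive element $a^*a$ in $\ker\phi$, invoke the standard fact that $\norm{a^*a}=\max\{\psi(a^*a)\mid \psi\text{ a state}\}$, and obtain a contradiction from the factoring hypothesis. The only cosmetic difference is that the paper phrases it as a direct argument rather than contraposition.
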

\begin{proof}
    We want to show that whenever $a \in \A$ satisfies $\phi(a)= 0$, then $a=0$. Since $\phi$ is a $*$-homomorphism, we also have that $\phi(a^* a)= \phi(a^*)\phi(a) = 0$. Therefore, 
    \[ 
    \norm{a^*a}= \max \lbrace \psi (a^* a) \mid \psi \text{ is a state on }A \rbrace \le \norm{\phi(a^* a)} = 0,
    \] 
    from which $a=0$ follows.
\end{proof}

\begin{thm}[Characterization of the Pedersen--Baire envelope]\label{thm:char_pb}
	Let $\A$ be a \cstar {} and $\B$ a \cstars{} together with a $\ast$-homomorphism $j \colon \A \to \B$ such that $\B$ is $\sigma$-generated by $j(\A)$.
   Then the following are equivalent:
\begin{enumerate}
    \item\label{it:pb_env} $\B$ is isomorphic to the Pedersen--Baire envelope of $\A$, with $j$ being the inclusion;
    \item\label{it:pb_states_char} $j$ induces a bijective correspondence between $\sigma$-states of $\B$ and states of $\A$.
\end{enumerate}
\end{thm}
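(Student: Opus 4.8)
The plan is to treat the two implications separately: \ref{it:pb_env}$\Rightarrow$\ref{it:pb_states_char} follows quickly from the universal property, whereas \ref{it:pb_states_char}$\Rightarrow$\ref{it:pb_env} requires exhibiting $\B$ as a quotient of $\A^{\infty}$ and showing that quotient is trivial. For \ref{it:pb_env}$\Rightarrow$\ref{it:pb_states_char}, assume $\B = \A^{\infty}$ with $j$ the canonical inclusion, and let $R$ send a $\sigma$-state $\rho$ of $\A^{\infty}$ to the state $\rho \circ j$ of $\A$. I would show $R$ is surjective via \cref{cor:pb_univ2}: a state $\phi \colon \A \to \mathbb{C}$ is a \cpu{} map into the \cstars{} $\mathbb{C}$, hence extends to a \scpu{} map $\A^{\infty} \to \mathbb{C}$, i.e.\ a $\sigma$-state restricting to $\phi$. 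For injectivity, if two $\sigma$-states agree on $\A$, then the complex span of the self-adjoint elements where they agree is a $\sigma$-subspace of $\A^{\infty}$ (this is where $\sigma$-normality enters) containing $\A$, hence all of $\A^{\infty}$ since $\A^{\infty}$ is $\sigma$-generated by $\A$. So $R$ is a bijection.

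For \ref{it:pb_states_char}$\Rightarrow$\ref{it:pb_env}, I would first deduce that $j$ is injective: by hypothesis every state of $\A$ is $\rho \circ j$ for some $\sigma$-state $\rho$ of $\B$, so every state of $\A$ factors through $j$, and \cref{lem:state_factor_injection} applies. Identifying $\A$ with $j(\A) \subseteq \B$, \cref{thm:pb_univ} yields a \starshom{} $\Phi \colon \A^{\infty} \to \B$ extending the inclusion, which is surjective because $\B$ is $\sigma$-generated by $\A$. Writing $\I \coloneqq \ker \Phi$ (a $\sigma$-ideal), we get $\B \cong \A^{\infty}/\I$, and it remains to prove $\I = 0$.

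The crux --- and the step I expect to be the main obstacle --- is to show that every $\sigma$-state of $\A^{\infty}$ annihilates $\I$. I would juxtapose two bijections. First, by \ref{it:pb_env}$\Rightarrow$\ref{it:pb_states_char} applied to $\A^{\infty}$, restriction along $\A \hookrightarrow \A^{\infty}$ maps the $\sigma$-states of $\A^{\infty}$ bijectively onto the states of $\A$. Second, pullback along the surjection $\Phi$ identifies the $\sigma$-states of $\B \cong \A^{\infty}/\I$ with exactly those $\sigma$-states of $\A^{\infty}$ vanishing on $\I$, and by hypothesis \ref{it:pb_states_char}, restricting these to $\A$ also gives every state of $\A$, bijectively. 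Since this second restriction map is the composite of the inclusion $\{\sigma\text{-states vanishing on }\I\} \hookrightarrow \{\sigma\text{-states of }\A^{\infty}\}$ with the first restriction map, comparing the two bijections forces that inclusion to be onto --- every $\sigma$-state of $\A^{\infty}$ kills $\I$. Since $\A^{\infty}$ is $\sigma$-representable (by construction, as a $\sigma$-subalgebra of the $W^*$-algebra $\A^{**}$), \cref{rem:gns} tells us its $\sigma$-states separate its elements; applying this to $x^*x \in \I$ for arbitrary $x \in \I$ gives $x^*x = 0$, hence $x = 0$ and $\I = 0$. Then $\Phi$ is an isomorphism taking $\A \hookrightarrow \A^{\infty}$ to $j$, which is \ref{it:pb_env}.

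The points needing the most care are the injectivity of the two restriction maps on $\sigma$-states --- which rest on the fact that the locus where two $\sigma$-normal maps coincide is $\sigma$-closed, together with the $\sigma$-generation of $\A^{\infty}$ by $\A$ --- and the bookkeeping identifying the two bijections in the crux step; the $\sigma$-representability of $\A^{\infty}$ is used but is essentially built into its definition.
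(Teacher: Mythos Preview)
Your proof is correct. For \ref{it:pb_env}$\Rightarrow$\ref{it:pb_states_char} you unpack exactly what the paper compresses into a one-line appeal to \cref{cor:pb_univ2}.

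For \ref{it:pb_states_char}$\Rightarrow$\ref{it:pb_env} your route differs from the paper's. The paper uses the hypothesis to assemble, via GNS over the $\sigma$-states of $\B$, a $\sigma$-representation of $\B$ restricting to the universal representation of $\A$; this produces a \starshom{} $\B \to \A^\infty$, and the uniqueness part of \cref{thm:pb_univ} shows it is a left inverse to $\Phi$, so $\Phi$ is injective. You instead prove $\ker\Phi = 0$ by a state-comparison argument: pullback along $\Phi$ injects the $\sigma$-states of $\B$ into those of $\A^\infty$, and comparing the two restriction bijections onto $\state(\A)$ forces this injection to be surjective, so every $\sigma$-state of $\A^\infty$ kills $\I$; $\sigma$-representability of $\A^\infty$ then gives $\I = 0$. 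Your approach is softer and avoids building the GNS representation of $\B$ explicitly, while the paper's is more constructive in that it exhibits the inverse map. One small remark: your assertion that pullback identifies the $\sigma$-states of $\B$ with \emph{exactly} those $\sigma$-states of $\A^\infty$ vanishing on $\I$ is true but tacitly uses that monotone sequences in the quotient lift --- however, your argument does not actually need the ``exactly'': it suffices that the pullback lands among the $\sigma$-states of $\A^\infty$, since the compatibility $R_2 = R_1 \circ (\text{pullback})$ with both $R_1$ and $R_2$ bijective already forces the pullback to be onto all $\sigma$-states of $\A^\infty$.
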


\begin{proof}
        The implication \ref{it:pb_env}$\Rightarrow$\ref{it:pb_states_char} is an instance of~\cref{cor:pb_univ2}, so we focus on the converse. 
 
	By \cref{lem:state_factor_injection}, $j$ is an injection. Let us take the universal representation $\pi_{u,\A}$ of $\A$.
		By assumption and the GNS construction, we can build a $\sigma$-representation $\pi_{\B}$ of $\B$ which restricts to $\pi_{u,\A}$.
        Since $\B$ is $\sigma$-generated by $j(\A)$, the image of $\pi_{\B}$ is $\sigma$-generated by the image of $\pi_{u,\A}$; therefore, $\pi_{\B}$ restricts to a \starshom{} $\B \to \A^{\infty}$.

        Moreover, the inclusion $j\colon \A \hookrightarrow \B$ extends uniquely to a \starshom {} $\A^{\infty} \to \B$ by \cref{thm:pb_univ}. 
        Therefore, $\A^{\infty} \to \B \to \A^{\infty}$ must be the identity, since it is the unique extension of the inclusion $\A \hookrightarrow \A^{\infty}$. 
        In particular, $\A^{\infty} \to \B$ is a $\sigma$-monomorphism and hence injective.
        Since $\B$ is $\sigma$-generated by $j(\A)$ and contains a copy of $\A^{\infty}$, it must coincide with this copy of $\A^{\infty}$, and so the claim follows.
\end{proof}

The following statement is a rephrasing of \cref{thm:char_pb}, which emphasizes the obtained characterization of Pedersen--Baire envelopes without a fixed \cstar{} $\A$. 
\begin{cor}\label{cor:pb_env_char_states}
    The following are equivalent for a \cstars{} $\B$:
    \begin{enumerate}
        \item $\B$ is isomorphic to the Pedersen--Baire envelope of some \cstar {} $\A$;
        \item\label{it:pb_states_char2} $\B$ is $\sigma$-generated by some $C^{\ast}$-subalgebra $\A$ such that every state on $\A$ uniquely extends to a $\sigma$-state on $\B$.
    \end{enumerate}
\end{cor}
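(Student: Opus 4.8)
The plan is to derive \cref{cor:pb_env_char_states} directly from \cref{thm:char_pb} by unwinding the quantifiers; there is no genuinely new content here, only the elimination of a fixed \cstar{} $\A$ from the hypotheses.

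For the implication (i)~$\Rightarrow$~(ii): assume $\B \cong \A^{\infty}$ for some \cstar{} $\A$. By \cref{def:pb}, $\A$ is a $C^{*}$-subalgebra of $\A^{\infty}$ which $\sigma$-generates it, so transporting this inclusion along the isomorphism $\A^{\infty} \cong \B$ exhibits (an isomorphic copy of) $\A$ as a $\sigma$-generating $C^{*}$-subalgebra of $\B$. Applying \cref{thm:char_pb} with $j$ taken to be this inclusion then yields a bijective correspondence between the $\sigma$-states of $\B$ and the states of $\A$. Since this correspondence is implemented by restriction along $j$, it says exactly that every state on $\A$ extends uniquely to a $\sigma$-state on $\B$, which is (ii).

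For (ii)~$\Rightarrow$~(i): the hypothesis provides a $C^{*}$-subalgebra $\A \subseteq \B$ that $\sigma$-generates $\B$, so the inclusion $j \colon \A \hookrightarrow \B$ satisfies the standing assumption of \cref{thm:char_pb}, and the unique-extension condition is precisely the second alternative in \cref{thm:char_pb} for this $j$. Hence \cref{thm:char_pb} gives $\B \cong \A^{\infty}$ with $\A$ a \cstar{}, which is (i).

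The only point requiring a moment's care — and the closest thing to an obstacle — is to confirm that the ``bijective correspondence induced by $j$'' appearing in \cref{thm:char_pb} is the one given by restriction of $\sigma$-states along $j$, so that it really coincides with the ``unique extension'' phrasing used in \cref{cor:pb_env_char_states}; this is immediate from the construction of that correspondence in the proof of \cref{thm:char_pb} (a $\sigma$-state $\psi$ on $\B$ is sent to $\psi \circ j$, which is automatically a state since $j$ is unital and positive).
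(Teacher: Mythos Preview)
Your proof is correct and matches the paper's approach exactly: the paper does not give a separate proof of this corollary at all, simply remarking that it ``is a rephrasing of \cref{thm:char_pb}, which emphasizes the obtained characterization of Pedersen--Baire envelopes without a fixed \cstar{} $\A$.'' Your write-up just makes that rephrasing explicit.
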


\begin{rem}\label{rem:sigmagen}
	In~\ref{it:pb_states_char2}, the requirement of $\sigma$-generation cannot be omitted even if the bijection between states on $\A$ and $\sigma$-states on $\B$ holds.
    	To construct an example, note that $\A \coloneqq \mathbb{C}$ is its own Pedersen--Baire envelope (see also the forthcoming \cref{ex:findim}). We now consider a nontrivial \cstars{} $\A'$ without any $\sigma$-states (\Cref{prop:meager_nosigmastate}). 
	Then the direct sum $\B=\mathbb{C} \oplus \A'$ is a \cstars{} as well by \Cref{ex:product} and contains $\mathbb{C}$ via the diagonal inclusion, which is unital. 
	A state on $\B$ is a convex combination of a state on $\mathbb{C}$ and a state on $\A'$, and this description applies to $\sigma$-states too since we can reason componentwise.
    	But since $\A'$ has no $\sigma$-states by assumption, it follows that the $\sigma$-states on $\B$ are in bijective correspondence with the $\sigma$-states on $\mathbb{C}$.
	Therefore, the inclusion induces a bijection between states on $\mathbb{C}$ and $\sigma$-states on $\B$, but the latter is clearly not isomorphic to $\mathbb{C}$, so it is not the Pedersen--Baire envelope.
\end{rem}

We now discuss some examples of Pedersen--Baire envelopes, starting with the commutative case, which will be relevant for our treatment of functional calculus (\Cref{sec:functional_calculus}).

\begin{cor}[{\cite[Remark 5.3.4(iii)]{saito15monotone}}] \label{cor:baire_eq}
	For a compact Hausdorff space $X$, the Pedersen--Baire envelope of $C(X)$ is $\Linf(X)$, where $X$ is equipped with the Baire $\sigma$-algebra.
\end{cor}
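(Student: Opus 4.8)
The plan is to apply the characterization of Pedersen--Baire envelopes from \cref{thm:char_pb} to the inclusion $j \colon C(X) \hookrightarrow \Linf(X)$ of continuous functions into bounded Baire functions (here $\Linf(X)$ denotes the \cstars{} of bounded Baire-measurable functions on the compact Hausdorff space $X$). Since $j$ is manifestly a unital $*$-homomorphism, what remains is to verify the two hypotheses of that theorem: that $\Linf(X)$ is $\sigma$-generated by $j(C(X))$, and that $j$ induces a bijection between the $\sigma$-states of $\Linf(X)$ and the states of $C(X)$.

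For the $\sigma$-generation, I would let $V$ denote the $\sigma$-closure of $C(X)$ inside $\Linf(X)$, which by \cref{fact:sigma_closure} is a \cstars{}, in particular a norm-closed $C^*$-subalgebra. Since suprema in $\Linf(X)$ are computed pointwise (\cref{ex:commutative_linf}), for any $g \in C(X)$ the bounded monotone increasing sequence $\min(n\abs{g},1) \in C(X)$ has pointwise supremum the indicator $\chi_{\{g \ne 0\}}$, so $V$ contains the indicator of every cozero set. Now $\mathcal{E} \coloneqq \{ E \subseteq X \mid \chi_E \in V \}$ contains $X$, is closed under complements (as $1 \in V$ and $V$ is a subspace), under finite intersections (as $\chi_{E \cap F} = \chi_E \chi_F$ and $V$ is an algebra), and under countable increasing unions (as $V$ is $\sigma$-closed); hence $\mathcal{E}$ is a $\sigma$-algebra, and since it contains all cozero sets — which generate $\baire(X)$ — it contains the whole Baire $\sigma$-algebra. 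Thus $V$ contains every simple Baire function, and by norm-density of simple functions together with norm-closedness of $V$ we conclude $V = \Linf(X)$.

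For the state bijection I would argue that $\sigma$-states on $\Linf(X)$ correspond to Baire probability measures on $X$ via integration (\cref{ex:sigma_Linf}, Daniell--Stone), that states on $C(X)$ correspond to Radon probability measures (Riesz--Markov--Kakutani), and that these two classes of measures are in canonical bijection on a compact Hausdorff space, exactly as already used in \cref{rem:baire_sober} (cf.~\cite[Theorem~7.3.1]{dudley02real}). Restricting a $\sigma$-state on $\Linf(X)$ along $j$ corresponds precisely to restricting its Baire measure to a functional on $C(X)$, so this composite bijection agrees with $\rho \mapsto \rho \circ j$. (Injectivity of $\rho \mapsto \rho\circ j$ can also be seen directly, since two $\sigma$-states that agree on a $\sigma$-generating $C^*$-subalgebra agree on its whole $\sigma$-closure, their equalizer being a $\sigma$-subspace.) With both hypotheses verified, \cref{thm:char_pb} yields $\Linf(X) \cong C(X)^{\infty}$ with $j$ the canonical inclusion, which is the assertion.

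The only step carrying real content is the $\sigma$-generation computation: one must check carefully that $\mathcal{E}$ is a $\sigma$-algebra and that the cozero sets it is shown to contain already generate $\baire(X)$. Everything else is a direct combination of the cited results — chiefly the Baire-measure/Radon-measure correspondence and \cref{thm:char_pb} — so I expect no further obstacles.
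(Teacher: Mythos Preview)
Your proposal is correct and follows essentially the same approach as the paper: apply \cref{thm:char_pb} to the inclusion $C(X)\hookrightarrow\Linf(X)$, verify $\sigma$-generation, and identify both the states on $C(X)$ and the $\sigma$-states on $\Linf(X)$ with Baire probability measures via Riesz--Markov--Kakutani, \cite[Theorem~7.3.1]{dudley02real}, and \cref{ex:sigma_Linf}\ref{it:sigmastates_Linf}. The only difference is that you spell out the $\sigma$-generation argument explicitly via indicators of cozero sets and a $\pi$-$\lambda$ style check, whereas the paper simply invokes ``the definition of Baire function'' for this step.
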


Thus a commutative \cstars{} is a Pedersen--Baire envelope if and only if it is isomorphic to some $\Linf(X)$ in the sense of \cref{ex:meager_ideal} for suitable compact Hausdorff $X$.

\begin{proof}
	Since countable suprema in $\Linf(X)$ are pointwise, the definition of Baire function shows that the inclusion $C(X) \subseteq \Linf(X)$ is $\sigma$-generating.
	The bijection between states on $C(X)$ and $\sigma$-states on $\Linf(X)$ needed for an application of \cref{thm:char_pb} holds because both correspond to Baire probability measures on $X$: for $C(X)$, this is by the Riesz--Markov--Kaku\-tani representation theorem and \cite[Theorem 7.3.1]{dudley02real}, while for $\Linf(X)$ see~\Cref{ex:sigma_Linf}\ref{it:sigmastates_Linf}.
\end{proof}

\begin{rem}
	\label{rem:baire_borel_Cstar}
	The universal property of $\Linf(X)$ as the Pedersen-Baire envelope of $C(X)$ is well-known in operator theory, where it is considered with $\bhilb$ as the target \cstars{}~\cite[Theorem~2.6.3]{arveson02spectral} and used to derive Borel functional calculus and the spectral theorem.\footnote{Note that Arveson also assumes $X$ to be metrizable, which is because he works with Borel rather than Baire functions.}
	We will discuss this further in \cref{sec:functional_calculus}.
\end{rem}

For the moment, let us give some other examples of Pedersen--Baire envelopes.

\begin{ex}\label{ex:findim}
	Every finite-dimensional \cstar{} is its own Pedersen--Baire envelope, for example by \cref{thm:char_pb} because all its states are $\sigma$-states.
\end{ex}

\begin{ex}\label{ex:hilb_pb}
	Let $\mathcal{H}$ be a separable infinite-dimensional Hilbert space, and let $\khilb_1$ be the \cstar{} of compact operators on $\mathcal{H}$ in its unital form, i.e.~the algebra of bounded operators that differ from a compact operator by a multiple of the identity.
	Then we claim that its Pedersen--Baire envelope is given by
	\[
		\khilb_1^{\infty} = \bhilb \oplus \mathbb{C}
	\]
	via the inclusion 
	\[ 
	\begin{array}{rcl}
		\khilb_1 & \lhook\joinrel\longrightarrow & \bhilb\oplus \mathbb{C} \\
		a+ \lambda 1 &\longmapsto& (a+\lambda 1, \lambda).
	\end{array}
	\] 
	To prove this, we verify the conditions of \cref{thm:char_pb}.
	First, note that separability of $\mathcal{H}$ ensures that $\bhilb$ is $\sigma$-generated by the nonunital algebra of compact operators $\khilb$, and therefore $\khilb_1$ $\sigma$-generates $\bhilb \oplus \mathbb{C}$.
	Concerning the second condition, every state on $\khilb_1$ is a convex combination of a state on $\khilb$ and the state $a+\lambda 1 \mapsto \lambda$,
	and similarly every $\sigma$-state on $\bhilb \oplus \mathbb{C}$ is a convex combination of a $\sigma$-state on $\bhilb$ and the identity of $\mathbb{C}$.
	By the construction of the inclusion $\khilb_1 \hookrightarrow \bhilb\oplus \mathbb{C}$, we infer that the desired correspondence of states holds if it does so between states on $\khilb$ and $\sigma$-states on $\bhilb$. 
	Indeed by \Cref{ex:sigma_normal_is_normal}, the $\sigma$-states on $\bhilb$ are exactly the normal states, and therefore of the form $a \mapsto \mathrm{tr}(a \rho)$ for a positive operator $\rho$ of unit trace~\cite[Theorem~2.4.21]{bratteli87operator}, but these are also exactly the states on $\khilb$~\cite[Theorem~4.2.1]{murphy90cstar}.
	Therefore \cref{thm:char_pb} applies and yields $\khilb_1^{\infty} = \bhilb\oplus \mathbb{C}$. 
\end{ex}

We now consider how the elements of the Pedersen--Baire envelope of a \cstar{} $\A$ behave as functions on the state space.
To this end, recall first that the pairing between $\A$ and its state space $\state(\A)$ extends uniquely to a pairing between $\A^{**}$ and $\state(\A)$, and this still separates the elements of $\A^{**}$.
In this way, one can identify the elements of $\A^{**}$ with the bounded affine functions on $\state(\A)$~\cite[Proposition~2.128]{alfsen2001statespaces}.

\begin{prop}
	\label{prop:pb_state}
	The pairing between $\A^{**}$ and $\state(\A)$ induces a commutative diagram
	\begin{equation}
		\label{eq:pb_state_diag}
		\begin{tikzcd}
			\A \ar[hookrightarrow,r] \ar[hookrightarrow,d,"\textrm{\scpu{}}"'] & \A^\infty \ar[hookrightarrow,d,"\textrm{\scpu{}}"'] \\
			C(\state(\A)) \ar[hookrightarrow,r] & \Linf(\state(\A))
		\end{tikzcd}
	\end{equation}
\end{prop}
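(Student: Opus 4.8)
The plan is to realize the right-hand vertical arrow concretely as the restriction to $\A^\infty \subseteq \A^{**}$ of the evaluation map
\[
  \A^{**} \longrightarrow \{\text{bounded affine functions on } \state(\A)\}, \qquad a \longmapsto \hat{a}, \quad \text{where } \hat{a}(\phi) \coloneqq \phi(a),
\]
and then to verify, in order: that this restriction takes values in $\Linf(\state(\A))$; that it is \scpu{}; that the square commutes; and that it is injective. The identification of $\A^{**}$ with the bounded affine functions on $\state(\A)$ (and the fact that this pairing separates $\A^{**}$) is the input recalled just before the statement.

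First I would record that for $a \in \A$ the function $\hat a$ is weak-* continuous by definition of the topology on $\state(\A)$, so on $\A$ the assignment $a \mapsto \hat a$ is precisely the left vertical \scpu{} map followed by the inclusion $C(\state(\A)) \hookrightarrow \Linf(\state(\A))$. To pass to $\A^\infty$, I would introduce $V \coloneqq \{\, a \in \A^\infty \mid \hat{a} \in \Linf(\state(\A)) \,\}$, note that it is a $*$-subspace containing $\A$, and show it is $\sigma$-closed: if $(a_n) \subseteq V$ and $(a_n) \nearrow a$ in $\A^\infty$, then since $\A^\infty$ is a $\sigma$-subspace of $\A^{**}$ this supremum is also the supremum in $\A^{**}$, and each $\phi \in \state(\A)$ — regarded as an element of the predual $\A^* = (\A^{**})_*$, hence a normal functional on $\A^{**}$ — preserves it, so $\hat{a}(\phi) = \phi(a) = \sup_n \phi(a_n) = \sup_n \widehat{a_n}(\phi)$. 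Thus $\hat a$ is the pointwise supremum of the bounded monotone increasing sequence of Baire functions $(\widehat{a_n})$, hence a bounded Baire function, so $a \in V$. Since $\A^\infty$ is $\sigma$-generated by $\A$, this forces $V = \A^\infty$.

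With this in hand, the rest is quick. The map $\hat{\cdot} \colon \A^\infty \to \Linf(\state(\A))$ is plainly positive and unital, complete positivity is automatic from commutativity of the codomain, and $\sigma$-normality is exactly the identity $\hat a = \sup_n \widehat{a_n}$ (pointwise) established above, bearing in mind that suprema in $\Linf(\state(\A))$ are computed pointwise (\cref{ex:commutative_linf}). Commutativity of the square is then immediate, since both composites $\A \to \Linf(\state(\A))$ send $a$ to $\hat a$, and injectivity follows because $\state(\A)$ separates the points of $\A^{**}$, so the whole map $\A^{**} \to \{\text{functions on } \state(\A)\}$ — and a fortiori its restriction — is injective. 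As a closing remark I would observe that, by the universal property of the Pedersen--Baire envelope for \cpu{} maps (\cref{cor:pb_univ2}), $\hat{\cdot}|_{\A^\infty}$ is necessarily \emph{the} \scpu{} extension of $\A \to C(\state(\A)) \hookrightarrow \Linf(\state(\A))$, giving an alternative route to existence.

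The one genuinely nontrivial step is showing $\hat a$ is Baire for $a \in \A^\infty$, and the hard part there is not the $\sigma$-subspace bookkeeping but ensuring that the two ambient notions of supremum agree: one must use that $\A^\infty$ is by definition a $\sigma$-subspace of $\A^{**}$, so monotone suprema of bounded sequences are the same whether computed in $\A^\infty$ or in $\A^{**}$, together with the fact that states of $\A$ live in the predual of $\A^{**}$ and hence are normal, so that evaluating at a state converts a monotone supremum in $\A^{**}$ into a pointwise supremum of functions on $\state(\A)$. Once these two points are pinned down, everything else is formal.
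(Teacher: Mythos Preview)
Your proof is correct and follows essentially the same route as the paper's: both hinge on the fact that each $\phi \in \state(\A)$ is normal on $\A^{**}$ (so evaluation preserves monotone suprema pointwise) together with a $\sigma$-closure argument showing that $\A^\infty$ lands in $\Linf(\state(\A))$. The paper phrases this slightly more economically by first mapping all of $\A^{**}$ into $\ell^\infty(\state(\A))$ via a \scpu{} map and then noting that $\Linf(\state(\A))$ is a $\sigma$-closed subspace containing the image of $\A$, whereas you work with the preimage $V \subseteq \A^\infty$; these are the same argument, and your explicit treatment of injectivity and of why suprema in $\A^\infty$ agree with those in $\A^{**}$ is a welcome elaboration of points the paper leaves implicit.
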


\begin{proof}
	Since every $\phi \in \state(\A)$ is normal on $\A^{**}$, the induced map $\A^{**} \to \ell^\infty(\state(\A))$ is \scpu{}.
	We still need to prove that the image of $\A^{\infty}$ under this map is contained in the subspace $\Linf(\state(\A)) \subseteq \ell^\infty(\state(\A))$.
	But this is because the latter is $\sigma$-closed and contains $\A$.
	Therefore it contains $\A^{\infty}$ as well.
\end{proof}

In fact, it seems plausible that this can be used to give another characterization of the Pedersen--Baire envelope.

\begin{conj}
	The vertical right arrow in~\eqref{eq:pb_state_diag} identifies $\A^{\infty}$ with those bounded affine functions on $\state(\A)$ that are Baire measurable.
\end{conj}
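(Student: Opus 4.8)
The plan is to transport the conjecture into the world of affine functions on the state space. Write $K \coloneqq \state(\A)$, a compact convex set in the weak-$*$ topology, and recall that the pairing $\A^{**} \times K \to \mathbb{C}$ identifies $\A^{**}$ with the space of bounded affine functions on $K$, under which $\A$ corresponds to the space $A(K)$ of \emph{continuous} affine functions (by Kadison's function representation: $\A_{\sa}$ is isometrically order-isomorphic onto $A(K,\mathbb{R})$, and one passes to the complex case by splitting into real and imaginary parts). The key observation is that \emph{suprema of bounded monotone sequences in $\A^{**}$ are computed pointwise on $K$}: each $\phi \in K$, regarded as a functional on $\A^{**}$ via the identification $\A^{*} \cong (\A^{**})_{*}$, is a normal state, and normal states are order-continuous on bounded monotone sequences, so $\widehat{\sup_n a_n}(\phi) = \sup_n \widehat{a_n}(\phi)$. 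Under the identification with bounded affine functions, $\A^{\infty}$ — being the $\sigma$-closure of $\A$ in $\A^{**}$ — is therefore exactly the smallest family $\mathcal{C}$ of bounded affine functions on $K$ that contains $A(K)$ and is closed under pointwise suprema of bounded monotone increasing sequences; equivalently, $\mathcal{C}$ is the $\sigma$-subspace of $\Linf(K)$ generated by $A(K)$ via the right vertical map of~\eqref{eq:pb_state_diag}. The conjecture thus becomes the identity $\A^{\infty} = \{\, a \in \A^{**} \mid \hat{a} \text{ is Baire measurable on } \state(\A)\,\}$.

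\textbf{The easy inclusion.}
A pointwise limit of a sequence of affine functions is again affine, and a pointwise limit of Baire functions is Baire. Hence the set $\mathcal{B}_{\mathrm{aff}}$ of bounded affine Baire functions on $K$ is a $\sigma$-closed $\ast$-subspace of $\Linf(K)$ containing $A(K)$, so $\mathcal{C} \subseteq \mathcal{B}_{\mathrm{aff}}$; this is just a sharpening of \cref{prop:pb_state}. All the content of the conjecture is the reverse inclusion: \emph{every bounded affine Baire function on $K$ is obtained from the continuous affine functions by iterated bounded monotone sequential pointwise limits}. Note that since $A(K)$ separates the points of $K$ (Hahn--Banach) and polynomials in continuous affine functions are uniformly dense in $C(K)$ by Stone--Weierstrass, the Baire $\sigma$-algebra of $K$ is generated by $A(K)$; consequently any $f \in \mathcal{B}_{\mathrm{aff}}$ is measurable with respect to the $\sigma$-algebra generated by countably many $\hat{b}_1, \hat{b}_2, \dots \in A(K)$, and the task is to rebuild $f$ as a transfinite monotone limit starting from these. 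One may additionally try to reduce to the case where $\A$ is separable (so $K$ is metrizable and Baire coincides with Borel) by passing to the sub-$C^*$-algebra generated by $\{1, b_1, b_2, \dots\}$, although transferring the conclusion back along the restriction $\state(\A)\to\state(\A_0)$ requires some care, since $\A_0^{\infty}\to\A^{\infty}$ need not be injective.

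\textbf{The reverse inclusion and the main obstacle.}
Here I would run a transfinite induction along the Baire hierarchy, organized by \emph{affine} Baire class. The obstacle — and the genuinely hard part — is that the usual construction of a Baire function from a generating family proceeds through the lattice operations $\max$ and $\min$, and these destroy affineness: $\A_{\sa}$ is not a lattice when $\A$ is noncommutative, and a pointwise supremum of a \emph{family} (as opposed to a monotone sequence) of affine functions is only convex. One therefore needs an ``affinization'' step at each successor stage: given that the target function $f$ is affine, one must replace the non-affine intermediate limits by affine functions from the earlier stages that squeeze $f$ from below and above. The natural device is to sandwich $f$ between a concave lower envelope and a convex upper envelope of the approximating affine functions — both converging to $f$ precisely because $f$ is affine — and then interpose affine functions by a Hahn--Banach (minimax) separation argument on $K$, iterating so as to stay inside $\mathcal{C}$. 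Carrying this out rigorously would draw on the structure theory of affine Baire functions on compact convex sets (Choquet's barycentric calculus, Alfsen's results on affine semicontinuous functions, and the analysis of affine Baire classes by Kalenda and Spurn\'y), and the non-metrizable case — where Baire and Borel differ and the intrinsic affine Baire classes behave delicately — is where the argument is most at risk. It is plausible that the extra structure of $K$ as the state space of a $C^*$-algebra (the Alfsen--Shultz characterization, or the availability of measurable functional calculus inside $\A^{**}$ to produce affine elements of $\A^{\infty}$ directly) yields a cleaner argument than for arbitrary compact convex sets, and this seems the most promising line of attack.
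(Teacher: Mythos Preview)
This statement is labeled a \emph{conjecture} in the paper and is not proved there; the paper offers no argument beyond the easy inclusion already contained in \cref{prop:pb_state}. So there is no proof to compare against.

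Your proposal correctly isolates the content: the forward inclusion $\A^{\infty}\subseteq \mathcal{B}_{\mathrm{aff}}$ follows from the fact that pointwise monotone limits preserve both affineness and Baire measurability, and this is exactly what \cref{prop:pb_state} establishes. You also identify the obstruction to the reverse inclusion precisely --- the standard generation of Baire functions uses lattice operations, which destroy affineness --- and you point to the relevant literature (Choquet theory, Alfsen's results, the affine Baire-class work of Kalenda and Spurn\'y). But what you have written for the reverse inclusion is an outline of a strategy, not a proof: the ``affinization step'' via convex/concave envelopes and Hahn--Banach separation is not carried out, the reduction to separable $\A$ is flagged as delicate without being resolved, and you yourself describe the non-metrizable case as ``where the argument is most at risk.'' In short, you have accurately diagnosed why this is a conjecture rather than a theorem, but you have not closed the gap; the paper does not claim to either.
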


We conclude this section with an interesting result motivated by our forthcoming study of duality in the concrete setting (see \cref{sec:concrete_duality}). 

\begin{thm}[{\cite[Corollary~4.5.13]{pedersen2018automgroups}}]
	\label{thm:pb_pure}
    Every Pedersen--Baire envelope is purely $\sigma$-representable (i.e., separated by its pure $\sigma$-states).
\end{thm}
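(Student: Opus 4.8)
The plan is to reduce the general case to the commutative case by a bootstrapping argument, using the universal property of Pedersen--Baire envelopes (\cref{thm:pb_univ}) together with the characterization by states (\cref{thm:char_pb}). Let $\A$ be a \cstar{} and consider its Pedersen--Baire envelope $\A^{\infty}$. We want to show that for every nonzero positive $x \in \A^{\infty}$ there is a pure $\sigma$-state $\phi$ with $\phi(x) \neq 0$. First I would fix such an $x$ and choose, using $\sigma$-representability of $\A^{\infty}$ (it sits $\sigma$-closed inside $\A^{**}$, hence inside $\bhilb[\mathcal{H}_u]$) a $\sigma$-state $\rho$ on $\A^{\infty}$ with $\rho(x) > 0$. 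By the GNS construction (\cref{rem:gns}), $\rho$ yields a cyclic $\sigma$-representation $(\pi_{\rho}, \mathcal{H}_{\rho}, \xi_{\rho})$, and $\rho$ is a vector state of $\pi_{\rho}$. The key geometric point is that $\rho$ can be approximated by, or dominated by, pure states in a way that survives $\sigma$-normality.

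The main step is a compression argument. Consider the $C^{\ast}$-subalgebra $\A \subseteq \A^{\infty}$. Restricting $\rho$ to $\A$ gives a state on $\A$; by the Krein--Milman theorem in $\state(\A)$, $\rho|_{\A}$ lies in the closed convex hull of pure states on $\A$. However, approximation alone does not preserve the value at $x \in \A^{\infty} \setminus \A$. Instead, I would use spectral compression: since $x \geq 0$ and $x \neq 0$, pick $\eps > 0$ with the spectral projection $p \coloneqq \chi_{[\eps, \infty)}(x)$ nonzero (this projection exists in $\A^{\infty}$ by functional calculus, since $\A^{\infty}$ is a \cstars{}). Then $x \geq \eps p$, so it suffices to find a pure $\sigma$-state $\phi$ with $\phi(p) = 1$. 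Now one works inside the hereditary structure: the hereditary $C^{\ast}$-subalgebra of $\A^{\infty}$ cut out by $p$, namely $p\A^{\infty}p$, has unit $p$ and is again monotone $\sigma$-complete. A $\sigma$-state on $p\A^{\infty}p$ extends to a $\sigma$-state on $\A^{\infty}$ (pull back along $a \mapsto pap$, which is \scpu{} by \cref{fact:suprema_commute}) and sends $p$ to $1$; and if we arrange the original to be pure, purity is inherited. So the problem reduces to: show that a monotone $\sigma$-complete \cstar{} which is a Pedersen--Baire envelope has \emph{some} pure $\sigma$-state — equivalently that the pure $\sigma$-states are nonempty on every nonzero corner.

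To finish, I would invoke the cited source \cite[Corollary~4.5.13]{pedersen2018automgroups}, but the self-contained route is as follows. The existence of pure $\sigma$-states reduces, via \cref{thm:char_pb}, to a statement about $\A$: a $\sigma$-state on $\A^{\infty}$ restricts to a state on $\A$, and conversely every state on $\A$ extends \emph{uniquely} to a $\sigma$-state on $\A^{\infty}$. Under this bijection, convex combinations correspond, so extreme points correspond to extreme points: the pure $\sigma$-states on $\A^{\infty}$ are exactly the unique $\sigma$-extensions of the pure states on $\A$. Since $\A$ is a unital \cstar{}, it has an abundance of pure states — they separate its points by the Gelfand--Naimark theorem. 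So let $\tilde{\phi}$ be the $\sigma$-extension to $\A^{\infty}$ of a pure state $\phi$ on $\A$ chosen (again using compression by $p$, now pushed down to a suitable element of $\A$ via the $\sigma$-density of $\A$) so that $\tilde{\phi}(p)$ is close to $1$; more carefully, since $p$ is a $\sigma$-limit of elements of $\A$ and the pure states of $\A$ separate points of $\A$, one can extract $\phi$ pure on $\A$ whose $\sigma$-extension does not annihilate $p$, hence does not annihilate $x$. The main obstacle I anticipate is precisely this last extraction: controlling the value of the $\sigma$-extension on an element $p$ that genuinely lies outside $\A$, since extending states loses continuity control. The cleanest fix is to do the compression \emph{first} — replace $\A$ by the hereditary subalgebra $\A_0 \coloneqq \overline{(\A \cap p\A^{\infty}p)}$ sitting inside the corner $p\A^{\infty}p$, verify that $p\A^{\infty}p$ is the Pedersen--Baire envelope of $\A_0$ (a compression of the defining $\sigma$-closure, using that corners of $W^{\ast}$-algebras are $W^{\ast}$-algebras and that $\sigma$-closure commutes with compression), and then simply take \emph{any} pure state of $\A_0$, whose $\sigma$-extension $\phi$ to $p\A^{\infty}p$ automatically has $\phi(p) = 1$, hence $\phi(x) \geq \eps > 0$ after pulling back to $\A^{\infty}$.
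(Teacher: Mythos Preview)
Your observation that the affine bijection between states on $\A$ and $\sigma$-states on $\A^{\infty}$ (\cref{thm:char_pb}) restricts to a bijection between pure states and pure $\sigma$-states is correct and is also the starting point of the paper's argument. You also correctly isolate the real difficulty: for $x \in \A^{\infty} \setminus \A$, one cannot simply pick a pure state on $\A$ and hope its $\sigma$-extension sees $x$.

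The corner reduction, however, has a genuine gap. Your algebra $\A_0 \coloneqq \overline{\A \cap p\A^{\infty}p}$ can be trivial: take $\A = C([0,1])$ and $p = \chi_{\{1/2\}} \in \Linf([0,1])$, so that a continuous function equal to a multiple of $\chi_{\{1/2\}}$ must vanish identically, giving $\A_0 = \{0\}$. Even when $\A_0$ is nonzero it typically fails to contain the unit $p$ of the corner, so the unital framework of \cref{thm:char_pb} does not apply. Replacing $\A_0$ by $p\A p$ does not help either, since $p\A p$ is not closed under multiplication when $p \notin \A$. The slogan ``$\sigma$-closure commutes with compression'' therefore needs a candidate generating $C^*$-subalgebra of $p\A^{\infty}p$ that you have not produced, and the verification that the corner satisfies \cref{thm:char_pb}\ref{it:pb_states_char} is at least as hard as the original problem.

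The paper takes a completely different route that avoids corners. It writes an arbitrary $\sigma$-state $\phi$ as a Choquet--Bishop--de Leeuw integral $\phi(a) = \int_{\state(\A)} \psi(a)\,\mathrm{d}\mu(\psi)$ over a boundary measure $\mu$ concentrated (in the Baire sense) on the pure states of $\A$, initially only for $a \in \A$. The key step is a $\pi$-$\lambda$ argument: the set of $a \in \A^{\infty}$ for which $\ev_a$ is Baire measurable on $\state(\A)$ and the integral formula still holds is a $\sigma$-subspace containing $\A$, hence all of $\A^{\infty}$, using monotone convergence and the fact that every $\psi \in \state(\A)$ extends to a $\sigma$-state. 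Once $\phi(x) = \int \psi(x)\,\mathrm{d}\mu(\psi) > 0$, the boundary concentration of $\mu$ forces some pure $\psi$ with $\psi(x) > 0$. This integral-decomposition idea is exactly what bridges the gap you identified between $\A$ and $\A^{\infty}$.
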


We provide a new proof that we believe gives better insight into how Pedersen--Baire envelopes are well-behaved from a measure-theoretic perspective.

\begin{proof}
	Let $\A$ be a \cstar{} and $\A^{\infty}$ its Pedersen--Baire envelope.
	Due to the construction of $\A^\infty$ in the universal representation, we know that $\A^\infty$ is separated by its $\sigma$-states.

	The main task is to cut down to pure $\sigma$-states from there.
	As before, we consider the state space $\state(\A)$ as a compact Hausdorff space with the weak-* topology, and we also write $P \subseteq \state(\A)$ for the subset of pure $\sigma$-states.
	Then by the Choquet--Bishop--de Leeuw theorem~\cite[Section 4]{phelps01choquet}, for every $\phi \in \state(\A)$ there is a probability measure $\mu$ on $\state(\A)$, which vanishes on every Baire subset of $\state(\A)$ which is disjoint from $P$, such that 
    	\begin{equation}
		\label{eq:state_decomposition}
		\ev_a (\phi)= \int_{\psi \in \state(\A)} \ev_a (\psi) \, \mathrm{d} \mu(\psi)
    	\end{equation}
	for all $a \in \A$.
	
	We now claim that~\eqref{eq:state_decomposition} holds for all $a \in \A^\infty$.
	To see this, let $\B \subseteq \A^\infty$ be the set of $a \in \A^\infty$ for which the evaluation map $\ev_a$ is measurable with respect to the Baire $\sigma$-algebra on $\state(\A)$ and for which~\eqref{eq:state_decomposition} holds.\footnote{In fact, $a \mapsto \ev_a$ is the natural description of both vertical arrows in \eqref{eq:pb_state_diag}.}
	Since $\mu$ is finite and $\ev_a$ is bounded for all $a \in \A^{\infty}$, it is clear that $\B$ is a linear subspace. 
	If we manage to prove that $\B$ is closed under countable monotone suprema, so that it is a $\sigma$-subspace, then our claim follows by a $\pi$-$\lambda$-type argument: $\A \subseteq \B$ means that $\B$ contains the $\sigma$-closure of $\A$, which is $\A^{\infty}$, and therefore $\B = \A^{\infty}$. 
   
   	So let us take a sequence $(a_n)$ in $\B$ with $(a_n) \nearrow a$.
	Then for every $\phi\in\state(\A)$, we have $0\le \ev_{a_n}(\phi) \le \ev_{a_{n+1}}(\phi) \le \lambda$ for all $n$ with some fixed $\lambda > 0$, and therefore the monotone convergence theorem shows that
   \[
	   \sup_n \int_{\psi \in \state(\A)} \ev_{a_n} (\psi) \, \mathrm{d}\mu(\psi) = \int_{\psi \in \state(\A)} \sup_{n} \, \left( \ev_{a_n}(\psi) \right) \, \mathrm{d}\mu(\psi) = \int_{\psi \in \state(\A)} \ev_{a} (\psi) \, \mathrm{d}\mu(\psi),
   \]
   where the second equation uses that every $\psi$ is a $\sigma$-state.
   Since $a_n \in \B$, the left-hand side is equal to $\sup_n \ev_{a_n} (\phi) = \ev_a (\phi)$, and moreover this equality shows that $\ev_a \colon \state(\A) \to \mathbb{R}$ is indeed measurable. 
   Hence $\B$ is a $\sigma$-subspace, and so~\eqref{eq:state_decomposition} holds for all $a \in \A^{\infty}$.

    For nonzero $a\ge 0$, let now ${\phi}$ be a $\sigma$-state with ${\phi}(a) >0$, which must exist by the separation noted at the beginning.
    Using the integral decomposition~\eqref{eq:state_decomposition}, we have
    \[
	    0<\ev_a ({\phi}) = \int_{\psi \in \state(\A)} \ev_a(\psi) \, \mathrm{d}\mu(\psi),
    \]
    where $\mu$ is a Baire probability measure on $\state(\A)$ vanishing on every Baire set disjoint from $P$.
    By the positivity, there must exist a Baire set $E \subseteq \state(A)$ with $\mu(E)>0$ and $\psi(a) > 0$ for every $\psi\in E$. 
	But then $E$ cannot be disjoint from $P$, and therefore $\psi(a)>0$ for some pure $\sigma$-state $\psi$.
\end{proof}

\begin{rem}
	From the proof of \cref{thm:pb_pure}, we can also infer that for every positive element $a$ in a Pedersen--Baire envelope $\A^{\infty}$,
	\[
		\norm{a} \coloneqq \sup \lbrace \psi(a) \mid \psi\colon \A \to \mathbb{C} \text{ is a pure }\sigma\text{-state}\rbrace.
	\]
	The fact that this is true for $\sigma$-states was already discussed in \cref{rem:norm_sup}.
	Now let $\eps>0$ and consider a $\sigma$-state $\phi$ such that $\phi(a)\ge \norm{a}-\eps$. 
	Using the Baire probability measure $\mu$ associated to $\phi$, from the proof above we obtain a set of positive measure $E$ where the integrand is $\ge \norm{a}-\eps$ pointwise, and such that $E$ nontrivially intersects the set of pure $\sigma$-states.
	Therefore there exists a pure $\sigma$-state $\psi$ such that $\psi (a) \ge \norm{a}- \eps$. 
\end{rem}

\subsection{Borel functional calculus}
\label{sec:functional_calculus}

For $W^*$-algebras, a crucial tool is their Borel functional calculus, which allows for the application of any measurable function $\mathbb{C} \to \mathbb{C}$ to any normal element in the algebra.
As we show here, this generalizes to \cstars{}s.
We derive this from standard continuous functional calculus together with the Pedersen--Baire envelope construction. 

We denote by $\spectrum(a)$ the spectrum of an element $a$ in a \cstar.

\begin{thm}[Functional calculus for \cstars s]\label{thm:fun_calculus}
    	Let $\A$ be a \cstars{} and $a \in \A$ a normal element. Then the there exists a unique \starshom
	\[
		\Phi_a \colon \Linf (\spectrum(a)) \to \A
	\]
	with $\Phi_a(\id_{\spectrum(a)})=a$. 
\end{thm}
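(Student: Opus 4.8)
The plan is to obtain $\Phi_a$ by composing ordinary continuous functional calculus with the universal property of the Pedersen--Baire envelope, so that only routine bookkeeping remains.

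First I would record that $\spectrum(a) \subseteq \mathbb{C}$ is a nonempty compact set, hence a compact Hausdorff space, so that $\Linf(\spectrum(a))$ — the \cstars{} of bounded Baire functions on $\spectrum(a)$, in the sense of \cref{ex:meager_ideal} — is defined, and the coordinate function $\id_{\spectrum(a)}$ belongs to it (being continuous, hence Baire). Since $a$ is normal, the unital $C^*$-subalgebra $C^*(1,a) \subseteq \A$ is commutative, and by Gelfand duality together with spectral permanence (so that the spectrum of $a$ computed in $C^*(1,a)$ agrees with $\spectrum(a)$) there is a $*$-isomorphism $C(\spectrum(a)) \xrightarrow{\cong} C^*(1,a)$ sending $\id_{\spectrum(a)}$ to $a$. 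Composing with the inclusion $C^*(1,a) \hookrightarrow \A$ yields a $*$-homomorphism $\phi_a \colon C(\spectrum(a)) \to \A$ with $\phi_a(\id_{\spectrum(a)}) = a$, and this $\phi_a$ is the unique such $*$-homomorphism: any $*$-homomorphism $C(\spectrum(a)) \to \A$ sending $\id_{\spectrum(a)}$ to $a$ is forced on the dense $*$-subalgebra of polynomials in the coordinate function and its conjugate (Stone--Weierstrass), hence on all of $C(\spectrum(a))$ by continuity.

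Next I would apply \cref{cor:baire_eq}, which identifies $\Linf(\spectrum(a))$ with the Pedersen--Baire envelope $C(\spectrum(a))^{\infty}$, the inclusion $C(\spectrum(a)) \hookrightarrow \Linf(\spectrum(a))$ being $\sigma$-generating. Since $\A$ is a \cstars{}, the universal property of Pedersen--Baire envelopes (\cref{thm:pb_univ}) extends $\phi_a$ uniquely to a \starshom{} $\Phi_a \colon \Linf(\spectrum(a)) \to \A$, and by construction $\Phi_a(\id_{\spectrum(a)}) = \phi_a(\id_{\spectrum(a)}) = a$; this gives existence. For uniqueness, any \starshom{} $\Psi \colon \Linf(\spectrum(a)) \to \A$ with $\Psi(\id_{\spectrum(a)}) = a$ restricts on $C(\spectrum(a))$ to a $*$-homomorphism sending $\id_{\spectrum(a)}$ to $a$, which must equal $\phi_a$ by the uniqueness just noted; then $\Psi$ and $\Phi_a$ are both \starshom{}s extending $\phi_a$ along the $\sigma$-generating inclusion, so they coincide by the uniqueness clause of \cref{thm:pb_univ}. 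I do not expect a genuine obstacle here; the only points needing a moment of care are invoking spectral permanence so that ``the spectrum'' is unambiguous, and noting that $\spectrum(a)$ is compact Hausdorff so that $\Linf(\spectrum(a))$ is meaningful as written.
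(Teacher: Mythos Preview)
Your proposal is correct and takes essentially the same approach as the paper: continuous functional calculus gives the unique $*$-homomorphism $C(\spectrum(a)) \to \A$ sending $\id_{\spectrum(a)}$ to $a$, and then \cref{cor:baire_eq} together with the universal property of the Pedersen--Baire envelope (\cref{thm:pb_univ}) upgrades it to the desired \starshom{} on $\Linf(\spectrum(a))$. The paper's proof is just a terser version of what you wrote, omitting the explicit justification via Gelfand duality, spectral permanence, and Stone--Weierstrass.
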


\begin{proof}
    By continuous functional calculus, we have a unique $*$-homomorphism
    \[
	    \Psi_a \colon C(\spectrum(a)) \to \A
    \]
    with $\Psi_a(\id_{\spectrum(a)}) = a$.
    By \cref{cor:baire_eq}, $\Linf(\spectrum(a))$ is the Pedersen-Baire envelope of $C(\spectrum(a))$.
    Therefore, $\Psi_a$ uniquely extends to a \starshom{} $\Linf(\spectrum(a)) \to \A$ by~\cref{thm:pb_univ}.
\end{proof}

Given $f\in \Linf (\spectrum(a))$, it is customary to write $f(a)$ as shorthand for $\Phi_a(f)$. 

\begin{rem}[Spectral theorem] \label{rem:spectral_thm}
	In particular, we also obtain the spectral theorem: given a normal operator $a \in \bhilb$, the \starshom{} obtained by Borel functional calculus is associated to a unique PVM $\mu$ as discussed in \cref{ex:pvms}. 
	In terms of integral notation, this means that we have
	\[
		f(a) = \int_{\spectrum(a)} f(\lambda) \, \mathrm{d} \mu (\lambda),
	\]
	and the spectral theorem follows by setting $f= \id_{\spectrum(a)}$.
\end{rem}
\begin{rem}
	In a commutative \cstar{} $\A \coloneqq C(X)$, continuous functional calculus is simply given by composition: $f(a) = f \circ a$.
	If the compact Hausdorff space $X$ is such that $C(X)$ is a commutative \cstars, which by \cref{fact:rickart_gelfand} equivalently means that $X$ is a Rickart space, then the same is usually \emph{not} true for merely measurable $f$.

	Take for example $X = \beta\mathbb{N}$, which satisfies $C(\beta\mathbb{N}) \cong \ell^{\infty}(\mathbb{N})$, and consider the one-point compactification $\overline{\mathbb{N}} = \mathbb{N} \cup \{\infty\}$.
	The subalgebra $C(\overline{\mathbb{N}}) \subseteq C(\beta \mathbb{N})$ consists of all continuous functions $\beta \mathbb{N} \to \mathbb{C}$ which factor across $\beta \mathbb{N} \twoheadrightarrow \overline{\mathbb{N}}$. 
	If the formula $f(a) = f \circ a$ held for all measurable $f$, then this subalgebra would have to be closed under the functional calculus.
	However, under the isomorphism  $C(\beta \mathbb{N})\cong \ell^\infty(\mathbb{N})$, the subalgebra $C(\overline{\mathbb{N}})$ corresponds to the space of convergent sequences $c(\mathbb{N})$.
	Since applying the functional calculus to a convergent sequence can result in a non-convergent sequence, we reach a contradiction.
\end{rem}

We recall that a \emph{projection} in a \cstar{} is an element $p$ such that $p=p^*=p^2$. We say that an element $a$ is \newterm{simple} if it can be written as a finite sum $a = \sum_{i=1}^n \lambda_i p_i$ for some scalars $\lambda_i \in \mathbb{C}$ and projections $p_i$. 
We do not require the $p_i$ to be pairwise orthogonal or even to commute (although the following statement would still be true with this restriction).

\begin{cor}\label{cor:simple_dense}
	In every \cstars, we have:
	\begin{enumerate}
		\item Every self-adjoint is the supremum of a sequence of simple self-adjoint elements.
		\item The set of simple elements is norm-dense.
	\end{enumerate}
\end{cor}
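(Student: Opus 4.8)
The plan is to deduce both statements from the corresponding elementary facts about bounded Baire functions on a compact subset of $\mathbb{R}$, transported through the measurable functional calculus of \cref{thm:fun_calculus}.

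For a self-adjoint element $a$ of a \cstars{} $\A$, I would set $M \coloneqq \norm{a}$, so that $\spectrum(a) \subseteq [-M,M]$, and invoke the standard ``dyadic from below'' approximation: there is a monotone increasing sequence $(s_n)$ of real-valued simple Baire functions on $\spectrum(a)$, each bounded above by $\id_{\spectrum(a)}$ (the $s_n$ are finite linear combinations of indicator functions of preimages of half-open intervals under the continuous map $\id_{\spectrum(a)}$, hence of Baire sets), with $\norm{\id_{\spectrum(a)} - s_n}_\infty \le 2^{-n}$. By \cref{fact:suprema_norms}, $\sup_n s_n = \id_{\spectrum(a)}$ in $\Linf(\spectrum(a))$. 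Now I would push everything through the \starshom{} $\Phi_a$ of \cref{thm:fun_calculus}: each $b_n \coloneqq \Phi_a(s_n)$ is a real linear combination of the elements $\Phi_a(\chi_E)$, and these are projections, since $\chi_E = \chi_E^* = \chi_E^2$ in $\Linf(\spectrum(a))$ and $\Phi_a$ is a unital $*$-homomorphism; hence each $b_n$ is simple and self-adjoint. The sequence $(b_n)$ is bounded ($\norm{b_n} \le M$) and monotone increasing by positivity of $\Phi_a$, and $\sigma$-normality of $\Phi_a$ gives $\sup_n b_n = \Phi_a(\sup_n s_n) = \Phi_a(\id_{\spectrum(a)}) = a$, which proves (i).

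For (ii), the point I would stress is that it does \emph{not} follow formally from (i): an order supremum of a monotone sequence need not be a norm limit. Instead, using the same $(s_n)$ together with the fact that any $*$-homomorphism of \cstar{}s is norm-contractive, one gets $\norm{a - b_n} = \norm{\Phi_a(\id_{\spectrum(a)} - s_n)} \le \norm{\id_{\spectrum(a)} - s_n}_\infty \to 0$, so every self-adjoint element is a norm limit of simple self-adjoint elements. Then a general $x \in \A$ decomposes as $x = \Re(x) + i\,\Im(x)$ with $\Re(x) \coloneqq \tfrac12(x + x^*)$ and $\Im(x) \coloneqq \tfrac{1}{2i}(x - x^*)$ self-adjoint; approximating each of these in norm by simple self-adjoint elements and using that finite sums and complex scalar multiples of simple elements are again simple (no orthogonality or commutativity being required in the definition) yields norm-density of the simple elements.

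I expect the only genuinely delicate point to be the observation just used in (ii) — recognizing that (i) cannot be recycled directly, so that one must instead route the uniform approximation of $\id_{\spectrum(a)}$ through the contractivity of $\Phi_a$; everything else is routine bookkeeping around the functional calculus and the definition of ``simple''.
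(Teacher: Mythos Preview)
Your proposal is correct and follows essentially the same approach as the paper: approximate $\id_{\spectrum(a)}$ from below by simple Baire functions and push through the functional calculus $\Phi_a$ of \cref{thm:fun_calculus}, using $\sigma$-normality for (i) and uniform convergence plus contractivity for (ii), then decompose a general element into real and imaginary parts. The paper's proof is terser but records exactly these steps, including the observation that the approximation can be chosen uniform so that the supremum is also a norm limit.
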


In the commutative case, where the set of simple elements is also a subalgebra, this statement has already been noted in~\cite[p.~37]{saito15monotone}.

\begin{proof}
    For any self-adjoint element $a$, the function $\id_{\spectrum(a)}$ can be approximated from below by simple functions, which correspond to simple elements $s_n \in \Linf (\spectrum(a))$. Therefore, $\id_{\spectrum(a)} = \sup_n s_n$. Using functional calculus, this translates to $a= \sup_n s_n(a)$, which proves the first claim.
    By choosing the approximation by simple functions to be uniform, we can moreover achieve that this supremum is norm-convergent.
    Since any element in a \cstar{} is a sum of two self-adjoints, the second claim follows in general.
\end{proof}

It is a standard fact that every $*$-homomorphism preserves functional calculus.
We now show that this extends to measurable functional calculus, and it is easy to see based on \Cref{thm:fun_calculus}.

\begin{lem}
	\label{lem:fun_calculus_starshom}
	If $\phi \colon \A \to \B$ is a \starshom{} between \cstars{}s and $a \in \A$ is normal, then for any $f \in \Linf(\spectrum(a))$ we have
	\[
		\phi(f(a)) = f(\phi(a)).
	\]
\end{lem}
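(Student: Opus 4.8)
The plan is to realize both sides of the claimed identity as the value at $f$ of two $\sigma$-homomorphisms $\Linf(\spectrum(a)) \to \B$ and then invoke a uniqueness argument. First observe that $\phi(a)$ is a normal element of $\B$ and, since $\phi$ is a unital $*$-homomorphism, $\spectrum(\phi(a)) \subseteq \spectrum(a)$. Restriction of bounded Baire functions along this closed inclusion is a $\sigma$-homomorphism $\Linf(\spectrum(a)) \to \Linf(\spectrum(\phi(a)))$, because suprema in both algebras are computed pointwise; composing it with $\Phi_{\phi(a)}$ from \cref{thm:fun_calculus} gives a $\sigma$-homomorphism $f \mapsto f(\phi(a))$ on $\Linf(\spectrum(a))$ (this is precisely what the notation $f(\phi(a))$ means), and it sends $\id_{\spectrum(a)}$ to $\phi(a)$. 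On the other hand, $f \mapsto \phi(f(a)) = \phi(\Phi_a(f))$ is the composite of the $\sigma$-homomorphisms $\Phi_a$ (by \cref{thm:fun_calculus}) and $\phi$ (by hypothesis), hence a $\sigma$-homomorphism $\Linf(\spectrum(a)) \to \B$, and it too sends $\id_{\spectrum(a)}$ to $\phi(a)$.

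Next I would use that, by \cref{cor:baire_eq}, $\Linf(\spectrum(a))$ is the Pedersen--Baire envelope of $C(\spectrum(a))$, so by the uniqueness clause of \cref{thm:pb_univ} any two $\sigma$-homomorphisms out of $\Linf(\spectrum(a))$ that agree on the $C^*$-subalgebra $C(\spectrum(a))$ coincide. It therefore remains to verify $\phi(g(a)) = g(\phi(a))$ for $g \in C(\spectrum(a))$. On continuous functions $\Phi_a$ restricts to the continuous functional calculus $\Psi_a$ (as in the proof of \cref{thm:fun_calculus}), and the required identity $\phi(\Psi_a(g)) = g(\phi(a))$ is the standard fact, recalled just above, that unital $*$-homomorphisms intertwine continuous functional calculus; it follows by checking it on polynomials in $g$ and $\overline{g}$, where it is just multiplicativity, additivity and unitality of $\phi$, and passing to uniform limits using that $\phi$ is norm-contractive. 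This establishes agreement on $C(\spectrum(a))$, and hence the lemma.

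I do not expect a genuine obstacle here. The only points needing minor care are that $f(\phi(a))$ is well-defined for $f \in \Linf(\spectrum(a))$ via the inclusion $\spectrum(\phi(a)) \subseteq \spectrum(a)$, and that $f \mapsto f(\phi(a))$ really is a $\sigma$-homomorphism, both of which follow from \cref{thm:fun_calculus} together with the pointwise description of the relevant suprema; everything else is a direct application of the universal property of the Pedersen--Baire envelope.
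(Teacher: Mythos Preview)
Your proof is correct and follows essentially the same approach as the paper: both sides are identified as \starshom{}s $\Linf(\spectrum(a)) \to \B$ sending $\id_{\spectrum(a)}$ to $\phi(a)$, and uniqueness is used to conclude. The paper's proof is a one-liner invoking the uniqueness clause of \cref{thm:fun_calculus} directly, whereas you unpack that uniqueness into its constituents (\cref{cor:baire_eq}, \cref{thm:pb_univ}, and compatibility of continuous functional calculus with $*$-homomorphisms); this is the same argument, just at a finer level of detail.
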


Here, we have slightly abused notation by writing $f$ instead of $f|_{\spectrum(\phi(a))}$ on the right-hand side.

\begin{proof}
	Considered as functions of $f$, both sides are \starshom{}s $\Linf(\spectrum(a)) \to \B$ sending $\id_{\spectrum(a)}$ to $\phi(a)$, and hence they coincide by the uniqueness part of \Cref{thm:fun_calculus}.
\end{proof}

\subsection{\texorpdfstring{$\sigma$}{σ}-completions}\label{sec:sigma_compl}

In the context of measurable spaces, it is often of interest to consider the $\sigma$-algebra generated by an algebra of sets, and this motivated the $\sigma$-completions of Boolean algebras considered in \cref{sec:bool_sigma_completions}.
For a \cstar{} $\A$, we now study analogous notions of $\sigma$-completion, first introduced by Wright in \cite{wright74minimal,wright76regular}.

\begin{defn}
	Let $\A$ be a \cstar. A \newterm{$\sigma$-completion} of $\A$ is a pair $(\B,j)$, where
	\begin{enumerate}
		\item $\B$ is a \cstars;
		\item $j\colon \A \to \B$ is an injective \starshom;
		\item $\B$ is $\sigma$-generated by $j(\A)$.
	\end{enumerate}
\end{defn}

The set of $\sigma$-completions of $\A$ carries a canonical preorder $\preceq$: we say that a $\sigma$-completion $(\B,j)$ is \emph{below} a $\sigma$-completion $(\C,k)$ if and only if there exists a \starshom {} $\phi \colon \B \to \C$ such that 
\[
\begin{tikzcd}
    \A \ar[r,"j"]\ar[rd,"k" below left] & \B\ar[d, dashed, "\phi"]\\
    & \C
\end{tikzcd}
\] 
The assumption that $\B$ is $\sigma$-generated by $j(\A)$ guarantees that such $\phi$ is unique if it exists.
Also it is clear that $\preceq$ is reflexive and transitive.

As for Boolean algebras, there are two special $\sigma$-completions of a \cstar{}~\cite{wright74minimal,wright76regular}.

\begin{defn} Let $\A$ be a \cstar. 
    \begin{enumerate}
        \item The \newterm{universal $\sigma$-completion} of $\A$ is a $\sigma$-completion $(\B,j)$ which is minimal with respect to $\preceq$.
        \item  The \newterm{regular $\sigma$-completion} of $\A$ is a $\sigma$-completion $(\B,j)$ such that for every $b \in \B_{\sa}$,
        \begin{equation}
		\label{eq:regular_completion}
	        b = \sup \, \lbrace j(a) \mid j(a) \le b\rbrace.
        \end{equation}
    \end{enumerate}
\end{defn}

The use of ``the'' in the previous definition is justified by the fact that both the universal and the regular $\sigma$-completions are unique up to unique isomorphism under $A$. 
For the universal $\sigma$-completion, this follows from the universal property (\cref{cor:univ_minimal_2}), while for the regular $\sigma$-completion the proof is rather involved~\cite[Theorem 2.4]{wright76regular}. 

\begin{nota}
	Given any \cstar{} $\A$, we write $\univ{\A}$ and $\reg{\A}$ to denote the universal and the regular $\sigma$-completions, respectively.
\end{nota}

Moreover, these $\sigma$-completions exist for every \cstar{} $\A$ (see~\cite[Theorem~2.5]{wright74minimal} and~\cite[Theorem~2.3]{wright76regular}). 
We refrain from an explicit description of the regular $\sigma$-completion outside of the commutative setting, since in the present paper we are not interested in specific details of such a construction (recall that, already for \bsigma{}s, regular $\sigma$-completions are not functorial, see \cref{rem:nonfunctoriality_regular}).
% In fact, the reason why we introduce the regular $\sigma$-completion for all \cstar{}s is to build the analogy with the \bsigma{} case (see \cref{sec:bool_sigma_completions}), but we will argue against their explicit use in the noncommutative context in \cref{sec:tensor_cstars}.

For the universal $\sigma$-completion, we first provide an existence proof which simplifies Wright's, and we then prove a universal property which motivates the naming.
This will turn out to be stronger than its analogue given in the case of \bsigma{}s, since it also holds for the probabilistic situation (\cref{cor:univ_minimal_2}).

Since kernels of \scpu\ maps are not necessarily closed under multiplication, a direct analogue of \cref{lem:univ_sigma_completion_bool} in the \bsigma{} setting would be too restrictive.
More explicitly, we aim to describe the $\sigma$-ideal associated to the universal $\sigma$-completion $\univ{A}$ simply as a $\sigma$-subspace $\mathscr{I} \subseteq A^\infty$ generated by some subspace $\mathscr{I}_0$, without explicitly requiring closure under multiplication.
From this perspective, our current notion of $\sigma$-closure appears to be unsuitable due to the subtleties already highlighted before \cref{lem:sigmaclosed_is_closed}.
For this reason, we will consider a new notion of $\sigma$-closure, which will be relevant only in this subsection.
\par\vspace{1ex}

\begin{samepage}
	\noindent\rule{\linewidth}{1pt}\par\vspace{-3ex}
	\subsubsection*{Interlude on strong $\sigma$-closures}	
\end{samepage}
We begin with a preliminary definition.

\begin{defn}[{\cite[Definition~2.1.17]{saito15monotone}}]\label{def:orderlimit}
	Let $\A$ be a \cstar{}.
	\begin{enumerate}
		\item\label{it:orderconv_sa}
			A bounded sequence $(a_n)$ in $\A_{\sa}$ is \newterm{order convergent} if there exists a decomposition $a_n = b_n - c_n$ with bounded increasing sequences $(b_n)$ and $(c_n)$ both having a supremum.
			In this case, we define its \newterm{order limit} as
			\[
				\ordlim_n a_n \coloneqq \sup_n b_n -\sup_n c_n.
			\]
		\item A bounded sequence $(a_n)$ in $\A$ is \newterm{order convergent} if its sequences of real $(\Re(a_n))$ and imaginary parts $(\Im(a_n))$ are order convergent, and in this case we define the \newterm{order limit} as
			\[
				\ordlim_n a_n \coloneqq \ordlim_n \Re(a_n) + \mathsf{i} \, \ordlim_n \Im(a_n),
			\]
			where $\mathsf{i}$ is the imaginary unit.
	\end{enumerate}
\end{defn}
We refer to~\cite[Lemma~2.1.16]{saito15monotone} for the proof that the order limit in~\ref{it:orderconv_sa} does not depend on the choice of decomposition $a_n = b_n - c_n$.

\begin{rem}\label{rem:sigma_normal_orderlimit}
	By linearity, any $\sigma$-normal map $\phi$ also preserves order limits, in the sense that $\phi (\ordlim_n a_n )= \ordlim_n \phi(a_n)$ for any order convergent sequence $(a_n)$. 
\end{rem}

To get a conceptual understanding of this notion, let us note the following.

\begin{prop}\label{prop:order_limit_variation}
	Let $\A$ be a \cstars{} and let $(a_n)$ a self-adjoint sequence. 
	If the \emph{total variation}
	\begin{equation}
		\sum_n \, \abs{a_{n+1} - a_n}\label{eq:total_variation}
	\end{equation}
	is bounded by an element of $\A$, then $(a_n)$ has an order limit.
	\end{prop}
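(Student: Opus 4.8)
The plan is to exhibit an explicit decomposition $a_n = b_n - c_n$ into bounded monotone increasing sequences, so that order convergence follows directly from \cref{def:orderlimit} together with the monotone $\sigma$-completeness of $\A$.

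First I would set $d_k \coloneqq a_{k+1} - a_k$, which is self-adjoint, and pass to its positive and negative parts $d_k^{+}, d_k^{-} \ge 0$, so that $d_k = d_k^{+} - d_k^{-}$ and, using $\pm d_k \le \abs{d_k}$, also $d_k^{+}, d_k^{-} \le \abs{d_k}$. A telescoping computation gives
\[
	a_n = a_1 + \sum_{k=1}^{n-1} d_k = \Bigl( a_1 + \sum_{k=1}^{n-1} d_k^{+} \Bigr) - \sum_{k=1}^{n-1} d_k^{-},
\]
so I would define $b_n \coloneqq a_1 + \sum_{k=1}^{n-1} d_k^{+}$ and $c_n \coloneqq \sum_{k=1}^{n-1} d_k^{-}$. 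Both sequences are self-adjoint and monotone increasing, since the summands $d_k^{\pm}$ are positive.

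The key step is boundedness. By hypothesis the total variation $\sum_n \abs{a_{n+1} - a_n}$ is dominated by some $M \in \A_+$, which by \cref{def:infinite_sum} means precisely that the partial sums satisfy $\sum_{k=1}^{n-1} \abs{d_k} \le M$ for all $n$. Combined with $d_k^{\pm} \le \abs{d_k}$ and the order-compatibility of finite sums, this yields $\sum_{k=1}^{n-1} d_k^{\pm} \le M$, so $(c_n)$ is bounded and $(b_n)$ is bounded above by $a_1 + M$ in the order (hence norm-bounded, as we are in the unital setting). By monotone $\sigma$-completeness, $\sup_n b_n$ and $\sup_n c_n$ therefore exist in $\A_{\sa}$; moreover squeezing $a_1 - M \le a_n \le a_1 + M$ shows $(a_n)$ itself is bounded, so \cref{def:orderlimit} genuinely applies. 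Hence $(a_n)$ is order convergent, with $\ordlim_n a_n = \sup_n b_n - \sup_n c_n$.

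I do not anticipate a serious obstacle: the argument is essentially the Jordan decomposition of a sequence of bounded variation, and the only point requiring a little care is verifying that the domination hypothesis transfers from $\abs{d_k}$ to the partial sums of $d_k^{\pm}$, which uses only the elementary inequalities $\pm d_k \le \abs{d_k}$. A minor subtlety worth flagging is that the word ``bounded'' in \cref{def:orderlimit} need not be assumed separately here, as it is recovered from the decomposition just constructed.
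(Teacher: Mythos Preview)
Your proof is correct and takes essentially the same approach as the paper: decompose the differences $d_k = a_{k+1} - a_k$ into positive and negative parts and use the total-variation bound to control the resulting monotone partial sums. The paper's version differs only cosmetically (it absorbs $a_1$ by setting $d_1 \coloneqq a_1$), and is in fact terser than yours on the boundedness step.
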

	\begin{proof}
		We consider $d_{n+1} \coloneqq a_{n+1} - a_{n}$ (for $n=1$, $d_1\coloneqq a_1$), and set
		\[
		b_{n} \coloneqq \sum_{k=1}^n (d_k)^+, \qquad c_{n} \coloneqq \sum_{k=1}^n (d_k)^- .
		\]
		The sequences $(b_n)$ and $(c_n)$ are monotone increasing, and bounded by assumption. Moreover, $b_{n}-c_{n}=a_n$, so $(a_n)$ indeed has an order limit.
	\end{proof}
	
	\begin{prop}
		\label{prop:order_limit_commutative}
		In a commutative \cstars{} $\A = C(X)$, a self-adjoint sequence $(a_n)$ has an order limit if and only if the {total variation} \eqref{eq:total_variation} is bounded by an element of $\A$.
		In this case, the order limit is pointwise except on a meager set.
	\end{prop}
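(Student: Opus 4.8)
The plan is to observe that one half of the equivalence is already available and that the other half, together with the pointwise description, is a short telescoping estimate that genuinely uses commutativity. The implication ``the total variation \eqref{eq:total_variation} is bounded by an element of $\A$ $\Longrightarrow$ $(a_n)$ has an order limit'' holds in any \cstars{} and is precisely \cref{prop:order_limit_variation}, so nothing new needs to be done there.

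For the converse, I would start from an order convergent self-adjoint sequence $(a_n)$ in $\A = C(X)$ and fix, as in \cref{def:orderlimit}, a decomposition $a_n = b_n - c_n$ with $(b_n) \nearrow b$ and $(c_n) \nearrow c$, so that $\ordlim_n a_n = b - c$. The differences $b_{n+1} - b_n$ and $c_{n+1} - c_n$ are positive, and since in a commutative \cstars{} $C(X)$ the order is the pointwise order and $\abs{\,\cdot\,}$ is the pointwise absolute value, the elementary inequality $\abs{s-t} \le s+t$ for reals $s,t \ge 0$ gives
\[
	\abs{a_{n+1} - a_n} \le (b_{n+1} - b_n) + (c_{n+1} - c_n)
\]
in $C(X)$. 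Summing over $n = 1, \dots, N$ and telescoping,
\[
	\sum_{n=1}^{N} \abs{a_{n+1} - a_n} \le (b_{N+1} - b_1) + (c_{N+1} - c_1) \le (b - b_1) + (c - c_1),
\]
uniformly in $N$; by \cref{def:infinite_sum} this means that the total variation \eqref{eq:total_variation} is bounded by the element $(b - b_1) + (c - c_1) \in \A$. I would also point out that commutativity is essential here, as $\abs{x-y} \le x+y$ for positive elements $x,y$ fails in general \cstar{}s.

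For the final claim, I would apply \cref{lem:sup_chaus}\ref{it:sup_chaus_description} separately to $(b_n)$ and to $(c_n)$ to obtain meager sets $M_b, M_c \subseteq X$ such that $b(x) = \lim_n b_n(x)$ for $x \notin M_b$ and $c(x) = \lim_n c_n(x)$ for $x \notin M_c$. Then for every $x$ outside the meager set $M_b \cup M_c$,
\[
	(\ordlim_n a_n)(x) = b(x) - c(x) = \lim_n \bigl( b_n(x) - c_n(x) \bigr) = \lim_n a_n(x),
\]
so the order limit agrees with the pointwise limit of $(a_n)$ except on a meager set.

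I do not expect a serious obstacle; the argument is short. The only points needing a little care are the index convention in \eqref{eq:total_variation} (the sum starts with $\abs{a_2 - a_1}$, so the estimate above matches it exactly) and invoking the correct part of \cref{lem:sup_chaus}, namely that an existing supremum of a bounded monotone sequence in $C(X)$ is pointwise off a meager set.
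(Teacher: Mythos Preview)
Your proposal is correct and follows essentially the same argument as the paper: the easy direction via \cref{prop:order_limit_variation}, the converse via the pointwise triangle inequality $\abs{a_{n+1}-a_n}\le (b_{n+1}-b_n)+(c_{n+1}-c_n)$ and telescoping, and the pointwise-off-a-meager-set claim via \cref{lem:sup_chaus}. The only cosmetic difference is that the paper bounds the partial sums by $b_{m+1}-b_1+c_{m+1}-c_1$ rather than passing to the suprema $b,c$ as you do.
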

	% Here, we use the usual definition of the operator absolute value $|x| \coloneqq \sqrt{x^*x}$.
	
	\begin{proof}
		%%% Some material for the noncommutative case that didn't work out:
		%
		%For every $x \in \A$, we have the operator inequalities\footnote{For the proof, let us write $y \coloneqq \Re(x)$ and $z \coloneqq \Im(x)$ for simplicity. Then since $x - x^*$ is anti-hermitian, we have $(x - x^*)^2 = x^2 + (x^*)^2 - x x^* - x^* x \le 0$. Therefore $4 y^2 = (x + x^*)^2 = x^2 + (x^*)^2 + x^* x + x x^* \le 2 (x^* x + x x^*)$, which gives the first inequality by operator monotonicity of the square root, and similarly for $4 z^2$. For the second pair of inequalities, we similarly use $x x^* = (y + \mathsf{i} z)(y - \mathsf{i} z) = y^2 + z^2 - \mathsf{i} (y z - z y) \ge 0$ to get $x^* x \le 2(y^2 + z^2)$, and similarly for $x x^*$.}
		%\[
		%	\Re(x), \Im(x) \le \sqrt{\frac{1}{2}} \left( |x|^2 + |x^*|^2 \right)^{1/2}, 
		%	\qquad
		%	|x|, |x^*| \le \sqrt{2} \left( \Re(x)^2 + \Im(x)^2 \right)^{1/2}.
		%\]
		%Using these inequalities on every $x \coloneqq a_{n+1} - a_n$, combined with the fact that $(a_n)$ has an order limit if and only if both $(\Re(a_n))$ and $(\Im(a_n))$ do, imply that it is enough to consider the case of self-adjoint sequences.
		%Hence assume $a_n^* = a_n$ from now on.
	By \cref{prop:order_limit_variation}, it suffices to show that whenever a sequence has an order limit, then the total variation is bounded.
		If $a_n = b_n - c_n$ is a decomposition with bounded increasing sequences $(b_n)$ and $(c_n)$, then we have
		\[
			|a_{n+1} - a_n| = |(b_{n+1} - b_n) - (c_{n+1} - c_n)| \le (b_{n+1} - b_n) + (c_{n+1} - c_n),
		\]
		where both brackets on the right are nonnegative.\footnote{This triangle inequality is where commutativity enters.}
		We therefore obtain
		\[
			\sum_{n=1}^m |a_{n+1} - a_n| \le \sum_{n=1}^m (b_{n+1} - b_n) + \sum_{n=1}^m (c_{n+1} - c_n) = b_{m+1} - b_1 + c_{m+1} - c_1,
		\]
		which is upper bounded by the assumption that $(b_n)$ and $(c_n)$ are bounded sequences.
	
		% Conversely, suppose that the total variation is bounded, and assume $a_1 = 0$ without loss of generality.
		% Then we define recursively $b_1 = c_1 \coloneqq 0$ and
		% \[
		% 	b_{n+1} \coloneqq b_n + \max\{a_{n+1} - a_n, 0\}, \qquad c_{n+1} \coloneqq c_n + \max\{a_n - a_{n+1}, 0\}.
		% \]
		% Then we have $a_1 = b_1 - c_1$, and we therefore get $a_n = b_n - c_n$ from the induction step
		% \begin{align*}
		% 	b_{n+1} - c_{n+1} & = \max\{a_{n+1} - a_n, 0\} - \max\{a_n - a_{n+1}, 0\} + b_n - c_n \\
		% 		& = a_{n+1} - a_n + a_n \\
		% 		& = a_{n+1}.
		% \end{align*}
		% Therefore $(a_n)$ has an order limit.
	
	%	This converges in norm to $0$, but its total variation is unbounded.
	%	The same example works for any \cstars{} by taking multiples of the identity.
		The final statement follows from \Cref{lem:sup_chaus}.
	\end{proof}

The most important feature of order limits is that they commute with multiplication, a statement which would not even make sense for suprema or infima.
More generally, they have the following properties~\cite[Section 2.1]{saito15monotone}.\footnote{In particular, see Lemma 2.1.19 and the brief discussion after its proof, Lemma 2.1.21, Proposition 2.1.22 and Corollary 2.1.23.}

\begin{fact}\label{fact:orderlimit}
	In a \cstar{} $\A$, the following assertions hold.
	\begin{enumerate}
		\item\label{it:commutingprops} Order limits commute with addition, multiplication and involution: if $(a_n)$ and $(b_n)$ are order convergent and $c,d \in \A$ are arbitrary, then 
		\begin{align*}
			\ordlim_n \left( a_n + b_n \right) & = \ordlim_n a_n + \ordlim_n b_n, &  
			\ordlim_n c a_n d & = c \left( \ordlim_n a_n \right) d, &
			\ordlim_n a_n^* & = \left( \ordlim_n a_n \right)^*.
		\end{align*}
		\item \label{it:normlimit_order}
			Every sequence $(a_n)$ which converges in norm to some $a \in \A$ has an order convergent subsequence $(a_{n(j)})_{j}$ such that $\ordlim_j a_{n(j)} = a$. 
	\end{enumerate}
\end{fact}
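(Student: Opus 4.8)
I would prove the two parts in the order (ii), then (i), since (ii) is the more self-contained.

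For part (ii), first reduce to the self-adjoint case. Assuming the claim for self-adjoint sequences, a general norm-convergent $(a_n)$ with limit $a$ is handled by applying that case to $(\Re(a_n))$, then to the imaginary parts of the resulting subsequence; on the final subsequence both real and imaginary parts order-converge, to $\Re(a)$ and $\Im(a)$ respectively, so the subsequence order-converges to $a$ by \cref{def:orderlimit}. So assume $(a_n)$ self-adjoint, and, after passing to a subsequence and relabelling, that $\norm{a_{n+1}-a_n}<2^{-n}$. Put $d_1\coloneqq a_1$, $d_{n+1}\coloneqq a_{n+1}-a_n$, $b_n\coloneqq\sum_{k=1}^n d_k^{+}$ and $c_n\coloneqq\sum_{k=1}^n d_k^{-}$, so that $a_n=b_n-c_n$ with $(b_n)$, $(c_n)$ monotone increasing. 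Since $\norm{d_k^{\pm}}\le\norm{d_k}$ is summable, both $(b_n)$ and $(c_n)$ are norm-Cauchy, hence norm-convergent, and by \cref{fact:suprema_norms} their norm limits coincide with their suprema. Therefore $(a_n)$ is order-convergent with $\ordlim_n a_n=\sup_n b_n-\sup_n c_n=\lim_n b_n-\lim_n c_n=\lim_n a_n=a$. (Alternatively, \cref{prop:order_limit_variation} supplies order-convergence once one observes that the partial sums of $\sum_n\abs{a_{n+1}-a_n}$ are norm-bounded, and the limit is then identified as above.)

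For part (i), additivity is immediate, since adding two decompositions $a_n=b_n-c_n$ and $a'_n=b'_n-c'_n$ term by term gives a decomposition of $a_n+a'_n$ and suprema commute with addition by \cref{fact:suprema_commute}; the non-self-adjoint case follows by splitting into real and imaginary parts, and the involution identity is then routine from $\Re(a_n^{*})=\Re(a_n)$ and $\Im(a_n^{*})=-\Im(a_n)$ together with self-adjointness of $\ordlim_n\Re(a_n)$ and $\ordlim_n\Im(a_n)$. The real content is the multiplication identity, which I would prove in three steps. \emph{Compression}: for self-adjoint $(a_n)=(b_n-c_n)$ and any $c$, the sequences $(c^{*}b_nc)$ and $(c^{*}c_nc)$ are monotone increasing with suprema $c^{*}(\sup_n b_n)c$ and $c^{*}(\sup_n c_n)c$ by \cref{fact:suprema_commute}, so $\ordlim_n c^{*}a_nc=c^{*}(\ordlim_n a_n)c$. \emph{Two-sided products of self-adjoint sequences}: the polarization identity
\[
	d^{*}a_nc=\frac14\sum_{k=0}^{3}\mathsf{i}^{k}\,(d+\mathsf{i}^{-k}c)^{*}\,a_n\,(d+\mathsf{i}^{-k}c)
\]
expresses $d^{*}a_nc$ as a fixed complex-linear combination of compressions of $a_n$; applying the compression case to each summand and reassembling via the same identity (now applied to the fixed self-adjoint element $\ordlim_n a_n$), and using that order limits commute with finite complex-linear combinations (additivity plus the elementary $\ordlim_n\lambda a_n=\lambda\ordlim_n a_n$ for $\lambda\in\mathbb{C}$), yields $\ordlim_n d^{*}a_nc=d^{*}(\ordlim_n a_n)c$; since $d^{*}$ runs over all of $\A$, this is $\ordlim_n ea_nc=e(\ordlim_n a_n)c$ for arbitrary $e,c$. \emph{General sequences}: writing $a_n=\Re(a_n)+\mathsf{i}\,\Im(a_n)$ and noting that for self-adjoint $(x_n)$ both $\Re(ex_nc)$ and $\Im(ex_nc)$ are again complex-linear combinations of compressions of $x_n$ (polarization with $u=e^{*}$, $v=c$), the sequence $(ex_nc)$ is order-convergent, and combining the real and imaginary contributions gives $\ordlim_n ea_nc=e(\ordlim_n a_n)c$ in full generality.

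The only genuinely delicate point is this bookkeeping in the multiplication argument: tracking precisely which complex-linear combinations of one-sided compressions reconstruct two-sided products, and then their real and imaginary parts. Everything else reduces directly to \cref{fact:suprema_commute} and \cref{fact:suprema_norms}.
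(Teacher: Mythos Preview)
The paper does not give its own proof of this statement: it is recorded as a \emph{Fact} with a citation to \cite[Section~2.1]{saito15monotone} (specifically Lemmas~2.1.19 and~2.1.21, Proposition~2.1.22 and Corollary~2.1.23). Your argument is correct and is essentially the standard one found there: part~(ii) via a rapidly convergent subsequence and the decomposition into positive/negative parts of the increments (cf.\ \cref{prop:order_limit_variation}), and part~(i) by reducing multiplication to compression through polarization, with \cref{fact:suprema_commute} and \cref{fact:suprema_norms} doing the work.
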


\begin{defn}
	Let $\A$ be a \cstar{}.
	\begin{enumerate}
		\item A \newterm{strong $\sigma$-subspace} $V\subseteq \A$ is a $*$-subspace that is \newterm{strongly $\sigma$-closed}: whenever $(a_n)\subseteq V$ is order convergent, then $\ordlim_n a_n \in V$.
		\item The \newterm{strong $\sigma$-closure} of a $*$-subspace $W$ is the smallest strong $\sigma$-subspace containing $W$.
	\end{enumerate} 
\end{defn}
\begin{rem}\label{rem:strongsigma_closed}
	\begin{enumerate}
		\item\label{it:strongsigma_closed_closed} By \cref{fact:orderlimit}\ref{it:normlimit_order}, every strong $\sigma$-subspace is closed.
		\item\label{it:strongsigma_closed_kernel} The kernel of a $\sigma$-normal map is strongly $\sigma$-closed because of \cref{rem:sigma_normal_orderlimit}. 
		In particular, $\sigma$-ideals are strongly $\sigma$-closed because they are the kernel of their quotient map.
	\end{enumerate}
\end{rem}
It remains unclear whether $\sigma$-subspaces are strongly $\sigma$-closed in general. 
For example, given a $\sigma$-injection $\A\hookrightarrow \B$ of \cstars{}s, we do not know whether the image of $\A$ is strongly $\sigma$-closed in $\B$.  
This uncertainty arises because $b_n$ and $c_n$ as in \cref{def:orderlimit} are arbitrary, meaning they could a priori belong to $\B\setminus \A$ even if $a_n = b_n - c_n \in \A$. 
This resembles in spirit the open problem discussed in \cite[Section 5.3.1]{saito15monotone}, as both involve different notions of $\sigma$-closure.
\par\vspace{-1.5ex}
\noindent\rule{\linewidth}{1pt}
\par\vspace{1ex}
We are now ready to discuss the universal $\sigma$-completion.
\begin{nota}
	Let $\A^{\infty}$ be the Pedersen--Baire envelope of a \cstar{} $\A$. We denote by $\mathscr{I}_{0}$ the $*$-subspace of $\A^{\infty}$ consisting of all elements $a \in \A^{\infty}$ for which there exists a sequence $(a_n)$ in $\A$ such that $\ordlim_n a_n = a$ in $\A^{\infty}$ and $\ordlim_n a_n = 0$ in $\A$.	
	%When $\A$ is clear from context, we may simply write $\mathscr{I}_0$ instead of $\mathscr{I}_{A,0}$.
\end{nota}

\begin{lem}\label{lem:minimal_sigma_sub}
	The strong $\sigma$-closure of $\mathscr{I}_0$ in $\A^{\infty}$, denoted by $\mathscr{I}$, is a $\sigma$-ideal in $\A^\infty$.
\end{lem}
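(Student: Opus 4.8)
The plan is to reduce the statement to the single assertion that $\mathscr{I}$ is a two-sided ideal in $\A^\infty$. Indeed, $\mathscr{I}$ is automatically a norm-closed $*$-subspace (it is a strong $\sigma$-closure of the $*$-subspace $\mathscr{I}_0$, and strong $\sigma$-subspaces are norm-closed by \cref{rem:strongsigma_closed}\ref{it:strongsigma_closed_closed}), and it is $\sigma$-closed because a bounded monotone increasing sequence is order convergent with order limit equal to its supremum, so $\sigma$-closedness is a special case of strong $\sigma$-closedness. The ideal property will then be obtained by a bootstrapping argument: first for multiplication by elements of $\A$, then for multiplication by arbitrary elements of $\A^\infty$.

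First I would check that $\mathscr{I}_0$ is stable under two-sided multiplication by $\A$, i.e.\ $b\,\mathscr{I}_0\,c \subseteq \mathscr{I}_0$ for all $b,c \in \A$: if $a \in \mathscr{I}_0$ is witnessed by a sequence $(a_n) \subseteq \A$ with $\ordlim_n a_n = a$ in $\A^\infty$ and $\ordlim_n a_n = 0$ in $\A$, then $(b a_n c) \subseteq \A$ is order convergent both in $\A$ and in $\A^\infty$ with order limit $0$ in $\A$ and $bac$ in $\A^\infty$, by \cref{fact:orderlimit}\ref{it:commutingprops} (order limits commute with multiplication), so $bac \in \mathscr{I}_0 \subseteq \mathscr{I}$. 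Next, fixing $b,c\in\A$, the maps $x\mapsto bxc$ and $x\mapsto bx^{*}c$ preserve order convergence and order limits by the same fact, and $\mathscr{I}$ is strongly $\sigma$-closed, so
\[
	S \coloneqq \{\, x \in \A^\infty \mid bxc \in \mathscr{I}\ \text{and}\ bx^{*}c \in \mathscr{I} \,\}
\]
is a strongly $\sigma$-closed $*$-subspace of $\A^\infty$, and it contains $\mathscr{I}_0$ by the previous sentence. By minimality of the strong $\sigma$-closure, $\mathscr{I} \subseteq S$, hence $b\,\mathscr{I}\,c \subseteq \mathscr{I}$; taking $b=1$ or $c=1$ yields $\A\mathscr{I} \subseteq \mathscr{I}$ and $\mathscr{I}\A \subseteq \mathscr{I}$.

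The main point, and the step I expect to require the most care, is the passage from $\A$ to $\A^\infty$. For this I would first argue that the strong $\sigma$-closure of $\A$ inside $\A^\infty$ is all of $\A^\infty$: any $\sigma$-subspace $V$ of $\A^\infty$ with $\A \subseteq V$ is in fact a $\sigma$-subspace of $\A^{**}$ containing $\A$, since a bounded monotone increasing sequence in $V$ has the same supremum whether computed in $\A^\infty$ or in $\A^{**}$ ($\A^\infty$ being $\sigma$-closed in $\A^{**}$), so $V \supseteq \A^\infty$ and hence $V = \A^\infty$; and strong $\sigma$-subspaces are in particular $\sigma$-subspaces. Then
\[
	T \coloneqq \{\, c \in \A^\infty \mid c\,\mathscr{I} \subseteq \mathscr{I}\ \text{and}\ \mathscr{I}\,c \subseteq \mathscr{I} \,\}
\]
is a $*$-subspace of $\A^\infty$ — here one uses that $\mathscr{I}$ is $*$-closed, so $c\in T$ forces $c^{*}y = (y^{*}c)^{*}\in\mathscr{I}$ and $yc^{*} = (cy^{*})^{*}\in\mathscr{I}$ for all $y\in\mathscr{I}$ — and it is strongly $\sigma$-closed, since for each fixed $y\in\mathscr{I}$ the maps $c\mapsto cy$ and $c\mapsto yc$ preserve order limits and $\mathscr{I}$ is strongly $\sigma$-closed. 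By the preceding paragraph $\A \subseteq T$, so $T$ is a strong $\sigma$-subspace of $\A^\infty$ containing $\A$, whence $T = \A^\infty$. Therefore $\mathscr{I}$ absorbs multiplication by all of $\A^\infty$, and combined with the first paragraph this shows that $\mathscr{I}$ is a norm-closed, $\sigma$-closed, two-sided ideal, i.e.\ a $\sigma$-ideal of $\A^\infty$.
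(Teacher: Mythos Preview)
Your proof is correct and follows essentially the same approach as the paper's: reduce to showing that $\mathscr{I}$ absorbs multiplication by $\A^\infty$, first establish that $\mathscr{I}_0$ is stable under multiplication by elements of $\A$ using \cref{fact:orderlimit}\ref{it:commutingprops}, and then run two successive $\pi$--$\lambda$ style arguments (first in the $\mathscr{I}$ variable, then in the $\A^\infty$ variable) using strong $\sigma$-closedness. Your version is a bit more explicit about why the strong $\sigma$-closure of $\A$ in $\A^\infty$ equals $\A^\infty$, but the two proofs are otherwise the same.
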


In the proof below, we will make abundant use of the properties 
of order limits highlighted in \cref{fact:orderlimit}.
\begin{proof}
	By definition and \cref{rem:strongsigma_closed}\ref{it:strongsigma_closed_closed} (or \cref{lem:sigmaclosed_is_closed}), it only needs to be shown that $\mathscr{I}$ is closed under multiplication with elements of $\A^\infty$.
	To start, we show that $\mathscr{I}_0$ is closed under multiplication by elements of $\A$.
	If $a \in \mathscr{I}_0$, as witnessed by a sequence $(a_n)$ in $\A$ such that $\ordlim_n a_n = a$ in $\A^{\infty}$ and $\ordlim_n a_n = 0$ in $\A$, then for all $b \in \A$, we have
	\[
		\ordlim_n b a_n = b \, \ordlim_n a_n = 0
	\]
	in $\A$, and hence $ba = b \ordlim_n a_n = \ordlim_n b a_n$ belongs to $\mathscr{I}_0$.
	The situation is analogous for right multiplication.
	The result now follows by considering two $\pi$-$\lambda$ arguments:
	\begin{itemize}
		\item The $*$-subspace $\lbrace a \in \mathscr{I} \mid ab,ba\in \mathscr{I} \; \forall b \in \A \rbrace$ is strongly $\sigma$-closed and contains $\mathscr{I}_0$, so it coincides with $\mathscr{I}$.
		\item The $*$-subspace $\lbrace b \in \A^{\infty} \mid ab,ba \in \mathscr{I} \; \forall a \in \mathscr{I}\rbrace$ is strongly $\sigma$-closed and contains $\A$ by the previous item, so it coincides with $\A^{\infty}$.
	\end{itemize} 
	Therefore, the statement holds.
\end{proof}

Let us consider this ideal in a family of examples before tackling the universal $\sigma$-completion itself in general.

\begin{prop}\label{prop:separable_unique_completion}
	Let $X$ be a compact Hausdorff space and $\M$ the $\sigma$-ideal of functions in $\Linf(X)$ whose support is meager (\cref{ex:meager_ideal}).
	Then:
	\begin{enumerate}
		\item We have $\mathscr{I} \subseteq \M$.
		\item If $X$ is separable, then $\mathscr{I} = \M$.
	\end{enumerate}
\end{prop}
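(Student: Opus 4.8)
I would prove the two inclusions $\mathscr{I}\subseteq\M$ and (under separability) $\M\subseteq\mathscr{I}$ separately. Part (i) is nearly formal once one checks the statement at the level of generators; part (ii) needs a genuine construction, and this is where the hypothesis on $X$ gets used.

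For (i), the plan is to show first that $\mathscr{I}_0\subseteq\M$ and then pass to the strong $\sigma$-closure. It suffices to treat self-adjoint $a\in\mathscr{I}_0$ since both $\mathscr{I}_0$ and $\M$ are $*$-subspaces. Such an $a$ comes with a sequence $(a_n)$ in $\A=C(X)$ with $\ordlim_n a_n = a$ in $\A^{\infty}=\Linf(X)$ and $\ordlim_n a_n = 0$ in $\A$. I would fix a decomposition $a_n=b_n-c_n$ into bounded increasing sequences in $C(X)$ witnessing order convergence in $C(X)$; since order limits do not depend on the decomposition (\cite[Lemma~2.1.16]{saito15monotone}), the same $b_n,c_n$ compute $\ordlim_n a_n$ in $\Linf(X)$ as well, where suprema are literally pointwise. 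By \cref{lem:sup_chaus} the supremum of $(b_n)$ in $C(X)$ agrees with its pointwise supremum off a meager set, and likewise for $(c_n)$; so the hypothesis $\sup_n b_n=\sup_n c_n$ in $C(X)$ forces $a=\sup_n b_n-\sup_n c_n$ (pointwise) to vanish off a meager Baire set, i.e.\ $a\in\M$. Since $\M$ is a $\sigma$-ideal (\cref{ex:meager_ideal}), it is strongly $\sigma$-closed (\cref{rem:strongsigma_closed}), hence contains the strong $\sigma$-closure $\mathscr{I}$ of $\mathscr{I}_0$.

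For (ii), by (i) it remains to prove $\M\subseteq\mathscr{I}$. Since $\M$ is $\sigma$-generated as a $*$-subspace by the indicators of meager Baire sets (\cref{ex:meager_ideal}) and $\mathscr{I}$ is itself a $\sigma$-ideal of $\Linf(X)$ (\cref{lem:minimal_sigma_sub}), it is enough to show $\chi_E\in\mathscr{I}$ for every meager Baire set $E$. The core step is a construction for a single nowhere dense zero set: given $g\in C(X)$ with $g\ge 0$, normalized to $g\le 1$, and $\{g>0\}$ dense, consider $a_n:=1-g^{1/n}\in C(X)$ (continuous functional calculus). This sequence is bounded and decreasing; its pointwise infimum is $\chi_{\{g=0\}}$, so $\ordlim_n a_n=\chi_{\{g=0\}}$ in $\Linf(X)$, while any continuous lower bound of $(a_n)$ is $\le 0$ on the dense set $\{g>0\}$ hence $\le 0$ everywhere, so $\inf_n a_n=0=\ordlim_n a_n$ in $C(X)$. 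Thus $\chi_{\{g=0\}}\in\mathscr{I}_0\subseteq\mathscr{I}$. Now separability enters: for a meager Baire $E=\bigcup_n N_n$ with $N_n$ nowhere dense, each $\overline{N_n}$ is closed nowhere dense, $X\setminus\overline{N_n}$ is open dense and therefore meets the countable dense set in a countable dense subset $\{e_k\}_k$; applying Urysohn's lemma to the disjoint closed sets $\{e_k\}$ and $\overline{N_n}$ gives $g_k\in C(X)$ with $0\le g_k\le 1$, $g_k(e_k)=1$, $g_k|_{\overline{N_n}}=0$, and then $g:=\sum_k 2^{-k}g_k\in C(X)$ has $\{g>0\}\supseteq\{e_k\}_k$ dense and $g|_{\overline{N_n}}=0$, so $Z_n:=g^{-1}(0)$ is a nowhere dense zero set containing $\overline{N_n}$. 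Hence $E\subseteq\bigcup_n Z_n$, and since $\chi_{Z_n}\in\mathscr{I}$, finite unions keep us in the ideal $\mathscr{I}$, $\mathscr{I}$ is $\sigma$-closed, and $\mathscr{I}$ absorbs multiplication by $\Linf(X)$, we conclude $\chi_E=\chi_E\cdot\chi_{\bigcup_n Z_n}\in\mathscr{I}$. Together with (i) this gives $\mathscr{I}=\M$.

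\textbf{Main obstacle.} The only delicate point is the separable, non-metrizable regime. If $X$ were metrizable one could simply take $Z_n=\overline{N_n}$, since closed sets are then zero sets and every meager Baire set is already a countable union of nowhere dense zero sets; in general, however, a closed nowhere dense set need not be a zero set, so this reduction fails, and the countable-dense-set-plus-Urysohn argument above is exactly what repairs it. I expect the write-up of that step (and, secondarily, the bookkeeping in (i) ensuring the \emph{same} order-limit decomposition is read in $C(X)$ and in $\Linf(X)$ so that \cref{lem:sup_chaus} applies) to be the only places demanding care. One should also keep in mind that without separability the equality is expected to fail, so the hypothesis is genuinely used and cannot be weakened in the proof.
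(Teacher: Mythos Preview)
Your argument is correct and follows the paper's overall strategy: for (i) reduce $\mathscr{I}_0\subseteq\M$ to the meager-complement statement of \cref{lem:sup_chaus}, and for (ii) use separability together with Urysohn's lemma to construct a decreasing sequence in $C(X)$ whose infimum is $0$ in $C(X)$ but whose pointwise infimum dominates the relevant indicator function, then invoke the ideal structure of $\mathscr{I}$.

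The concrete construction in (ii) differs. The paper fixes a closed nowhere dense $C$, chooses a dense sequence $(x_n)$ in $X\setminus C$, takes Urysohn functions $g_n$ with $g_n(x_n)=0$ and $g_n(C)=\{1\}$, and sets $f_n:=g_1\cdots g_n$; then the pointwise infimum $f$ lies in $\mathscr{I}_0$ and satisfies $f\ge\chi_C$, so $\chi_C\in\mathscr{I}$ by the hereditary property of the positive cone of a closed ideal. You instead build a single $g=\sum_k 2^{-k}g_k$ vanishing on $\overline{N_n}$ with $\{g>0\}$ dense, and use $a_n:=1-g^{1/n}$ to land the indicator of the \emph{zero set} $Z_n=g^{-1}(0)\supseteq\overline{N_n}$ directly in $\mathscr{I}_0$; the passage to $\chi_E$ then goes via the ideal absorption $\chi_E=\chi_E\cdot\chi_{\bigcup Z_n}$. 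Both work; the paper's product construction is marginally shorter (it avoids the detour through zero sets and the series), while your version has the pleasant feature that the element produced in $\mathscr{I}_0$ is itself a projection.
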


\begin{proof}
	\begin{enumerate}
		\item Since $\M$ is a $\sigma$-ideal, it is enough to show that $\mathscr{I}_0 \subseteq \M$ (\cref{rem:strongsigma_closed}\ref{it:strongsigma_closed_kernel}).
			Thus consider a sequence $(f_n)$ in $C(X)$ such that $\ordlim_n f_n = 0$ in $C(X)$ and $\ordlim_n f_n = f$ in $\Linf(X)$.
			Then the claim that $f$ has meager support follows by \Cref{prop:order_limit_commutative}.
		\item It remains to prove that $\M \subseteq \mathscr{I}$.
			For example by arguing is in the proof of \cref{cor:simple_dense}, it is easy to see that every $f \in \M$ can be written as a countable supremum of linear combinations of indicator functions $\chi_C$, with each $C$ a measurable meager set.
			Therefore, it suffices to prove that $\chi_C \in \mathscr{I}$ for every measurable meager set $C$.
	
			We first consider the case of $C$ being a closed nowhere dense set. 
			By assumption, there exists a dense sequence $(x_n)_{n \in \mathbb{N}}$ in $X$. 
			Since $\{x_n \: : \: n \in \mathbb{N}\} \cap (X \setminus C)$ is also dense, we can assume without loss of generality that $\{x_n \: : \: n \in \mathbb{N} \} \cap C = \emptyset$.
			By Urysohn's lemma, there exist continuous functions $g_n\colon X \to [0,1]$ such that $g_n(x_n)=0$ and $g_n(C)=\{1\}$ for all $n$.
			We now define $f_n \coloneqq g_1 \cdots g_n$. 
			By construction, $f_n \ge f_n\cdot g_{n+1} = f_{n+1}$ and $f_n(C) = \{1\}$ for all $n$.
			Moreover, $\inf_n f_n =0$ in $C(X)$ since the dense sequence $(x_n)_{n \in \mathbb{N}}$ is sent to zero by the pointwise infimum.
			%Then $\inf_n f_n=0$ also holds in the $\sigma$-completion $\Linf(X)/\mathscr{I}$, which in turn means that $\inf_n f_n \in \mathscr{I}$ in $\Linf(X)$. 
			If we denote the pointwise infimum by $f$, then we have $f \in \mathscr{I}_0$ by definition since the infimum in the Pedersen--Baire envelope $\Linf(X)$ is pointwise.
			Since in addition $0 \le \chi_C \le f$, we conclude $\chi_C \in \mathscr{I}$ because the positive cone of a closed ideal is hereditary~\cite[Theorem 1.5.2]{pedersen2018automgroups}.
	
			If $C$ is merely a measurable nowhere dense set, then its closure $\overline{C}$ is still nowhere dense, and we conclude again by the hereditary property: $0 \le \chi_C \le \chi_{\overline{C}} \in \mathscr{I}$ implies $\chi_C \in \mathscr{I}$.
			As $\mathscr{I}$ is $\sigma$-closed, the indicator function of any measurable meager set also belongs to $\mathscr{I}$, and therefore the claim $\M \subseteq \mathscr{I}$ follows.
			\qedhere
	\end{enumerate}
\end{proof}

\begin{ex}
	One may wonder whether $\mathscr{I} = \mathscr{I}_0$ in general.
	For an example where this is not the case, consider $\A \coloneqq C([0,1])$ with Pedersen--Baire envelope $\A^\infty = \Linf([0,1])$.
	Then the indicator function $f \coloneqq \chi_{\mathbb{Q} \cap [0,1]}$ belongs to $\mathscr{I}$ but not to $\mathscr{I}_0$.
	Indeed the former is clear by \cref{prop:separable_unique_completion}.
	For the latter, it is enough to note that $f$ is not of Baire class one (not the pointwise limit of a sequence of continuous functions).
	This is for example because the set of points of continuity of a Baire class one function is comeager~\cite[Theorem~24.14]{kechris95classical}.
\end{ex}

While our next auxiliary result could be deduced from \cite[Lemma 5.4.1]{saito15monotone} and \cite[Theorem 3.3]{wright72measures},\footnote{See also \cite[proof of Lemma 2.4]{wright74minimal} for an additional insight.} the following proof offers a conceptually clean argument.

\begin{lem}
	\label{lem:AI_disjoint}
	We have $\A \cap \mathscr{I} = \lbrace 0 \rbrace$.
\end{lem}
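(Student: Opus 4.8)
The plan is to exploit the canonical $\sigma$-normal ``evaluation'' maps into the state space. Concretely, let $\epsilon \colon \A^{\infty} \to \Linf(\state(\A))$ denote the $\sigma$-cpu map of \cref{prop:pb_state} (the one sending $x$ to the Baire function $\phi \mapsto \phi_{**}(x)$ on $\state(\A)$), which restricts on $\A$ to the $\sigma$-cpu embedding $\A \to C(\state(\A))$, $a \mapsto (\phi \mapsto \phi(a))$; recall this restriction is isometric, hence injective. Let $\M \subseteq \Linf(\state(\A))$ be the $\sigma$-ideal of Baire functions with meager support (\cref{ex:meager_ideal}), applied here to the compact Hausdorff space $\state(\A)$. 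The heart of the argument is the inclusion $\mathscr{I} \subseteq \epsilon^{-1}(\M)$. Granting this, the lemma follows at once: if $a \in \A \cap \mathscr{I}$ then $\epsilon(a) \in \M$ is a \emph{continuous} function on $\state(\A)$ with meager support, so the open set $\{\phi \mid \phi(a) \neq 0\}$ is meager; since $\state(\A)$ is compact Hausdorff, hence a Baire space, a meager open set is empty, so $\phi(a)=0$ for every state $\phi$ and therefore $a = 0$ by injectivity of $\A \to C(\state(\A))$.

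To establish $\mathscr{I} \subseteq \epsilon^{-1}(\M)$, first I would check that $\epsilon^{-1}(\M)$ is a strong $\sigma$-subspace of $\A^{\infty}$. Indeed $\M$ is a $\sigma$-ideal, hence strongly $\sigma$-closed (\cref{rem:strongsigma_closed}\ref{it:strongsigma_closed_kernel}), and $\epsilon$ is $\sigma$-normal, hence $*$-linear and compatible with order limits (\cref{rem:sigma_normal_orderlimit}), so the preimage of a strongly $\sigma$-closed $*$-subspace is again one. Since $\mathscr{I}$ is by definition the strong $\sigma$-closure of $\mathscr{I}_0$, it now suffices to show $\mathscr{I}_0 \subseteq \epsilon^{-1}(\M)$. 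Take $x \in \mathscr{I}_0$; splitting into real and imaginary parts, we may assume $x$ is witnessed by a self-adjoint sequence $(a_n)$ in $\A$ with $\ordlim_n a_n = x$ in $\A^{\infty}$ and $\ordlim_n a_n = 0$ in $\A$. Write $a_n = b_n - c_n$ with $(b_n) \nearrow s$ and $(c_n) \nearrow s$ in $\A_{\sa}$, the two suprema coinciding precisely because $\ordlim^{\A}_n a_n = 0$. Applying $\epsilon$ and using that it preserves order limits and suprema of bounded increasing sequences gives
\[
	\epsilon(x) = \sup_n \epsilon(b_n) - \sup_n \epsilon(c_n)
\]
in $\Linf(\state(\A))$, where both suprema are pointwise (\cref{ex:commutative_linf}). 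On the other hand, applying the $\sigma$-normal map $\A \to C(\state(\A))$ of \cref{prop:pb_state} to the decreasing sequence $s - b_n \searrow 0$ yields $\inf_n\bigl(\epsilon(s) - \epsilon(b_n)\bigr) = 0$ in $C(\state(\A))$, whence \cref{lem:sup_chaus}\ref{it:sup_chaus_inf0} shows that $N_b \coloneqq \{\phi \mid \sup_n \epsilon(b_n)(\phi) < \epsilon(s)(\phi)\}$ is meager; likewise $N_c$. Since $b_n \le s$ forces $\epsilon(b_n) \le \epsilon(s)$ pointwise, off the meager set $N_b \cup N_c$ both pointwise suprema equal $\epsilon(s)(\phi)$, so $\epsilon(x)$ vanishes there; hence $\epsilon(x) \in \M$, completing the proof.

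The main obstacle, and the reason this is not immediate, is that the inclusion $\A \hookrightarrow \A^{\infty}$ need \emph{not} preserve suprema (cf.~\cref{rem:alg_sets_not_sigma}), so the suprema of $(b_n)$ and $(c_n)$ computed in $\A$ versus in $\Linf(\state(\A))$ genuinely differ -- this is exactly what allows $\mathscr{I}_0 \neq \{0\}$, and it means one cannot argue by saying that $\sigma$-states vanish on $\mathscr{I}_0$ (they do not). What rescues the argument is that, although each of the two suprema $\sup_n \epsilon(b_n)$ and $\sup_n \epsilon(c_n)$ may differ from $\epsilon(s)$ on a large non-meager set, the $\sigma$-normality of $\A \to C(\state(\A))$ combined with \cref{lem:sup_chaus}\ref{it:sup_chaus_inf0} forces each to agree with $\epsilon(s)$ off a meager set, so their \emph{difference} $\epsilon(x)$ is supported on a meager set and hence lies in $\M$.
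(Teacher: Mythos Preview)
Your proof is correct and follows essentially the same approach as the paper: embed $\A^{\infty}$ into $\Linf(\state(\A))$ via \cref{prop:pb_state}, show that $\mathscr{I}_0$ (hence its strong $\sigma$-closure $\mathscr{I}$) lands in the meager-support $\sigma$-ideal $\M$ using \cref{lem:sup_chaus}, and conclude by the Baire category theorem. Your write-up is considerably more explicit than the paper's terse version---in particular you spell out the decomposition $a_n = b_n - c_n$, the use of $\sigma$-normality of $\A \to C(\state(\A))$, and why $\epsilon^{-1}(\M)$ is strongly $\sigma$-closed---but the underlying argument is the same.
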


\begin{proof}
	We consider the embedding into $\Linf(\state(\A))$ from \cref{prop:pb_state}.
	Then since countable infima in $C(\state(\A))$ are pointwise outside of a meager set by \cref{lem:sup_chaus}, every $a \in \mathscr{I}_0$ is supported on a meager subset of $\state(\A)$.
	This implies that $\mathscr{I}_0$ is contained in the $\sigma$-ideal of Baire measurable functions with meager support, and therefore so is $\mathscr{I}$.
	The claim now follows by the Baire category theorem.
\end{proof}

\begin{prop}\label{prop:minimal_description}
	With notation as above, $\A^{\infty}/ \mathscr{I}$ is the universal $\sigma$-completion of $\A$.
\end{prop}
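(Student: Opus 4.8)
The plan is to check that the pair $(\A^\infty/\mathscr{I}, u)$, where $u\colon \A \hookrightarrow \A^\infty \twoheadrightarrow \A^\infty/\mathscr{I}$ is the composite of the inclusion with the quotient map $q$, satisfies the three defining conditions of a $\sigma$-completion of $\A$, and then to prove that it is minimal with respect to $\preceq$. The minimality will come for free from the universal property of the Pedersen--Baire envelope (\cref{thm:pb_univ}) once we observe that every \starshom{} out of $\A^\infty$ kills $\mathscr{I}$.

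\textbf{Step 1: $(\A^\infty/\mathscr{I}, u)$ is a $\sigma$-completion.} That $\A^\infty/\mathscr{I}$ is a \cstars{} follows from \cref{lem:minimal_sigma_sub} together with \cref{fact:sigma_closure}\ref{it:sigmaclosedideal}, and $q$ is then a surjective \starshom. Since $\A^\infty$ is $\sigma$-generated by $\A$, its image $\A^\infty/\mathscr{I}$ is $\sigma$-generated by $u(\A)$ by a routine preimage argument. Injectivity of $u$ is exactly the content of \cref{lem:AI_disjoint}, namely $\A \cap \mathscr{I} = \{0\}$. The only point needing real care is that $u$ is a \starshom, i.e. that it preserves those bounded monotone suprema that happen to exist in $\A$. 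Given $(a_n)\nearrow a$ in $\A$, I would let $a' \coloneqq \sup_n a_n$ computed in $\A^\infty$ (so $a' \le a$, since $a$ remains an upper bound there), and observe that the decreasing sequence $(a - a_n) \subseteq \A$ has order limit $0$ in $\A$ but order limit $a - a'$ in $\A^\infty$ (both identifications via \cref{fact:suprema_commute}). Hence $a - a' \in \mathscr{I}_0 \subseteq \mathscr{I}$, so $u(a) = q(a) = q(a') = \sup_n q(a_n) = \sup_n u(a_n)$, the middle equality using $\sigma$-normality of $q$. Thus $u$ is a \starshom, completing this step.

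\textbf{Step 2: minimality.} Let $(\B,k)$ be an arbitrary $\sigma$-completion of $\A$. By \cref{thm:pb_univ} the $*$-homomorphism $k$ extends uniquely to a \starshom{} $\tilde k\colon \A^\infty \to \B$ with $\tilde k|_{\A} = k$. I claim $\mathscr{I} \subseteq \ker \tilde k$. As $\ker \tilde k$ is strongly $\sigma$-closed (\cref{rem:strongsigma_closed}\ref{it:strongsigma_closed_kernel}) and $\mathscr{I}$ is the strong $\sigma$-closure of $\mathscr{I}_0$, it is enough to show $\mathscr{I}_0 \subseteq \ker \tilde k$. For $a \in \mathscr{I}_0$ witnessed by $(a_n)\subseteq \A$ with $\ordlim_n a_n = a$ in $\A^\infty$ and $\ordlim_n a_n = 0$ in $\A$, both $\tilde k$ and $k$ are $\sigma$-normal, hence preserve order limits (\cref{rem:sigma_normal_orderlimit}), giving
\[
	\tilde k(a) = \tilde k\bigl(\ordlim_n a_n\bigr) = \ordlim_n \tilde k(a_n) = \ordlim_n k(a_n) = k\bigl(\ordlim_n a_n\bigr) = k(0) = 0.
\]
Therefore $\tilde k$ factors as a \starshom{} $\bar k\colon \A^\infty/\mathscr{I} \to \B$ with $\bar k\circ u = k$, which witnesses $(\A^\infty/\mathscr{I}, u) \preceq (\B,k)$. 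Since $(\B,k)$ was arbitrary, $(\A^\infty/\mathscr{I}, u)$ is $\preceq$-minimal, hence the universal $\sigma$-completion of $\A$.

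\textbf{Main obstacle.} I expect everything except the \starshom{} property of $u$ in Step 1 to be a straightforward assembly of the universal property of $(-)^\infty$, order-limit preservation by $\sigma$-normal maps, and \cref{lem:minimal_sigma_sub,lem:AI_disjoint}. The genuinely non-formal point is recognizing that the ``defect'' $a - \sup^{\A^\infty}_n a_n$ between the supremum taken in $\A$ and the one taken in $\A^\infty$ always lies in $\mathscr{I}_0$; this is precisely where the definition of $\mathscr{I}_0$ in terms of order limits is tailored to make the construction work, and where one must be careful that the monotone sequence $(a-a_n)$ is order convergent in both $\A$ and $\A^\infty$ with the expected limits.
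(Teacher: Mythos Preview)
Your proof is correct and follows essentially the same approach as the paper's. The paper compresses your Step~1 $\sigma$-normality argument into a single line by invoking \cref{rem:sigmanormal_inf0} and noting $\mathscr{I}_0 \subseteq \mathscr{I}$, but your explicit ``defect'' computation $a - a' \in \mathscr{I}_0$ is exactly what underlies that citation; Step~2 matches the paper's minimality argument verbatim in structure.
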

\begin{proof}
	By \cref{lem:AI_disjoint}, the $\ast$-homomorphism $j \colon \A \to \A^{\infty}/\mathscr{I}$ is injective.
	It is a $\sigma$-homomorphism by \cref{rem:sigmanormal_inf0} because $\mathscr{I}_0 \subseteq \mathscr{I}$. 
	Its image is $\sigma$-generating since this already holds before taking the quotient.
	It thus remains to check the universality of the $\sigma$-completion.
	If $k \colon \A \hookrightarrow \B$ is any $\sigma$-completion of $\A$, then we obtain an induced \starshom{} $k^{\infty}\colon \A^\infty \to \B$ by the universal property of $\A^\infty$.
	Now the assumption that $k$ is a \starshom{} itself shows that $\mathscr{I}_0 \subseteq \ker(k^{\infty})$, hence $\mathscr{I} \subseteq \ker(k^{\infty})$ by \cref{rem:strongsigma_closed}\ref{it:strongsigma_closed_kernel}.
\end{proof}

Using \Cref{lem:minimal_sigma_sub}, we can actually prove a stronger universal property of the universal $\sigma$-com\-ple\-tion.

\begin{thm}[Universal property of the universal $\sigma$-completion]\label{cor:univ_minimal_2} 
    Consider a \cstar{} $\A$ and let $j \colon \A \to \univ{\A}$ be its universal $\sigma$-com\-ple\-tion.
	Then for every \cstars{} $\B$, every \scpu{} map $\phi \colon \A \to \B$ extends uniquely to a \scpu{} map $\tilde{\phi} \colon \univ{\A} \to \B$, i.e. $\tilde{\phi} j = \phi$.
\end{thm}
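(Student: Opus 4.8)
The plan is to reduce everything to the explicit description of the universal $\sigma$-completion as $\univ{\A} \cong \A^{\infty}/\mathscr{I}$ obtained in \cref{prop:minimal_description}, where $\A^{\infty}$ is the Pedersen--Baire envelope and $\mathscr{I}$ is the strong $\sigma$-closure of $\mathscr{I}_{0}$. With this in hand, the statement amounts to three things: a \scpu{} map out of $\A$ extends to $\A^{\infty}$ (which we already have from \cref{cor:pb_univ2}), this extension annihilates $\mathscr{I}$, and it therefore descends to a \scpu{} map on the quotient. Uniqueness will then be formal.

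For existence, first I would take a \scpu{} map $\phi\colon \A \to \B$ and, noting that $\phi$ is in particular \cpu{}, apply \cref{cor:pb_univ2} to obtain a \scpu{} extension $\phi^{\infty}\colon \A^{\infty} \to \B$ with $\phi^{\infty}|_{\A} = \phi$. The crux is the claim $\mathscr{I} \subseteq \ker\phi^{\infty}$, where $\ker\phi^{\infty} \coloneqq (\phi^{\infty})^{-1}(0)$: this is a $*$-subspace (positivity makes $\phi^{\infty}$ involution-preserving) and is strongly $\sigma$-closed because $\phi^{\infty}$, being $\sigma$-normal, preserves order limits (\cref{rem:sigma_normal_orderlimit} and \cref{rem:strongsigma_closed}). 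Since $\mathscr{I}$ is the strong $\sigma$-closure of $\mathscr{I}_{0}$, it suffices to show $\mathscr{I}_{0} \subseteq \ker\phi^{\infty}$. So let $a \in \mathscr{I}_{0}$, witnessed by a sequence $(a_{n})$ in $\A$ with $\ordlim_{n} a_{n} = a$ in $\A^{\infty}$ and $\ordlim_{n} a_{n} = 0$ in $\A$. Then, using that $\phi^{\infty}$ and $\phi$ both preserve order limits,
\[
	\phi^{\infty}(a) = \ordlim_{n} \phi^{\infty}(a_{n}) = \ordlim_{n} \phi(a_{n}) = \phi(0) = 0,
\]
as wanted. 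Hence $\phi^{\infty}$ factors as $\phi^{\infty} = \tilde{\phi}\circ\pi$ through the quotient $\pi\colon \A^{\infty} \twoheadrightarrow \A^{\infty}/\mathscr{I} = \univ{\A}$ with $\tilde{\phi}$ linear; $\tilde{\phi}$ is \cpu{} since $\pi$ is a surjective unital $*$-homomorphism and positive elements (and their matrix amplifications) lift through the quotient by the closed ideal $\mathscr{I}$, and $\tilde{\phi}$ extends $\phi$ because $\tilde{\phi}(j(a)) = \phi^{\infty}(a) = \phi(a)$.

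The step I expect to be the main obstacle is verifying that $\tilde{\phi}$ is $\sigma$-normal. The plan is to combine the fact that $\pi$ is a surjective \starshom{} with the lifting property of quotients by $\sigma$-ideals: given $(\bar a_{n}) \nearrow \bar a$ in $\univ{\A}$, there is a bounded monotone increasing sequence $(a_{n})$ in $\A^{\infty}$ with $\pi(a_{n}) = \bar a_{n}$ and $\pi(\sup_{n} a_{n}) = \bar a$, and then
\[
	\tilde{\phi}(\bar a) = \phi^{\infty}\big(\sup_{n} a_{n}\big) = \sup_{n} \phi^{\infty}(a_{n}) = \sup_{n} \tilde{\phi}(\bar a_{n})
\]
by $\sigma$-normality of $\phi^{\infty}$; alternatively one can pass through \cref{rem:sigmanormal_inf0} and lift decreasing-to-$0$ sequences instead. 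Producing this lift is the delicate point: starting from positive lifts of the increments $\bar a_{n+1}-\bar a_{n}$ of controlled norm, one telescopes and must keep the partial sums order bounded in $\A^{\infty}$, which comes down to lifting the order interval $[0,\bar a]$ along $\pi$. This is a standard but not entirely trivial fact about quotients of monotone $\sigma$-complete \cstar{}s (in the spirit of the Saito--Wright theory), and if a self-contained account is preferred it should be isolated as a separate lemma.

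Finally, uniqueness is formal: if $\tilde{\phi}_{1},\tilde{\phi}_{2}\colon \univ{\A} \to \B$ are \scpu{} maps both restricting to $\phi$ on $j(\A)$, then the set $\{x \in \univ{\A} \mid \tilde{\phi}_{1}(x) = \tilde{\phi}_{2}(x)\}$ is a $*$-subspace which is $\sigma$-closed (both maps preserve suprema of bounded increasing sequences) and contains $j(\A)$; since $\univ{\A}$ is $\sigma$-generated by $j(\A)$, the two maps agree. The same argument shows more generally that a \scpu{} map on a \cstars{} is determined by its restriction to any $\sigma$-generating $*$-subspace, which is what makes the whole setup work.
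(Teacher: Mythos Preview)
Your argument is correct and follows exactly the same route as the paper: extend $\phi$ to $\phi^{\infty}\colon \A^{\infty}\to\B$ via \cref{cor:pb_univ2}, check that $\mathscr{I}_0\subseteq\ker\phi^{\infty}$ using that both $\phi^{\infty}$ and $\phi$ preserve order limits, and conclude $\mathscr{I}\subseteq\ker\phi^{\infty}$ because kernels of \scpu{} maps are strongly $\sigma$-closed. The paper's proof stops at ``obtain the desired factorization'' and does not spell out the $\sigma$-normality of $\tilde{\phi}$ that you flag; the monotone lift you need is precisely what underlies \cref{fact:sigma_closure}\ref{it:sigmaclosedideal} (Saito--Wright, Proposition~2.2.21), so you are right that it is standard and can be quoted rather than reproved.
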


Thus we obtain the existence of an adjunction
\begin{equation}\label{eq:univ_adjunction}
    \begin{tikzcd}
        \Calgscpu \ar[rr,phantom, "\bot"]\ar[rr,bend left=15,"\univ{(-)}"] && \SigmaCcpu \ar[ll,bend left=15]
    \end{tikzcd}
\end{equation}
where the lower arrow is the forgetful functor, while its left adjoint $\univ{(-)}$ sends $\A$ to its universal $\sigma$-completion $\univ{\A}$. 
Since the unit components $A \hookrightarrow \univ{A}$ are monomorphisms, it follows that this left adjoint is faithful.
\begin{proof}
	This is by an argument similar to the proof of \Cref{prop:minimal_description}.
	We consider the \scpu{} map $\phi^{\infty} \colon \A^{\infty} \to \B$ given by \cref{cor:pb_univ2}. 
	Then the kernel of $\phi^\infty$ contains $\mathscr{I}_0$, since if a sequence $(a_n)$ in $\A$ satisfies $\ordlim_n a_n =0$, then we also have 
    	\[
		\phi^\infty( \ordlim_n a_n ) = \ordlim_n \phi^\infty(a_n) = \ordlim_n \phi(a_n) = \phi(\ordlim_n a_n) = 0,
	\]
	where we have used the assumption that $\phi$ is \scpu.
    	Since the kernel of a \scpu{} map is a strong $\sigma$-subspace (\cref{rem:strongsigma_closed}\ref{it:strongsigma_closed_kernel}), this means that $\mathscr{I}$ is contained in the kernel as well,
	and hence we obtain the desired factorization across $\univ{\A} = \A^{\infty}/\mathscr{I}$.
\end{proof}

\begin{rem}
	\cref{cor:univ_minimal_2} has the flavor of a noncommutative version of Carath\'{e}odory's extension theorem.
	However, it does not exactly recover that result in the commutative case.
	To see why, recall that $\sigma$-algebras generated by algebras of sets are not necessarily $\sigma$-completions (\cref{rem:alg_sets_not_sigma}).
	The upcoming \cref{prop:universal_completions} implies that such counterexamples can be transferred to the C*-setting as well.
	%An even stronger argument will be provided by \cref{rem:univ_reg_differ2}, where we will see that regular $\sigma$-completions and universal $\sigma$-completions can differ.
	%This again follows by translating the corresponding \cref{ex:tensor_product} for Boolean algebras.
	%In particular, an algebra of sets can generate the regular $\sigma$-completion instead of the universal one, thus the statement above does not recover the full potential of Carath\'{e}odory's extension theorem.
\end{rem}

\begin{rem}\label{rem:univ_prop_hom}
	Similar to the Boolean algebra case from \cref{prop:sigma_completion}, 
	it is clear that the universal property of the universal $\sigma$-completion restricts to \starshom{}s: every \starshom{} $\A \to \B$ extends uniquely to a \starshom{} $\univ{\A} \to \B$.
	In other words, the adjunction in \eqref{eq:univ_adjunction} restricts to the following:
	\[
    \begin{tikzcd}
        \Calgshom \ar[rr,phantom, "\bot"]\ar[rr,bend left=15,"\univ{(-)}"] && \SigmaC \ar[ll,bend left=15]
    \end{tikzcd}
\]
\end{rem}

\begin{rem}
	\cref{cor:univ_minimal_2} holds also for $\sigma$-normal positive maps, using~\cite[Proposition 5.4.7 and Exercise 5.4.8]{saito15monotone} instead of \cref{cor:pb_univ2} for the proof.
\end{rem}

In the following example, we describe the regular $\sigma$-completion in the commutative case.

\begin{ex}\label{ex:commutative_regular}
	For $X$ a compact Hausdorff space, we again consider the $\sigma$-ideal $\M \subseteq \Linf(X)$ of Baire measurable functions with meager support.
	Then the regular $\sigma$-completion of $C(X)$ is
	\[
		\reg{C(X)} = \Linf(X) / \M,
	\]
	where the $\sigma$-injection is given by the composition $C(X) \hookrightarrow \Linf(X) \twoheadrightarrow \Linf(X)/\M$, which is indeed injective by the Baire category theorem.
	The regularity property~\eqref{eq:regular_completion} holds by \cite[Corollary~4.2.11]{saito15monotone}.
\end{ex}

Thus by \cref{prop:separable_unique_completion}, separability implies that there is only one $\sigma$-completion. This is not the case in general, as we will see in \cref{rem:univ_reg_differ2}.

\begin{prop}\label{prop:separable_unique_completion2}
	Let $X$ be a separable compact Hausdorff space. Then $C(X)$ has a unique (up to isomorphism) $\sigma$-completion.
\end{prop}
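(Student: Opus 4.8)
The plan is to pin down the universal $\sigma$-completion of $C(X)$ explicitly, observe that under separability it coincides with the regular one, and then use the regularity condition \eqref{eq:regular_completion} to force every other $\sigma$-completion to be isomorphic to it.

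First I would identify $\univ{C(X)}$. By \cref{cor:baire_eq} the Pedersen--Baire envelope of $C(X)$ is $\Linf(X)$, so \cref{prop:minimal_description} gives $\univ{C(X)} = \Linf(X)/\mathscr{I}$ with $\sigma$-injection the composite $\iota \colon C(X) \hookrightarrow \Linf(X) \twoheadrightarrow \Linf(X)/\mathscr{I}$; and since $X$ is separable, part (ii) of \cref{prop:separable_unique_completion} yields $\mathscr{I} = \M$. Comparing with \cref{ex:commutative_regular}, the pair $(\Linf(X)/\M,\iota)$ is \emph{also} the regular $\sigma$-completion of $C(X)$, so it satisfies the regularity condition \eqref{eq:regular_completion}: every self-adjoint $b \in \Linf(X)/\M$ equals $\sup\{\iota(g) \mid g \in C(X)_{\sa},\ \iota(g) \le b\}$.

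Next I would take an arbitrary $\sigma$-completion $(\B,j)$ of $C(X)$. Since $\univ{C(X)}$ is minimal with respect to $\preceq$, there is a \starshom{} $q \colon \Linf(X)/\M \to \B$ with $q\iota = j$, and it is surjective because $\B$ is $\sigma$-generated by $j(C(X)) = q(\iota(C(X)))$ (the same inference as in \cref{rem:sigmageneral}). It remains to show $q$ is injective, and since $\ker q$ is a $\ast$-ideal it suffices to show it contains no nonzero positive element. Given $b \ge 0$ with $q(b) = 0$, I would apply $q$ to each witness in the regularity supremum: for $g \in C(X)_{\sa}$ with $\iota(g) \le b$, positivity (hence monotonicity) of $q$ gives $j(g) = q(\iota(g)) \le q(b) = 0$, and since $j$ is an injective $\ast$-homomorphism it reflects positivity, so $g \le 0$ and therefore $\iota(g) \le 0$. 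Thus every element of $\{\iota(g) \mid g \in C(X)_{\sa},\ \iota(g) \le b\}$ is $\le 0$, so its supremum $b$ is $\le 0$; combined with $b \ge 0$ this gives $b = 0$. Hence $\ker q = \{0\}$, $q$ is a bijective $\ast$-homomorphism, hence an isomorphism of \cstars{}s, and $\B \cong \Linf(X)/\M$, which proves uniqueness.

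The one point that needs care is that $q$, being only a \starshom{}, preserves \emph{countable} suprema, whereas the regularity supremum ranges over a possibly uncountable set — so the naive move of applying $q$ to the whole supremum is not licensed. The observation that makes the argument go through is that one never needs $q$ to preserve that supremum: mere monotonicity of $q$ applied to each individual witness $\iota(g)$ suffices, after which the supremum is computed entirely inside $\Linf(X)/\M$. The remaining ingredients (identification of the two distinguished $\sigma$-completions via \cref{prop:minimal_description}, \cref{prop:separable_unique_completion} and \cref{ex:commutative_regular}; $\preceq$-minimality of the universal $\sigma$-completion; and the fact that injective $\ast$-homomorphisms reflect positivity) are either recorded above or standard.
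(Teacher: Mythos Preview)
Your proof is correct. The route differs from the paper's, though both begin by invoking \cref{prop:separable_unique_completion} and \cref{ex:commutative_regular} to identify $\univ{C(X)}$ with $\reg{C(X)}$. From there the paper appeals to Wright's result \cite[Corollary~3.4]{wright76regular} that the regular $\sigma$-completion is $\preceq$-\emph{maximal}: combined with $\preceq$-minimality of the universal one and their coincidence, any $\sigma$-completion is sandwiched and hence isomorphic by antisymmetry of $\preceq$. You instead bypass Wright's maximality result entirely: you take only the surjection $q$ coming from $\preceq$-minimality and prove its injectivity by hand, using the regularity condition~\eqref{eq:regular_completion} and the observation that monotonicity of $q$ on individual witnesses suffices. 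Your argument is thus more self-contained, while the paper's is shorter by outsourcing the work to the literature; in effect you have re-derived, in this special case, the piece of Wright's maximality theorem that the paper invokes.
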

\begin{proof}
	By \cref{prop:separable_unique_completion}, the universal and the regular $\sigma$-completions coincide. 
	Since the universal $\sigma$-completion is $\preceq$-minimal and the regular is $\preceq$-maximal \cite[Corollary 3.4]{wright76regular}, all the other $\sigma$-completions are necessarily isomorphic to this one.
\end{proof}

Interestingly, under these conditions the $\sigma$-completion is monotone complete, see \cite[Theorem 4.2.9 and Corollary 4.2.24]{saito15monotone}.

\begin{cor}
	Let $X$ be a second countable compact Hausdorff space with no isolated points. 
	Then $C(X)$ has no $\sigma$-state.
\end{cor}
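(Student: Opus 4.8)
The plan is to reduce the statement to \cref{prop:meager_nosigmastate} by passing through the universal $\sigma$-completion. Suppose, for contradiction, that there is a $\sigma$-state $\phi \colon C(X) \to \mathbb{C}$. Since $\mathbb{C}$ is commutative, positivity of $\phi$ already implies complete positivity, so $\phi$ is a \scpu{} map. I would then invoke the universal property of the universal $\sigma$-completion (\cref{cor:univ_minimal_2}) with $\B = \mathbb{C}$: it extends $\phi$ uniquely to a \scpu{} map $\tilde{\phi} \colon \univ{C(X)} \to \mathbb{C}$, that is, to a $\sigma$-state on $\univ{C(X)}$.

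The next step is to identify $\univ{C(X)}$. Being second countable and compact Hausdorff, $X$ is metrizable by Urysohn's metrization theorem, hence separable, so the second part of \cref{prop:separable_unique_completion} gives $\mathscr{I} = \M$. Combining \cref{prop:minimal_description} with the identification of the Pedersen--Baire envelope of $C(X)$ as $\Linf(X)$ (\cref{cor:baire_eq}), we obtain $\univ{C(X)} = \Linf(X)/\M$, where $\M$ is the $\sigma$-ideal of Baire functions with meager support. (Equivalently, \cref{prop:separable_unique_completion2} together with \cref{ex:commutative_regular} identifies this unique $\sigma$-completion with $\reg{C(X)} = \Linf(X)/\M$.)

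Finally, \cref{prop:meager_nosigmastate}, applied to our $X$ --- second countable, compact Hausdorff, with no isolated points --- asserts precisely that $\Linf(X)/\M$ has no $\sigma$-states. This contradicts the existence of $\tilde{\phi}$, and therefore $C(X)$ admits no $\sigma$-state.

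I do not expect any genuine obstacle, since all the required ingredients are already in place; the only point worth double-checking is that a $\sigma$-state on $C(X)$ qualifies as a \scpu{} map, so that \cref{cor:univ_minimal_2} applies --- and this is immediate. As an alternative one could argue directly, bypassing $\sigma$-completions: by the Riesz--Markov--Kakutani theorem a $\sigma$-state on $C(X)$ corresponds to a Baire probability measure $\mu$ which, by \cref{lem:sup_chaus}\ref{it:sup_chaus_inf0} and the monotone convergence theorem, must vanish on every meager Baire set; since $X$ has no isolated points, $\mu$ is nonatomic, and \cite[Theorem 16.5]{oxtoby80measure} then writes $X$ as the union of a meager set and a $G_{\delta}$ set of measure zero, forcing $\mu(X) = 0$, a contradiction. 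This essentially reproduces the proof of \cref{prop:meager_nosigmastate}.
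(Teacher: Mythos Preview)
Your proof is correct and follows essentially the same route as the paper: factor a putative $\sigma$-state through the universal $\sigma$-completion via \cref{cor:univ_minimal_2}, identify that completion with $\Linf(X)/\M$ using separability of $X$ (the paper cites \cref{prop:separable_unique_completion2} and \cref{ex:commutative_regular}, which you also mention as the alternative identification), and conclude by \cref{prop:meager_nosigmastate}. Your additional direct argument at the end is indeed just the proof of \cref{prop:meager_nosigmastate} unwound.
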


\begin{proof}
	By \Cref{ex:commutative_regular,prop:separable_unique_completion2}, the unique $\sigma$-completion of $C(X)$ is $\Linf(X) / \M$.
	Since every $\sigma$-state would have to factor across this $\sigma$-completion by \cref{cor:univ_minimal_2}, the claim follows by \cref{prop:meager_nosigmastate}.
\end{proof}

\begin{prop}
	Let $\mathcal{H}$ be a separable infinite-dimensional Hilbert space.
	Then the only $\sigma$-com\-ple\-tion of $\khilb_1$, the algebra of compact operators plus multiples of the identity, is $\bhilb$.
\end{prop}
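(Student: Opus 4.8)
\emph{Proof plan.} The plan is to identify $\bhilb$, with the natural inclusion $\iota\colon\khilb_1\hookrightarrow\bhilb$, as \emph{both} the universal and the regular $\sigma$-completion of $\khilb_1$. Once this is done, uniqueness of arbitrary $\sigma$-completions is immediate: since the universal $\sigma$-completion is $\preceq$-minimal and the regular one is $\preceq$-maximal~\cite[Corollary~3.4]{wright76regular}, every $\sigma$-completion $(\B,j)$ satisfies $(\bhilb,\iota)\preceq(\B,j)\preceq(\bhilb,\iota)$, and the two comparison morphisms compose to the identity by the uniqueness built into $\preceq$, so $\B\cong\bhilb$ over $\khilb_1$. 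Throughout, one must keep track of \emph{where} each supremum, infimum or order limit is taken, since $\khilb_1$ is \emph{not} itself monotone $\sigma$-complete.

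First I would fix the setup. By \cref{ex:hilb_pb} the Pedersen--Baire envelope is $\khilb_1^{\infty}=\bhilb\oplus\mathbb{C}$, with $k+\lambda1\mapsto(k+\lambda1,\lambda)$, and $\bhilb$ is a \cstars{} (being a $W^{\ast}$-algebra) which, by separability of $\mathcal{H}$, is $\sigma$-generated by the compact operators $\khilb\subseteq\khilb_1$. The one delicate point here, to be settled at once, is that $\iota$ is $\sigma$-normal: if $(b_n)\searrow0$ in $\khilb_1$ but $b\coloneqq\inf_n b_n\neq0$ in $\bhilb$, then $b\geq0$ and $b\neq0$, so a spectral projection of $b$ dominates a nonzero \emph{finite-rank} projection $p$ with $\varepsilon p\leq b\leq b_n$ for all $n$ and some $\varepsilon>0$; as $p\in\khilb_1$, this is a nonzero positive lower bound for $(b_n)$ in $\khilb_1$, contradicting $\inf_n b_n=0$ there. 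Hence $(\bhilb,\iota)$ is a $\sigma$-completion of $\khilb_1$.

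Next I would verify that $(\bhilb,\iota)$ is the \emph{regular} $\sigma$-completion by checking~\eqref{eq:regular_completion}. Given $b\in\bhilb_{\sa}$, translating by the central scalar $\norm{b}$ reduces to $b\geq0$; choosing finite-rank projections $q_N\uparrow1$ and putting $a_N\coloneqq b^{1/2}q_N b^{1/2}\in\khilb\subseteq\khilb_1$, the $a_N$ are increasing, satisfy $a_N\leq b$, and by commutation of compression with suprema (\cref{fact:suprema_commute}) have $\sup_N a_N=b^{1/2}\bigl(\sup_N q_N\bigr)b^{1/2}=b$ in $\bhilb$ (the sup over the full set exists since $\bhilb$ is a $W^\ast$-algebra). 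Thus $b=\sup\{a\in\khilb_1\mid a\leq b\}$, so $(\bhilb,\iota)=\reg{\khilb_1}$.

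Finally I would identify the \emph{universal} $\sigma$-completion using the description $\univ{\khilb_1}=\khilb_1^{\infty}/\mathscr{I}$ of \cref{prop:minimal_description}, by showing $\mathscr{I}=0\oplus\mathbb{C}$. For the inclusion $\supseteq$: with $q_n$ the projection onto the span of the first $n$ basis vectors, the sequence $a_n\coloneqq1-q_n\in\khilb_1$ has $\ordlim_n a_n=0$ in $\khilb_1$ via the decomposition $a_n=(1-q_1)-(q_n-q_1)$, both summand sequences having supremum $1-q_1$ in $\khilb_1$; computing the \emph{same} decomposition inside $\khilb_1^{\infty}=\bhilb\oplus\mathbb{C}$ gives $\ordlim_n a_n=(1-q_1,1)-(1-q_1,0)=(0,1)$, so $(0,1)\in\mathscr{I}_0\subseteq\mathscr{I}$, and since $\mathscr{I}$ is a $\sigma$-ideal (\cref{lem:minimal_sigma_sub}) we get $0\oplus\mathbb{C}=\mathbb{C}\cdot(0,1)\subseteq\mathscr{I}$. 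For the inclusion $\subseteq$: the projection $\pi\colon\bhilb\oplus\mathbb{C}\to\bhilb$ is a $\sigma$-normal $\ast$-homomorphism with $\ker\pi=0\oplus\mathbb{C}$, so $0\oplus\mathbb{C}$ is strongly $\sigma$-closed (\cref{rem:strongsigma_closed}), and it suffices to show $\mathscr{I}_0\subseteq0\oplus\mathbb{C}$; if $(b,\mu)\in\mathscr{I}_0$ is witnessed by $(a_n)\subseteq\khilb_1$ with $\ordlim_n a_n=(b,\mu)$ in $\khilb_1^{\infty}$ and $\ordlim_n a_n=0$ in $\khilb_1$, then applying $\pi$, which preserves order limits, and noting that $\pi$ composed with the Pedersen--Baire inclusion $\khilb_1\hookrightarrow\khilb_1^{\infty}$ equals $\iota$, one gets $b=\ordlim_n\iota(a_n)$ in $\bhilb$, which equals $\iota(0)=0$ since $\iota$ is $\sigma$-normal and $\ordlim_n a_n=0$ in $\khilb_1$. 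Hence $\mathscr{I}=0\oplus\mathbb{C}$ and $\univ{\khilb_1}=(\bhilb\oplus\mathbb{C})/(0\oplus\mathbb{C})\cong\bhilb$ with inclusion $\iota$. Combining with the previous paragraph, $\univ{\khilb_1}=(\bhilb,\iota)=\reg{\khilb_1}$, and the squeeze argument of the first paragraph completes the proof. I expect the main obstacle, and the place requiring the most care, to be precisely the failure of $\khilb_1$ to be monotone $\sigma$-complete: it is what forces the finite-rank-projection argument for $\sigma$-normality of $\iota$ and the explicit witness $a_n=1-q_n$ for $(0,1)\in\mathscr{I}_0$, which are the two concrete inputs on which the whole argument hinges.
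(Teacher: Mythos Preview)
Your proof is correct and takes a genuinely different route from the paper's. The paper's argument is shorter and structure-theoretic: it invokes \cref{rem:sigmageneral} to say that every $\sigma$-completion is $\khilb_1^{\infty}/\I$ for some $\sigma$-ideal $\I$ meeting $\khilb_1$ trivially, then uses the well-known fact that the only closed two-sided ideals of $\bhilb$ are $0$, $\khilb$ and $\bhilb$, so that the only candidate in $\bhilb\oplus\mathbb{C}$ with trivial intersection with $\khilb_1$ is $0\oplus\mathbb{C}$; existence of $\sigma$-completions then forces this ideal to work, yielding $\bhilb$.

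By contrast, you compute $\univ{\khilb_1}$ and $\reg{\khilb_1}$ directly from the definitions and the description of $\mathscr{I}$ in \cref{prop:minimal_description}, and then squeeze as in \cref{prop:separable_unique_completion2}. This is longer but more self-contained within the paper's own machinery: it avoids the external input of the ideal lattice of $\bhilb$, and it makes explicit the two places where something nontrivial happens, namely the $\sigma$-normality of $\iota$ (via finite-rank projections) and the witness $(0,1)\in\mathscr{I}_0$ (via $a_n=1-q_n$). One small remark: in your $\mathscr{I}_0\subseteq 0\oplus\mathbb{C}$ step you are implicitly using that the order limit in $\bhilb$ is independent of the decomposition (so the two computations of $\ordlim_n\iota(a_n)$, one through $\pi$ and one through $\iota$, must agree); this is exactly the well-definedness of $\ordlim$ cited after \cref{def:orderlimit}.
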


\begin{proof}
	% We first need to show that the inclusion $\khilb_1 \subseteq \bhilb$ is a \starshom{}. 
	% To this end, suppose that $\inf a_n = 0$ in $\khilb_1$ and $\inf_n a_n = b$ in $\bhilb$.
	% Then trivially $b \ge 0$.
	% If $b > 0$ held true, then we could find $c \in \khilb$ such that $0 < c \le b$.\footnote{For example, choose $\xi \in \mathcal{H}$ with $\langle \xi, b \xi \rangle > 0$, let $p$ be the rank-one projection onto $\xi$, and set $c \coloneqq b^{\frac{1}{2}} p b^{\frac{1}{2}}$.}
	% This would result in $a_n \ge c$ for all $n$, contradicting the assumption that $\inf a_n = 0$ in $\khilb_1$.

	% To prove that $\bhilb$ is the regular $\sigma$-completion of $\khilb_1$, we only need to verify~\eqref{eq:regular_completion}.
	% Since $\khilb_1$ contains all scalars, we can assume $b \ge 0$ without loss of generality.
	% Assuming $\mathcal{H} = \ell^2(\mathbb{N})$ without loss of generality, we take $p_n$ to be the projection onto the first $n$ coordinates and
	% \[
	% 	a_n \coloneqq b^{\frac{1}{2}} p_n b^{\frac{1}{2}}.
	% \]
	% Then we have $\sup_n a_n = b$ by $\sup_n p_n = 1$ and \cref{fact:suprema_commute}.
	% This shows both that $\khilb_1$ $\sigma$-generates $\bhilb$ and that every self-adjoint element in $\bhilb$ is the supremum of its lower bounds in $\khilb_1$, as desired.
	The Pedersen--Baire envelope is $\khilb_1^\infty = \bhilb \oplus \mathbb{C}$ by \cref{ex:hilb_pb}.
	This cannot be a $\sigma$-completion because the inclusion $\khilb_1 \hookrightarrow \bhilb \oplus \mathbb{C}$ is not a $\sigma$-homomorphisms. For example, let $(p_n)$ be a monotone sequence of projections of finite rank whose supremum in $\bhilb$ is $1$.
	Then also $\sup_n p_n = 1$ in $\khilb_1$, but their images in $\bhilb \oplus \mathbb{C}$ satisfy $\sup_n (p_n,0) = (1,0) \neq (1,1)$.

	By \cref{rem:sigmageneral}, any $\sigma$-completion of $\khilb_1$ is determined by a $\sigma$-ideal of $\bhilb \oplus \mathbb{C}$ having trivial intersection with $\khilb_1$.
	Since the only nontrivial closed two-sided ideal of $\bhilb$ is $\khilb$, the only closed two-sided ideal with such trivial intersection is $0 \oplus \mathbb{C}$.\footnote{With the complete list of closed two-sided ideals being $0 \oplus 0$, $0 \oplus \mathbb{C}$, $\khilb \oplus 0$, $\khilb \oplus \mathbb{C}$, $\bhilb \oplus 0$ and $\bhilb \oplus \mathbb{C}$.}
	As $\sigma$-completions exist for every \cstar{} (cf.\ \cref{prop:minimal_description}), this ideal must identify a $\sigma$-completion, hence the only one, and it is clearly $\bhilb$ with the inclusion of $\khilb_1$ being the usual one.
\end{proof}

\section{Equivalences and dualities}\label{sec:equivalences_dualities}

This section investigates the connections between the categories previously introduced. 
For an overview of their definitions, we refer back to \cref{tab:meas,tab:cstar,tab:bool} in the introduction.

Our first goal is to prove the equivalence $\cSigmaC\cong \SigmaB$, which we perform in \cref{sec:equivalence}. 
We present two distinct approaches: one based on the Gelfand and Stone dualities, and another that emphasizes the measurable aspects.
Another key result here is \cref{thm:sigma_normal_proj}, a criterion for $\sigma$-normality which we use in \cref{sec:tensor_cstars} to construct a tensor product for commutative \cstars{}s.
Another is \cref{thm:povm_integration}, which sheds light on the probabilistic nature of \scpu{} maps and is fundamental for the study of equivalences and dualities with probabilistic morphisms in \cref{sec:probabilistic}.
The main result of the paper, the measurable Gelfand duality, is stated and proved in \cref{sec:concrete_duality}, where we focus on the concrete side.

\subsection{Equivalence of commutative \texorpdfstring{\cstars s}{σC*-algebras} and \texorpdfstring{\bsigma s}{Boolean-sigma-algebras}}
\label{sec:equivalence}

The aim of this section is to construct an equivalence $\cSigmaC \cong \SigmaB$.
To begin, constructing a functor $\cSigmaC \to \SigmaB$ is quite straightforward as follows.

\begin{defn}
	For a commutative \cstars{} $\A$, we write $\proj(\A)$ for its poset of projections with respect to the induced order.
\end{defn}

Thanks to the commutativity, it is not hard to see that $\proj(\A)$ is a \bsigma{}, with algebraic operations given by
\[ 
	p \wedge q \coloneqq pq, \quad p \vee q\coloneqq  p+q-pq, \quad \text{and} \quad \neg p \coloneqq 1-p,
\]
as well as $\top=1$ and $\bot=0$, and suprema and infima formed as in $\A$ itself (\cref{ex:compression}).\footnote{See also~\cite[Corollary~2.2.6]{saito15monotone} for more details, where it is shown that the projections form a $\sigma$-complete lattice even if $\A$ is not commutative.}
Since every \starshom{} maps projections to projections in such a way that these algebraic operations and countable suprema are preserved, we obtain the desired functor
\[
	\proj\colon \cSigmaC \to \SigmaB.
\]
The remaining task is to show that this functor is an equivalence.
We provide two approaches for doing this.

The first approach implemented in \cref{sec:gelfand_stone_ap} uses Gelfand and Stone duality, together with a result ensuring that $\sigma$-additivity is preserved by both dualities. 
Although quite short to write down, this proof is indirectly involved by relying heavily on the established theory, and it does not shed much light on the measurable nature of \bsigma{}s and \cstars{}: for a measurable space $X$, the Gelfand spectrum of $\Linf(X)$ typically is much larger than $X$, and it may seem unsatisfactory to consider this space as the ``geometric object'' associated to $\Linf(X)$ rather than $X$ itself.

The second strategy, offered in \cref{sec:measurable_ap} describes directly how to obtain a \cstars{} from a \bsigma{}.
The correspondence at the level of morphisms will be based on a generalized notion of POVMs and PVMs, which specializes to Markov kernels (\cref{rem:markov_kernels}), and an integration theory that mimics the Lebesgue integral (\cref{thm:povm_integration}).
The disadvantage is that the functor $\SigmaB \to \cSigmaC$ constructed there is harder to work with directly.

\subsubsection{Gelfand--Stone approach}\label{sec:gelfand_stone_ap}

Both \bsigma{}s and commutative \cstars{}s correspond to Rickart spaces under Stone and Gelfand duality (\cref{cor:rickart_stone,fact:rickart_gelfand}).
In particular, $\proj$ is naturally isomorphic to the composite functor
\[
	\clopen \circ \spec \: : \: \cSigmaC \to \SigmaB,
\]
which sends a commutative \cstars{} $\A$ to the clopen sets in its Gelfand spectrum.
In particular, a candidate essential inverse of $\proj$ is given by $C(\stone(-))$, which sends a \bsigma{} $B$ to the complex-valued continuous functions on its Stone space.
The challenge is to ensure that this interacts well with $\sigma$-normality.

This is addressed by the following results, which allow us to prove fullness of $\proj$, or alternatively well-definedness of $C(\stone(-))$ as a functor $\SigmaB\to \cSigmaC$.
The fact that we formulate and prove these without assuming $\sigma$-completeness will come in handy later when we to discuss the relation between the universal $\sigma$-completions (see \cref{prop:universal_completions}).

\begin{lem}
	\label{lem:clopen_approx}
	Let $X$ be a Stone space.	
	Then for every continuous $f \colon X \to [0,1]$ and $\eps > 0$, there is a projection $p \in C(X)$ such that
	\begin{equation}
		\label{eq:clopen_approx}
		f- 2 \eps \le p \le \eps^{-1} f. 
	\end{equation}
\end{lem}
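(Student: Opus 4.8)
The plan is to exploit the fact that in a Stone space, clopen sets form a basis, so continuous functions can be approximated by step functions whose level sets are clopen. Concretely, I would partition $[0,1]$ using the threshold $\eps$: consider the open set $U \coloneqq f^{-1}((\eps,1])$ and its complement's interior, or more directly, work with the closed set $f^{-1}([\eps,1])$ and the open set $f^{-1}((0,1])$. Since $X$ is a Stone space, it is totally disconnected compact Hausdorff, hence zero-dimensional, so between the compact set $K \coloneqq f^{-1}([\eps,1])$ and the open set $V \coloneqq f^{-1}((\eps/2,1])$ (with $K \subseteq V$) there is a clopen set $C$ with $K \subseteq C \subseteq V$; let $p \coloneqq \chi_C$, which is a continuous function because $C$ is clopen, hence a projection in $C(X)$.

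Next I would verify the two inequalities in~\eqref{eq:clopen_approx} pointwise. For $x \in C$: we have $p(x) = 1$, so $f(x) - 2\eps \le 1 - 2\eps \le 1 = p(x)$ (using $f(x) \le 1$), giving the left inequality; and since $C \subseteq V$ means $f(x) > \eps/2 > 0$, hmm — I need $p(x) \le \eps^{-1} f(x)$, i.e.\ $1 \le \eps^{-1} f(x)$, i.e.\ $f(x) \ge \eps$. This does \emph{not} follow merely from $C \subseteq V$; I would instead need $C \subseteq f^{-1}([\eps,1])$ to get that. But then I cannot also have $K \subseteq C$ unless $K = f^{-1}([\eps,1])$ is itself clopen, which it need not be. So the partition threshold must be chosen more carefully: take $C$ clopen with $f^{-1}([\eps,1]) \subseteq C \subseteq f^{-1}((\eps - \eps^2,\,1])$ is still not enough. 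The correct choice is to arrange $C \subseteq f^{-1}([\eps,1])$ for the right inequality and $f^{-1}([2\eps,1]) \subseteq C$ for the left inequality: then on $C$, $p = 1 \le \eps^{-1} f$ since $f \ge \eps$ there, and off $C$, $p = 0 \ge f - 2\eps$ since $f < 2\eps$ outside $C$ (as $f^{-1}([2\eps,1]) \subseteq C$). Such a clopen $C$ exists by zero-dimensionality applied to the compact set $f^{-1}([2\eps,1])$ sitting inside the open set $f^{-1}((\eps,1])$ — wait, I need $C$ \emph{closed} inside $f^{-1}([\eps,1])$, so apply it to $f^{-1}([2\eps,1]) \subseteq f^{-1}((\eps,1])$, both of the right type, obtaining clopen $C$ with $f^{-1}([2\eps,1]) \subseteq C \subseteq f^{-1}((\eps,1]) \subseteq f^{-1}([\eps,1])$ — but $f^{-1}((\eps,1])$ need not be clopen, only open; however $C$ being clopen and contained in the open set $f^{-1}((\eps,1])$ still gives $C \subseteq f^{-1}([\eps,1])$ pointwise, which is what matters. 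On $C$: $\eps \le f(x) \le 1$, so $1 \le \eps^{-1}f(x)$, right inequality holds, and $f(x) - 2\eps \le 1 - 2\eps \le 1 = p(x)$, left inequality holds. Off $C$: $p(x) = 0$; since $x \notin f^{-1}([2\eps,1])$ we have $f(x) < 2\eps$, so $f(x) - 2\eps < 0 = p(x)$, left inequality holds; and $0 = p(x) \le \eps^{-1} f(x)$ since $f \ge 0$, right inequality holds.

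The only genuine ingredient is the standard separation fact that in a zero-dimensional compact Hausdorff space, a compact set contained in an open set can be separated by a clopen set — this is exactly the ultranormality invoked in the proof of \cref{lem:sup_clopens} via \cite[Proposition 8.8]{prolla82topics}, or simply compactness plus the clopen basis. I expect this to be the only nontrivial step; the rest is the pointwise case analysis above. I should double-check the edge case $\eps \ge 1/2$, where $f - 2\eps \le 0$ everywhere so the left inequality is automatic, and the argument still goes through (indeed one could even take $C = \emptyset$, $p = 0$), so there is no loss in assuming $\eps < 1/2$ if convenient, but it is not necessary.
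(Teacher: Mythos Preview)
Your proposal is correct and, after the exploratory detours, lands on exactly the paper's argument: separate the disjoint closed sets $f^{-1}([2\eps,1])$ and $f^{-1}([0,\eps])$ by a clopen $C$ (equivalently, sandwich a clopen between $f^{-1}([2\eps,1])$ and $f^{-1}((\eps,1])$) using ultranormality of Stone spaces, set $p = \chi_C$, and check the two inequalities pointwise on $C$ and on its complement. The paper cites ultranormality via \cite[Proposition~8.8]{prolla82topics} just as you anticipated.
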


\begin{proof}
	Every Stone space is ultranormal~\cite[Proposition 8.8]{prolla82topics}.
	In particular, the disjoint closed sets 
	$f^{-1} ([2\eps , 1])$ and $f^{-1}([0,\eps])$ are separated by a clopen $U$ which contains the first set but is disjoint from the second.
	In other words, we have
	\[
		f(U) \subseteq (\eps,1], \qquad f(X \setminus U) \subseteq [0,2\eps).
	\]
	With $p \coloneqq \chi_U$ the indicator function, the desired inequalities hold because they hold both on $U$ and on $X \setminus U$.
\end{proof}

\begin{rem}\label{rem:clopen_approx_functional}
	In a \cstars{} $\A$, \cref{lem:clopen_approx} is true even without the factor of $2$ on the left in~\eqref{eq:clopen_approx}.
	Indeed, given any $a \in \A$ with $0 \le a \le 1$ and $\eps>0$, let us consider the projection $p \coloneqq \chi_{[\eps,1]}(a)$.
	Since
	\[
		\id_{[0,1]} - \eps \le \chi_{[\eps,1]} \le \eps^{-1} \id_{[0,1]}
	\]
	in $\Linf([0,1])$, this is true also in $\Linf(\spectrum(a))$, and thus $a - \eps \le p \le \eps^{-1} a$.  
\end{rem}

The following auxiliary notion is a weaker form of $\sigma$-normality (cf.~\cref{rem:sigmahom_add_mon}\ref{it:sigmaadd}).

\begin{defn}
	Let $X$ be a Stone space and $\A$ a \cstar{}.
	A positive map $\phi \colon C(X) \to \A$ is \newterm{$\sigma$-additive on projections} if for every sequence $(p_n)$ of pairwise orthogonal projections for which $\sum_n p_n$ exists in $C(X)$,\footnote{See~\cref{def:infinite_sum}. When it exists, this may be different from the pointwise sum.}
	\[
	\phi\left(\sum_n p_n \right) = \sum_n \phi(p_n).
	\] 
\end{defn}

\begin{rem}\label{rem:sigmaadd_mon}
	A positive map $\phi \colon C(X) \to \A$, with $X$ a Stone space, is $\sigma$-additive on projections if and only if it preserves suprema (or infima) of monotone increasing (resp.~decreasing) sequences of projections. 
	The proof is precisely the one of \cref{rem:sigmahom_add_mon}, noting that the meet of commuting projections is their product.
	%Let us focus on proving that $\sigma$-additivity on projections implies preservation of suprema of monotone increasing sequences of projections. 
	% \tob{I think this is too elementary to spell out}
	% In this case, preservation of infima of monotone decreasing sequences of projections is immediate by linearity and unitality: replace $p$ by $1-p$. 
	% Let $(p_n)$ be any monotone increasing sequence of projections and define $q_1\coloneqq p_1$ and $q_n \coloneqq p_n - p_{n-1}$ for $n>1$. 
	% The obtained $(q_n)$ is a sequence of mutually disjoint projections, and therefore 
	% \[
	% 	\phi\left(\sup_k p_k\right)= \phi \left(\sum_n q_n\right) = \sum_n \phi (q_n) = \sup_k \sum_{n=1}^k \phi(q_n) = \sup_k \phi(p_k).
	% \]
	% because, by linearity, $\sum_{n=1}^k \phi(q_n)= \phi(p_k)$.
	% Similar equalities also prove that preservation of these suprema is equivalent to $\sigma$-additivity on projections.
\end{rem}
\begin{thm}
	\label{thm:sigma_normal_proj}
	For a \cstar{} $\A$ and a Stone space $X$, a positive map
	\[
		\phi \colon C(X) \longrightarrow \A
	\]
	is $\sigma$-normal if and only if it is $\sigma$-additive on projections.
\end{thm}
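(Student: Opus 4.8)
The plan is to prove both implications, the easy one being that $\sigma$-normality implies $\sigma$-additivity on projections (this is immediate, since a sum $\sum_n p_n$ of pairwise orthogonal projections is by definition the supremum of the monotone increasing sequence of partial sums, and $\sigma$-normal maps preserve such suprema). The substance is in the converse: assume $\phi$ is $\sigma$-additive on projections and show it preserves suprema of arbitrary bounded monotone increasing sequences. By \cref{rem:sigmanormal_inf0}, it suffices to show that $\inf_n \phi(f_n) = 0$ whenever $(f_n) \searrow 0$ in $C(X)$. Rescaling, we may assume $0 \le f_n \le 1$ for all $n$.

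The key idea is to approximate the continuous functions $f_n$ from above by projections using \cref{lem:clopen_approx}. Fix $\eps > 0$. For each $n$, \cref{lem:clopen_approx} gives a projection $q_n \in C(X)$ with $f_n - 2\eps \le q_n \le \eps^{-1} f_n$; I would actually want a *decreasing* sequence of such projections, so I would pass to $p_n \coloneqq q_1 \wedge \cdots \wedge q_n$, which still satisfies $p_n \le \eps^{-1} f_1$ is not quite what's needed — rather, since $q_k \le \eps^{-1} f_k \le \eps^{-1} f_n$ for $k \ge n$ is false in general (the $f_k$ decrease), let me instead use $p_n \coloneqq q_n \wedge q_{n+1} \wedge \cdots$; but infinite meets of projections exist in $C(X)$ only when $C(X)$ is a $\sigma$C*-algebra, which we are not assuming here. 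The cleaner route: since $(f_n)$ is decreasing, from $q_n \le \eps^{-1} f_n$ we get that $p_n \coloneqq q_1 \wedge \cdots \wedge q_n$ satisfies $p_n \le q_n \le \eps^{-1} f_n$, and $(p_n)$ is monotone decreasing. We still have $f_n - 2\eps \le f_k - 2\eps \le q_k$ for $k \le n$, hence $f_n - 2\eps \le p_n$. So $(p_n)$ is a decreasing sequence of projections with
\[
	f_n - 2\eps \le p_n \le \eps^{-1} f_n
\]
for all $n$. Now the pointwise infimum of $(f_n)$ is $0$, so the infimum of the projections $(p_n)$ in $C(X)$, if it exists, must be dominated by $\eps^{-1} f_n$ for all $n$, hence is $\le 0$, hence is $0$; but I need to know $\inf_n p_n$ exists in $C(X)$ — this follows because $f_n - 2\eps \le p_n$ forces any lower bound to be $\le$ a quantity tending pointwise to $-2\eps$, and one checks via \cref{lem:sup_clopens} (dually, the infimum of clopens) that the constant $0$ is indeed the infimum of the $p_n$, since $\bigcap_n U_n$ where $p_n = \chi_{U_n}$ has empty interior (any point in the interior would have $f_n \ge p_n - 0 $... ) — this is the step where I expect to spend care.

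Once $\inf_n p_n = 0$ in $C(X)$ is established, $\sigma$-additivity on projections (in the monotone form of \cref{rem:sigmaadd_mon}) gives $\inf_n \phi(p_n) = 0$ in $\A$. Combining with $\phi(f_n) \le \eps^{-1} \phi(p_n) + 2\eps \cdot \phi(1)$ — wait, from $f_n \le \eps^{-1} f_n$... rather from $f_n - 2\eps \le p_n$ we get $f_n \le p_n + 2\eps \cdot 1$, so $\phi(f_n) \le \phi(p_n) + 2\eps \phi(1)$ by positivity and unitality. Therefore any lower bound $c$ for the $\phi(f_n)$ satisfies $c \le \phi(p_n) + 2\eps \phi(1)$ for all $n$; since $\inf_n \phi(p_n) = 0$, we get $c \le 2\eps \phi(1)$. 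As $\eps > 0$ was arbitrary and $\phi(1)$ is fixed, $c \le 0$, and since $\phi(f_n) \ge 0$ forces $c \le 0$ to be the best bound... actually we need $\inf_n \phi(f_n) = 0$: we have $0$ is a lower bound, and any lower bound $c$ satisfies $c \le 2\eps\phi(1) \to 0$, so $c \le 0$; hence $0 = \inf_n \phi(f_n)$.

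The main obstacle I anticipate is the careful verification that $\inf_n p_n = 0$ holds in $C(X)$ (as opposed to merely pointwise) — this requires the Stone-space description of infima of clopen sets from \cref{lem:sup_clopens} together with the observation that $\bigcap_n \overline{U_n}$ has empty interior, which uses the squeeze $f_n - 2\eps \le p_n \le \eps^{-1}f_n$ and $\inf_n f_n = 0$ pointwise. Everything else is routine manipulation with positivity and the $\eps$-argument above.
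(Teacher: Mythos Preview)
Your approach is essentially the paper's: approximate each $f_n$ by a projection via \cref{lem:clopen_approx}, take meets to get a decreasing sequence $(p_n)$ with $f_n - 2\eps \le p_n \le \eps^{-1} f_n$, show $\inf_n p_n = 0$ in $C(X)$, apply $\sigma$-additivity on projections, and conclude by the $\eps$-squeeze.

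Two remarks. First, your ``main obstacle'' is not one: you already have the argument. Any lower bound $g$ for $(p_n)$ satisfies $g \le p_n \le \eps^{-1} f_n$ for all $n$; since $\inf_n f_n = 0$ in $C(X)$ by hypothesis, this forces $g \le 0$. As $0$ is also a lower bound, the infimum exists and equals $0$. This is exactly the paper's argument---no recourse to \cref{lem:sup_clopens}, empty interiors, or the Baire category theorem is needed. Second, your claim that ``the pointwise infimum of $(f_n)$ is $0$'' is wrong in general (by \cref{lem:sup_chaus}\ref{it:sup_chaus_inf0}, $\inf_n f_n = 0$ in $C(X)$ only says the set where the pointwise infimum is positive is meager); fortunately your actual argument uses only the order-theoretic hypothesis $\inf_n f_n = 0$, so this slip is harmless.
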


\begin{proof}
	The ``only if'' part is clear by \Cref{rem:sigmaadd_mon}, so we focus on the less obvious ``if'' direction.
	To this end, it suffices to show that for every $(f_n) \searrow 0$ in $C(X)$, we have $(\phi(f_n))\searrow 0$. 
	We assume $f_n \le 1$ without loss of generality and fix $\eps > 0$.

	Then by \Cref{lem:clopen_approx} applied to $f_n$, we can find projections $p_{n,\eps} \in C(X)$ such that
	\[
		f_n - 2 \eps \le p_{n,\eps} \le \eps^{-1} f_n.
	\]
	Upon recursively replacing $p_{n+1,\eps}$ by the product $p_{n+1,\eps} p_{n,\eps}$, we may assume the monotonicity $p_{n+1,\eps} \le p_{n,\eps}$. 
	Indeed whenever $p_{n, \eps}$ and $p_{n+1, \eps}$ satisfy the desired inequalities, then also
	\[
		f_{n+1} - 2 \eps \le p_{n+1,\eps} p_{n,\eps} \le \eps^{-1} f_{n+1},
	\]
	where the first inequality follows from $f_{n+1}- 2 \eps \le f_{n} - 2 \eps \le p_{n,\eps}$ and the fact that the product of commuting projections is their meet.
	Therefore, without loss of generality, the sequence $(p_{n,\eps})$ is monotone decreasing.

	But then for every $g \in C(X)_{\sa}$, we have
	\[
		g \le p_{n,\eps} \quad \forall n \qquad \Longleftrightarrow \qquad g \le 0.
	\]
	Indeed if $g \le p_{n,\eps}$ for all $n$, then we get $g \le \eps^{-1} f_n$ for all $n$, and thus $g \le 0$; the converse is clear by $p_{n,\eps} \ge 0$.
	This proves that $\inf_n p_{n,\eps} = 0$.
	Hence by \cref{rem:sigmaadd_mon},
	\[
		\inf_n \phi(f_n) \le \inf_n \phi(p_{n,\eps}) + 2 \eps = 2 \eps,
	\]
	and so we conclude the desired $\inf_n \phi(f_n) = 0$ by taking $\eps \to 0$.
\end{proof}

\begin{thm}\label{thm:restriction_gs_shom}
	There is a commuting diagram of functors 
	\begin{equation}
		\label{eq:CStone}
    		\begin{tikzcd}[row sep=7ex]
			\SigmaB \ar[rr,hookrightarrow] \ar[d, "C(\stone(-))"] && \Boolshom \ar[rr,hookrightarrow] \ar[d, "C(\stone(-))"] && \Bool \ar[d,"C(\stone(-))"] \\
			\cSigmaC \ar[rr,hookrightarrow] && \cCalgshom \ar[rr,hookrightarrow] && \cCalg
    		\end{tikzcd}
	\end{equation}
	where the vertical functors are fully faithful.
\end{thm}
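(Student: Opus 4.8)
The plan is to deduce the statement from classical Gelfand and Stone dualities together with the $\sigma$-normality criterion of \cref{thm:sigma_normal_proj}. First, the functor $C(\stone(-)) \colon \Bool \to \cCalg$ is fully faithful, being the composite of the equivalence $\Bool \cong \Stone^{\op}$ (Stone duality), the fully faithful inclusion $\Stone^{\op} \hookrightarrow \cHaus^{\op}$, and the equivalence $\cHaus^{\op} \cong \cCalg$ (Gelfand duality). Concretely, a Boolean homomorphism $\phi \colon A \to B$ is sent to the unital $*$-homomorphism $C(\stone(\phi)) \colon g \mapsto g \circ \stone(\phi)$. Writing $\chi_a \in C(\stone(A))$ for the indicator function of the clopen set associated with $a \in A$ (so that $\chi_{(-)}$ is a bijection from $A$ onto the projections of $C(\stone(A))$ preserving the Boolean operations), the fact that $\stone(\phi)^{-1}$ sends the clopen associated with $a$ to the one associated with $\phi(a)$ gives $C(\stone(\phi))(\chi_a) = \chi_{\phi(a)}$; in other words $C(\stone(-))$ acts on projections exactly as $\phi$.

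The heart of the argument is the equivalence, for any Boolean homomorphism $\phi \colon A \to B$: $\phi$ is a \bsigmahom{} if and only if $C(\stone(\phi))$ is $\sigma$-normal. I would first record a dictionary between suprema in $A$ and suprema of projections in $C(\stone(A))$. Given a sequence $(a_n)$ in $A$ with finite joins $r_n \coloneqq a_1 \vee \cdots \vee a_n$, \cref{lem:sup_clopens} shows that $\sup_n a_n = \sup_n r_n$ exists in $A \cong \clopen(\stone(A))$ exactly when $V \coloneqq \overline{\bigcup_n \widehat{r_n}}$ is clopen (with $\widehat{r_n}$ the clopen associated with $r_n$), in which case it corresponds to $V$; and a computation in the spirit of \cref{lem:sup_chaus} identifies the only candidate for $\sup_n \chi_{r_n}$ in $C(\stone(A))_{\sa}$ as $\chi_V$, which lies in $C(\stone(A))$ precisely when $V$ is clopen. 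Hence $\sup_n a_n$ exists in $A$ if and only if $\sup_n \chi_{r_n}$ exists in $C(\stone(A))$, in which case the latter is again a projection and corresponds to $\sup_n a_n$.

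Granting this dictionary, both directions are short. For the forward direction, $C(\stone(\phi))$ is positive, and if $\phi$ preserves countable suprema then, since $C(\stone(\phi))$ acts as $\phi$ on projections, it preserves suprema of monotone increasing sequences of projections, hence is $\sigma$-additive on projections by \cref{rem:sigmaadd_mon}, hence $\sigma$-normal by \cref{thm:sigma_normal_proj}. For the reverse direction, if $C(\stone(\phi))$ is $\sigma$-normal then applying it to the monotone increasing projection sequence $(\chi_{r_n})_n$ and using the dictionary — together with $\phi(r_n) = \phi(a_1) \vee \cdots \vee \phi(a_n)$ — yields $\phi(\sup_n a_n) = \sup_n \phi(a_n)$ whenever $\sup_n a_n$ exists. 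With this equivalence the remaining assertions are formal: $C(\stone(-))$ restricts to a functor $\Boolshom \to \cCalgshom$ (so the right square commutes tautologically), which is faithful because $\Bool \to \cCalg$ is, and full because any $\sigma$-normal $*$-homomorphism $C(\stone(A)) \to C(\stone(B))$ equals $C(\stone(\phi))$ for some Boolean homomorphism $\phi$, which is then a \bsigmahom{}. For the left square, if $A \in \SigmaB$ then $\stone(A)$ is a Rickart space by \cref{cor:rickart_stone}, so $C(\stone(A)) \in \cSigmaC$ by \cref{fact:rickart_gelfand}; thus $C(\stone(-))$ restricts further to $\SigmaB \to \cSigmaC$, and full faithfulness is inherited from $\Boolshom \to \cCalgshom$ since $\SigmaB$ and $\cSigmaC$ are the full subcategories on the $\sigma$-complete objects.

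I expect the main obstacle to be the bookkeeping in the dictionary step: one must check that a monotone increasing sequence of projections in $C(\stone(A))$ has a supremum in the self-adjoint part exactly when the corresponding sequence of clopens has one, and that this supremum is then again a projection. This does not follow from \cref{lem:sup_clopens} alone and needs the explicit description of suprema from \cref{lem:sup_chaus}, which pins the candidate supremum down as $\chi_{\overline{\bigcup_n \widehat{r_n}}}$. Everything else — Stone and Gelfand duality, the fully faithful inclusion $\Stone \hookrightarrow \cHaus$, and \cref{thm:sigma_normal_proj} — enters as a black box.
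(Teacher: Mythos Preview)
Your proposal is correct and follows essentially the same route as the paper: Gelfand and Stone duality give full faithfulness of the rightmost vertical functor, \cref{thm:sigma_normal_proj} provides the forward implication that a Boolean \bsigmahom{} yields a $\sigma$-normal $*$-homomorphism, and fullness of the restricted functors comes from the fact that a $\sigma$-normal $*$-homomorphism restricts to a \bsigmahom{} on projections. The paper's proof is terser and simply asserts that the restriction to projections is a \bsigmahom{}, whereas you spell out the needed ``dictionary'' between suprema of clopens in $\clopen(\stone(A))$ and suprema of the corresponding indicator projections in $C(\stone(A))_{\sa}$ via \cref{lem:sup_clopens} and \cref{lem:sup_chaus}; this extra care is warranted, particularly in the $\Boolshom$ case where the algebras need not be $\sigma$-complete and one must argue conditionally on existence of the supremum.
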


\begin{proof}
	By Gelfand and Stone duality, the functor $C(\stone(-))$ on the right is fully faithful. 
	Moreover, the restrictions displayed in \eqref{eq:CStone} indeed hold, meaning that a $\sigma$-homomorphism $A\to B$ between Boolean algebras translate to a $\sigma$-homomorphism $C(\stone(A))\to C(\stone(B))$ between the associated \cstar{}s. This is a consequence of \cref{thm:sigma_normal_proj}.
	Commutativity of the diagram is trivial.

	It remains to show that the functors $\Boolshom\to\cCalgshom$ and $\SigmaB\to \cSigmaC$ are fully faithful. 
	While faithfulness is clear as the right vertical arrow is faithful, 
	fullness holds because every $\sigma$-homomorphism $C(\stone(A))\to C(\stone(B))$ is the unique extension of its restriction to a $\sigma$-homomorphism $A\to B$ by the fact that simple elements are dense in $C(\stone(A))$.
	% Commutativity of the diagram is obvious.
	% On objects, we saw in \cref{rem:rickart_gelfand_stone} that the vertical left functor indeed lands in $\SigmaC$.
	%
	% Therefore we only need to check that the vertical functors are fully faithful.
	%
	% By Gelfand duality and Stone duality, the functor $C(\stone(-))$ on the right is fully faithful.
	% We prove that it restricts to a fully faithful functor as in the middle.
	% This amounts to showing that a $*$-homomorphism $C(\stone(A)) \to C(\stone(B))$ is a \starshom{} if and only if the associated Boolean algebra homomorphism $A \to B$ is a \bsigmahom{}.
	% Keeping in mind that the elements of the Boolean algebra on each side are the projections of the corresponding \cstar{}s, the ``if'' direction is an application of \cref{thm:sigma_normal_proj}.
	% The ``only if'' direction is clear (\cref{rem:sigmahom_add_mon}).
	% Finally, the left vertical functor only imposes further restrictions on objects and is thus fully faithful as well.
\end{proof} 

%To derive the desired equivalence, we do not actually need the full faithfulness of $C(\stone(-))$.

\begin{cor}\label{cor:proj_equiv}
	There is an equivalence of categories
	\[
    \begin{tikzcd}
        \SigmaB \ar[rr,bend left=15,"C(\stone(-))"] && \cSigmaC \ar[ll,bend left=15,"\proj"]
    \end{tikzcd}
    \]
\end{cor}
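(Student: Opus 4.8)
The plan is to leverage \cref{thm:restriction_gs_shom}, which already shows that $C(\stone(-)) \colon \SigmaB \to \cSigmaC$ is fully faithful. A fully faithful functor is an equivalence precisely when it is essentially surjective, and in that case any functor in the other direction admitting a natural isomorphism with the abstract quasi-inverse will serve; here the candidate is $\proj$. So it suffices to (i) prove essential surjectivity of $C(\stone(-))$, and (ii) exhibit a natural isomorphism $\proj \circ C(\stone(-)) \cong \id_{\SigmaB}$. Given full faithfulness, (i) and (ii) formally force $C(\stone(-)) \circ \proj \cong \id_{\cSigmaC}$ as well, identifying $\proj$ as the quasi-inverse.

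For essential surjectivity: given a commutative \cstars{} $\A$, write $\A \cong C(\spec(\A))$ by Gelfand duality. By \cref{fact:rickart_gelfand} the spectrum $\spec(\A)$ is a Rickart space, hence a Stone space whose topology is generated by its clopen sets, so the unit of Stone duality $\spec(\A) \to \stone(\clopen(\spec(\A)))$ is a homeomorphism. Since the projections of $C(\spec(\A))$ are exactly the indicator functions of clopen subsets, sending a projection to its support gives a Boolean isomorphism $\proj(\A) \cong \clopen(\spec(\A))$; one then checks this preserves countable suprema and infima, using \cref{ex:compression} (projections in $\A$ are closed under countable suprema, computed inside $\A_{\sa}$) together with \cref{lem:sup_clopens} (which identifies the supremum of a sequence of clopens with the closure of their union, when that is open). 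Composing, $C(\stone(\proj(\A))) \cong C(\spec(\A)) \cong \A$, so every object of $\cSigmaC$ lies in the essential image. For (ii), the projections of $C(\stone(B))$ are the indicators of clopen subsets of $\stone(B)$, so $\proj(C(\stone(B))) \cong \clopen(\stone(B)) \cong B$ naturally in $B$ by the unit of Stone duality, with the $\sigma$-structure matching for the same reason as above.

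I do not expect a serious obstacle to remain: the analytic content has already been packaged into \cref{thm:sigma_normal_proj} and \cref{thm:restriction_gs_shom}. The one place that needs genuine, if routine, attention is verifying that the Boolean isomorphism $\proj(\A) \cong \clopen(\spec(\A))$ is in fact a \bsigmaiso{} --- that countable suprema of projections, formed in $\A_{\sa}$, correspond to the suprema of clopens described in \cref{lem:sup_clopens}. Once that compatibility is in hand, the equivalence of categories follows formally from full faithfulness together with essential surjectivity.
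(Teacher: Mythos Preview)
Your proof is correct and follows essentially the same route as the paper's: full faithfulness from \cref{thm:restriction_gs_shom}, essential surjectivity via the Rickart-space correspondence (\cref{cor:rickart_stone,fact:rickart_gelfand}), and the identification of $\proj$ as quasi-inverse by essential uniqueness. Your one flagged concern---that $\proj(\A) \cong \clopen(\spec(\A))$ be a \bsigmaiso{}---is in fact automatic by \cref{rem:iso_sigmaiso}, so no separate verification via \cref{lem:sup_clopens} is needed.
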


\begin{proof}
	For a \bsigma{} $B$, the natural isomorphism $\proj(C(\stone(B))) \cong B$ is clear by Stone duality.
	Moreover, $C(\stone(-))$ is essentially surjective because commutative \cstars{}s and \bsigma{}s both correspond to Rickart spaces (\cref{cor:rickart_stone,fact:rickart_gelfand}), so it is an equivalence of categories by \cref{thm:restriction_gs_shom}.
	Essential uniqueness of the inverse is then sufficient to ensure that $\proj$ is the inverse equivalence of $C(\stone(-))$.
	% Conversely for a \cstars{} $\A$, we have an obvious natural $*$-homomorphism
	% \begin{equation}
	% 	\label{eq:proj_gelfand}
	% 	\A \longrightarrow C(\stone(\proj(\A)))
	% \end{equation}
	% assigning to every element the corresponding evaluation map.
	% To show that this is an isomorphism, we can assume $\A = C(X)$ for a Rickart space $X$ by \cref{fact:rickart_gelfand}, which means in particular that $X$ is a Stone space.
	% But in this case $\proj(\A) = \clopen(X)$, so that $\stone(\proj(\A)) = X$.
	% Under this identification, our~\eqref{eq:proj_gelfand} corresponds to the identity map on $C(X)$.
	% \tob{I don't see how to prove essential surjectivity of $C(\stone(-))$ in any other way, and this is why I don't think that a shorter proof is possible. Maybe a shorter presentation of the same proof is possible}
\end{proof}

We now turn to the question of how this equivalence makes the Baire envelope of a Boolean algebra match up with the Pedersen--Baire envelope of the associated \cstar{}, and likewise for universal $\sigma$-completions.

\begin{prop}\label{prop:universal_completions}
	The following diagrams of functors commute up to natural isomorphism:
	\[
	\begin{tikzcd}[row sep=7ex]
		\Bool \ar[rr,"(-)^{\infty}"]\ar[d, "C(\stone(-))"] && \SigmaB \ar[d,"C(\stone(-))"] 
								   && \Boolshom \ar[rr,"\univ{(-)}"]\ar[d, "C(\stone(-))"] && \SigmaB \ar[d,"C(\stone(-))"] \\
		\cCalg \ar[rr,"(-)^{\infty}"] && \cSigmaC 
					     && \cCalgshom \ar[rr,"\univ{(-)}"] && \cSigmaC 
	\end{tikzcd}
	\]
	In other words, the Baire envelope of a Boolean algebra corresponds to the Pedersen--Baire envelope of the associated \cstar{}, and similarly for the universal $\sigma$-completions.
\end{prop}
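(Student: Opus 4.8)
The plan is to deduce both squares formally from the equivalence $\SigmaB \cong \cSigmaC$ of \cref{cor:proj_equiv} together with \cref{thm:restriction_gs_shom}, rather than by an explicit computation. In each square we are comparing two functors from a category of Boolean algebras to $\cSigmaC$: ``right then down'' is $C(\stone(-)) \circ L$ and ``down then right'' is $L' \circ C(\stone(-))$, where $L$ is the left adjoint on the Boolean side — either $(-)^{\infty}$ from \cref{prop:baire_univ} or $\univ{(-)}$ from \cref{rem:univ_adjunction_bool} — and $L'$ is the corresponding one on the $C^*$ side — $(-)^{\infty}$ from \cref{thm:pb_univ} or $\univ{(-)}$ from \cref{rem:univ_prop_hom}. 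Writing $\Psi = C(\stone(-)) \colon \SigmaB \to \cSigmaC$ (an equivalence) and $\Phi = C(\stone(-))$ for the fully faithful functor $\Bool \to \cCalg$ (resp.\ $\Boolshom \to \cCalgshom$), the content of \cref{thm:restriction_gs_shom} is exactly that $U' \Psi \cong \Phi U$, where $U, U'$ are the forgetful functors that $L, L'$ are left adjoint to. Since the Pedersen--Baire envelope and the universal $\sigma$-completion of a commutative \cstar{} are again commutative, all of these adjunctions restrict to the commutative subcategories as needed.

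Given this setup, $\Psi L \cong L' \Phi$ follows by the Yoneda lemma: for $A$ in the Boolean category and $\mathcal{C} \in \cSigmaC$ there are natural bijections
\begin{align*}
	\cSigmaC(\Psi L A, \mathcal{C})
	&\cong \SigmaB(L A, \Psi^{-1}\mathcal{C}) \cong \Bool(A, U\Psi^{-1}\mathcal{C}) \\
	&\cong \cCalg(\Phi A, \Phi U \Psi^{-1}\mathcal{C}) \cong \cCalg(\Phi A, U'\mathcal{C}) \cong \cSigmaC(L'\Phi A, \mathcal{C}),
\end{align*}
using in turn: essential surjectivity of $\Psi$ (so that $\Psi^{-1}\mathcal{C}$ makes sense); the adjunction $L \dashv U$; full faithfulness of $\Phi$; the isomorphism $\Phi U \Psi^{-1} \cong U'$ obtained from $U'\Psi \cong \Phi U$ by precomposing with $\Psi^{-1}$; and the adjunction $L' \dashv U'$. (For the $\univ{(-)}$ square, replace $\Bool$ by $\Boolshom$ and $\cCalg$ by $\cCalgshom$.) As these bijections are natural in $\mathcal{C}$, Yoneda yields $\Psi L A \cong L' \Phi A$ naturally in $A$, which is the claimed commutativity up to natural isomorphism. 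Equivalently: $\Psi L$ and $L' \Phi$ are both left adjoint to $U \Psi^{-1} \colon \cSigmaC \to \Bool$, and left adjoints are unique up to isomorphism.

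It is worth unwinding what this says concretely, which also serves as a sanity check. By \cref{cor:baire_eq} the Pedersen--Baire envelope $(C(\stone(A)))^{\infty}$ is $\Linf(\stone(A))$, whose projections are the indicator functions of Baire sets, so that $\proj(\Linf(\stone(A))) \cong \baire(\stone(A)) = A^{\infty}$, in agreement with $\proj \circ C(\stone(-)) \cong \id_{\SigmaB}$ applied to $A^{\infty}$; on morphisms, both routes produce precomposition with $\stone(\phi)$ for a homomorphism $\phi \colon A \to B$. The only input beyond pure diagram chasing is checking that the adjunctions really restrict to the commutative subcategories and that the forgetful squares of \cref{thm:restriction_gs_shom} commute up to canonical natural isomorphism, which is where one uses that biduals and $\sigma$-closures of commutative subalgebras are commutative; this is the point to be careful about, but it is routine. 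A more hands-on alternative is to transport universal properties directly across $C(\stone(-))$: a $\sigma$-completion of a commutative \cstar{} is automatically commutative (being $\sigma$-generated by a commutative subalgebra), and under $C(\stone(-))$ and $\proj$ the injective $\sigma$-generating $\sigma$-homomorphisms on the two sides correspond, so $C(\stone(-))$ carries the Baire envelope to the Pedersen--Baire envelope and the universal $\sigma$-completion to the universal $\sigma$-completion by matching their respective universal properties — but the formal argument above is shorter.
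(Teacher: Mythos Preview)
Your proof is correct and takes essentially the same approach as the paper: both arguments show that the two composites around each square are left adjoint to the same functor (the paper uses $\proj \colon \cSigmaC \to \Bool$, you use $U\Psi^{-1}$, which is the same thing) via the same chain of natural bijections coming from \cref{thm:pb_univ}, \cref{cor:proj_equiv}, and \cref{thm:restriction_gs_shom}, and then invoke uniqueness of adjoints. The only difference is cosmetic---the paper's version is more compact and omits the sanity-check paragraph.
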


\begin{proof}
	We focus on the first diagram, as the proof for the second is perfectly analogous.
	For this case, we observe that the lower composite is left adjoint to $\proj \colon \cSigmaC \to \Bool$, since we have bijections
	\begin{equation*}
		\begin{array}{rll}
			\cSigmaC (C(\stone(B))^{\infty},\A) \!{} & \cong \: \cCalg (C(\stone(B)), \A)& \text{by \cref{thm:pb_univ}} \\
			& \cong \: \cCalg (C(\stone(B)),C(\stone(\proj(\A))))\quad & \text{by \cref{cor:proj_equiv}}\\ 
			& \cong \: \Bool (B, \proj(\A))& \text{by \cref{thm:restriction_gs_shom}}
		\end{array}
	\end{equation*}
	natural in $\A \in \cSigmaC$ and $B \in \Bool$.
	Similar reasoning shows that the upper composite $C(\stone((-)^{\infty}))$ is left adjoint to $\proj$ as well.
	Therefore the claims follow by essential uniqueness of the adjoint.
\end{proof}

\subsubsection{Measurable approach}\label{sec:measurable_ap}

We now present a more direct approach to the equivalence $\cSigmaC \cong \SigmaB$ from \Cref{cor:proj_equiv} which avoids the use of Gelfand or Stone duality.
As per \Cref{def:baire}, we write $\baire(\mathbb{C})$ for the Boolean $\sigma$-algebra given by the Baire (or, equivalently, Borel) sets in $\mathbb{C}$.

\begin{defn}\label{def:linf_bool}
    For a Boolean $\sigma$-algebra $B$, we define 
\[
	\Linfbool(B) \coloneqq \left \lbrace f\colon \baire(\mathbb{C}) \to B\ \text{ }\sigma\textrm{-hom.} \ \middle\vert\, \begin{array}{l}
    \text{there is bounded }U \in \baire(\mathbb{C})\\ \text{such that }f(U)= \top\end{array}\hspace{-1ex}  \right\rbrace.
\]
\end{defn}

Here, we read the subscript ``abs'' as standing for ``abstract''.

Our first task is to equip $\Linfbool(B)$ with a $*$-algebra structure.
To this end, we use the following notion of tupling.
By \cref{prop:concrete_regular,prop:tensor_standardborel}, there is a canonical isomorphism
\[
	\baire(\mathbb{C}^n) \cong \baire(\mathbb{C})^{\unitensor n}
\]
where the right-hand side denotes the $n$-fold universal tensor power of $\baire(\mathbb{C})$, or equivalently the $n$-fold coproduct in $\csigmaB$.
Therefore, for every $f_1,\dots, f_n \in \Linfbool(B)$, we have a tupling 
\[
	\langle f_1 , \dots ,f_n \rangle \colon \baire(\mathbb{C}^n)\to B.
\] 
This is the unique \bsigmahom{} such that
\[
	\langle f_1, \dots , f_n \rangle (U_1 \times \dots \times U_n)=f_1 (U_1) \wedge \dots \wedge f_n (U_n)
\]        
for all measurable $U_1, \dots, U_n \subseteq \mathbb{C}$.
With this in mind, we can define a sum $f + g \in \Linfbool(B)$ for any $f, g \in \Linfbool(B)$ by considering
\[
	\operatorname{sum} \colon  \baire(\mathbb{C})\to \baire(\mathbb{C}^2)
\]
which is given taking the preimage along the addition map $\mathbb{C}^2 \to \mathbb{C}$, and composing it with the pairing,
\[
f+g \coloneqq \operatorname{sum} \circ \, \langle f,g \rangle  .   
\]  
Using multiplication on $\mathbb{C}$ instead of addition also gives us a product on $\Linfbool(B)$.
A straightforward check shows that these operations make $\Linfbool(B)$ into a commutative ring, with the following additive and multiplicative identities:
\[
  0(U) \coloneqq \begin{cases}
    \top & \text{if }0 \in U,\\
    \bot & \text{otherwise},
    \end{cases} \qquad \text{and}\qquad  1(U) \coloneqq \begin{cases}
        \top & \text{if }1 \in U,\\
        \bot & \text{otherwise.}
    \end{cases}
\]
In fact, one can similarly check that $\Linfbool(B)$ is a complex $*$-algebra with scalar multiplication $(\lambda f)(U)\coloneqq f(\frac{1}{\lambda}U)$ for every $\lambda\in \mathbb{C}\setminus \lbrace 0\rbrace$ and involution $(f^*)(U)\coloneqq f(U^*)$.
Then the following statement follows by direct check. 

\begin{prop}\label{prop:staralg}
    $\Linfbool(B)$ is a commutative unital $*$-algebra.
\end{prop}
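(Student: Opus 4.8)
The idea is to recognise $\Linfbool(B)$ as the hom-set $\SigmaB(\baire(\mathbb{C}),B)$ restricted to the morphisms with bounded support, and to obtain its $*$-algebra structure simply by transporting the $*$-algebra structure of $\mathbb{C}$ along the contravariant, finite-product-preserving functor $X\mapsto\baire(X)$ on standard Borel spaces (the restriction of $\Sigma$ from \cref{thm:ls_duality}). The structural fact that makes everything work is already recorded above: by \cref{prop:tensor_standardborel,prop:concrete_regular} there are canonical isomorphisms $\baire(\mathbb{C}^n)\cong\baire(\mathbb{C})^{\unitensor n}$, coherent with the projections and diagonals, so that $\baire(-)$ indeed sends $\mathbb{C}^n$ to the $n$-fold coproduct of $\baire(\mathbb{C})$ in $\SigmaB$. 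Under these isomorphisms the tupling $\langle f_1,\dots,f_n\rangle$ of the text is exactly the cotupling given by the universal property of the coproduct: evaluating on rectangles $U_1\times\dots\times U_n=\bigcap_k\pi_k^{-1}(U_k)$, which $\sigma$-generate $\baire(\mathbb{C}^n)$, recovers the stated formula $\bigwedge_k f_k(U_k)$, and simultaneously shows that $\langle f_1,\dots,f_n\rangle$ is the unique \bsigmahom{} with those values, so it is well defined.

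With this identification each algebraic operation is induced by a single structure morphism of $\baire(\mathbb{C})$: the preimage maps $\operatorname{sum},\operatorname{mult}\colon\baire(\mathbb{C})\to\baire(\mathbb{C}^2)$ along addition and multiplication, the preimage along $z\mapsto\lambda z$ (for $\lambda\ne 0$; for $\lambda=0$ one just takes the constant element $0$), the preimage along $z\mapsto\bar z$, and the two points $\{\ast\}\to\mathbb{C}$ picking out $0$ and $1$. Since preimages along Borel maps are $\sigma$-homomorphisms, these are all morphisms in $\SigmaB$, and they exhibit $\baire(\mathbb{C})$ as a commutative unital $*$-algebra object in $\SigmaB^{\op}$ — this is precisely the image under the contravariant functor $\baire(-)$ of the corresponding structure on $\mathbb{C}$ carried by continuous (hence Borel) maps. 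Consequently $\SigmaB(\baire(\mathbb{C}),B)=\SigmaB^{\op}(B,\baire(\mathbb{C}))$ is automatically a commutative unital $*$-algebra, with zero and unit the indicated constants; each axiom is obtained by pulling back the corresponding commuting diagram of continuous maps between finite powers of $\mathbb{C}$ — associativity of $+$ from $\operatorname{add}\circ(\operatorname{add}\times\id)=\operatorname{add}\circ(\id\times\operatorname{add})$ on $\mathbb{C}^3$, distributivity from $(x,y,z)\mapsto x(y+z)=xy+xz$, the identities $(f+g)^*=f^*+g^*$, $(fg)^*=f^*g^*$, $(\lambda f)^*=\bar\lambda f^*$, $(f^*)^*=f$ from $\overline{z+w}=\bar z+\bar w$, $\overline{zw}=\bar z\bar w$, $\overline{\lambda z}=\bar\lambda\bar z$, $\bar{\bar z}=z$, and so on — using the coherence of the isomorphisms $\baire(\mathbb{C}^m)\cong\baire(\mathbb{C})^{\unitensor m}$ with the coproduct inclusions.

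It remains to check that these operations restrict to $\Linfbool(B)$, i.e.\ that bounded support is preserved. If $f(U)=\top$ and $g(V)=\top$ with $U,V$ bounded Borel sets, then $\langle f,g\rangle(U\times V)=f(U)\wedge g(V)=\top$; since addition, multiplication, scalar multiplication and conjugation send bounded Borel subsets of $\mathbb{C}^2$ (resp.\ $\mathbb{C}$) to bounded Borel subsets of $\mathbb{C}$, the sets $U+V$, $UV$, $\lambda U$, $U^*$ are bounded and witness bounded support of $f+g$, $fg$, $\lambda f$, $f^*$, while the constants $0,1$ are trivially of bounded support. I do not expect a genuine obstacle: the proposition is a formal consequence of \cref{prop:tensor_standardborel,prop:concrete_regular}, and the only thing requiring care is the bookkeeping — matching the concretely defined tuplings and the $\lambda\neq 0$ convention for scalar multiplication with the coproduct universal property, and keeping track of boundedness — which is exactly what "a straightforward check" in the text refers to.
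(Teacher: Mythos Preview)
Your proposal is correct and is exactly the Lawvere-theoretic perspective the paper itself invokes just after the proposition; the paper's ``straightforward check'' is precisely your observation that $\baire(-)$ turns the $*$-algebra $\mathbb{C}$ into a $*$-algebra object in $\SigmaB^{\op}$, whence hom-sets inherit the structure. One small slip in the boundedness paragraph: continuous images of Borel sets are only analytic, so $U+V$ and $UV$ need not be Borel --- but this is harmless, since you may first enlarge $U$ and $V$ to closed balls (using monotonicity of $f,g$), after which the relevant images are again closed balls.
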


A similar result was already discussed by Sikorski in \cite[\S~43]{sikorski69boolean}, where he mentioned the algebra structure on the set of all \bsigmahom{} $\baire(\mathbb{R}) \to B$ (i.e.~without our boundedness requirement).
Our treatment is somewhat similar to Olmsted's approach~\cite{olmsted42lebesgue}, although we make the intrinsically categorical nature of the construction more explicit.
In particular, our construction of the $*$-algebra structure is best understood from the perspective of Lawvere theories.

Unfortunately, proving that $\Linfbool(B)$ is a \cstars{} is not straightforward from this categorical perspective.
Instead, it is more convenient to utilize a more down-to-earth approach, mindful of the one discussed by Sikorski, that describes $\Linfbool(B)$ as a particular quotient of some $\Linf(X)$, with $X$ a measurable space.
So let us start with the concrete case first.

\begin{lem}
	\label{lem:measurableviewpoint}
    	Let $(X, \Sigma(X))$ be a measurable space. Then there is a natural isomorphism of commutative unital $*$-algebras
    	\begin{equation}
	    	\label{eq:Linf_boolvsmeas}
	    	\Linf(X) \cong \Linfbool(\Sigma(X))
    	\end{equation}
	given by $f \mapsto (U \mapsto f^{-1}(U))$.
\end{lem}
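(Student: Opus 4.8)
The strategy is to exhibit the stated map explicitly and check directly that it is a well-defined bijection which respects all the $*$-algebra operations, with naturality as a routine afterthought. First I would define $\Theta \colon \Linf(X) \to \Linfbool(\Sigma(X))$ by sending a bounded measurable function $f \colon X \to \mathbb{C}$ to the map $\Theta(f) \colon \baire(\mathbb{C}) \to \Sigma(X)$, $U \mapsto f^{-1}(U)$. One must verify that $\Theta(f)$ really lands in $\Linfbool(\Sigma(X))$: that $U \mapsto f^{-1}(U)$ is a Boolean algebra homomorphism is immediate from the elementary identities $f^{-1}(\mathbb{C}) = X$, $f^{-1}(U^c) = X \setminus f^{-1}(U)$, $f^{-1}(U \cap V) = f^{-1}(U) \cap f^{-1}(V)$; that it preserves countable suprema follows since preimages commute with countable unions, so it is a \bsigmahom; and the boundedness condition holds because $f$ is a bounded function, so taking $U$ to be a large enough closed ball gives $f^{-1}(U) = X = \top$. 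Hence $\Theta$ is well-defined.

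Next I would construct the inverse. Given $g \in \Linfbool(\Sigma(X))$, I want to produce a bounded measurable function $\Psi(g) \colon X \to \mathbb{C}$. The natural definition is $\Psi(g)(x) \coloneqq$ the unique point $\lambda \in \mathbb{C}$ lying in every bounded Baire set $U$ with $x \in g(U)$ — equivalently, $\Psi(g)(x)$ is the point determined by the ultrafilter $\{U \in \baire(\mathbb{C}) : x \in g(U)\}$ on $\baire(\mathbb{C})$, using that $\baire(\mathbb{C})$ is the Baire $\sigma$-algebra of the (sober, in fact standard Borel) space $\mathbb{C}$. Concretely: for each $x$, the set $\mathcal{U}_x \coloneqq \{U \in \baire(\mathbb{C}) \mid x \in g(U)\}$ is a $\sigma$-ultrafilter (i.e.\ a \bsigmahom\ $\baire(\mathbb{C}) \to \{\bot,\top\}$) since $g$ is a \bsigmahom\ and evaluation at $x$ is a \bsigmahom\ $\Sigma(X) \to \{\bot,\top\}$; because $\mathbb{C}$ with its Borel/Baire structure is sober (\cref{lem:sober}, as a standard Borel space, see \cref{def:borelmeas}), such a $\sigma$-ultrafilter corresponds to a unique point $\lambda_x \in \mathbb{C}$, and the boundedness clause guarantees $\lambda_x$ actually exists (lies in the bounded set $U$ witnessing $g(U) = \top$). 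Set $\Psi(g)(x) \coloneqq \lambda_x$. Measurability of $\Psi(g)$ follows since $\Psi(g)^{-1}(U) = g(U)$ for all $U \in \baire(\mathbb{C})$, which is measurable by hypothesis, and this identity is exactly what shows $\Theta \circ \Psi = \id$; the reverse composite $\Psi \circ \Theta = \id$ is immediate from $f^{-1}(U) \ni x \iff f(x) \in U$.

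Finally I would check that $\Theta$ is a $*$-algebra homomorphism. For the ring operations this is where the categorical setup of the paragraph preceding the lemma pays off: the sum and product on $\Linfbool(\Sigma(X))$ were defined via the tupling $\langle g_1, g_2\rangle \colon \baire(\mathbb{C}^2) \to \Sigma(X)$ and postcomposition with $\operatorname{sum}$ or multiplication on $\mathbb{C}$. One checks that $\langle \Theta(f), \Theta(g)\rangle = \Theta(\langle f, g\rangle)$, where $\langle f, g\rangle \colon X \to \mathbb{C}^2$ is the pointwise pairing, using the universal property of the coproduct $\baire(\mathbb{C})^{\unitensor 2} \cong \baire(\mathbb{C}^2)$ and the fact that $(x \mapsto (f(x),g(x)))^{-1}(U_1 \times U_2) = f^{-1}(U_1) \cap g^{-1}(U_2)$; then $\Theta(f+g) = \operatorname{sum} \circ \langle \Theta(f),\Theta(g)\rangle = \Theta(\operatorname{sum}_{\mathbb{C}} \circ \langle f,g\rangle) = \Theta(f+g)$ since $\operatorname{sum}_{\mathbb{C}} \circ \langle f,g\rangle$ is pointwise addition, and similarly for multiplication, scalar multiplication (matching $(\lambda f)^{-1}(U) = f^{-1}(\frac1\lambda U)$), the involution ($\overline{f}^{-1}(U) = f^{-1}(U^*)$), and the units. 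Naturality in $X$ — that for measurable $h \colon X \to Y$ the square relating $h^* \colon \Linf(Y) \to \Linf(X)$ and the induced \starshom\ $\Linfbool(\Sigma(Y)) \to \Linfbool(\Sigma(X))$ commutes — is a one-line check from $(f \circ h)^{-1}(U) = h^{-1}(f^{-1}(U))$.

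The main obstacle is the construction of $\Psi$ and, in particular, pinning down precisely \emph{why} the boundedness condition in \cref{def:linf_bool} is exactly what is needed: without it, the $\sigma$-ultrafilter $\mathcal{U}_x$ on $\baire(\mathbb{C})$ need not correspond to a point of $\mathbb{C}$ (it could be a "point at infinity"), and the appeal to sobriety of $\mathbb{C}$ must be set up so that the bounded witnessing set forces $\mathcal{U}_x$ to be the principal ultrafilter at some genuine $\lambda_x \in \mathbb{C}$. Everything else is a matter of bookkeeping with preimages.
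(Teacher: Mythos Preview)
Your proposal is correct and follows essentially the same approach as the paper: the paper's proof is a two-line invocation of Loomis--Sikorski duality (the adjunction~\eqref{eq:adjunction_bool_meas}) together with sobriety of $\mathbb{C}$, which is exactly the machinery you unpack explicitly when constructing $\Psi$ via the $\sigma$-ultrafilter $\mathcal{U}_x$ and appealing to \cref{lem:sober}. One small wording slip: sobriety of $\mathbb{C}$ already guarantees that $\lambda_x$ \emph{exists}; what the boundedness clause buys you is that all the $\lambda_x$ lie in a common bounded set, so that $\Psi(g)$ is a \emph{bounded} function and hence lands in $\Linf(X)$.
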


\begin{proof}
	By Loomis--Sikorski duality (\cref{thm:ls_duality}) and sobriety of $\mathbb{C}$ as a measurable space, the \bsigmahom{}s $\baire(\mathbb{C}) \to \Sigma(X)$ correspond to the measurable functions $X \to \mathbb{C}$, and it is straightforward to see that our boundedness condition corresponds to boundedness of the functions.
\end{proof}

This justifies the notation $\Linfbool$, which suggests that the construction is a variant of $\Linf$ for abstract \bsigma{}s --- that is, not necessarily concrete.

\begin{nota}
    $\ball{\lambda}{r}$, resp.~$\cball{\lambda}{r}$, denotes the open, resp.~closed, ball of radius $r\ge 0$ around $\lambda \in \mathbb{C}$.
\end{nota}

We now equip $\Linfbool(B)$ with a norm given by
\begin{equation}\label{eq:norm_linf}
	\norm{f} \coloneqq \inf \, \lbrace r \mid f(\cball{0}{r})= \top \rbrace.
\end{equation}
This is finite by the boundedness assumption in \cref{def:linf_bool}, and the minimum is attained because $\cball{0}{r}= \bigcap_{n} \cball{0}{r+\frac{1}{n}}$ and $f$ preserves countable intersections.
It is easy to see that this norm makes the isomorphism~\eqref{eq:Linf_boolvsmeas} isometric.

\begin{prop}\label{prop:linf_morphisms}
	For two \bsigma{}s $A$ and $B$, every \starshom{} $\phi \colon A \to B$ induces a $*$-homomorphism given by postcomposition,
	\begin{align*}
		\phi^* \colon \Linfbool(A) & \longrightarrow \Linfbool(B) \\
		f & \longmapsto  \phi f.
	\end{align*}
	Moreover, this morphism is \emph{contractive}, i.e.~$\norm{\phi^* (f)}\le \norm{f}$ for all $f \in \Linfbool(A)$.
\end{prop}
\begin{proof}
	Concerning addition,
    \begin{equation*}
        \begin{split}
        \phi^* (f+g) (U) & = \phi \operatorname{sum} \langle f,g \rangle (U) = \phi \langle f,g \rangle (\operatorname{sum}^{-1} (U))\\ 
			 &=  \langle \phi f, \phi g \rangle (\operatorname{sum}^{-1} (U))= \operatorname{sum} (\langle \phi f, \phi g \rangle ) (U) = \left( \phi^* f + \phi^* g \right) (U),
    \end{split}
    \end{equation*}
    where the third equality comes from the fact that $\phi \langle f,g \rangle=\langle \phi f, \phi g \rangle$, since these two coincide on rectangles. Analogously, one can prove that also multiplication is preserved.
	From direct computations, we also see that $\phi^*$ is unital, contractive, and respects scalar multiplication and the $*$-operation.
\end{proof}

\begin{rem}
Despite the advantages offered by \cref{def:linf_bool}, such as \cref{prop:linf_morphisms} being easy to prove from first principles, proving the completeness of $\Linfbool(B)$ is not at all straightforward, and similarly for the monotone $\sigma$-completeness.

To illustrate the difficulty, consider the concrete case where $B = \Sigma(X)$ for some measurable space $(X, \Sigma(X))$, and take a Cauchy sequence $(f_n)$ in $\Linf(X)$ with limit $f$. 
Then based on the fact that the limit of a Cauchy sequence of real numbers is equal to its $\limsup$ and its $\liminf$, one might guess that
\begin{equation}
	\label{eq:liminf_cauchy_fail}
	f^{-1} (U) \stackrel{?}{=} \liminf_n f_n^{-1}(U)\coloneqq \bigcup_{n} \, \bigcap_{m \ge n} \, f_m^{-1}(U)
\end{equation}
for every $U \in \Sigma(X)$, and that this $\liminf$ coincides with the analogously defined $\limsup$.
However, this approach fails: if we use the $\eps$-$\delta$ definition of limit to compute $f^{-1}(C)$ for a closed set $C \subseteq \mathbb{C}$, it turns out that
\begin{equation}
	\label{eq:liminf_cauchy_correct}
	f^{-1}(C)= \bigcap_{\ell} \, \bigcup_{n} \, \bigcap_{m \ge n} \, f_m^{-1}(C + \ball{0}{1/\ell}),
\end{equation}
and in general this is different from the suspected~\eqref{eq:liminf_cauchy_fail}.
To continue the proof in terms of $*$-algebra structure as above, one would then need to show that the right-hand side of~\eqref{eq:liminf_cauchy_correct} indeed extends to a \bsigmahom, and that this is exactly the limit of the Cauchy sequence $(f_n^{-1}) \in \Linfbool(\Sigma(X))$ with respect to the norm~\eqref{eq:norm_linf}.
Even if such an extension can be constructed, working with~\eqref{eq:liminf_cauchy_correct} explicitly looks quite cumbersome and would be difficult to follow for a human reader.
\end{rem}

So to prove that $\Linfbool(B)$ is indeed a \cstars, we instead opt for a ``concrete perspective'' by writing $B$ as a quotient of a concrete \bsigma.
Indeed by the Loomis--Sikorski representation theorem (\cref{thm:ls_rep}), there exists a measurable space $(X, \Sigma(X))$ together with a $\sigma$-ideal $I\subseteq \Sigma(X)$ such that $B\cong \Sigma(X)/I$.

\begin{prop}\label{prop:linf_cstars2}
     Let $(X, \Sigma(X))$ be a measurable space and $I \subseteq \Sigma(X)$ a $\sigma$-ideal $I$.
     Then there is a $*$-isomorphism
     \[
	     \Linfbool(\Sigma(X) / I) \,\cong\, \Linf(X)/ \I,
     \]
     where the $\sigma$-ideal
     \[
	     \I \coloneqq \lbrace g \colon X \to \mathbb{C} \text{ bounded measurable} \mid \mathrm{supp}(g) \in I \rbrace
     \]
     is the $\sigma$-subspace generated by the $\chi_p$ for $p \in I$.
\end{prop}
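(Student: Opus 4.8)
The plan is to realize $\Linfbool(\Sigma(X)/I)$ as a quotient of $\Linf(X)$ by exhibiting an explicit surjective $*$-homomorphism with kernel exactly $\I$, and then to invoke the first isomorphism theorem for $*$-algebras. Write $q\colon \Sigma(X)\to\Sigma(X)/I$ for the quotient \bsigmahom{} and, for $E\in\Sigma(X)$, put $[E]\coloneqq q(E)$. By \cref{lem:measurableviewpoint} the rule $g\mapsto(U\mapsto g^{-1}(U))$ is a $*$-algebra isomorphism $\Linf(X)\cong\Linfbool(\Sigma(X))$, and postcomposition with $q$ is a $*$-homomorphism by \cref{prop:linf_morphisms}; composing these I obtain a $*$-homomorphism
\[
	\Psi\colon \Linf(X)\longrightarrow \Linfbool(\Sigma(X)/I), \qquad \Psi(g)=\bigl(U\mapsto [g^{-1}(U)]\bigr).
\]
That $\I$ is a $\sigma$-ideal, namely the $\sigma$-subspace generated by the $\chi_p$ with $p\in I$, is exactly \cref{ex:ideal_from_ideal}, so it only remains to identify $\ker\Psi$ with $\I$ and to prove that $\Psi$ is surjective.

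For the kernel: $\Psi(g)=0$ means $[g^{-1}(U)]=\top$ whenever $0\in U$ and $[g^{-1}(U)]=\bot$ whenever $0\notin U$; taking $U=\mathbb{C}\setminus\{0\}$ yields $\mathrm{supp}(g)=g^{-1}(\mathbb{C}\setminus\{0\})\in I$, i.e. $g\in\I$. Conversely, if $\mathrm{supp}(g)\in I$ then for $0\notin U$ we have $g^{-1}(U)\subseteq\mathrm{supp}(g)\in I$, and for $0\in U$ the complement $X\setminus g^{-1}(U)=g^{-1}(\mathbb{C}\setminus U)\subseteq\mathrm{supp}(g)\in I$; hence $\Psi(g)=0$. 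So $\ker\Psi=\I$.

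For surjectivity I would fix $f\in\Linfbool(\Sigma(X)/I)$, that is a \bsigmahom{} $f\colon\baire(\mathbb{C})\to\Sigma(X)/I$ with $f(U_0)=\top$ for some bounded Borel $U_0$. Since $\mathbb{C}$ is a standard Borel space, \cite[32.5]{sikorski69boolean} provides a lift of $f$ along $q$, i.e. a \bsigmahom{} $h\colon\baire(\mathbb{C})\to\Sigma(X)$ with $q\circ h=f$; by \cref{lem:measurableviewpoint} (Loomis--Sikorski duality together with sobriety of $\mathbb{C}$) this $h$ has the form $U\mapsto g_0^{-1}(U)$ for a measurable $g_0\colon X\to\mathbb{C}$, which need not be bounded. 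Picking $n\in\mathbb{N}$ with $U_0\subseteq\cball{0}{n}$ forces $f(\cball{0}{n})=\top$, hence $X\setminus g_0^{-1}(\cball{0}{n})\in I$; setting $E\coloneqq g_0^{-1}(\cball{0}{n})$ and $g\coloneqq g_0\cdot\chi_E\in\Linf(X)$, the functions $g$ and $g_0$ agree on $E$, so $g^{-1}(U)$ and $g_0^{-1}(U)$ differ only inside $X\setminus E\in I$, giving $[g^{-1}(U)]=[g_0^{-1}(U)]=f(U)$ for all $U$, i.e. $\Psi(g)=f$. Thus $\Psi$ is a surjective $*$-homomorphism with kernel $\I$ and descends to the desired $*$-isomorphism $\Linf(X)/\I\cong\Linfbool(\Sigma(X)/I)$; a routine additional estimate shows that this isomorphism is moreover isometric for the norm \eqref{eq:norm_linf} and the quotient norm. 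The only genuinely nontrivial ingredient is the lifting of \bsigmahom{}s out of $\baire(\mathbb{C})$ along a quotient map — this is precisely where the standard-Borel nature of $\mathbb{C}$ enters via \cite[32.5]{sikorski69boolean} — while the boundedness management and the kernel computation are bookkeeping with $\sigma$-ideals.
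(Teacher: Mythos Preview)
Your proof is correct and follows essentially the same route as the paper: build the $*$-homomorphism $\Linf(X)\to\Linfbool(\Sigma(X)/I)$ via \cref{lem:measurableviewpoint} and \cref{prop:linf_morphisms}, invoke \cite[32.5]{sikorski69boolean} for surjectivity, identify the kernel as $\I$, and cite \cref{ex:ideal_from_ideal} for the final clause. Your handling of the boundedness issue by truncating the lifted $g_0$ is a valid alternative to the paper's footnoted suggestion of restricting to a bounded disk from the start, and your explicit kernel verification is more detailed than the paper's one-line ``by definition''; the isometry remark you append is not part of this proposition but is established separately in \cref{cor:linf_cstars}.
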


Thus the elements of $\Linfbool(\Sigma(X)/I)$ can be represented as bounded measurable functions on $X$ modulo ``$I$-almost sure equality''.

\begin{proof}
    By composition, we obtain a map
    \[
	    \phi \: \colon \: \Linf(X) \stackrel{\cong}{\longrightarrow} \Linfbool (\Sigma(X)) \longrightarrow \Linfbool(\Sigma(X)/I).
    \]
    This $\phi$ is a $*$-homomorphism by \cref{prop:linf_morphisms}.
    Moreover, by \cite[32.5]{sikorski69boolean}, every \bsigmahom {} $\baire(\mathbb{C}) \to \Sigma(X) / I$ can be lifted to $\baire(\mathbb{C}) \to \Sigma(X)$,
    and therefore $\phi$ is surjective.\footnote{In order to see that this still works with our boundedness condition, we really need to use a sufficiently large bounded subset in place of $\mathbb{C}$.}
    The kernel of $\phi$ coincides, by definition, with $\I$.
    We therefore have constructed the desired isomorphism.

    The final statement is an instance of \cref{ex:ideal_from_ideal}.
\end{proof}

\begin{cor}
	\label{cor:linf_cstars}
    Let $B$ be a \bsigma. 
    Then $\Linfbool(B)$ is a commutative \cstars{} with respect to the norm \eqref{eq:norm_linf}.
\end{cor}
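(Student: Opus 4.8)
The plan is to reduce the general case to the concrete case already handled in \cref{prop:linf_cstars2}, by using the Loomis--Sikorski representation theorem to write an arbitrary \bsigma{} as a quotient of a concrete one.

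\textbf{Step 1: reduce to a quotient of a concrete algebra.} Given an arbitrary \bsigma{} $B$, apply the Loomis--Sikorski representation theorem (\cref{thm:ls_rep}): writing $B \cong A^\infty / M$ where $A^\infty = \baire(\stone(B))$ and $M$ is the $\sigma$-ideal of meager Baire sets, we obtain in particular a measurable space $X$ (namely $\stone(B)$ with its Baire $\sigma$-algebra) and a $\sigma$-ideal $I \subseteq \Sigma(X)$ with $B \cong \Sigma(X)/I$. Since $\Linfbool$ is manifestly invariant under isomorphism of \bsigma{}s, we may assume $B = \Sigma(X)/I$.

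\textbf{Step 2: transport the \cstars{} structure along the isomorphism of \cref{prop:linf_cstars2}.} By \cref{prop:linf_cstars2}, there is a $*$-isomorphism $\Linfbool(\Sigma(X)/I) \cong \Linf(X)/\I$, where $\I$ is the $\sigma$-ideal of bounded measurable functions with support in $I$. Now $\Linf(X)$ is a commutative \cstars{} by \cref{ex:commutative_linf}, and $\I$ is a $\sigma$-ideal in it by \cref{ex:ideal_from_ideal}; hence the quotient $\Linf(X)/\I$ is again a commutative \cstars{} by \cref{fact:sigma_closure}\ref{it:sigmaclosedideal} (a $\sigma$-closed ideal is automatically norm-closed by \cref{lem:sigmaclosed_is_closed}, so the quotient is an honest \cstar{}, and it is monotone $\sigma$-complete by that fact). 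Transporting along the $*$-isomorphism shows that $\Linfbool(B)$ is a commutative \cstars{}.

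\textbf{Step 3: identify the norm.} It remains to check that the \cstar{} norm obtained in Step 2 agrees with the norm~\eqref{eq:norm_linf} defined intrinsically on $\Linfbool(B)$. On $\Linf(X) \cong \Linfbool(\Sigma(X))$ the isomorphism of \cref{lem:measurableviewpoint} is isometric for the sup-norm and the norm~\eqref{eq:norm_linf}, as already observed. For the quotient, the quotient \cstar{} norm of a class $[g] \in \Linf(X)/\I$ equals $\inf\{\|g - h\|_\infty \mid h \in \I\}$, and one checks this equals $\inf\{ r \mid \{x : |g(x)| > r\} \in I\}$; under the isomorphism $\Linf(X)/\I \cong \Linfbool(\Sigma(X)/I) = \Linfbool(B)$, the latter is precisely $\inf\{ r \mid f(\cball{0}{r}) = \top\}$ for the corresponding $f \in \Linfbool(B)$, since $f(\cball{0}{r}) = \top$ in $\Sigma(X)/I$ means exactly that $\{x : |g(x)| > r\} \in I$. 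Hence the two norms coincide, completing the proof.

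\textbf{Main obstacle.} The conceptual content is entirely in \cref{prop:linf_cstars2} and \cref{thm:ls_rep}, both available; the only genuine work here is the bookkeeping in Step 3 matching the intrinsically defined norm~\eqref{eq:norm_linf} with the quotient \cstar{} norm, and in particular verifying that the infimum defining the quotient norm is attained on the support description rather than merely bounding it --- this uses that $f$ preserves countable intersections, as already noted after~\eqref{eq:norm_linf}, together with the fact that $\I$ consists exactly of functions with support in $I$.
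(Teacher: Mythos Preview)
Your proposal is correct and follows essentially the same approach as the paper: reduce via Loomis--Sikorski to $B = \Sigma(X)/I$, invoke \cref{prop:linf_cstars2} together with \cref{fact:sigma_closure} to get the \cstars{} structure, and then verify that the quotient norm agrees with~\eqref{eq:norm_linf}. The paper spells out your Step~3 more explicitly by proving both inequalities $\norm{f}' \ge \alpha$ and $\norm{f}' \le \alpha$ separately (the latter by setting $g \coloneqq h\chi_N$ with $N = h^{-1}(\cball{0}{r}^c)$), but this is exactly the computation you gesture at with ``one checks''.
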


\begin{proof}
  	Continuing in the same notation, we assume $B = \Sigma(X)/I$ without loss of generality.  
	Then by \cref{fact:sigma_closure}, we know that $\Linf(X)/\I$ is a \cstars, and hence so is $\Linfbool(B)$.
	It remains to be shown that the norm \eqref{eq:norm_linf} is indeed the quotient norm.

	Denoting the quotient norm with a prime, we get that for $f \in \Linfbool(B)$,
    \[
    \norm{f}' = \inf_{g \in \I} \norm{h+g} = \inf_{g \in \I} \inf \, \lbrace r\ge 0 \mid h(x) + g(x) \in \cball{0}{r} \text{ for all }x \in X \rbrace,
    \]
    where $h : X \to \mathbb{C}$ is any bounded measurable function representing the class of $f$. To show that this norm coincides with the one induced by \eqref{eq:norm_linf} via the isomorphism $\Linfbool(\Sigma(X) / I) \cong \Linf(X)/ \I$, we need to prove that $\norm{f}'$ is equal to
    \[
    \alpha \coloneqq \inf \left\lbrace r \ge 0 \:\bigg|\: h^{-1} \! \left( \cball{0}{r}^c \right) \in I \right\rbrace. 
    \]
    So let $r> \norm{f}'$. Then there exists $g \in \I$ such that $h(x) + g(x) \in \cball{0}{r}$ for all $x \in X$. In particular,
    \[
	    \lbrace x \in X \mid h(x) \notin \cball{0}{r} \rbrace \:\subseteq\: \lbrace x \in X \mid g(x) \neq 0 \rbrace \in I,
    \]
    and therefore $r \ge \alpha$.
    Hence $\norm{f}' \ge \alpha$. 

    Conversely, suppose $r > \alpha$. 
	Then $N\coloneqq h^{-1} (\cball{0}{r}^c) \in I$; we define $g\coloneqq h \chi_{N}$. 
	Thus $g \in \I$ by construction.
	We also have achieved $h(x) - g(x) \in \cball{0}{r}$ for all $x \in X$, and so $r \ge \norm{f}'$.
	Hence $\norm{f}' \le \alpha$, and thus $\norm{f}' = \alpha$.
\end{proof}

\begin{rem}\label{rem:Linf_specs}
    Using the isomorphism $\Linfbool(B) \cong \Linf(X) / \I$ is one way to see the following for $f\in \Linfbool(B)$:
	\begin{enumerate}
		\item $f$ is self-adjoint if and only if $f(\mathbb{R})=\top$.
		\item $f$ is positive if and only if $f(\mathbb{R}^{\ge 0})= \top$.
		\item\label{it:Linf_specs_proj} $f$ is a projection if and only if $f(\lbrace 0,1 \rbrace)= \top$.
	\end{enumerate}
\end{rem}

We now continue our study of morphisms started in \cref{prop:linf_morphisms}.
%with $B$ any \bsigma{} and $\A$ any \cstars{}.

\begin{nota}\label{nota:char_fun}
	For any $p \in B$, we write $\chi_p$ for the \newterm{characteristic function} $\chi_p \colon \baire(\mathbb{C}) \to B$, the unique element of $\Linfbool(B)$ with $\chi_p(\{0\}) = \neg p$ and $\chi_p(\{1\}) = p$.
\end{nota}

The following holds as a simple consequence of \cref{rem:Linf_specs}\ref{it:Linf_specs_proj}.

\begin{lem}\label{lem:proj_B}
	The map
    	\begin{align*}
		\eta_B \colon \proj(\Linfbool(B)) & \longrightarrow B \\
		\chi & \longmapsto \chi (\lbrace 1 \rbrace)
	\end{align*}
	is an isomorphism.\footnote{$\eta_B$ is actually a component of a natural isomorphism of functors, but to state this we will need to define the $\Linfbool$ functor first (see \cref{propdef:linf,thm:general_duality}).} 
	In particular, the projections of $\Linfbool(B)$ are precisely the characteristic functions.
\end{lem}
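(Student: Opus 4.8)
The plan is to verify that $\eta_B$ is a well-defined bijection of posets preserving the Boolean algebra structure, using the identification $\Linfbool(B) \cong \Linf(X)/\mathscr{I}$ from \cref{prop:linf_cstars2} together with \cref{rem:Linf_specs}\ref{it:Linf_specs_proj}. First I would establish well-definedness: by \cref{rem:Linf_specs}\ref{it:Linf_specs_proj}, an element $\chi \in \Linfbool(B)$ is a projection if and only if $\chi(\{0,1\}) = \top$, which means $\chi$ is a \bsigmahom{} $\baire(\mathbb{C}) \to B$ that factors through the restriction to the two-point sub-$\sigma$-algebra of $\baire(\mathbb{C})$ generated by $\{0\}$ and $\{1\}$; such a homomorphism is entirely determined by the value $\chi(\{1\}) =: p$, since then necessarily $\chi(\{0\}) = \neg p$ and $\chi$ of any Baire set $U$ is $p$, $\neg p$, $\top$, or $\bot$ according to whether $U$ contains $1$ only, $0$ only, both, or neither. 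Conversely, for any $p \in B$ this prescription does define a \bsigmahom{} $\baire(\mathbb{C}) \to B$ (it is the pushforward of the evident homomorphism along the quotient $\baire(\mathbb{C}) \to \{0,1\}$-valued part), and it is bounded since $\chi_p(\cball{0}{1}) = \top$. This shows $\eta_B$ is surjective with explicit inverse $p \mapsto \chi_p$ in the notation of \cref{nota:char_fun}, and injective because $\chi$ is recovered from $\chi(\{1\})$.

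Next I would check that $\eta_B$ respects the Boolean operations, so that it is an isomorphism of \bsigma{}s and not merely of sets. For the order: $\chi \le \chi'$ in $\proj(\Linfbool(B))$ means $\chi \chi' = \chi$, i.e.\ (computing the product in $\Linfbool(B)$ via the multiplication map $\mathbb{C}^2 \to \mathbb{C}$ and the pairing $\langle \chi, \chi' \rangle$) that $\chi(\{1\}) \wedge \chi'(\{1\}) = \chi(\{1\})$, which is exactly $\eta_B(\chi) \le \eta_B(\chi')$ in $B$; working concretely through $\Linf(X)/\mathscr{I}$, this is just the statement that $\chi_p \chi_q = \chi_{p \wedge q}$ modulo $\mathscr{I}$, using $\chi_p \chi_q = \chi_{p \cap q}$ pointwise for indicator functions. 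Similarly $\neg \chi = 1 - \chi$ corresponds to $\neg p$, and $\chi \vee \chi' = \chi + \chi' - \chi\chi'$ corresponds to $p \vee q$. Since the paper already records (in the discussion preceding the statement of \cref{cor:proj_equiv}, via \cref{ex:compression} and \cite[Corollary~2.2.6]{saito15monotone}) that suprema and infima of projections in a commutative \cstars{} are formed as in the algebra, and \cref{rem:Linf_specs} identifies which elements are projections, it follows that $\eta_B$ is automatically a \bsigmahom{} and in fact a \bsigmaiso{} by \cref{rem:iso_sigmaiso}. The final sentence, that the projections are precisely the characteristic functions, is then immediate: $p \mapsto \chi_p$ is the inverse of $\eta_B$, so every projection equals $\chi_p$ for a unique $p$.

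The main obstacle I anticipate is purely bookkeeping: one must be careful that the "$*$-algebra operations" on $\Linfbool(B)$ defined abstractly via pairings and the Lawvere-theory-style construction really do restrict correctly to the characteristic functions, i.e.\ that $\chi_p \cdot \chi_q = \chi_{p\wedge q}$ and $\chi_p + \chi_q - \chi_p\chi_q = \chi_{p\vee q}$ as computed from \cref{def:linf_bool}. The cleanest route — and the one I would actually write — is to reduce everything to the concrete model $\Linfbool(B) \cong \Linf(X)/\mathscr{I}$ of \cref{prop:linf_cstars2}, where $\chi_p$ becomes (the class of) the indicator function of a representative measurable set, and these identities are the familiar identities for indicator functions in $\Linf(X)$ passed to the quotient; since $X$ and the $\sigma$-ideal $I$ with $B \cong \Sigma(X)/I$ exist by the Loomis--Sikorski representation theorem (\cref{thm:ls_rep}), no generality is lost. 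I would therefore present the proof in one short paragraph: invoke \cref{rem:Linf_specs}\ref{it:Linf_specs_proj} to see that projections are exactly the \bsigmahom{}s killing the complement of $\{0,1\}$, observe these are parametrized bijectively by $\chi(\{1\}) \in B$ with inverse $p \mapsto \chi_p$, and note compatibility with the lattice operations through the concrete description, concluding the isomorphism.

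\begin{proof}
	By \cref{rem:Linf_specs}\ref{it:Linf_specs_proj}, an element $\chi \in \Linfbool(B)$ is a projection precisely when $\chi(\lbrace 0, 1 \rbrace) = \top$. For such $\chi$, writing $p \coloneqq \chi(\lbrace 1 \rbrace)$, the \bsigmahom{} property forces $\chi(\lbrace 0 \rbrace) = \chi(\lbrace 0,1\rbrace) \wedge \neg \chi(\lbrace 1 \rbrace) = \neg p$, and then for any Baire set $U \subseteq \mathbb{C}$ the value $\chi(U)$ equals $p$, $\neg p$, $\top$, or $\bot$ according to whether $U \cap \lbrace 0,1 \rbrace$ is $\lbrace 1 \rbrace$, $\lbrace 0 \rbrace$, $\lbrace 0,1 \rbrace$, or $\emptyset$. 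Hence $\chi$ is determined by $p$, and conversely this assignment defines a \bsigmahom{} $\baire(\mathbb{C}) \to B$ — namely the characteristic function $\chi_p$ of \cref{nota:char_fun} — which is bounded since $\chi_p(\cball{0}{1}) = \top$. This shows $\eta_B$ is a bijection with inverse $p \mapsto \chi_p$, and in particular the projections of $\Linfbool(B)$ are exactly the characteristic functions.

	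It remains to see that $\eta_B$ is a \bsigmahom{}, hence a \bsigmaiso{} by \cref{rem:iso_sigmaiso}. By \cref{thm:ls_rep} we may assume $B = \Sigma(X)/I$ for a measurable space $X$ and a $\sigma$-ideal $I$, and use the $*$-isomorphism $\Linfbool(B) \cong \Linf(X)/\I$ of \cref{prop:linf_cstars2}; under it, $\chi_p$ is represented by the indicator function of any measurable set representing $p$. The pointwise identities $\chi_E \chi_F = \chi_{E \cap F}$, $1 - \chi_E = \chi_{E^c}$, and $\chi_E + \chi_F - \chi_E\chi_F = \chi_{E \cup F}$ in $\Linf(X)$ pass to the quotient and give $\eta_B(\chi \wedge \chi') = \eta_B(\chi) \wedge \eta_B(\chi')$, $\eta_B(\neg \chi) = \neg \eta_B(\chi)$, and $\eta_B(\chi \vee \chi') = \eta_B(\chi) \vee \eta_B(\chi')$. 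Finally, countable suprema of projections in the commutative \cstars{} $\Linfbool(B)$ are formed as in the algebra (\cref{ex:compression}), and these coincide with the suprema in $\proj(\Linfbool(B))$; transporting along $\eta_B$, they match the suprema in $B$, since in $\Linf(X)/\I$ a monotone increasing sequence of indicator functions has supremum the indicator function of the union. Thus $\eta_B$ preserves countable suprema, and therefore it is an isomorphism of \bsigma{}s.
\end{proof}
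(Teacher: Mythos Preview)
Your proof is correct and follows the same approach as the paper, which simply states that the lemma is a ``simple consequence'' of \cref{rem:Linf_specs}\ref{it:Linf_specs_proj}; you have spelled out the details that the paper leaves implicit. One small redundancy: once you have shown $\eta_B$ is a bijective Boolean homomorphism, \cref{rem:iso_sigmaiso} already gives that it is a \bsigmaiso{}, so the explicit verification of countable suprema in your final sentences is unnecessary.
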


To study morphisms, we give the following definitions of PVMs and POVMs, which generalize the ones usually studied in quantum probability~\cite[pp.~48,49]{paulsen2002} by allowing the domain to be a general \bsigma{}.

\begin{defn}\label{def:povm}
	Let $B$ be a \bsigma{} and let $\A$ be a \cstars{}.
	An $\A$-valued \newterm{POVM} on $B$ is a map $\mu \colon B \to {\A}$ satisfying the following properties:
	\begin{enumerate}
		\item (positivity) $\mu(p)\ge 0$ for all $p\in B$;
		\item (normalization) $\mu(\top)=1$;
		\item ($\sigma$-additivity) If $(p_n)$ is a countable sequence of pairwise disjoint elements, then\footnote{This equation requires the infinite sum on the right hand side to exist \emph{and} to coincide with the left hand side. Since any POVM is automatically monotone by binary additivity, we necessarily have $\sum_n \mu(p_n) \le 1$.} 
		\[
		\mu ( \sup_n p_n ) = \sum_n \mu (p_n).
		\]
	\end{enumerate}
	Such a $\mu$ is a \newterm{PVM} if, additionally, 
	\begin{enumerate}\setcounter{enumi}{3}
		\item (spectrality) $\mu(p \wedge q )= \mu(p) \mu(q)$ for all $p,q \in B$.
	\end{enumerate}
\end{defn}

Note that if $\mu$ is a PVM, then $\mu(p) \mu(p) = \mu(p \wedge p )= \mu(p)$ immediately implies that $\mu(p)$ is a projection, and these projections are required to commute as $p$ varies.

\begin{rem}\label{rem:povm_mon}
	By arguing as in \cref{rem:sigmahom_add_mon}, it is clear that a POVM preserves suprema (resp.~infima) of monotone increasing (resp.~decreasing) sequences. 
	% This uses the same argument as in \cref{rem:sigmahom_add_mon}, with the additional observation that $\sigma$-additivity here also replaces the use of linearity.
	% \tob{Isn't it just the very basic argument from \cref{rem:sigmahom_add_mon}, with some additional care taken to ensure that the infinite sum exists?}
	% In particular, in this context $\sigma$-additivity is stronger than preserving such suprema, since the latter does not ensure linearity.
	% \tob{I don't understand what ``linearity'' actually refers to}
\end{rem}

\begin{ex}
	The canonical map $\chi_- \colon B \to \Linfbool(B)$ is a PVM.
	Indeed this is clear in the concrete case, and this implies the general case by \cref{prop:linf_cstars2}.

	More generally, every \bsigmahom{} $\phi \colon A \to B$ induces a PVM given as the composite
	\[
		\begin{tikzcd}
			A \ar[r,"\phi"] & B \ar[r,"\chi_-"] & \Linfbool(B).
		\end{tikzcd}
	\]
\end{ex}

\begin{thm}\label{thm:povm_integration}
	Let $B$ be a \bsigma{} and $\A$ a \cstars{}.
	Then for every POVM $\mu : B \to \A$, there is a unique \scpu{} map $\phi \colon \Linfbool(B) \to \A$ which extends $\mu$ in the sense that the diagram
	\[
		\begin{tikzcd}
			B \ar[rd,"\mu"'] \ar[rr,"\chi_-"] && \Linfbool(B) \ar[dl,"\phi"] \\
			& \A 
		\end{tikzcd}
	\]
	commutes. Moreover, this extension is unique even among all positive maps.
\end{thm}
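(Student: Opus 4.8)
The plan is to reduce to the concrete case via the Loomis--Sikorski representation theorem, build the extension as a Lebesgue-style integral against the POVM, and then observe that the remaining properties are essentially forced. By \cref{thm:ls_rep} we may write $B \cong \Sigma(X)/I$ for a measurable space $X$ and a $\sigma$-ideal $I$, and by \cref{prop:linf_cstars2} this gives $\Linfbool(B) \cong \Linf(X)/\I$, where $\I$ is the $\sigma$-ideal of bounded measurable functions with support in $I$; under this identification $\chi_p$ corresponds to the class of $\chi_E$ for any $E \in \Sigma(X)$ lifting $p$. First I would push $\mu$ forward along the quotient $q \colon \Sigma(X) \to \Sigma(X)/I$ to get $\tilde\mu := \mu\circ q \colon \Sigma(X) \to \A$, and check that this is again a POVM: positivity and $\tilde\mu(X)=\mu(\top)=1$ are immediate, and $\sigma$-additivity holds because $q$ is a \bsigmahom{} sending a disjoint sequence to a disjoint sequence.

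Next I would construct the integration map $\tilde\phi \colon \Linf(X) \to \A$. On simple functions set $\tilde\phi\bigl(\sum_i \lambda_i \chi_{E_i}\bigr) := \sum_i \lambda_i \tilde\mu(E_i)$; well-definedness follows from finite additivity of $\tilde\mu$ on disjoint unions by the standard common-refinement bookkeeping, and evaluating on a disjoint-partition representative shows $\tilde\phi(f)\ge 0$ whenever $f \ge 0$. Hence for self-adjoint simple $f$ one gets $-\norm f\,1 \le f \le \norm f\,1$ and therefore $\norm{\tilde\phi(f)} \le \norm f$ using $\tilde\phi(1)=\tilde\mu(X)=1$, with $\norm{\tilde\phi(f)}\le 2\norm f$ in general. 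Since simple elements are norm-dense (\cref{cor:simple_dense}) and $\Linf(X)$ is complete, $\tilde\phi$ extends uniquely to a bounded linear map on $\Linf(X)$, which is positive (the positive cone is norm-closed, and every nonnegative element is a norm-limit of nonnegative simple functions) and unital. For the descent: if $0\le g\in\I$ with $\mathrm{supp}(g)=S\in I$, then $0\le g\le\norm g\,\chi_S$ forces $0\le\tilde\phi(g)\le\norm g\,\tilde\mu(S)=\norm g\,\mu(\bot)=0$, and since $\I$ is spanned by such $g$ (\cref{ex:ideal_from_ideal}), positivity alone gives $\I\subseteq\ker\tilde\phi$. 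Thus $\tilde\phi$ descends to a positive unital $\phi\colon\Linfbool(B)\to\A$ with $\phi(\chi_p)=\tilde\mu(E)=\mu(p)$, so $\phi$ extends $\mu$.

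It then remains to see that $\phi$ is \scpu{} and that it is the only positive extension. Complete positivity is automatic because the domain $\Linfbool(B)$ is a commutative \cstar{}, so any positive unital map out of it is completely positive. For $\sigma$-normality I would argue directly on $\Linfbool(B)$, imitating the proof of \cref{thm:sigma_normal_proj}: by \cref{rem:sigmanormal_inf0} it suffices to show $\inf_n\phi(f_n)=0$ for $(f_n)\searrow 0$ with $0\le f_n\le 1$. Fix $\eps>0$ and set $p_n:=\chi_{[\eps,1]}(f_n)$, which by \cref{lem:proj_B} equals $\chi_{r_n}$ for a unique $r_n\in B$ and satisfies $f_n-\eps\le p_n\le\eps^{-1}f_n$ by \cref{rem:clopen_approx_functional}; replacing $p_{n+1}$ by $p_{n+1}p_n$ as in \cref{thm:sigma_normal_proj} makes $(p_n)$ decreasing, and then $g\le p_n$ for all $n$ forces $g\le\eps^{-1}\inf_n f_n=0$, so $\inf_n p_n=0$, hence $\inf_n r_n=\bot$ since $\chi_-$ is an injective \bsigmahom{} onto the projections. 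As a POVM $\mu$ preserves decreasing infima (\cref{rem:povm_mon}), so $\inf_n\mu(r_n)=\mu(\bot)=0$, and $\phi(f_n)\le\phi(p_n)+\eps\,\phi(1)=\mu(r_n)+\eps$ yields $\inf_n\phi(f_n)\le\eps$; letting $\eps\to 0$ gives $\sigma$-normality. Finally, for uniqueness among positive maps: any positive $\psi$ with $\psi\chi_-=\mu$ satisfies $\psi(1)=\mu(\top)=1$, hence is bounded with $\norm{\psi(a)}\le\norm a$ on self-adjoints; since by \cref{lem:proj_B} the simple elements of $\Linfbool(B)$ are the linear combinations of the $\chi_p$, $\psi$ agrees with $\phi$ on simple elements, and by norm-density of these and completeness, $\psi=\phi$.

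The step I expect to be the main obstacle is the $\sigma$-normality: a naive monotone-convergence argument is not available, because suprema and infima in a \cstars{} need not be computed pointwise on any underlying space, so the argument has to be routed through the projection-and-functional-calculus approximation of \cref{thm:sigma_normal_proj} and \cref{rem:clopen_approx_functional}. A secondary point requiring care is the well-definedness and boundedness of the integration map in Step~2, which is routine but somewhat tedious, and the verification that the pullback $\tilde\mu$ really is a POVM.
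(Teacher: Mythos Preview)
Your proof is correct and follows the same overall architecture as the paper: reduce to the concrete case via the Loomis--Sikorski representation theorem, build the integration map, and descend through the quotient $\Linf(X)\twoheadrightarrow\Linfbool(B)$. The paper is terser in two places where you are explicit: it outsources the concrete case (existence of the \scpu{} extension on $\Linf(X)$) to Wright~\cite{wright72measures}, and it obtains both $\sigma$-normality of the descended map and uniqueness among positive extensions by a single appeal to \cref{thm:sigma_normal_proj} (using $\Linfbool(B)\cong C(\stone(B))$). Your inline construction of the Lebesgue integral and your $\sigma$-normality argument via \cref{rem:clopen_approx_functional} are exactly the content of those citations unpacked, so the route is the same; the payoff of your version is self-containedness, while the paper's version makes the dependence on \cref{thm:sigma_normal_proj} visible and avoids redoing Wright's computation.
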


In other words, we obtain a bijective correspondence between POVMs $B \to \A$ and \scpu{} maps $\Linfbool(B) \to \A$.
This is well-known in the concrete case, in which case one also writes
\[
\phi(f) = \int f \,\mathrm{d}\mu,
\]
and we will adopt this notation in the general case as well.
%The proof will show that $\phi$ is in fact unique among all positive unital extensions of $\mu$.
%It is also worth noting that this result is, to some extent, known in the theory.
As far as we are aware, the most general previous result has been given by Wright, who considered positive $\sigma$-additive measures on measurable spaces with values in a $\sigma$-space, i.e.~an ordered vector space closed under suprema of monotone sequences~\cite[Section~2]{wright72measures}. 
Mutatis mutandis, Wright's proof is a rewrite of the existence proof of the Lebesgue integral.
Our contribution is the generalization of the domain to \bsigma{}s (not necessarily concrete).\footnote{Our statement and proof also readily generalize to POVMs on \bsigma{}s taking values in a $\sigma$-space, but we prefer to stick to the case of \cstars{}s and normalized measures to reduce the proliferation of concepts.}

% The argument below is mostly a rewrite of the existence proof of the Lebesgue integral, modulo some adjustments to account for $B$ not necessarily being concrete.

% 	This notation will be used throughout the proof.

% 	It is also worth noting that this result is, to some extent, known in the theory.
% 	As far as we are aware, the most general result is given by Wright, who considered positive $\sigma$-additive measures on measurable spaces with values in a $\sigma$-space, i.e.~a vector space with a notion of order and closed under order limits of monotone sequences~\cite[Section~2]{wright72measures}. 
% 	The proof below could be generalized to measures on \bsigma{}s with values in a $\sigma$-space, but we prefer to stick to the case of \cstars{}s and normalized measures to reduce the proliferation of concepts.
% 	The uniqueness of the extension among positive unital maps is original to this paper.
%	\anto{Explicitly state that the genral case of not necessarily concrete \bsigma \ is original.}
\begin{proof}
	Since points play a crucial role in the standard arguments, we once again employ the Loomis--Sikorski representation theorem (\cref{thm:ls_rep}) to reduce the problem to the concrete case.
	%To avoid confusion, we will write $p$ and $p_i$ for $B$, while the associated characteristic functions in $\Linfbool(B)$ will be written as $\chi_p$ and $\chi_{p_i}$.
	As already mentioned, the statement is known if $B$ is concrete~\cite[Section 2]{wright72measures}.

	In general, we again assume $B = \Sigma(X)/I$ without loss of generality.
	Then the upper square in the diagram
	\begin{equation}
		\label{eq:LS_integral_diagram}
		\begin{tikzcd}
			\Sigma(X) \ar[rrrr,"\chi_-"] \ar[dr,twoheadrightarrow] \ar[dr,twoheadrightarrow] &&&& \Linfbool(\Sigma(X)) \ar[dl,twoheadrightarrow] \ar[bend left,ddll,"\psi"] \\
			& \Sigma(X)/I \ar[rr,"\chi_-"] \ar[dr,"\mu"' pos=0.4] && \Linfbool(\Sigma(X)/I) \ar[dl,"\phi" pos=0.4, dashed] \\
			&& \A \\
		\end{tikzcd}
	\end{equation}
	commutes.
	Thanks to the statement in the concrete case, we know that the left composite extends to a \scpu{} map $\psi \colon \Linfbool(\Sigma(X)) \to \A$.
	Thus by \cref{prop:linf_cstars2}, it factors across the quotient $\Linfbool(\Sigma(X)/I)$, because $\psi(\chi_p) = \mu(\bot) = 0$ for all $p \in I$.
	The last part of the statement is an immediate consequence of \cref{thm:sigma_normal_proj}.
\end{proof}

\begin{cor}\label{cor:pvm}
	For every PVM $\mu \colon B \to \A$, there is a unique \starshom{} $\phi \colon \Linfbool(B) \to \A$ which extends $\mu$. Moreover, this extension is unique even among all positive maps.
\end{cor}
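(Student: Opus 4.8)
The plan is to deduce everything from \cref{thm:povm_integration}. Since every PVM is in particular a POVM, that theorem already produces a unique \scpu{} map $\phi \colon \Linfbool(B) \to \A$ with $\phi \chi_- = \mu$, and moreover $\phi$ is the unique \emph{positive} map with this property. Hence both uniqueness assertions in the statement come for free, and the only remaining task is to verify that this particular $\phi$ is multiplicative; once that is done, $\phi$ is a $*$-homomorphism (a positive unital linear map between \cstar{}s is automatically $*$-preserving) and it is $\sigma$-normal by construction, so it is a \starshom.

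For multiplicativity I would run a standard density argument. First I would check $\phi(st) = \phi(s)\phi(t)$ on simple elements: by \cref{lem:proj_B} the projections of $\Linfbool(B)$ are exactly the characteristic functions $\chi_p$, and one has $\chi_p \chi_q = \chi_{p \wedge q}$ in $\Linfbool(B)$ --- immediate in the concrete case and hence true in general via the presentation $\Linfbool(B) \cong \Linf(X)/\I$ of \cref{prop:linf_cstars2}. Combining this with linearity of $\phi$, the relation $\phi \chi_- = \mu$, and the spectrality axiom $\mu(p \wedge q) = \mu(p)\mu(q)$ gives $\phi(\chi_p \chi_q) = \mu(p)\mu(q) = \phi(\chi_p)\phi(\chi_q)$, and linearity then extends multiplicativity to all simple elements (which by \cref{lem:proj_B} are precisely the finite linear combinations of characteristic functions). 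Then I would extend to arbitrary $f, g \in \Linfbool(B)$: by \cref{cor:simple_dense} there are (automatically norm-bounded) sequences of simple elements $s_n \to f$ and $t_n \to g$ in norm, and since $\phi$ is contractive (being a positive unital map between \cstar{}s) and multiplication is jointly norm-continuous on bounded sets, passing to the limit in $\phi(s_n t_n) = \phi(s_n) \phi(t_n)$ yields $\phi(fg) = \phi(f)\phi(g)$.

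I do not expect a genuine obstacle: this corollary is essentially a bookkeeping consequence of \cref{thm:povm_integration}. The only point deserving attention is that the passage from simple to general elements must be performed in the \emph{norm} topology, where simple elements are dense (\cref{cor:simple_dense}) and $\phi$ is continuous, rather than via $\sigma$-convergence, for which no joint continuity of multiplication is available. An alternative, mirroring the proof of \cref{thm:povm_integration}, would be to reduce to the concrete case $B = \Sigma(X)/I$ by Loomis--Sikorski and invoke the classical multiplicativity of integration against a projection-valued measure; but the direct density argument above is shorter and self-contained.
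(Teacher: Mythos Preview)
Your proof is correct. Both uniqueness claims indeed come directly from \cref{thm:povm_integration}, and your density argument for multiplicativity is sound: the identity $\chi_p \chi_q = \chi_{p\wedge q}$ together with spectrality gives multiplicativity on simple elements, and norm-density of simple elements (\cref{cor:simple_dense}) plus contractivity of $\phi$ pushes this to all of $\Linfbool(B)$.

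The paper takes the alternative you mention at the end: it reduces to the concrete case via the Loomis--Sikorski representation, noting that if the lift $\psi$ in diagram~\eqref{eq:LS_integral_diagram} is multiplicative (which is the classical fact about integration against a PVM on a measurable space), then so is its quotient $\phi$. Your direct route avoids invoking that classical result as a black box and instead unpacks it in the abstract setting, at the modest cost of verifying $\chi_p \chi_q = \chi_{p\wedge q}$ --- which you still justify via the concrete presentation from \cref{prop:linf_cstars2}. So the two arguments are close in spirit; yours is slightly more self-contained, while the paper's is terser and keeps the parallel with the proof of \cref{thm:povm_integration}.
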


\begin{proof}
	We need to prove that for every PVM $\mu \colon B \to \A$, the unique extension $\Linfbool(B) \to \A$ is multiplicative.
	This is again standard in the case where $B$ is concrete, and reduces to the concrete case in general: if $\psi$ in \eqref{eq:LS_integral_diagram} is multiplicative, then so is $\phi$.
\end{proof}

We are now ready to prove that $\Linfbool$ is a functor, and actually an equivalence. 

\begin{prop}\label{propdef:linf}
    The association $\Linfbool\colon \SigmaB \to \cSigmaC$ defined by
     \[
    B \mapsto \Linfbool(B) \qquad \text{and} \qquad (\phi\colon A \to B) \mapsto \left(\arraycolsep=1.4pt \begin{array}{rrcl}
        \phi^* \colon & \Linfbool(A) & \to& \Linfbool(B)\\
        & f &\mapsto& \phi f
    \end{array}\right)
    \]
    is a functor. 
\end{prop}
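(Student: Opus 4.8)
The plan is to verify the three clauses of functoriality in turn, the first two being essentially immediate and the third requiring a genuine argument.

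That $\Linfbool(B)$ is a commutative \cstars{}, hence an object of $\cSigmaC$, is exactly \cref{cor:linf_cstars}. For the compositional identities, I would simply unwind definitions: $\id_B^*(f) = \id_B \circ f = f$ for every $f \in \Linfbool(B)$, so $\id_B^* = \id_{\Linfbool(B)}$; and for composable maps $A \xrightarrow{\phi} B \xrightarrow{\psi} C$ in $\SigmaB$ and $f \in \Linfbool(A)$ one has $(\psi\phi)^*(f) = (\psi\phi)\circ f = \psi\circ(\phi\circ f) = \psi^*(\phi^*(f))$, by associativity of composition of maps $\baire(\mathbb{C}) \to (-)$. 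Thus $(\psi\phi)^* = \psi^*\phi^*$.

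The substantive point is that for $\phi \in \SigmaB(A,B)$ the induced map $\phi^*$ is a morphism of $\cSigmaC$, i.e.\ a \starshom{}. By \cref{prop:linf_morphisms} it is already a unital, contractive $*$-homomorphism, so only $\sigma$-normality remains to be checked, and I would deduce this from the integration theorem. First compute $\phi^*$ on characteristic functions: for $p \in A$, the element $\phi^*(\chi_p) = \phi\circ\chi_p$ is the $\sigma$-homomorphism $\baire(\mathbb{C}) \to B$ taking value $\phi(p)$ on $\{1\}$ and value $\phi(\neg p) = \neg\phi(p)$ on $\{0\}$, which by \cref{nota:char_fun} is precisely $\chi_{\phi(p)}$. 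Hence $\phi^* \circ \chi_- = \chi_- \circ \phi$ as maps $A \to \Linfbool(B)$. Now $\chi_-\circ\phi \colon A \to \Linfbool(B)$ is a PVM, being the composite of the \bsigmahom{} $\phi$ with the canonical PVM $\chi_- \colon B \to \Linfbool(B)$. By \cref{cor:pvm}, this PVM has a unique extension to a positive map $\Linfbool(A) \to \Linfbool(B)$, and that unique extension is automatically a \starshom{}. Since $\phi^*$ is a positive map extending $\chi_-\circ\phi$ in this sense, it must coincide with that extension; in particular $\phi^*$ is $\sigma$-normal, which finishes the proof.

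The main obstacle is this last step. One cannot apply \cref{thm:sigma_normal_proj} directly, since $\Linfbool(A)$ is not literally of the form $C(X)$ for a Stone space $X$, and verifying $\sigma$-normality by hand on monotone sequences of simple elements would be awkward. The clean route is to exploit that the projections of $\Linfbool(A)$ are precisely the characteristic functions (\cref{lem:proj_B}), so that $\phi^*$ is pinned down on projections by $\phi$ itself, and then to invoke the uniqueness-among-positive-maps clause of \cref{cor:pvm} to promote this to a statement about all of $\Linfbool(A)$.
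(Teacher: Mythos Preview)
Your proof is correct and follows essentially the same route as the paper: invoke \cref{cor:linf_cstars} for the object assignment, use \cref{prop:linf_morphisms} for the $*$-homomorphism structure, and obtain $\sigma$-normality by recognizing $\phi^*$ as the unique positive extension of the PVM $\chi_-\circ\phi$ via \cref{cor:pvm}. The only cosmetic difference is that you verify preservation of identities and composition by direct unwinding, whereas the paper remarks that these follow from the uniqueness clause of the extension; both arguments are immediate.
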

\begin{proof}
	$\Linfbool(B)$ is a commutative \cstars{} by \cref{cor:linf_cstars}.
	To prove that $\phi^*$ is indeed a \starshom{}, it suffices to combine \cref{prop:linf_morphisms,cor:pvm}, since $\phi^*$ extends the PVM given by the composition of $\phi$ with the inclusion $\chi_{-} \colon B \to \Linfbool(B)$.
	Functoriality is clear by the uniqueness of the extension.
\end{proof}

\begin{thm}\label{thm:general_duality}
    There is an equivalence
    \[
    \begin{tikzcd}
	    \SigmaB \ar[rr,bend left=15,"\Linf"] \ar[rr,phantom,"\cong"] && \cSigmaC \ar[ll,bend left=15,"\proj"]
    \end{tikzcd}
    \]
\end{thm}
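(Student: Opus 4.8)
The plan is to exhibit explicit natural isomorphisms
\[
	\eta \colon \proj\circ\Linfbool \;\Longrightarrow\; \id_{\SigmaB},
	\qquad
	\epsilon \colon \Linfbool\circ\proj \;\Longrightarrow\; \id_{\cSigmaC},
\]
which together witness the claimed equivalence (and, combined with \cref{cor:proj_equiv}, identify $\Linfbool$ with $C(\stone(-))$ up to natural isomorphism). For $\eta$ I would take the maps $\eta_B\colon\proj(\Linfbool(B))\to B$, $\chi\mapsto\chi(\{1\})$ from \cref{lem:proj_B}, which are already known to be isomorphisms of \bsigma{}s. Naturality in $B$ is a one-line check: for a \bsigmahom{} $\phi\colon A\to B$ one has $\Linfbool(\phi)(\chi_p)=\phi\circ\chi_p=\chi_{\phi(p)}$, so both $\eta_B\circ\proj(\Linfbool(\phi))$ and $\phi\circ\eta_A$ send a projection $\chi_p$ to $\phi(p)$.

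For $\epsilon$ I would first observe that for every commutative \cstars{} $\A$ the inclusion $\iota_\A\colon\proj(\A)\hookrightarrow\A$ is a PVM in the sense of \cref{def:povm}: positivity and normalization are immediate, spectrality holds because the meet of commuting projections is their product, and $\sigma$-additivity holds because a sequence of pairwise orthogonal projections $(p_n)$ satisfies $\sum_n p_n=\sup_n p_n$ in a \cstars{} (its partial sums are projections increasing to it, by \cref{def:infinite_sum} and \cref{ex:compression}), while countable suprema in $\proj(\A)$ are computed in $\A$. By \cref{cor:pvm} this PVM extends uniquely to a \starshom{} $\epsilon_\A\colon\Linfbool(\proj(\A))\to\A$ determined by $\epsilon_\A(\chi_p)=p$. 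Naturality of $\epsilon$ then follows again from the uniqueness clause of \cref{cor:pvm}: for a \starshom{} $\psi\colon\A\to\B$, both $\epsilon_\B\circ\Linfbool(\proj(\psi))$ and $\psi\circ\epsilon_\A$ are \starshom{}s extending the PVM $p\mapsto\psi(p)$.

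It then remains to check that each $\epsilon_\A$ is bijective. Surjectivity is the easy half: the image of the $*$-homomorphism $\epsilon_\A$ is a norm-closed $*$-subalgebra of $\A$, it contains every projection of $\A$ since $\epsilon_\A(\chi_p)=p$, hence every simple element, and these are norm-dense by \cref{cor:simple_dense}; so the image is all of $\A$.

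For injectivity — the one genuinely non-formal step, where the fact that $\Linfbool(\proj(\A))$ is $\sigma$-generated by its projections is used — I would argue as follows. The kernel $\mathscr{I}$ of $\epsilon_\A$ is a $\sigma$-ideal, hence a hereditary closed two-sided ideal by \cref{lem:sigmaclosed_is_closed}. On the other hand $\epsilon_\A$ is injective on projections, because $\proj(\epsilon_\A)$ coincides with the isomorphism $\eta_{\proj(\A)}$ of \cref{lem:proj_B} (both send $\chi_p$ to $p$). If $\mathscr{I}$ were nonzero it would contain some $a>0$, and then the spectral projection $p\coloneqq\chi_{[\norm{a}/2,\,\norm{a}]}(a)$ is nonzero (as $\norm{a}/2<\norm{a}$) and satisfies $0\le p\le 2\norm{a}^{-1}a$ by \cref{rem:clopen_approx_functional}, so $p\in\mathscr{I}$ by heredity — contradicting injectivity on projections. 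Hence $\mathscr{I}=0$, so every $\epsilon_\A$ is an isomorphism, and the natural isomorphisms $\eta$ and $\epsilon$ exhibit $(\Linfbool,\proj)$ as an equivalence of categories.
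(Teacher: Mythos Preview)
Your proof is correct and follows the same approach as the paper: construct $\eta$ via \cref{lem:proj_B}, construct $\epsilon$ by extending the PVM $\proj(\A)\hookrightarrow\A$ via \cref{cor:pvm}, and check bijectivity. The paper's proof is terser, saying only that $\epsilon_\A$ ``is an isomorphism since it is bijective on projections, and projections are sufficient to determine all elements by \cref{cor:simple_dense}''; your spectral-projection argument for injectivity (any nonzero element of the kernel dominates a nonzero projection) is a legitimate way to unpack that sentence, and your explicit naturality checks fill in what the paper declares ``obvious''. One minor quibble: the citation of \cref{lem:sigmaclosed_is_closed} for heredity is off---kernels of $*$-homomorphisms are automatically closed two-sided ideals, and heredity of the positive cone is then the standard fact \cite[Theorem~1.5.2]{pedersen2018automgroups} already used elsewhere in the paper.
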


By the equivalence in the Gelfand--Stone approach (\cref{cor:proj_equiv}), it also follows that $\Linfbool$ is naturally isomorphic to $C(\stone(-)) \colon \SigmaB \to \cSigmaC$.

\begin{proof} 
	\cref{lem:proj_B} has constructed a natural isomorphism $\proj \circ \Linfbool \cong \id$.
	In the other direction, the inclusion $\proj(\A) \to \A$ is a PVM for every commutative \cstars{} $\A$, and by \cref{cor:pvm} this extends to a \starshom{} $\phi_\A \colon \Linfbool(\proj(\A))\to \A$. 
	It is an isomorphism since it is bijective on projections, and projections are sufficient to determine all elements by \cref{cor:simple_dense}.
	Naturality in $\A$ is obvious.
\end{proof}

As one benefit, we obtain the analogue of \cref{prop:universal_completions} for the regular $\sigma$-completions.
Due to the lack of functoriality of the regular $\sigma$-completion, we only state it as an objectwise isomorphism.

\begin{cor}\label{cor:regular_completions}
	Regular $\sigma$-completions commute, i.e.\ $\Linfbool (\reg{B})\cong \reg{C(\stone(B))}$ for every \bsigma{} $B$.
\end{cor}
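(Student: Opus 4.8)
The plan is to reduce both sides to an explicit quotient of $\Linf(\stone(B))$ and observe that these quotients coincide. Throughout, write $X \coloneqq \stone(B)$ equipped with its Baire $\sigma$-algebra, so that $\Sigma(X) = B^{\infty} = \baire(X)$ and $X$ is a Stone space, hence in particular compact Hausdorff; let $M \subseteq B^{\infty}$ be the $\sigma$-ideal of meager Baire subsets of $X$. (The substantive case is that of a general Boolean algebra $B$; if $B$ is already a \bsigma{} the claim is trivial, since then $\reg{B} = B$ and $C(\stone(B))$ is already a \cstars{}.)

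First I would treat the Boolean side. By \cref{prop:boolean_regular}, the regular $\sigma$-completion of $B$ is $\reg{B} = B^{\infty}/M = \Sigma(X)/M$. Feeding this into \cref{prop:linf_cstars2}, applied to the measurable space $X$ and the $\sigma$-ideal $M$, produces a $*$-isomorphism
\[
	\Linfbool(\reg{B}) \;=\; \Linfbool(\Sigma(X)/M) \;\cong\; \Linf(X)/\I ,
\]
where $\I$ is the $\sigma$-ideal of bounded Baire functions on $X$ whose support belongs to $M$.

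Next I would treat the $C^*$-side. The key bookkeeping observation is that, since $M$ consists exactly of the meager Baire sets, $\I$ is nothing but the $\sigma$-ideal $\M$ of bounded Baire functions with meager support from \cref{ex:meager_ideal}: a bounded Baire function has support in $M$ precisely when that support is a meager Baire set. Then \cref{ex:commutative_regular} identifies $\Linf(X)/\M$ with the regular $\sigma$-completion $\reg{C(X)} = \reg{C(\stone(B))}$, with $\sigma$-injection $C(X) \hookrightarrow \Linf(X) \twoheadrightarrow \Linf(X)/\M$. Chaining the two identifications gives $\Linfbool(\reg{B}) \cong \reg{C(\stone(B))}$. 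One can moreover check that this isomorphism carries the canonical copy of $C(\stone(B))$ inside $\Linfbool(\reg{B})$ — the $C^*$-subalgebra generated by the characteristic functions $\chi_p$ for $p$ in the image of $B \hookrightarrow \reg{B}$, which is a copy of $C(\stone(B))$ by Stone duality — onto the canonical copy on the right, since both arise from $C(X) \subseteq \Linf(X)$ followed by the same quotient.

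The one step where I would be most careful is exactly this matching of the two ``meager'' ideals: it glues together the Boolean description of the regular $\sigma$-completion as a quotient by meager subsets of the Stone space (\cref{prop:boolean_regular}) with the $C^*$-description via the order-theoretic regularity condition \eqref{eq:regular_completion}, and it is the place where the explicit description of the kernel in \cref{prop:linf_cstars2} is indispensable. No naturality is asserted — fitting the fact that the regular $\sigma$-completion is not functorial (\cref{rem:nonfunctoriality_regular}) — so, like the two results it rests on, the statement is purely objectwise and there is no further compatibility to verify.
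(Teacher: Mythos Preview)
Your proof is correct and follows essentially the same route as the paper's: both arguments identify each side with $\Linf(\stone(B))/\M$ by combining \cref{prop:boolean_regular}, \cref{prop:linf_cstars2}, and \cref{ex:commutative_regular}, the crux being that the Boolean ``meager'' $\sigma$-ideal $M$ and the $C^*$ ``meager-support'' $\sigma$-ideal $\M$ match up. You have also correctly flagged that the statement is only nontrivial for a general Boolean algebra $B$ (the paper's phrase ``\bsigma{}'' appears to be a slip, since for a genuine \bsigma{} both sides reduce to $\Linfbool(B)\cong C(\stone(B))$); the paper's own proof and its use in \cref{rem:univ_reg_differ2} confirm that the general Boolean case is what is intended.
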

\begin{proof}
	As noted after the statement of \cref{thm:general_duality}, $\Linfbool \cong C(\stone(-))$.
	Moreover, by \cref{prop:linf_cstars2}, regular $\sigma$-completions of Boolean algebras are sent to regular $\sigma$-completions of their associated \cstar{}s because they are both given by quotienting by meager sets: compare \cref{ex:commutative_regular} with \cref{prop:boolean_regular}.
\end{proof}

\begin{rem}\label{rem:univ_reg_differ2}
	Given that both the universal and the regular $\sigma$-completions are preserved by $\Linfbool$ (\cref{prop:universal_completions,cor:regular_completions}), \cref{ex:tensor_product} lets us conclude that the universal and regular $\sigma$-completions of a \cstar{} can differ.
	This was already known to Wright~\cite[p.~303]{wright76regular}.
\end{rem}

\subsection{Concrete equivalence and measurable Gelfand duality}\label{sec:concrete_duality}

In the Boolean setting, we had introduced concrete \bsigma{}s as those isomorphic to the $\sigma$-algebra of a measurable space (\cref{def:concrete_bool}).
On first thought, one may expect that the analogous notion in the \cstar{} setting would be the $\sigma$-representable \cstars{}s from \cref{def:sigma_representable}.
However, under the equivalence $\SigmaB \cong \cSigmaC$, the $\sigma$-representable \cstars{}s do not generally correspond to the concrete \bsigma{}s, as for example $L^{\infty}([0,1])$ is $\sigma$-representable, but its \bsigma{} of projections is not concrete (\cref{ex:nonconcI}).

On a commutative \cstars{}, the $\sigma$-homomorphisms to $\mathbb{C}$ are precisely the pure $\sigma$-states.
Therefore a notion of concreteness in the \cstar{} setting that matches the Boolean one 
is that of pure $\sigma$-representability.

\begin{cor}\label{cor:concrete}
    The equivalence of \cref{thm:general_duality} restricts to 
    \[
    \begin{tikzcd}
	    \csigmaB \ar[rr,bend left=15,"\Linfbool"] \ar[rr,phantom,"\cong"] &&  \cmsigmaC \ar[ll,bend left=15,"\proj"]
    \end{tikzcd}
    \]
\end{cor}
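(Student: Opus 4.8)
The plan is to exploit that $\csigmaB$ and $\cmsigmaC$ are \emph{full} subcategories of $\SigmaB$ and $\cSigmaC$, so that in view of the equivalence of \cref{thm:general_duality} it is enough to show that the equivalence matches up their objects. Concretely, I would prove that a \bsigma{} $B$ is concrete if and only if $\Linfbool(B)$ is purely $\sigma$-representable. Since $\proj \circ \Linfbool \cong \id_{\SigmaB}$ and $\Linfbool \circ \proj \cong \id_{\cSigmaC}$, this single equivalence of conditions automatically also yields that a commutative \cstars{} $\A$ is purely $\sigma$-representable iff $\proj(\A)$ is concrete, hence that $\Linfbool$ and $\proj$ restrict to functors between $\csigmaB$ and $\cmsigmaC$, that these restrictions remain fully faithful, that the unit and counit isomorphisms restrict, and that the restriction is essentially surjective (for $\A \in \cmsigmaC$ take $B \coloneqq \proj(\A)$, which is concrete, and use $\Linfbool(\proj(\A)) \cong \A$).

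The core step is a translation of pure $\sigma$-states into $\sigma$-homomorphisms to $\{\bot,\top\}$. Because $\Linfbool(B)$ is commutative, its pure $\sigma$-states are exactly the \starshom{}s $\Linfbool(B) \to \mathbb{C}$. Identifying $\{\bot,\top\}$ with the $\sigma$-algebra of a one-point space gives $\Linfbool(\{\bot,\top\}) \cong \mathbb{C}$ by \cref{lem:measurableviewpoint}, while $\proj(\mathbb{C}) = \{\bot,\top\}$. Modulo this identification, a \bsigmahom{} $\phi \colon B \to \{\bot,\top\}$ is the same thing as a $\mathbb{C}$-valued PVM on $B$, so by \cref{cor:pvm} it extends uniquely to a \starshom{} $\Phi \colon \Linfbool(B) \to \mathbb{C}$; conversely, restricting any such $\Phi$ to projections and using the isomorphism $\eta_B \colon \proj(\Linfbool(B)) \cong B$ of \cref{lem:proj_B} recovers $\phi$, with $\phi(p) = \Phi(\chi_p)$ for $p \in B$, so that $\Phi(\chi_p) \ne 0 \iff \phi(p) = \top$. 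Moreover, by \cref{cor:simple_dense} together with \cref{lem:proj_B}, the \starshom{}s $\Linfbool(B)\to\mathbb{C}$ separate the elements of $\Linfbool(B)$ if and only if they separate the projections $\chi_p$; and a short functional calculus argument --- for positive $x$ with $\Phi(x)=0$ for all such $\Phi$, apply $\Phi$ to the spectral projections $\chi_{[\eps,\infty)}(x) \le \eps^{-1} x$ (cf.\ \cref{rem:clopen_approx_functional}) and let $\eps \to 0$ --- shows this is equivalent to separation of positive elements, i.e.\ to pure $\sigma$-representability of $\Linfbool(B)$.

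Putting these together, $\Linfbool(B)$ is purely $\sigma$-representable if and only if for every $p > \bot$ in $B$ there is a $\sigma$-homomorphism $\phi \colon B \to \{\bot,\top\}$ with $\phi(p) = \top$, which by \cref{lem:concretebool} says exactly that $B$ is concrete. This completes the matching of objects and hence the proof. I expect no serious obstacle beyond the bookkeeping; the only mildly delicate point is reconciling the definition of purely $\sigma$-representable (stated for positive elements in \cref{def:measurable_cstar}) with the projection-level statement, which is precisely what the functional calculus step above takes care of.
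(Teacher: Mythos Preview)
Your proposal is correct and follows essentially the same approach as the paper: both arguments reduce to showing that $B$ is concrete iff $\Linfbool(B)$ is purely $\sigma$-representable, via the bijection between \bsigmahom{}s $B \to \{\bot,\top\}$ and \starshom{}s $\Linfbool(B) \to \mathbb{C}$, and then invoke \cref{lem:concretebool}. The paper's proof is slightly more streamlined in the ``$B$ concrete $\Rightarrow$ purely $\sigma$-representable'' direction by appealing directly to \cref{lem:measurableviewpoint} (so that $\Linfbool(B) \cong \Linf(X)$ with point evaluations as the separating \starshom{}s), whereas you route through \cref{cor:pvm} and a functional calculus reduction from projections to positives; both are valid, and your extra step cleanly addresses the positivity formulation in \cref{def:measurable_cstar} that the paper leaves implicit.
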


Hence by \cref{thm:concrete_duality}, we also obtain a dual equivalence $\cmsigmaC \cong^\op \sobmeas$.

\begin{proof}
	In the commutative case, pure states are the same as \starshom{}s to $\mathbb{C}$.
	Now if a commutative \cstars{} $\A$ is separated by \starshom{}s to $\mathbb{C}$, then $\proj(\A)$ is trivially separated by \bsigmahom{}s to $\{\bot, \top\}$ and therefore concrete.
	Conversely if a \bsigma{} $B$ is separated by \bsigmahom{}s to $\{\bot, \top\}$, then \cref{lem:measurableviewpoint} implies that $\Linfbool(B)$ is separated by \starshom{}s to $\mathbb{C}$, namely the point evaluation maps.
	Therefore the claim follows by \cref{thm:general_duality}.
\end{proof}

	Whether pure $\sigma$-representability can be of interest for further research remains unclear; \cref{rem:meas_not_inclusion} highlights an undesirable feature that can be considered an argument against its use.	

	Let us now focus on the connection between commutative \cstars{}s and measurable spaces.
	By composing \cref{thm:general_duality} and Loomis--Sikorski duality (\cref{thm:ls_duality}), we obtain a contravariant adjunction between $\cSigmaC$ and $\meas$, which we will call \newterm{measurable Gelfand duality}.
	But let us consider a more direct construction of the measurable space associated to a given commutative \cstars{} $\A$ first.

\begin{defn}
	For a commutative \cstars{} $\A$, the \newterm{Gelfand $\sigma$-spectrum} is
	\[
		\specs(\A) \coloneqq \{\phi \colon \A \to \mathbb{C} \mid \phi \text{ is a \starshom{}}\}
	\]
	equipped with the smallest $\sigma$-algebra which makes the evaluation maps
	\begin{align*}
			\ev_a \colon \specs(\A) & \longrightarrow \mathbb{C}\\
			\phi & \longmapsto \phi(a)
	\end{align*}
	measurable for all $a \in \A$. 
\end{defn}

This is clearly a functor $\specs\colon \cSigmaC \to \meas$, which acts on morphisms by composition.
We now show that it coincides with the functor obtained by composing \cref{thm:ls_duality,thm:general_duality}.

\begin{prop}\label{prop:psigma_ssigma}
	The functor $\specs$ is naturally isomorphic to $\stones \circ \proj$. 
\end{prop}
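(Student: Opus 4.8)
The plan is to produce, for each commutative \cstars{} $\A$, a natural bijection between the underlying sets $\specs(\A)$ and $\stones(\proj(\A))$, and then check that it is a measurable isomorphism for the respective $\sigma$-algebras. On the level of points, I would send a \starshom{} $\phi\colon\A\to\mathbb{C}$ to its restriction to projections $\phi|_{\proj(\A)}\colon\proj(\A)\to\{\bot,\top\}$, where we identify $\{0,1\}\subseteq\mathbb{C}$ with $\{\bot,\top\}$; since $\phi$ is multiplicative and $\sigma$-normal and maps projections to projections (necessarily $0$ or $1$ by functional calculus), $\phi|_{\proj(\A)}$ is a \bsigmahom{} by the discussion preceding the functor $\proj\colon\cSigmaC\to\SigmaB$. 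Conversely, given a \bsigmahom{} $\psi\colon\proj(\A)\to\{\bot,\top\}$, it is a PVM, hence by \cref{cor:pvm} it extends uniquely to a \starshom{} $\tilde\psi\colon\Linfbool(\proj(\A))\to\mathbb{C}$; composing with the natural isomorphism $\phi_\A\colon\Linfbool(\proj(\A))\xrightarrow{\cong}\A$ from the proof of \cref{thm:general_duality} yields a \starshom{} $\A\to\mathbb{C}$. That these two assignments are mutually inverse follows from the uniqueness clauses: restricting $\tilde\psi\circ\phi_\A^{-1}$ to projections recovers $\psi$, and conversely a \starshom{} $\A\to\mathbb{C}$ is determined by its values on projections since projections $\sigma$-generate $\A$ (simple elements are norm-dense, \cref{cor:simple_dense}), so it must agree with the extension of its own restriction.

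Next I would verify that this bijection is measurable in both directions. The $\sigma$-algebra on $\specs(\A)$ is generated by the maps $\ev_a$ for $a\in\A$, and the $\sigma$-algebra on $\stones(\proj(\A))$ is generated by the sets $[p]=\{\psi\mid\psi(p)=\top\}$ for $p\in\proj(\A)$. For one direction, the preimage of $[p]$ under $\phi\mapsto\phi|_{\proj(\A)}$ is exactly $\{\phi\in\specs(\A)\mid\phi(p)=1\}=\ev_p^{-1}(\{1\})$, which is measurable; so the map is measurable. For the reverse direction, I need each $\ev_a$ (viewed on $\stones(\proj(\A))$ via the bijection) to be measurable, i.e.~each $\psi\mapsto\tilde\psi(\phi_\A^{-1}(a))$ to be measurable. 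Writing $f\coloneqq\phi_\A^{-1}(a)\in\Linfbool(\proj(\A))$, this amounts to $\psi\mapsto\int f\,\mathrm{d}\psi$ being measurable; for $f=\chi_p$ this is the indicator of $[p]$, for simple $f$ it is a finite $\mathbb{C}$-linear combination of such indicators, and for general $f$ one approximates uniformly by simple elements (\cref{cor:simple_dense}) so measurability is preserved under the resulting uniform limit. Hence the bijection and its inverse are both measurable, so it is an isomorphism of measurable spaces.

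Finally, naturality: given a \starshom{} $u\colon\A\to\B$ between commutative \cstars{}s, both $\specs(u)$ and $\stones(\proj(u))$ act by precomposition with $u$ (respectively with $\proj(u)=u|_{\proj(\A)}$), and one checks the square commutes by unwinding definitions — restricting $\phi\circ u$ to projections equals restricting $\phi$ to projections and then precomposing with $u|_{\proj}$, using that $u$ maps $\proj(\B)$ into $\proj(\A)$ in the contravariant sense. So the family of isomorphisms assembled above is a natural isomorphism $\specs\cong\stones\circ\proj$. The main obstacle I anticipate is the measurability of $\ev_a$ in the reverse direction for non-simple $a$: one has to be slightly careful that the uniform approximation by simple elements in $\A$ corresponds to uniform approximation in $\Linfbool(\proj(\A))$ under the isometric isomorphism $\phi_\A$, and that pointwise (in $\psi$) uniform limits of measurable functions are measurable, which is standard but deserves an explicit sentence. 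Everything else is a routine diagram chase leaning on the uniqueness parts of \cref{cor:pvm} and \cref{thm:general_duality}.
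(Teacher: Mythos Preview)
Your proposal is correct and follows essentially the same approach as the paper: establish the bijection via restriction to projections (which the paper invokes abstractly through \cref{thm:general_duality}), and identify the two $\sigma$-algebras by showing that the $\ev_p$ for $p\in\proj(\A)$ already generate the $\sigma$-algebra of $\specs(\A)$. The paper streamlines your reverse-measurability step by observing that the set of $a\in\A$ for which $\ev_a$ is measurable with respect to the projection-generated $\sigma$-algebra is a sub-\cstars{} of $\A$ containing $\proj(\A)$, hence all of $\A$ by \cref{cor:simple_dense}---this replaces your explicit uniform-approximation argument but amounts to the same thing.
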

\begin{proof}
	Let $\A$ be a \cstars{}.
	Then the $\sigma$-algebra of $\specs(\A)$ is equivalently the $\sigma$-algebra $B$ generated by the $\ev_p$ with $p \in \proj(\A)$, as one can see by noting that the set of $a \in \A$ for which $\ev_a$ is measurable (with respect to $B$) is a sub-\cstars{} of $\A$, and $\proj(\A)$ generates $\A$ (\cref{cor:simple_dense}).
	In other words, this $\sigma$-algebra is generated by the sets of the form
	\begin{equation}\label{eq:evp_sigmaalg}
		\ev_p^{-1}(\lbrace 1 \rbrace) = \lbrace \phi \in \specs(\A) \mid \phi(p)= 1\rbrace.
	\end{equation}
	Comparing this with the analogous \eqref{eq:a_sigmaalg} shows that the bijection
	\[
		\specs(\A) = \cSigmaC(\A, \mathbb{C}) \cong \SigmaB(\proj(\A), \{\bot, \top\}) = \stones(\proj(\A))
	\]
	obtained from \cref{thm:general_duality} is an isomorphism of measurable spaces.
	Naturality is immediate.
\end{proof}

\begin{thm}[Measurable Gelfand duality]\label{thm:measurableGelfand}
		There is a contravariant idempotent adjunction 
		\[
		\begin{tikzcd}
			\meas\ar[rr,bend left=20,"\Linf"]\ar[rr,phantom,"\bot^{\op}"] &&\cSigmaC, \ar[ll,bend left=20,"\specs"]
		\end{tikzcd}
		\]
		where $\Linf$ is the functor described in \cref{ex:sigma_Linf}.
		It restricts to an equivalence
		\begin{equation}
			\label{eq:measurableGelfand}
			\begin{tikzcd}
				\sobmeas \ar[rr,bend left=15,"\Linf" pos=0.55] \ar[rr,phantom,"\cong^{\op}" pos=0.6] && \cmsigmaC \ar[ll,bend left=15,"\specs" pos=0.4]
			\end{tikzcd}
		\end{equation}
\end{thm}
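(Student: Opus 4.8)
The plan is to obtain measurable Gelfand duality essentially for free by composing two results that are already in hand: Loomis--Sikorski duality (\cref{thm:ls_duality}), which is a contravariant idempotent adjunction $\Sigma \dashv \stones$ between $\meas$ and $\SigmaB$, and the equivalence $\Linfbool \colon \SigmaB \xrightarrow{\cong} \cSigmaC$ with inverse $\proj$ (\cref{thm:general_duality}). Since an equivalence composed with an idempotent adjunction is again an idempotent adjunction, this produces a contravariant idempotent adjunction between $\meas$ and $\cSigmaC$. The only real work is to check that the resulting functors are the ones named in the statement, namely $\Linf$ on one side and $\specs$ on the other.

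First I would identify the right adjoint direction. The composite $\meas \xrightarrow{\Sigma} \SigmaB^{\op} \xrightarrow{\Linfbool} \cSigmaC^{\op}$ sends a measurable space $X$ to $\Linfbool(\Sigma(X))$, which by \cref{lem:measurableviewpoint} is canonically isomorphic to $\Linf(X)$; on morphisms this sends a measurable $f \colon X \to Y$ to the precomposition $f^* \colon \Linf(Y) \to \Linf(X)$, which matches the functor of \cref{ex:sigma_Linf}. For the other direction, the composite $\cSigmaC \xrightarrow{\proj} \SigmaB \xrightarrow{\stones} \meas$ sends $\A$ to $\stones(\proj(\A))$, which is naturally isomorphic to $\specs(\A)$ by \cref{prop:psigma_ssigma}. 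Thus the adjunction $\Sigma \dashv \stones$ transports along the equivalence to the claimed adjunction $\Linf \dashv \specs$ (as covariant functors $\meas \to \cSigmaC^{\op}$ and back), and idempotency is preserved because equivalences preserve all the relevant 2-categorical structure (the unit and counit of the transported adjunction are isomorphisms on the same objects, up to the natural isomorphisms implementing the equivalence).

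Next I would read off the restriction to an equivalence. Every idempotent adjunction restricts to an equivalence between the full subcategories of objects on which the unit, respectively counit, components are isomorphisms. For Loomis--Sikorski duality these are $\sobmeas$ and $\csigmaB$ by \cref{thm:concrete_duality}. Under the equivalence $\Linfbool$, the subcategory $\csigmaB$ corresponds to $\cmsigmaC$ by \cref{cor:concrete}. Hence measurable Gelfand duality restricts to a dual equivalence between $\sobmeas$ and $\cmsigmaC$, with the two functors being the restrictions of $\Linf$ and $\specs$; concretely, this is the composite of the dual equivalence $\sobmeas \cong^{\op} \csigmaB$ from \cref{thm:concrete_duality} with the equivalence $\csigmaB \cong \cmsigmaC$ from \cref{cor:concrete}.

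I do not anticipate a serious obstacle, since all the hard analytic and set-theoretic content has been absorbed into the cited results (the Loomis--Sikorski representation theorem, the Baire category theorem arguments, and \cref{thm:sigma_normal_proj}). The one point requiring a little care is purely bookkeeping: verifying that the natural isomorphisms $\Linfbool(\Sigma(X)) \cong \Linf(X)$ (\cref{lem:measurableviewpoint}) and $\stones(\proj(\A)) \cong \specs(\A)$ (\cref{prop:psigma_ssigma}) are compatible with the unit and counit of Loomis--Sikorski duality, so that the transported (co)unit really is the expected one (e.g.\ the unit component $X \to \specs(\Linf(X))$ sends a point to its evaluation $\sigma$-homomorphism, and the counit component $\A \to \Linf(\specs(\A))$ is $a \mapsto \ev_a$). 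This is a routine diagram chase using naturality, and it also makes transparent why the unit is an isomorphism exactly for sober $X$ (\cref{lem:sober}) and the counit exactly for purely $\sigma$-representable $\A$.
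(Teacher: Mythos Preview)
Your proposal is correct and follows exactly the same approach as the paper's proof: identify $\Linf \cong \Linfbool \circ \Sigma$ via \cref{lem:measurableviewpoint} and $\specs \cong \stones \circ \proj$ via \cref{prop:psigma_ssigma}, then compose Loomis--Sikorski duality (\cref{thm:ls_duality}) with the equivalence of \cref{thm:general_duality}, and invoke \cref{cor:concrete} for the restricted equivalence. The paper's proof is in fact just two sentences stating precisely this; your version spells out more of the bookkeeping (and contains a harmless slip calling $\Linf$ the ``right adjoint direction'' before correctly writing $\Linf \dashv \specs$).
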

\begin{proof}
	By \cref{prop:psigma_ssigma}, $\specs$ is naturally isomorphic to $\stones \circ \proj$, while $\Linf$ is naturally isomorphic to $\Linfbool \circ \Sigma$ by \cref{lem:measurableviewpoint}.
	The statement now follows by \cref{thm:general_duality,thm:ls_duality,cor:concrete}.
\end{proof}

\begin{prop}\label{prop:top_vs_meas_gelfand}
	Let $\cHaus$ be the category of compact Hausdorff spaces and continuous maps.
	Then measurable Gelfand duality is compatible with ordinary (topological) Gelfand duality in the sense that the diagram
	\begin{equation}
		\label{eq:measurableGelfand_chaus}
		\begin{tikzcd}[row sep=large]
			\cHaus \ar[rr] \ar[d,"C(-)"] && \sobmeas \ar[d,"\Linf"] \\\
			\cCalg \ar[rr,"(-)^{\infty}"] \ar[d,"\spec"] && \cmsigmaC \ar[d,"\specs"] \\
			\cHaus \ar[rr] && \sobmeas
		\end{tikzcd}
	\end{equation}
	commutes up to canonical natural isomorphism, where the upper and lower unlabeled arrow is the functor taking a compact Hausdorff space to its Baire measurable space,
	%\footnote{Up to natural isomorphism, this is the unique functor $\cHaus \to \meas$ sending the unit interval to itself and preserving limits and embeddings (i.e., regular monomorphisms)~\cite[Theorem A.3]{shirazi24commutativity}.}
	while $\spec\colon \cCalg \to \cHaus$ sends a commutative \cstar{} to its Gelfand spectrum.
\end{prop}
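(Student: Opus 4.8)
The plan is to verify the two squares of~\eqref{eq:measurableGelfand_chaus} in turn, deriving the lower one formally from the upper one by transporting along the equivalences already at hand. For the upper square, \cref{cor:baire_eq} provides, for each compact Hausdorff space $X$, a canonical isomorphism $C(X)^{\infty} \cong \Linf(X)$, where $\Linf(X)$ denotes the \cstars{} of bounded Baire functions on $X$, i.e.\ $\Linf$ applied to the Baire measurable space of $X$. To promote this to a natural isomorphism of functors $\cHaus \to \cmsigmaC$, I would fix a continuous map $f\colon X \to Y$ and compare the two induced \starshom{}s $C(Y)^{\infty}\to C(X)^{\infty}$: the extension $(f^{*})^{\infty}$ of $f^{*}\colon C(Y)\to C(X)$ furnished by \cref{thm:pb_univ}, and the morphism obtained by conjugating $\Linf(f)\colon \Linf(Y)\to\Linf(X)$ — which is well defined because $f$ is measurable for the Baire $\sigma$-algebras — through the isomorphisms of \cref{cor:baire_eq}. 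Both restrict to $f^{*}$ on the $\sigma$-generating subalgebra $C(Y)$ (for the second, because $g\mapsto g\circ f$ sends continuous functions to continuous functions), hence coincide by the uniqueness clause of \cref{thm:pb_univ}. This establishes commutativity of the upper square up to canonical natural isomorphism.

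For the lower square, rather than computing $\specs(\A^{\infty})$ by hand I would propagate the upper square through the two dualities. Precomposing the isomorphism $(-)^{\infty}\circ C(-)\cong \Linf$ with the Gelfand spectrum functor $\spec\colon \cCalg\to\cHaus$ and using the isomorphism $C(\spec(-))\cong\id_{\cCalg}$ of topological Gelfand duality yields a natural isomorphism $(-)^{\infty}\cong \Linf(\spec(-))\colon \cCalg\to\cmsigmaC$; here $\spec(\A)$ carries its Baire measurable space, which is sober by \cref{rem:baire_sober}, so this composite indeed lands in $\cmsigmaC$ (consistently with \cref{thm:pb_pure}). Postcomposing with $\specs\colon\cmsigmaC\to\sobmeas$ and invoking $\specs\circ\Linf\cong\id_{\sobmeas}$ from the restricted equivalence~\eqref{eq:measurableGelfand} gives
\[
	\specs\circ(-)^{\infty}\;\cong\;\specs\bigl(\Linf(\spec(-))\bigr)\;\cong\;\spec(-),
\]
where the right-hand side is understood as landing in $\sobmeas$ via the Baire measurable space. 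This is precisely the commutativity of the lower square. Alternatively one may argue directly: \cref{thm:pb_univ} identifies the underlying set of $\specs(\A^{\infty})$ with $\spec(\A)$, and its $\sigma$-algebra — generated by the evaluation maps $\ev_a$ for $a\in\A^{\infty}$ — equals $\baire(\spec(\A))$, since the $a\in\A$ already generate it (their evaluation maps are the continuous functions on $\spec(\A)$) while the set of $a\in\A^{\infty}$ for which $\ev_a$ is Baire measurable is a $\sigma$-subspace containing $\A$, hence all of $\A^{\infty}$, exactly as in the proof of \cref{prop:psigma_ssigma}.

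All the mathematical content is carried by \cref{cor:baire_eq} and the universal property \cref{thm:pb_univ}, both already available; the remaining work is bookkeeping with the contravariant functors and the various canonical isomorphisms. The one genuinely delicate point is checking that the naturality square in the upper level commutes on the nose — which the uniqueness part of \cref{thm:pb_univ} handles, together with the observation that composition with a continuous map preserves continuity — and then keeping track that the composite functors take values in the stated full subcategories.
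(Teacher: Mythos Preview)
Your proposal is correct and follows essentially the same route as the paper: the upper square is \cref{cor:baire_eq}, and the lower square is obtained formally by transporting through the equivalences $C(\spec(-))\cong\id$ and $\specs\circ\Linf\cong\id$. The paper phrases the second step as ``the left vertical functors are inverse equivalences, so commutativity of the lower square follows from the trivial commutativity of the outer square,'' which unwinds to exactly your precompose-with-$\spec$ argument. Your treatment is in fact slightly more careful than the paper's, since you explicitly verify naturality of the isomorphism $C(X)^{\infty}\cong\Linf(X)$ via the uniqueness clause of \cref{thm:pb_univ}, whereas the paper simply cites \cref{cor:baire_eq} (which, as stated, is only objectwise).
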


\begin{proof}
	Omitting ``up to natural isomorphism'' here and in the following, commutativity of the upper square is part of the statement of \cref{cor:baire_eq}.
	Since the left vertical functors are inverse equivalences, the commutativity of the lower square follows from the trivial commutativity of the outer square.
\end{proof}

\begin{cor}[Pedersen--Baire duality]\label{cor:pb_duality}
    Measurable Gelfand duality restricts to a contravariant equivalence
    \begin{equation}
	    \label{eq:pb_duality}
    	\begin{tikzcd}
        	\bairemeas \ar[rr,bend left=15,"\Linf"] \ar[rr,phantom,"\cong^{\op}"] &&  \cPBenv.\ar[ll,bend left=15,"\specs"]
    	\end{tikzcd}
    \end{equation}
\end{cor}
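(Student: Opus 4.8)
The plan is to derive the statement simply by restricting the equivalence $\sobmeas \cong^{\op} \cmsigmaC$ of \cref{thm:measurableGelfand} to appropriate full subcategories. By \cref{rem:baire_sober}, $\bairemeas$ is a full subcategory of $\sobmeas$, and by \cref{thm:pb_pure} every commutative Pedersen--Baire envelope is purely $\sigma$-representable, so $\cPBenv$ is a full subcategory of $\cmsigmaC$; both are evidently closed under isomorphism. Since an equivalence of categories restricts to an equivalence between any full isomorphism-closed subcategory of the source and its essential image, it suffices to verify that the essential image of $\bairemeas$ under $\Linf \colon \sobmeas \to \cmsigmaC^{\op}$ is exactly $\cPBenv$ (and then $\specs$, being the inverse equivalence, automatically restricts correctly).

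To identify this essential image, I would argue as follows. If $X \in \bairemeas$, then by \cref{def:baire} its underlying set carries a compact Hausdorff topology, making it the Baire measurable space of some compact Hausdorff space $Y$ with $\Sigma(X) = \baire(Y)$; hence $\Linf(X)$ is precisely the \cstars{} of bounded Baire functions on $Y$ in the sense of \cref{ex:meager_ideal}, which by \cref{cor:baire_eq} is canonically isomorphic to the Pedersen--Baire envelope $C(Y)^{\infty}$. Thus $\Linf$ sends $\bairemeas$ into $\cPBenv$. Conversely, by the remark following \cref{cor:baire_eq}, every commutative Pedersen--Baire envelope is isomorphic to some $\Linf(Y)$ with $Y$ compact Hausdorff, i.e.\ to $\Linf$ of the Baire measurable space of $Y$, which lies in $\bairemeas$. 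Since both $\bairemeas$ and $\cPBenv$ are closed under isomorphism, this shows that the essential image of $\bairemeas$ under $\Linf$ is exactly $\cPBenv$.

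Putting this together: $\Linf$ is fully faithful on $\bairemeas$ (being the restriction of the fully faithful functor $\Linf \colon \sobmeas \to \cmsigmaC^{\op}$ to full subcategories) and has essential image $\cPBenv$, so it is an equivalence $\bairemeas \cong^{\op} \cPBenv$, and the restriction of $\specs$ is its inverse; in particular $\specs(\A) \in \bairemeas$ for every $\A \in \cPBenv$. I expect no genuine obstacle here — the only substantive inputs beyond \cref{thm:measurableGelfand} are the identification of commutative Pedersen--Baire envelopes with the algebras of bounded Baire functions on compact Hausdorff spaces (\cref{cor:baire_eq} and the ensuing remark) and the inclusion $\cPBenv \subseteq \cmsigmaC$ (\cref{thm:pb_pure}); the only mild care needed is to note that "being a Baire measurable space" is invariant under isomorphism of measurable spaces (transport the compact Hausdorff topology along the bijection) and that $\Linf$ of the Baire measurable space of $Y$ really is the bounded-Baire-function algebra, matching the convention of \cref{ex:meager_ideal}.
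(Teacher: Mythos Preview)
Your proof is correct and follows essentially the same approach as the paper: restrict the equivalence~\eqref{eq:measurableGelfand} from \cref{thm:measurableGelfand} to the full subcategories $\bairemeas$ and $\cPBenv$, using \cref{cor:baire_eq} to see that $\Linf$ lands in $\cPBenv$. The only minor difference is in how you justify that $\specs$ restricts to $\bairemeas$: the paper invokes the commuting diagram~\eqref{eq:measurableGelfand_chaus} from \cref{prop:top_vs_meas_gelfand} directly, whereas you argue abstractly via essential images (and make the inclusion $\cPBenv \subseteq \cmsigmaC$ explicit through \cref{thm:pb_pure}); both are fine and amount to the same thing.
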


\begin{proof}
	The fact that $\Linf$ maps $\bairemeas$ to $\cPBenv$ is part of \cref{cor:baire_eq}.
	Since Baire measurable spaces are sober (\cref{rem:baire_sober}), we can apply~\eqref{eq:measurableGelfand} and~\eqref{eq:measurableGelfand_chaus} to conclude that $\specs$ is its essential inverse.
\end{proof}

We may restrict further to standard Borel spaces.

\begin{cor}\label{cor:borelmeas}
    The contravariant equivalence of \cref{cor:pb_duality} restricts further to 
    \begin{equation}
	    \label{eq:borelmeas}
	    \begin{tikzcd}
		    \borelmeas \ar[rr,bend left=15,"\Linf"] \ar[rr,phantom,"\cong^{\op}"] &&  \cpbsep. \ar[ll,bend left=15,"\proj"]
	    \end{tikzcd}
    \end{equation}
\end{cor}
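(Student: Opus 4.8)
The plan is to derive \cref{cor:borelmeas} from the contravariant equivalence $\bairemeas \cong^{\op} \cPBenv$ of \cref{cor:pb_duality} by showing it carries the full subcategory $\borelmeas$ onto the full subcategory $\cpbsep$. Since both are full subcategories of categories already related by an equivalence, it suffices to match their objects: I would check that $\Linf(X) \in \cpbsep$ for every standard Borel space $X$, and that $\specs(\A) \in \borelmeas$ for every $\A \in \cpbsep$ (the functor labelled $\proj$ in \eqref{eq:borelmeas} really being $\stones \circ \proj \cong \specs$ by \cref{prop:psigma_ssigma}). The restricted functors then form an inverse pair of contravariant equivalences automatically.

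For the claim that $\Linf$ maps $\borelmeas$ into $\cpbsep$, the first step is to present every standard Borel space as the Baire measurable space of a metrizable compact Hausdorff space. By Kuratowski's theorem an uncountable standard Borel space is Borel-isomorphic to the Cantor space $\{0,1\}^{\mathbb{N}}$; a countably infinite one carries the power-set $\sigma$-algebra and is thus isomorphic to $\mathbb{N} \cup \{\infty\}$ with its (Baire $=$ power-set) $\sigma$-algebra; a finite one is a finite discrete space. In each case one obtains a second countable compact Hausdorff $Y$ with $\Sigma(X) \cong \baire(Y)$, the Baire and Borel $\sigma$-algebras agreeing by second countability (\cref{rem:baire_borel}). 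Since $\Linf$ depends only on the underlying $\sigma$-algebra (\cref{lem:measurableviewpoint}), and $C(Y)$ is separable precisely because $Y$ is a metrizable compact Hausdorff space, \cref{cor:baire_eq} gives $\Linf(X) \cong \Linf(Y) \cong C(Y)^{\infty}$, which lies in $\cpbsep$ (in the finite case $\Linf(X)$ is finite-dimensional and its own Pedersen--Baire envelope, \cref{ex:findim}).

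For the reverse claim that $\specs$ maps $\cpbsep$ into $\borelmeas$, I would take $\A \in \cpbsep$, write $\A \cong C(Y)^{\infty}$ with $C(Y)$ separable, note that $Y$ is then second countable, hence compact metrizable, hence a complete separable metric space, and invoke \cref{cor:pb_duality} (or \cref{prop:top_vs_meas_gelfand}) to identify $\specs(\A)$ with $Y$ equipped with its Baire $\sigma$-algebra --- which is exactly the standard Borel space underlying $Y$ in the sense of \cref{def:borelmeas}. Putting the two inclusions together yields the claimed restriction, and essential uniqueness of adjoint/inverse functors ensures the two restricted functors remain mutually inverse.

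I do not expect any deep obstacle here; the one point needing care is the bookkeeping in the first step, namely realizing \emph{every} standard Borel space --- uncountable, countably infinite, and finite alike --- as $\baire(Y)$ for a suitable metrizable compact $Y$, which is where Kuratowski's theorem enters. Once this is set up, the dictionary ``$C(Y)$ separable $\iff$ $Y$ second countable'', together with \cref{cor:baire_eq}, does the rest, and it is precisely this dictionary that makes $\cpbsep$ correspond to $\borelmeas$ rather than to some larger class of measurable spaces.
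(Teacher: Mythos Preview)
Your proposal is correct and follows essentially the same route as the paper. The paper's proof simply asserts that standard Borel spaces can equivalently be described as Baire measurable spaces of second countable compact Hausdorff spaces (justified by the footnote that compact metric spaces are complete), and then invokes the dictionary ``$C(Y)$ separable $\iff$ $Y$ second countable'' together with \cref{cor:pb_duality}; you make the first assertion explicit via Kuratowski's theorem and the case split on cardinality, which is exactly the content the paper is leaving implicit.
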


\begin{proof} 
	While standard Borel spaces are usually defined as the Baire (or equivalently Borel) measurable spaces associated to separable completely metrizable spaces, we can equivalently use second countable compact Hausdorff spaces.\footnote{Recall that a compact metric space is necessarily complete.}
	Then the claim follows from \cref{cor:pb_duality} by the standard fact that a compact Hausdorff space $X$ is second countable if and only if $C(X)$ is separable.
\end{proof}

\subsection{With probabilistic morphisms}
\label{sec:probabilistic}

We now extend some of the duality results to probabilistic morphisms.
In the context of \cstars{}s, the obvious candidates for probabilistic maps are \scpu{} maps, given their similar nature to that of \cpu{} maps between \cstar{}s.
As far as we know, there is no separate concept for probabilistic morphisms between \bsigma{}s: any such definition will have to linearize the structure of the \bsigma{}s, which essentially amounts to moving to the associated \cstars{}s.
This issue is easier to resolve for measurable spaces, as the following standard concept shows.

\begin{defn} 
	For measurable spaces $(X,\Sigma(X))$ and $(Y,\Sigma(Y))$, a \newterm{Markov kernel} $\mu \colon X \rightsquigarrow Y$ is a function
	\begin{align*}
		\mu \colon \Sigma(Y) \times X & \longrightarrow [0,1]\\
        	(E,x) &\longmapsto \mu(E | x)
	\end{align*}
	such that:
	\begin{enumerate}
		\item $E \mapsto \mu(E | x)\in [0,1]$ is a probability measure on $Y$ for every $x \in X$.
		\item $x \mapsto \mu(E | x)$ is measurable for every $E\in \Sigma(Y)$.
	\end{enumerate} 
\end{defn}

Throughout this section, we denote Markov kernels by squiggly arrows $\rightsquigarrow$ to emphasize that they are not actual functions.

\begin{defn}[{e.g.~\cite[Section~4]{fritz2019synthetic}}]
	\label{def:stoch}
	$\stoch$ is the category whose objects are measurable spaces, morphisms are Markov kernels, and composition is given by the Chapman--Kolmogorov equation:
	for $\mu \colon X \rightsquigarrow Y$ and $\nu \colon Y \rightsquigarrow Z$,
	\[
		(\nu \circ \mu) (E | x) \coloneqq \int_Y \nu(E | y) \, \mathrm{d}\mu(y | x).
	\] 
	We denote by $\sobstoch$, $\bairestoch$ and $\borelstoch$, the full subcategories of $\stoch$ given by sober measurable spaces, Baire measurable spaces and standard Borel spaces, respectively.
\end{defn}

The identity morphism of a measurable space $X$ is the Markov kernel $\id_X \colon X \rightsquigarrow X$ given by $\id_X(E | x) \coloneqq \chi_E(x)$.
With the obvious abuse of notation, every measurable function $f \colon X \to Y$ induces a $\lbrace 0,1 \rbrace$-valued Markov kernel $X \rightsquigarrow Y$ by
$f(E | x) \coloneqq \chi_E(f(x))$,
% \[
% 	f(E | x) \coloneqq \begin{cases}
% 		1 & \text{if } f(x) \in E,\\
% 		0 & \text{otherwise},
% 	\end{cases}
% \]
and this construction defines a functor $\meas \to \stoch$.

\begin{rem}\label{rem:sobstoch}
	$\stoch$ is the Kleisli category of the Giry monad on $\meas$ (which we briefly discussed in \cref{sec:concrete_loomis_sikorski}).
	Since the Giry monad takes values in $\sobmeas$, this suggests that sober measurable spaces should be special in $\stoch$.
	This manifests itself in two ways:
	\begin{enumerate}
		\item For every $X \in \meas$, the sobrification unit
			\[
				X \longrightarrow \stones(\Sigma(X))\cong \specs(\Linf(X))
			\]
			becomes an isomorphism in $\stoch$, where its inverse is the obvious $\lbrace 0,1\rbrace$-valued Markov kernel. 
		In particular, the inclusion $\sobstoch \hookrightarrow \stoch$ is an equivalence.
		\item Between sober measurable spaces, the map from measurable maps to $\{0,1\}$-valued Markov kernels is bijective.
			In one direction, if $\mu \colon X \rightsquigarrow Y$ is such a Markov kernel, then $\mu(\_ | x)$ is a $\lbrace 0,1 \rbrace$-valued probability measure for every $x \in X$. 
		Since $Y$ is sober, this corresponds to a unique $y \in Y$.
		The unique function $f \colon X \to Y$ so obtained is measurable because $f^{-1}(U)= \mu (U|\_)^{-1}(\lbrace 1 \rbrace)$.
	\end{enumerate}
	For a general categorical discussion of these properties, we refer to Proposition 5.5, Theorem 3.14 and Remark 5.2 of \cite{moss2022probability}.
\end{rem}

\begin{rem}\label{rem:markov_kernels}
	There is a natural bijection between Markov kernels $X \rightsquigarrow Y$ and POVMs $\Sigma(Y) \to \Linf(X)$.
	In one direction, it is quite clear that a Markov kernel $\nu \colon X \rightsquigarrow Y$ induces a POVM given by 
	\begin{align*} 
		\Sigma(Y) & \longrightarrow \Linf(X) \\
		E & \longmapsto \nu(E | \_).
	\end{align*}
	Conversely, since a POVM $\mu \colon \Sigma(Y) \to \Linf(X)$ is normalized and positive, it takes values in the unit interval of $\Linf(X)$, i.e.~for every $E \in \Sigma(Y)$ we have a measurable function $\mu(E) \colon X \to [0,1]$. 
	Then 
	\begin{align*}
		\Sigma(Y) \times X & \longrightarrow [0,1] \\
		(E,x) & \longmapsto \mu(E)(x)
	\end{align*} 
	is a Markov kernel because every $x \in X$ induces a \starshom\ $\Linf(X)\to \mathbb{C}$ by evaluating functions, and therefore $E\mapsto \mu(E)(x)$ is a probability measure by \cref{ex:sigma_Linf}\ref{it:sigmastates_Linf}.
	It is obvious that these two constructions are inverse to each other.
\end{rem}

Combining this with \cref{thm:povm_integration} shows that every Markov kernel $\mu \colon X \rightsquigarrow Y$ induces a \scpu{} map $\Linf(\mu) \colon \Linf(Y) \to \Linf(X)$ given by
\begin{equation}
	\label{eq:Koopman}
	\Linf(\mu)(f)(x) = \int_Y f(y) \, \mathrm{d}\mu(y | x).
\end{equation}
Especially in the more applied context of stochastic dynamical systems, $\Linf(\mu)$ is known as the \emph{Koopman operator} associated to $\mu$ (e.g.~\cite{tafazzol2024koopman}).

\begin{prop}\label{prop:Linf_stoch}
	This assignment $\Linf \colon \stoch \to \cSigmaCcpu$ is a fully faithful functor, which restricts to an equivalence
	\begin{equation}
		\label{eq:measurableGelfand_stoch}
		\begin{tikzcd}
			% diagram looks weird without the \qquad in front
			\qquad \stoch \ar[rr,bend left=20,"\Linf"] \ar[rr,phantom,"\cong^{\op}"] && \cmsigmaCcpu \ar[ll,bend left=20,"\specs" pos=0.4]
		\end{tikzcd}
	\end{equation}
\end{prop}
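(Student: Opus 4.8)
The plan is to establish, in order: (i) $\Linf$ is a well-defined contravariant functor $\stoch \to \cSigmaCcpu$; (ii) it is fully faithful; (iii) its essential image is exactly $\cmsigmaCcpu$, so that the corestriction is an equivalence. Throughout, the workhorse is the POVM--$\sigma$cpu correspondence of \cref{thm:povm_integration} together with its concrete instance \cref{rem:markov_kernels}.

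For (i), recall that each $\Linf(X)$ is a commutative \cstars{} (\cref{ex:commutative_linf}). Given a Markov kernel $\mu \colon X \rightsquigarrow Y$, \cref{rem:markov_kernels} turns it into the POVM $E \mapsto \mu(E \mid \_)$ from $\Sigma(Y)$ to $\Linf(X)$, and \cref{thm:povm_integration} integrates this to a \scpu{} map $\Linf(\mu) \colon \Linf(Y) \to \Linf(X)$, which is the Koopman operator \eqref{eq:Koopman}; this also shows $\Linf(\mu)$ is the \emph{unique} positive map extending that POVM. Functoriality now follows from this uniqueness clause without any explicit integration: the map $\id_{\Linf(X)}$ extends the POVM $E \mapsto \chi_E$, hence equals $\Linf(\id_X)$; and for $\mu \colon X \rightsquigarrow Y$, $\nu \colon Y \rightsquigarrow Z$ the two \scpu{} maps $\Linf(\nu \circ \mu)$ and $\Linf(\mu)\circ\Linf(\nu)$ agree on every indicator $\chi_E$ with $E \in \Sigma(Z)$ --- the latter by the Chapman--Kolmogorov equation of \cref{def:stoch} --- so they coincide by \cref{thm:povm_integration}. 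Thus $\Linf$ is a contravariant functor.

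For (ii), faithfulness is immediate since $\Linf(\mu)(\chi_E)(x) = \mu(E \mid x)$ recovers $\mu$ from $\Linf(\mu)$. For fullness, take any \scpu{} map $\phi \colon \Linf(Y) \to \Linf(X)$ and set $\mu_\phi(E) \coloneqq \phi(\chi_E)$. Then $\mu_\phi$ is a POVM in the sense of \cref{def:povm}: positivity and $\mu_\phi(Y)=1$ come from $\phi$ being positive and unital (and force each $\mu_\phi(E)$ to be a measurable function $X \to [0,1]$), while $\sigma$-additivity holds because for pairwise disjoint $(E_n)$ one has $\chi_{\bigcup_n E_n} = \sup_N \sum_{n \le N}\chi_{E_n}$ in $\Linf(Y)$, which $\phi$ preserves by $\sigma$-normality, giving $\phi(\chi_{\bigcup_n E_n}) = \sum_n \phi(\chi_{E_n})$. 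By \cref{rem:markov_kernels}, $\mu_\phi$ corresponds to a Markov kernel $\mu \colon X \rightsquigarrow Y$, and $\Linf(\mu)$ extends the POVM $\mu_\phi$ exactly as $\phi$ does; uniqueness in \cref{thm:povm_integration} yields $\Linf(\mu) = \phi$. Hence $\stoch(X,Y) \to \cSigmaCcpu(\Linf(Y),\Linf(X))$ is a bijection.

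For (iii), first note that $\Linf$ actually lands in $\cmsigmaCcpu$: $\Sigma(X)$ is concrete by definition (\cref{def:concrete_bool}) and $\Linf(X) \cong \Linfbool(\Sigma(X))$ by \cref{lem:measurableviewpoint}, which is purely $\sigma$-representable by \cref{cor:concrete}. Conversely, for $\A \in \cmsigmaCcpu$, the deterministic equivalence \eqref{eq:measurableGelfand} of \cref{thm:measurableGelfand} provides a sober measurable space $X \coloneqq \specs(\A)$ together with a $\sigma$-isomorphism $\A \cong \Linf(X)$, which is in particular an isomorphism in $\cSigmaCcpu$; viewing $X$ as an object of $\stoch$ exhibits $\A$ in the essential image. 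Since $\cmsigmaCcpu$ is a full subcategory of $\cSigmaCcpu$ and $\Linf$ is fully faithful and essentially surjective onto it, we conclude that $\Linf$ restricts to the contravariant equivalence $\stoch \cong^{\op} \cmsigmaCcpu$ as claimed. \emph{The step requiring the most care is (iii):} unlike the deterministic case there is no idempotent adjunction to lean on --- indeed the adjunction with $\specs$ genuinely fails for \cstars{}s such as $L^\infty([0,1])$ --- so essential surjectivity onto precisely $\cmsigmaCcpu$ must be extracted from the deterministic equivalence by hand; the fullness verification in (ii), namely that the indicator-restriction of a \scpu{} map satisfies the $\sigma$-additivity convention of \cref{def:povm}, is the other point deserving attention.
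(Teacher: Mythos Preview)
Your proof is correct and follows the same overall architecture as the paper's: functoriality, then full faithfulness via \cref{rem:markov_kernels} and \cref{thm:povm_integration}, then essential surjectivity onto $\cmsigmaCcpu$ via the deterministic equivalence~\eqref{eq:measurableGelfand}. The one genuine difference is in the functoriality step: the paper verifies $\Linf(\nu\circ\mu)=\Linf(\mu)\circ\Linf(\nu)$ by a direct integral computation invoking Fubini's theorem, whereas you observe that both sides are \scpu{} maps agreeing on indicator functions (by Chapman--Kolmogorov) and then appeal to the uniqueness clause of \cref{thm:povm_integration}. Your route is cleaner in that it avoids Fubini entirely and makes transparent that all the measure-theoretic content is already packaged into \cref{thm:povm_integration}; the paper's direct computation, on the other hand, has the minor virtue of being self-contained at that spot. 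For part~(iii), the paper phrases essential surjectivity via the equivalence $\sobstoch\hookrightarrow\stoch$ of \cref{rem:sobstoch} composed with~\eqref{eq:measurableGelfand}, while you unpack this as ``take $X=\specs(\A)$''---these are the same argument.
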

\begin{proof}
	Given two composable Markov kernels $\mu \colon X \rightsquigarrow Y$ and $\nu \colon Y \rightsquigarrow Z$, for all $x \in X$ and $f \in \Linf(Z)$ we have
	\begin{equation*}
		\begin{split}
			\Linf (\nu \circ \mu)(f)(x) &= \int_Z f(z) \, \mathrm{d}(\nu \circ \mu) (z | x) = \int_Z f(z) \int_Y \mathrm{d}\nu (z|y) \, \mathrm{d}\mu(y|x) \\
					    &= \int_Y \left(\int_Z f(z) \, \mathrm{d}\nu(z|y)\right) \mathrm{d}\mu(y|x) = \int_Y \Linf(\nu)(f)(y) \, \mathrm{d}\mu (y|x) \\[4pt]
					    &= \Linf(\mu)(\Linf(\nu) (f)(x)) = (\Linf(\nu) \circ \Linf(\mu))(f)(x),
		\end{split}
	\end{equation*}
	where the third equality follows by Fubini's theorem. 
	Therefore, composition is preserved, and since preservation of identities is straightforward, we obtain functoriality.
	Fullness and faithfulness is ensured by \cref{rem:markov_kernels} and \cref{thm:povm_integration}.
	The equivalence claim now follows by the essential surjectivity of $\sobstoch \hookrightarrow \stoch$ and~\eqref{eq:measurableGelfand}.
\end{proof}

By restricting $\Linf \colon \stoch \to \cSigmaCcpu$ to Baire measurable spaces, we immediately obtain the following probabilistic versions of \cref{cor:pb_duality,cor:borelmeas}.

\begin{cor}\label{cor:bairestoch}
	There are equivalences 
	\[
		\begin{tikzcd}
			\bairestoch \ar[rr,bend left=15,"\Linf" pos=0.55] \ar[rr,phantom,"\cong^{\op}" pos=0.55] && \cpbenvcpu \ar[ll,bend left=15,"\specs" pos=0.4] &&
				\borelstoch \ar[rr,bend left=15,"\Linf"] \ar[rr,phantom,"\cong^{\op}" pos=0.55] && \cpbsepcpu \ar[ll,bend left=15,"\specs" pos=0.4]
		\end{tikzcd}
	\]
\end{cor}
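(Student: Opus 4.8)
The plan is to derive both equivalences by restricting the ambient equivalence $\Linf \colon \stoch \to \cmsigmaCcpu$ from \cref{prop:Linf_stoch} (i.e.\ \eqref{eq:measurableGelfand_stoch}) to full subcategories. First I would invoke the general principle that if $F \colon \mathcal{C} \to \mathcal{D}$ is an equivalence with quasi-inverse $G$, and $\mathcal{C}_0 \subseteq \mathcal{C}$, $\mathcal{D}_0 \subseteq \mathcal{D}$ are full subcategories with $F$ sending objects of $\mathcal{C}_0$ into $\mathcal{D}_0$ and $G$ sending objects of $\mathcal{D}_0$ into $\mathcal{C}_0$, then $F$ and $G$ restrict to an equivalence $\mathcal{C}_0 \cong \mathcal{D}_0$: fullness and faithfulness are inherited for free because the subcategories are full, and essential surjectivity of $F|_{\mathcal{C}_0}$ holds since any $d \in \mathcal{D}_0$ satisfies $d \cong F(G(d))$ with $G(d) \in \mathcal{C}_0$. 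Here $\mathcal{C} = \stoch$, $\mathcal{D} = \cmsigmaCcpu$ (noting that $\cpbenvcpu$ is a full subcategory of $\cmsigmaCcpu$ by \cref{thm:pb_pure}), $F = \Linf$ and $G = \specs$; so all that remains is to check the two object-level containments in each case.

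For the first equivalence: given $X \in \bairestoch$, its $\sigma$-algebra is $\baire(Y)$ for some compact Hausdorff topology $Y$ on the underlying set of $X$, so by \cref{cor:baire_eq} we have $\Linf(X) = \Linf(Y)$, the Pedersen--Baire envelope of the commutative \cstar{} $C(Y)$; hence $\Linf$ maps $\bairestoch$ into $\cpbenvcpu$. Conversely, for $\A \in \cpbenvcpu$, the deterministic equivalence \cref{cor:pb_duality} already records that $\specs$ sends $\cPBenv$ into $\bairemeas$, and since $\bairestoch$ and $\bairemeas$ have the same objects, this gives $\specs(\A) \in \bairestoch$. The restriction principle above then yields $\bairestoch \cong^{\op} \cpbenvcpu$ via the functors $\Linf$ and $\specs$.

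For the second equivalence I would run the identical argument with \cref{cor:borelmeas} in place of \cref{cor:pb_duality}. A standard Borel space $X$ may equivalently be presented as the Baire measurable space of a second countable compact Hausdorff space $Y$, for which $C(Y)$ is separable; thus $\Linf(X) = \Linf(Y)$ is the Pedersen--Baire envelope of a separable commutative \cstar{}, i.e.\ an object of $\cpbsepcpu$, so $\Linf$ maps $\borelstoch$ into $\cpbsepcpu$. Conversely $\specs$ sends $\cpbsep$ into $\borelmeas$ by \cref{cor:borelmeas}, hence $\cpbsepcpu$ into $\borelstoch$. Restricting the equivalence once more gives $\borelstoch \cong^{\op} \cpbsepcpu$.

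I expect no genuine obstacle here: the substantive work was already carried out in \cref{prop:Linf_stoch,cor:pb_duality,cor:borelmeas}, and the present statement is a purely formal restriction of an equivalence to matched full subcategories. The only point deserving a moment's care is that the object-level correspondences of \cref{cor:pb_duality,cor:borelmeas} were proved on the deterministic side; but they transfer verbatim, since $\bairestoch$ and $\borelstoch$ share their objects with $\bairemeas$ and $\borelmeas$, while $\cpbenvcpu$ and $\cpbsepcpu$ share their objects with $\cPBenv$ and $\cpbsep$.
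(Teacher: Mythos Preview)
Your proposal is correct and follows exactly the same approach as the paper: the paper's proof is the one-line remark ``By restricting $\Linf \colon \stoch \to \cSigmaCcpu$ to Baire measurable spaces, we immediately obtain the following probabilistic versions of \cref{cor:pb_duality,cor:borelmeas}.'' You have simply unpacked what ``restricting'' means, invoking the standard fact about equivalences and full subcategories and citing \cref{cor:baire_eq}, \cref{cor:pb_duality}, \cref{cor:borelmeas}, and \cref{thm:pb_pure} for the object-level containments.
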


\begin{rem}\label{rem:no_prob_adj}
	In~\eqref{eq:measurableGelfand_stoch}, we have considered the Gelfand $\sigma$-spectrum as a functor 
	\[ 
	\specs \colon \cmsigmaCcpu \to \stoch.
	\]
	One may also wonder whether measurable Gelfand duality (\cref{thm:measurableGelfand}) extends to a probabilistic version involving $\cSigmaCcpu$ and $\stoch$.
	Unfortunately, the functor $\specs \colon \cSigmaC \to \meas$ cannot be extended to a functor
	\[
		\cSigmaCcpu \to \stoch
	\]
	while keeping the units $\A \to \Linf(\specs(\A))$ natural. 
	For example, take $L^{\infty}([0,1])$, as before with respect to the Lebesgue measure, which is a $\sigma$-state $\lambda \colon L^{\infty} ([0,1]) \to \mathbb{C}$. 
	Since $\specs (L^{\infty}([0,1])) = \emptyset$, the naturality diagram with respect to $\lambda$ would take the form
	\[
	\begin{tikzcd}
		L^{\infty}([0,1]) \ar[d,"\lambda"'] \ar[r, two heads]& \Linf(\emptyset)=\lbrace 0 \rbrace \ar[d]\\
		\mathbb{C} \ar[r,"\id"] & \mathbb{C}
	\end{tikzcd}
	\] 
	where the two horizontal arrows are the units.
	But clearly such a diagram cannot commute.
\end{rem}

\begin{rem}
	Probabilistic Gelfand duality~\cite{furber_jacobs_gelfand} is an equivalence $\cHausstoch\cong^\op \cCalgcpu$,
	where the former is the category of compact Hausdorff spaces with continuous Markov kernels.\footnote{A Markov kernel $\mu$ is \emph{continuous} (with respect to the $A$-topology) if $x \mapsto \mu(U|x)$ is lower semi-continuous for every open set $U$~\cite[Section 4]{fritz2019support}.}
	In the forward direction, it is given by $C(-) \colon \cHausstoch \to \cCalgcpu$, which is defined just like~\eqref{eq:Koopman}: composing with a continuous Markov kernel $\mu \colon X \rightsquigarrow Y$ like that turns out to map continuous functions to continuous functions as $C(\mu) \colon C(Y) \to C(X)$.
	The other direction is given by the Gelfand spectrum functor $\spec \colon \cCalgcpu \to \cHausstoch$, defined in terms of the Riesz--Markov--Kakutani representation theorem.

	With this in mind, we can now extend the diagram~\eqref{eq:measurableGelfand_chaus} to the probabilistic setting:
	\[
		\begin{tikzcd}[row sep=large]
			\cHausstoch \ar[rr] \ar[d,"C(-)"] && \stoch \ar[d,"\Linf"] \\
			\cCalgcpu \ar[rr,"(-)^{\infty}"] \ar[d,"\spec"] && \cmsigmaCcpu \ar[d,"\specs"] \\
			\cHausstoch \ar[rr] && \stoch
		\end{tikzcd}
	\]
	Here, as before, the topological side appears on the left and the measurable side on the right. 
	The proof proceeds exactly as in the case of~\eqref{eq:measurableGelfand_chaus}.
\end{rem}

\section{Tensor products of \texorpdfstring{\cstars{}s}{σC*-algebras}}\label{sec:tensor_cstars}

Using the duality theorems discussed above, we now briefly investigate candidate definitions of tensor products for \cstars{}s. 
As in the case of \bsigma{}s studied in \cref{sec:tensor_bool}, the general and the concrete case have different flavors, since they are naturally aligned with the universal and the regular $\sigma$-completions, respectively.

\subsection{Tensor products of commutative \texorpdfstring{\cstars{}s}{σC*-algebras}}\label{sec:tensor_commutative_cstars}

Let us focus on the commutative case for the moment, where we can leverage the equivalence $\cSigmaC \cong \SigmaB$ from \cref{thm:general_duality}.
As we will see after, the noncommutative case poses additional difficulties that we have not been able to resolve yet.

\begin{defn}\label{def:cstars_unitensor}
	Let $\A$ and $\B$ be two commutative \cstars{}s and $\A \otimes \B$ their C*-algebraic tensor product.\footnote{Since commutative \cstar{}s are nuclear, there is only one C*-algebraic tensor product, namely their coproduct in $\cCalg$.} Then:
	\begin{enumerate}
		\item The \newterm{universal tensor product} $\A\unitensor \B$ is the universal $\sigma$-completion of $\A \otimes \B$.
		\item The \newterm{regular tensor product} $\A \regtensor \B$ is the regular $\sigma$-completion of $\A \otimes \B$.
	\end{enumerate}
\end{defn}

To relate these tensor products to those for \bsigma{}s, we first note that the diagram
\[
	\begin{tikzcd}[row sep=large]
		\SigmaB \times \SigmaB \ar[rr,"\otimes"] \ar[d,hookrightarrow,"C(\stone(-)) \times C(\stone(-))"'] && \Bool \ar[d,hookrightarrow,"C(\stone(-))"] \\
		\cSigmaC \times \cSigmaC \ar[rr,"\otimes"] && \cCalg
	\end{tikzcd}
\]
commutes by Gelfand and Stone duality (up to canonical natural isomorphism), since the product of two Stone spaces coincides with their product as compact Hausdorff spaces.
By \cref{prop:universal_completions,cor:regular_completions}, we can conclude that it likewise commutes with $\unitensor$ or $\regtensor$ in place of $\otimes$: both tensor products match those for \bsigma{}s, and in this sense they are not new.

\begin{prop}\label{prop:tensor_coproduct}
	\begin{enumerate}
		\item If $\A, \B \in \cSigmaC$, then $\A \unitensor \B$ is the coproduct of $\A$ and $\B$ in $\cSigmaC$.
		\item If $\A, \B \in \cmsigmaC$, then $\A \regtensor \B$ is the coproduct of $\A$ and $\B$ in $\cmsigmaC$.
	\end{enumerate}
\end{prop}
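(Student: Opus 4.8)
The plan is to reduce both statements to the corresponding coproduct statements for Boolean $\sigma$-algebras by transporting them along the equivalences of \cref{cor:proj_equiv} and \cref{cor:concrete}, using that an equivalence of categories preserves all colimits, and coproducts in particular.

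For (i), fix $\A, \B \in \cSigmaC$ and, using \cref{cor:proj_equiv} (equivalently \cref{thm:general_duality}), choose $A, B \in \SigmaB$ with $\A \cong C(\stone(A))$ and $\B \cong C(\stone(B))$. By \cref{prop:sigma_tensor_product}, $A \unitensor B$ together with the inclusions $A \to A \otimes B \to A \unitensor B$ and $B \to A \otimes B \to A \unitensor B$ is the coproduct of $A$ and $B$ in $\SigmaB$. Since $C(\stone(-)) \colon \SigmaB \to \cSigmaC$ is an equivalence, applying it shows that $C(\stone(A \unitensor B))$, with the image inclusions, is the coproduct of $\A$ and $\B$ in $\cSigmaC$. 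It remains to identify this object, together with its inclusions, with $\A \unitensor \B$ and the inclusions of \cref{def:cstars_unitensor}. This is exactly the content of the commuting square discussed just before the statement: by \cref{prop:universal_completions} there is a natural isomorphism $C(\stone(A \unitensor B)) \cong C(\stone(A)) \unitensor C(\stone(B)) = \A \unitensor \B$, and unwinding the construction shows it intertwines the two pairs of inclusions, because $C(\stone(-))$ carries the coproduct inclusions $A \to A \otimes B$ in $\Bool$ to the coproduct inclusions $\A \to \A \otimes \B$ in $\cCalg$ (the product of Stone spaces being their product as compact Hausdorff spaces). This proves (i).

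For (ii), the argument runs identically with $\csigmaB$ and $\cmsigmaC$ in place of $\SigmaB$ and $\cSigmaC$. Given $\A, \B \in \cmsigmaC$, \cref{cor:concrete} provides concrete \bsigma{}s $A, B$ with $\A \cong \Linfbool(A)$ and $\B \cong \Linfbool(B)$; here $A \regtensor B$ is again concrete by \cref{prop:concrete_regular} and is the coproduct of $A$ and $B$ in $\csigmaB$ by \cref{cor:univprop_regular}. Applying the equivalence $\Linfbool \colon \csigmaB \to \cmsigmaC$ shows that $\Linfbool(A \regtensor B)$ is the coproduct of $\A$ and $\B$ in $\cmsigmaC$, and \cref{cor:regular_completions} (applied to the \bsigma{} $A \otimes B$) together with the identifications $C(\stone(A \otimes B)) \cong C(\stone(A)) \otimes C(\stone(B)) \cong \A \otimes \B$ and $\Linfbool \cong C(\stone(-))$ identifies $\Linfbool(A \regtensor B) = \Linfbool(\reg{(A \otimes B)})$ with $\reg{(\A \otimes \B)} = \A \regtensor \B$, compatibly with the coproduct inclusions as before.

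The bulk of this is purely formal; the one point demanding care is tracking the coproduct inclusions through the chain of equivalences and natural isomorphisms, so as to conclude that $\A \unitensor \B$ (resp.\ $\A \regtensor \B$) is literally the coproduct with its canonical structure maps, rather than merely isomorphic to it as an object. All the needed compatibilities follow once one observes that every functor and natural isomorphism involved is assembled from Gelfand and Stone duality together with the universal properties of the respective $\sigma$-completions.
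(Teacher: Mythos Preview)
Your proof is correct and follows essentially the same approach as the paper's: transport the Boolean coproduct statements (\cref{prop:sigma_tensor_product} and \cref{cor:univprop_regular}) along the equivalences of \cref{cor:proj_equiv} and \cref{cor:concrete}, using that the tensor products match under these equivalences (the discussion immediately preceding the proposition, via \cref{prop:universal_completions} and \cref{cor:regular_completions}). The only minor wrinkle is that \cref{cor:regular_completions} is stated for a \bsigma{} $B$ while you apply it to the Boolean algebra $A \otimes B$, but its proof works verbatim for arbitrary Boolean algebras, so this is harmless.
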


\begin{proof}
	\begin{enumerate}
		\item By the fact that the Boolean $\unitensor$ is the coproduct in $\SigmaB$ (\cref{prop:sigma_tensor_product}).
		\item Using $\cmsigmaC \cong \csigmaB$, by the fact that the Boolean $\regtensor$ is the coproduct in $\csigmaB$ (\cref{cor:univprop_regular}).
			\qedhere
	\end{enumerate}
\end{proof}

Unfortunately, the universal tensor product of two commutative purely $\sigma$-representable \cstar{}s is not a purely $\sigma$-representable \cstar{} in general: \cref{ex:tensor_product} is the corresponding statement in the Boolean setting. 
However, the case of Pedersen--Baire envelopes is quite well-behaved.

\begin{prop}\label{prop:pb_tensor}
	For commutative \cstar{}s, there is an isomorphism
	\begin{equation} 
		\label{eq:tensor_pb}
		(\A \otimes \B)^{\infty} \cong \A^{\infty} \unitensor \B^{\infty} \cong \A^{\infty} \regtensor \B^{\infty}
	\end{equation}
	natural in $\A$ and $\B$.
	In particular, $\A^{\infty} \unitensor \B^{\infty}$ is a purely $\sigma$-representable \cstar{}.
\end{prop}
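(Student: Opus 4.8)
The plan is to transfer the corresponding Boolean statement, namely \cref{prop:tensor_standardborel} (or rather its generalization to arbitrary Baire measurable spaces, which is \cref{prop:pb_tensor} itself in the Boolean incarnation), through the equivalence $\cSigmaC \cong \SigmaB$ of \cref{thm:general_duality} together with the compatibility of the various $\sigma$-completions with this equivalence. More concretely, I would first reduce to the Boolean side: by \cref{cor:baire_eq}, a commutative Pedersen--Baire envelope $\A^{\infty}$ is of the form $\Linf(X)$ for $X = \spec(\A)$ a compact Hausdorff space, and under $\proj$ it corresponds to the \bsigma{} $\baire(X)$ of Baire sets. So the claim~\eqref{eq:tensor_pb} amounts to showing, on the Boolean side, that for compact Hausdorff $X$ and $Y$ one has $\baire(X \times Y)^{\sigma} \cong \baire(X) \unitensor \baire(Y) \cong \baire(X) \regtensor \baire(Y)$, i.e.\ that the universal and regular tensor products of two Baire $\sigma$-algebras coincide and are again a Baire $\sigma$-algebra (that of the product space).

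The key steps, in order, are as follows. First, observe that $\A \otimes \B$ is $C(X \times Y)$ by Gelfand duality and the fact that the product of compact Hausdorff spaces corresponds to the tensor product of commutative \cstar{}s, so $(\A \otimes \B)^{\infty} = C(X\times Y)^{\infty} = \Linf(X \times Y)$ by \cref{cor:baire_eq}; this is manifestly natural in $\A$ and $\B$. Second, translate $\A^{\infty} \unitensor \B^{\infty}$ and $\A^{\infty} \regtensor \B^{\infty}$ to the Boolean side: by \cref{prop:universal_completions} and \cref{cor:regular_completions}, these correspond to $\baire(X) \unitensor \baire(Y)$ and $\baire(X) \regtensor \baire(Y)$ respectively, where I am using that $\proj$ intertwines $\sigma$-completions; equivalently, via the explicit functor $\Linfbool$, $\A^\infty \unitensor \B^\infty \cong \Linfbool(\baire(X) \unitensor \baire(Y))$ and similarly with $\regtensor$. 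Third, and this is the heart of the matter, prove the Boolean identity $\baire(X) \unitensor \baire(Y) = \baire(X) \regtensor \baire(Y) = \baire(X \times Y)$. This is exactly the content of \cite[Proposition 6.7.(iv)]{jamneshan23foundational}, which is cited in the excerpt as the general statement behind \cref{prop:tensor_standardborel}; but I would prefer to give a self-contained argument mimicking the proof of \cref{prop:tensor_standardborel}, replacing the use of \cite[32.5]{sikorski69boolean} (lifting of $\sigma$-homomorphisms out of $\baire(\mathbb{R})$ for standard Borel spaces) with the analogous lifting property available when the target is a quotient of a Baire $\sigma$-algebra of a \emph{compact Hausdorff} space — the relevant fact being that the inclusions $\baire(X) \to \baire(X) \unitensor \baire(Y)$ factor through any concrete \bsigma{} $C$ surjecting onto $\baire(X) \unitensor \baire(Y)$, since a \bsigmahom{} out of the Baire $\sigma$-algebra of a compact Hausdorff space into a quotient $C/J$ lifts along $C \to C/J$ (this uses the projectivity-type property of such Baire $\sigma$-algebras, which in turn rests on the strong Lusin property of Rickart spaces as in \cite{jamneshan23foundational}). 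Granting this lifting, the universal properties of $\unitensor$ and $\regtensor$ give mutually inverse \bsigmahom{}s between $\baire(X) \unitensor \baire(Y)$ and $\baire(X) \regtensor \baire(Y)$ exactly as in the proof of \cref{prop:tensor_standardborel}, and \cref{prop:concrete_regular} identifies the regular tensor product with $\baire(X \times Y)$ via $\sobmeas$.

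Finally, I would transport the resulting isomorphism back along the equivalence: applying $\Linfbool$ (or equivalently $C(\stone(-))$) to $\baire(X) \unitensor \baire(Y) \cong \baire(X) \regtensor \baire(Y) \cong \baire(X \times Y)$ and using \cref{cor:baire_eq} once more yields $\A^{\infty} \unitensor \B^{\infty} \cong \A^{\infty} \regtensor \B^{\infty} \cong \Linf(X \times Y) = (\A \otimes \B)^{\infty}$, and naturality is inherited from naturality of all the functors involved. The ``in particular'' clause is then immediate: $(\A \otimes \B)^{\infty}$ is a Pedersen--Baire envelope, hence purely $\sigma$-representable by \cref{thm:pb_pure}, so the isomorphic algebra $\A^{\infty} \unitensor \B^{\infty}$ is purely $\sigma$-representable as well. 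The main obstacle is the Boolean lifting step in the third bullet above — establishing that a \bsigmahom{} out of $\baire(X)$ for compact Hausdorff $X$ lifts along surjections of concrete \bsigma{}s onto a $\sigma$-completion, which is where the special structure of Baire $\sigma$-algebras of compact Hausdorff (Rickart) spaces, beyond the standard Borel case handled by \cite[32.5]{sikorski69boolean}, really has to be used; everything else is bookkeeping with the already-established equivalences and adjunctions.
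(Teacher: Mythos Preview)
Your approach is workable in outline but takes a significantly harder and more circuitous route than the paper, and the central step you flag as ``the main obstacle'' is precisely what the paper's argument renders unnecessary.

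The paper's proof is pure abstract nonsense and fits in three sentences. For the first isomorphism, $(-)^{\infty}\colon \cCalg \to \cSigmaC$ is a left adjoint (\cref{thm:pb_univ}), hence preserves coproducts; since $\A \otimes \B$ is the coproduct in $\cCalg$ and $\unitensor$ is the coproduct in $\cSigmaC$ (\cref{prop:tensor_coproduct}), one immediately gets $(\A\otimes\B)^{\infty}\cong \A^{\infty}\unitensor\B^{\infty}$. The ``in particular'' now follows from \cref{thm:pb_pure}, and then the second isomorphism drops out: since $\A^{\infty}\unitensor\B^{\infty}$ has just been shown to lie in $\cmsigmaC$, and $\cmsigmaC\subseteq\cSigmaC$ is full, both $\unitensor$ and $\regtensor$ compute the coproduct of $\A^{\infty}$ and $\B^{\infty}$ in $\cmsigmaC$ (\cref{prop:tensor_coproduct}), hence coincide.

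By contrast, you translate to the Boolean side and attempt to rerun the proof of \cref{prop:tensor_standardborel}, replacing the lifting result \cite[32.5]{sikorski69boolean} (which is specific to standard Borel $\sigma$-algebras) by an analogous projectivity property for $\baire(X)$ with $X$ an arbitrary compact Hausdorff space. You correctly identify this lifting step as the crux, but you do not prove it, and it is genuinely nontrivial --- this is essentially the substance of the Jamneshan--Tao argument you cite. So your proposal has an actual gap at exactly the point where the work lies. The paper's insight is that no such lifting is needed: the logical order is reversed, with the categorical argument \emph{yielding} the Boolean identity $\baire(X)\unitensor\baire(Y)\cong\baire(X)\regtensor\baire(Y)\cong\baire(X\times Y)$ as a corollary rather than requiring it as input. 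What your approach would buy, if the lifting step were supplied, is a proof that stays closer to the measure-theoretic objects; what the paper's approach buys is that the result becomes a formal consequence of adjunctions already in hand.
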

A slightly more general result was established by Jamneshan and Tao \cite[Proposition 6.7.(iv)]{jamneshan23foundational} in terms of \bsigma{}s. 
We include the following proof to highlight how our approach yields a concise and transparent argument.
\begin{proof}
	Like every left adjoint, the functor $(-)^\infty \colon \cCalg \to \cSigmaC$ preserves coproducts, which gives the first isomorphism.
	Therefore $\A^{\infty} \unitensor \B^{\infty}$ is a purely $\sigma$-representable \cstar{} in particular.
	The second isomorphism amounts to the claim that the natural projection $\A \unitensor \B \to \A \regtensor \B$ is an isomorphism, which we can affirm now since each tensor product is the coproduct in $\cmsigmaC$.
\end{proof}

\begin{rem}
	\label{rem:tensor_baire}
	In terms of measurable Gelfand duality (\cref{thm:measurableGelfand}), \eqref{eq:tensor_pb} also ensures that the product of two Baire measurable spaces is again a Baire measurable space, and this is induced by the product topology.
	This is true even for infinite products~\cite[Lemma~2.1]{jamneshan23moore}.
\end{rem}

\subsection{Symmetric monoidal equivalences}\label{sec:symmetric_monoidal_equivalences}

We now briefly investigate to which extent the equivalences considered in previous sections are not just equivalences of categories, but equivalences of \emph{symmetric monoidal} categories, where we consider the monoidal structures considered in the previous subsection.
In many cases, this holds because the products and tensor products that we consider have the same universal properties.
This applies in particular to all of the equivalences~\eqref{eq:measurableGelfand},~\eqref{eq:pb_duality} and~\eqref{eq:borelmeas}.

However, at the level of probabilistic morphisms, our products and tensor products no longer enjoy the same universal properties, and we have to be more careful.

\begin{prop}\label{prop:linf_sym}
	The equivalence~\eqref{eq:measurableGelfand_stoch} is a symmetric monoidal equivalence
	\[
		\begin{tikzcd}
			(\stoch,\times) \ar[rr,bend left=15,"\Linf"] \ar[rr,phantom,"\cong^{\op}" pos=0.6] && (\cmsigmaCcpu, \regtensor) \ar[ll,bend left=15,"\specs" pos=0.4]
		\end{tikzcd}
	\]
\end{prop}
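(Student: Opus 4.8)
The plan is to upgrade the already-established equivalence of categories~\eqref{eq:measurableGelfand_stoch} to a symmetric monoidal equivalence by exhibiting the required coherence data and checking that $\Linf$ is a strong monoidal functor. Recall that a symmetric monoidal equivalence can be obtained from an ordinary equivalence together with the structure of a strong symmetric monoidal functor on one of the two functors (the other then inherits a compatible structure automatically). So I would focus on equipping $\Linf \colon (\stoch, \times) \to (\cmsigmaCcpu, \regtensor)$ with natural isomorphisms
\[
	\Linf(X) \regtensor \Linf(Y) \xrightarrow{\ \cong\ } \Linf(X \times Y), \qquad \mathbb{C} \xrightarrow{\ \cong\ } \Linf(1),
\]
which are compatible with associators, unitors, and symmetries, and which moreover are \emph{natural with respect to Markov kernels}, not just deterministic morphisms.

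First I would handle the existence of the comparison isomorphism on objects. For $X, Y \in \stoch$ (so $X,Y$ are measurable spaces viewed via $\meas \to \stoch$), the algebras $\Linf(X)$ and $\Linf(Y)$ are purely $\sigma$-representable commutative \cstars{}s, so by \cref{prop:tensor_coproduct}\textit{(ii)} their regular tensor product $\Linf(X) \regtensor \Linf(Y)$ is their coproduct in $\cmsigmaC$. Transporting across the dual equivalence $\sobmeas \cong^{\op} \cmsigmaC$ of~\eqref{eq:measurableGelfand}, the coproduct in $\cmsigmaC$ corresponds to the product in $\sobmeas$, which by \cref{prop:concrete_regular} (or equivalently by \cref{rem:sober_tensor_product} together with the fact that the equivalence $\sobmeas \cong \csigmaB$ preserves products) is just the usual product of measurable spaces $X \times Y$. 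Composing, we get a natural isomorphism $\Linf(X) \regtensor \Linf(Y) \cong \Linf(X \times Y)$ in $\cmsigmaC$; concretely it is the unique \starshom{} extending the canonical PVM $\Sigma(X \times Y) \to \Linf(X) \regtensor \Linf(Y)$ built from the two coproduct inclusions, and on elementary tensors it sends $f \otimes g$ to $(x,y) \mapsto f(x) g(y)$. The unit comparison $\mathbb{C} \cong \Linf(1)$ is immediate since the one-point measurable space $1$ has $\Linf(1) = \mathbb{C}$, and this matches the monoidal unit of $(\cmsigmaCcpu, \regtensor)$, namely the regular $\sigma$-completion of $\mathbb{C} \otimes \mathbb{C} = \mathbb{C}$, which is $\mathbb{C}$ itself.

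The main obstacle — and the step deserving genuine care — is naturality of these comparison isomorphisms with respect to \emph{Markov kernels} $\mu \colon X \rightsquigarrow X'$ and $\nu \colon Y \rightsquigarrow Y'$, i.e.\ that the square relating $\Linf(\mu) \regtensor \Linf(\nu)$ and $\Linf(\mu \times \nu)$ commutes, where $\mu \times \nu$ is the product Markov kernel. Here one cannot simply invoke a universal property, because $\regtensor$ is \emph{not} a categorical coproduct in $\cmsigmaCcpu$ (the morphisms are \scpu{} maps, not \starshom{}s), and likewise $\times$ is not a categorical product in $\stoch$. The right way to verify this is by a direct computation using the integral description~\eqref{eq:Koopman} of $\Linf(\mu)$ together with \cref{thm:povm_integration}: both composites in the naturality square are \scpu{} maps $\Linf(X' \times Y') \to \Linf(X \times Y)$, and by the uniqueness clause in \cref{thm:povm_integration} it suffices to check that they agree on the generating PVM, i.e.\ on indicator functions of measurable rectangles $E \times F$ with $E \in \Sigma(X')$, $F \in \Sigma(Y')$. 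On such a rectangle both sides evaluate, at a point $(x,y)$, to $\mu(E \mid x)\,\nu(F \mid y)$, which follows from the definition of the product kernel and Fubini's theorem (exactly as in the proof of \cref{prop:Linf_stoch}). This pins down the two \scpu{} maps as equal.

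Finally I would dispatch the coherence axioms. The associativity and unit pentagon/triangle identities for $\Linf$, and compatibility with the symmetry, all reduce — again by the uniqueness part of \cref{thm:povm_integration}, since every morphism in sight is \scpu{} and determined by its restriction to the generating PVM — to the corresponding identities for the product of measurable spaces, which are the standard ones making $(\meas, \times)$ (hence $(\stoch, \times)$) symmetric monoidal, combined with the fact that $\regtensor$ on $\cmsigmaC$ inherits its associator, unitors and symmetry from the coproduct structure (\cref{prop:tensor_coproduct}\textit{(ii)}). Thus $\Linf$ is a strong symmetric monoidal functor, and since it is part of the equivalence~\eqref{eq:measurableGelfand_stoch}, it is a symmetric monoidal equivalence; the quasi-inverse $\specs$ acquires a canonical compatible strong monoidal structure, sending $\A \regtensor \B$ to $\specs(\A) \times \specs(\B)$. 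This completes the proof.
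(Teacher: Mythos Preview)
Your argument has a genuine gap that the paper's proof explicitly avoids. You write the naturality square as relating $\Linf(\mu) \regtensor \Linf(\nu)$ and $\Linf(\mu \times \nu)$, and then verify they agree on rectangles. But the expression $\Linf(\mu) \regtensor \Linf(\nu)$ presupposes that $\regtensor$ is already a bifunctor on $\cmsigmaCcpu$, i.e.\ that the tensor of two \scpu{} maps is a well-defined \scpu{} map between the regular tensor products. Nowhere in the paper prior to this proposition is that established: \cref{prop:tensor_coproduct} only gives $\regtensor$ as a coproduct in $\cmsigmaC$ (deterministic morphisms), and the universal property of the regular $\sigma$-completion is not available for \scpu{} maps (cf.\ \cref{rem:nonfunctoriality_regular}). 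So your ``other composite'' in the naturality square is not defined, and the computation you propose has nothing to compare $\Linf(\mu \times \nu)$ against.

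The paper's proof is organized around precisely this point: it emphasizes that functoriality of $\regtensor$ on \scpu{} maps is \emph{not} known a priori, and then \emph{defines} it by transport of structure along the already-established equivalence $\sobstoch \cong^{\op} \cmsigmaCcpu$, using the known symmetric monoidal structure on the deterministic subcategories $(\sobmeas,\times) \cong^{\op} (\cmsigmaC,\regtensor)$. With that definition, the monoidal equivalence holds by construction and no naturality check is needed. Your rectangle computation is not wrong as a calculation, but in the paper's logical order it becomes a verification that the transported tensor of \scpu{} maps acts on elementary tensors as one expects, rather than an independent proof of naturality. To repair your argument, you should first declare that $\regtensor$ on \scpu{} maps is defined by transport (or, equivalently, define $\phi \regtensor \psi$ via the comparison isomorphisms and $\Linf(\mu \times \nu)$), after which the coherence and naturality claims you list are immediate.
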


On the left, we write $\times$ for the product of measurable spaces, although it is \emph{not} the categorical product in $\stoch$ (but merely a symmetric monoidal structure).
The analogous statement and proof hold for the restricted versions considered in \cref{cor:bairestoch} as well.

\begin{proof}
	We emphasize that, a priori, we do not know whether $\regtensor$ is even functorial with respect to \scpu{} maps; we only know from \cref{prop:tensor_coproduct} that the subcategory $\cmsigmaC \subseteq \cmsigmaCcpu$ is symmetric monoidal.

	First, the inclusion $\sobstoch \hookrightarrow \stoch$, which we noted to be an equivalence in \cref{rem:sobstoch}, is symmetric monoidal because the product of sober measurable spaces is sober (\cref{rem:sober_tensor_product}). 
	Since $\specs$ lands in $\sobstoch$ anyway, we can consider $\sobstoch$ instead of $\stoch$ for the remainder of the proof.
	Then what we already know from~\eqref{eq:measurableGelfand} is the symmetric monoidal equivalence
	\[
		\begin{tikzcd}
			(\sobmeas,\times) \ar[rr,bend left=15,"\Linf"] \ar[rr,phantom,"\cong^{\op}"] && (\cmsigmaC,\regtensor) \ar[ll,bend left=15,"\specs"]
		\end{tikzcd}
	\]
	But now the fact that these two functors also implement an equivalence $\stoch \cong^\op \cmsigmaCcpu$ makes $\regtensor$ functorial with respect to \scpu{} maps by simply transporting the structure, and this results in a symmetric monoidal equivalence by construction.
\end{proof}

\begin{rem}\label{rem:markov_functor}
	Readers familiar with Markov categories may wonder whether $\Linf$ is also an equivalence of Markov categories. 
	A Markov category~\cite{fritz2019synthetic} is a symmetric monoidal category where every object comes equipped with a designated comonoid structure, subject to certain compatibility conditions.
	$\stoch$ is one of the most important examples: there, the comultiplication is given by the diagonal map 
	\[
		\Delta \colon X \to X \times X
	\]
	while the counit is the Markov kernel $X\rightsquigarrow \lbrace *\rbrace$ that associates to each $x\in X$ the only probability measure on the singleton space $\lbrace *\rbrace$. 
	
	In $(\cmsigmaCcpu)^{\op}$, and more in general in any category opposite to a category of algebras, a canonical choice for the comonoid structure is given by the multiplication and unit of the algebras (cf. \cite[\S~9]{fritz2019synthetic}).
	Based on this, $\Linf$ becomes an equivalence of Markov categories, since
	\[
		\Linf(\Delta) \colon \Linf(X) \regtensor \Linf(X) \cong \Linf(X \times X) \longrightarrow \Linf(X)
	\]
	is precisely the multiplication of $\Linf(X)$.
\end{rem}

\subsection{Towards tensor products for noncommutative \texorpdfstring{\cstars{}s}{σC*-algebras}}\label{sec:tensor_pbenv}

Let us finally turn to tensor products in the noncommutative setting. 
There, the main problem is functoriality.
To illustrate this, suppose that we define $\unitensor$ as before, where the C*-algebraic tensor product $\otimes$ is any functorial tensor product of \cstar{}s (such as the minimal or the maximal one).
Then is $\unitensor$ really a functor in each argument?
That is, if we have \starshom{}s
\[
	\phi \colon \A \to \C, \qquad \psi \colon \B \to \D,
\]
then do we get an induced \starshom{} $\phi \unitensor \psi \colon \A \unitensor \B \to \C \unitensor \D$?
The universal property of the universal $\sigma$-completion does not immediately guarantee this, because it is unclear whether the $*$-homomorphism $\phi \otimes \psi$ is $\sigma$-normal.
Indeed, the difficulties already observed in the Boolean case (\cref{sec:tensor_bool}) still arise, while the solution provided by Sikorski (\cref{prop:sigma_tensor_product}) does not directly extend to the noncommutative setting. 
One could try to approach functoriality by developing a noncommutative generalization of \cref{thm:sigma_normal_proj}, showing that $\sigma$-normality follows from $\sigma$-normality on projections.
But even then important challenges remain, as one still needs to understand the structure of projections in the tensor products.
The situation is even more complicated when dealing with the regular $\sigma$-completion instead, where we have no universal property at our disposal (recall \cref{rem:nonfunctoriality_regular}). %\footnote{Recall that the regular $\sigma$-completion and the universal $\sigma$-completion do differ already in the commutative case (\cref{ex:tensor_product}).}
A possible workaround might be to restrict to a suitable subcategory like $\msigmaC$, where one could hope for a universal property.

The situation is much better on the category of Pedersen--Baire envelopes $\PBenv$.
There, we indeed have a way to extend \cref{prop:pb_tensor} to the noncommutative case, and this can be used as a starting point for defining a tensor product.

\begin{defn}%[tentative]
	For \cstar{}s $\A$ and $\B$, their \newterm{maximal Pedersen--Baire tensor product} is
	\[
		\A^{\infty} \otimes^{\infty}_{\max} \B^{\infty} \coloneqq (\A \otimes_{\max} \B)^{\infty}.
	\]
\end{defn}
	
%We consider this definition tentative since it is not clear to us whether this tensor product is really the most desirable one.
Here it is important that we employ the maximal tensor product $\otimes_{\max}$, as the following arguments would not work e.g.~with the minimal tensor product $\otimes_{\min}$.

\begin{prop}
	\label{prop:nc_pb_tensor}
	$\otimes^{\infty}_{\max}$ makes $\PBenv$ into a symmetric monoidal category.
\end{prop}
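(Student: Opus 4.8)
The plan is to build the symmetric monoidal structure on $\PBenv$ by transporting, as much as possible, the symmetric monoidal structure of $(\Calg, \otimes_{\max})$ along the left adjoint $(-)^{\infty} \colon \Calg \to \SigmaC$, restricted to its essential image $\PBenv$. First I would establish functoriality: given \starshom{}s $\phi \colon \A^{\infty} \to \C^{\infty}$ and $\psi \colon \B^{\infty} \to \D^{\infty}$, I want an induced \starshom{} $\phi \otimes^{\infty}_{\max} \psi \colon (\A \otimes_{\max} \B)^{\infty} \to (\C \otimes_{\max} \D)^{\infty}$. The key observation is that, by \cref{thm:pb_univ}, restricting $\phi$ and $\psi$ along the inclusions $\A \hookrightarrow \A^{\infty}$ and $\B \hookrightarrow \B^{\infty}$ gives ordinary $*$-homomorphisms $\A \to \C^{\infty}$ and $\B \to \D^{\infty}$; composing with the canonical inclusions into $(\C \otimes_{\max} \D)^{\infty}$ and using the universal property of $\otimes_{\max}$ for \cstar{}s yields a $*$-homomorphism $\A \otimes_{\max} \B \to (\C \otimes_{\max} \D)^{\infty}$, which then extends uniquely to a \starshom{} on $(\A \otimes_{\max} \B)^{\infty}$ again by \cref{thm:pb_univ}. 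Uniqueness of all these extensions (the domain being $\sigma$-generated by a \cstar{}) gives functoriality in each argument and bifunctoriality. Crucially, this argument \emph{only uses the universal property of $\otimes_{\max}$ together with that of the Pedersen--Baire envelope}, which is exactly why the minimal tensor product would not work: $\otimes_{\min}$ does not have the needed universal mapping property out of a pair of $*$-homomorphisms into an arbitrary \cstar{}.

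Next I would verify the coherence data. The associator, left and right unitors, and symmetry for $\otimes^{\infty}_{\max}$ on $\PBenv$ should all be obtained from the corresponding isomorphisms for $\otimes_{\max}$ on \cstar{}s by applying the functor $(-)^{\infty}$ and using that $(-)^{\infty}$ is strong monoidal in the relevant sense: concretely, $(\A \otimes_{\max} \B)^{\infty} \cong \A^{\infty} \otimes^{\infty}_{\max} \B^{\infty}$ holds \emph{by definition}, so the monoidal structure is literally the image of the one on \cstar{}s. The unit object is $\mathbb{C}^{\infty} = \mathbb{C}$ (recall $\mathbb{C}$ is its own Pedersen--Baire envelope, \cref{ex:findim}), and the unitor isomorphisms $\mathbb{C} \otimes_{\max} \A \cong \A$ pass through $(-)^{\infty}$. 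I would then note that naturality of associator, unitors and symmetry, as well as the pentagon and hexagon axioms, follow because each of these is an equation between \starshom{}s whose domain is the Pedersen--Baire envelope of a \cstar{} $\A \otimes_{\max} \B \otimes_{\max} \C$ (or similar): two such \starshom{}s agreeing on the $\sigma$-generating \cstar{} must be equal, and on the \cstar{} the identities reduce to the known coherence axioms for $(\Calg, \otimes_{\max})$. Likewise $S^2 = \id$ and the triangle axiom reduce to their \cstar{} counterparts.

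I expect the main obstacle — or rather the main point requiring care — to be confirming that $\PBenv$ is genuinely \emph{closed} under $\otimes^{\infty}_{\max}$, i.e.\ that $(\A \otimes_{\max} \B)^{\infty}$ really is a Pedersen--Baire envelope of a \cstar{}. This is immediate from the definition (it is the Pedersen--Baire envelope of $\A \otimes_{\max} \B$), but one should double-check that the inclusion $\A \otimes_{\max} \B \hookrightarrow (\A \otimes_{\max} \B)^{\infty}$ is compatible with the chosen object identifications so that the coherence isomorphisms land in the right place; since $\otimes_{\max}$ of \cstar{}s is a genuine (unital) \cstar{}, there is no subtlety here. A secondary subtlety is that $\PBenv$ is a full subcategory of $\SigmaC$ whose objects are \emph{only determined up to isomorphism} as Pedersen--Baire envelopes — but this causes no problem, because the construction $\A^\infty \otimes^\infty_{\max} \B^\infty := (\A\otimes_{\max}\B)^\infty$ is well-defined on isomorphism classes via naturality established in the first paragraph. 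Thus the proof is essentially a transport-of-structure argument, and the only genuinely new input beyond \cref{thm:pb_univ} is the choice to use $\otimes_{\max}$ so that the requisite universal properties are available.

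\begin{proof}
	By definition, $\A^{\infty} \otimes^{\infty}_{\max} \B^{\infty} = (\A \otimes_{\max} \B)^{\infty}$ is again a Pedersen--Baire envelope, so $\PBenv$ is closed under this operation, and the unit object is $\mathbb{C} = \mathbb{C}^{\infty}$ (\cref{ex:findim}).

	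\emph{Bifunctoriality.} Let $\phi \colon \A^{\infty} \to \C^{\infty}$ and $\psi \colon \B^{\infty} \to \D^{\infty}$ be \starshom{}s. Restricting along the inclusions $\A \hookrightarrow \A^{\infty}$ and $\B \hookrightarrow \B^{\infty}$ gives $*$-homomorphisms $\A \to \C^{\infty}$ and $\B \to \D^{\infty}$; postcomposing with the canonical inclusions into $(\C \otimes_{\max} \D)^{\infty}$ and using the universal property of the maximal \cstar{}-tensor product yields a $*$-homomorphism $\A \otimes_{\max} \B \to (\C \otimes_{\max} \D)^{\infty}$, which by \cref{thm:pb_univ} extends uniquely to a \starshom{}
	\[
		\phi \otimes^{\infty}_{\max} \psi \colon (\A \otimes_{\max} \B)^{\infty} \longrightarrow (\C \otimes_{\max} \D)^{\infty}.
	\]
	Since the domain is $\sigma$-generated by $\A \otimes_{\max} \B$, uniqueness of extensions (\cref{thm:pb_univ}) shows that this assignment preserves identities and composition in each variable, and that it is bifunctorial. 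In particular, $\otimes^{\infty}_{\max}$ is well defined on isomorphism classes of Pedersen--Baire envelopes.

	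\emph{Coherence.} The associator, unitors and symmetry for $\otimes_{\max}$ on \cstar{}s are isomorphisms of \cstar{}s; applying $(-)^{\infty}$ and using the definitional identification $(\A \otimes_{\max} \B)^{\infty} = \A^{\infty} \otimes^{\infty}_{\max} \B^{\infty}$ produces the corresponding \starsiso{}s for $\otimes^{\infty}_{\max}$ on $\PBenv$; for the unitors we also use $\mathbb{C}^{\infty} = \mathbb{C}$ and $(\mathbb{C} \otimes_{\max} \A)^{\infty} \cong \A^{\infty}$. Naturality of these isomorphisms, the pentagon and hexagon axioms, the triangle axiom, and $S^2 = \id$ are each equalities of \starshom{}s whose common domain is the Pedersen--Baire envelope of an iterated maximal tensor product of \cstar{}s. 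By \cref{thm:pb_univ}, two \starshom{}s out of such an envelope agree iff they agree on the $\sigma$-generating \cstar{}, and on that \cstar{} each identity reduces to the known coherence axiom for the symmetric monoidal category $(\Calg, \otimes_{\max})$. Hence all axioms hold, and $(\PBenv, \otimes^{\infty}_{\max}, \mathbb{C})$ is a symmetric monoidal category.
\end{proof}
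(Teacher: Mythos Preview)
Your argument is essentially correct and close in spirit to the paper's, but the organization differs. The paper does not establish bifunctoriality directly; instead it proves a single universal property: for \starshom{}s $\phi \colon \A^{\infty} \to \C^{\infty}$ and $\psi \colon \B^{\infty} \to \C^{\infty}$ with \emph{commuting ranges} (same codomain), there is a unique \starshom{} $\A^{\infty} \otimes^{\infty}_{\max} \B^{\infty} \to \C^{\infty}$ extending both. From this, functoriality, associator, unitors, symmetry and all coherences follow exactly as for $\otimes_{\max}$ on $\Calg$. The advantage is that commuting ranges is a \emph{hypothesis}, so nothing needs to be checked; one restricts to $\A,\B$, applies the $\otimes_{\max}$ universal property, and extends via \cref{thm:pb_univ}. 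Your transport-of-structure route reaches the same destination and is equally valid, but is a touch heavier since you must verify coherences by reduction rather than reading them off a universal property.

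There is one small gap in your bifunctoriality step. To combine the composites $\A \to \C^{\infty} \to (\C \otimes_{\max} \D)^{\infty}$ and $\B \to \D^{\infty} \to (\C \otimes_{\max} \D)^{\infty}$ via the universal property of $\A \otimes_{\max} \B$, you need their ranges to commute. This is not automatic: the canonical maps $\C^{\infty} \to (\C \otimes_{\max} \D)^{\infty}$ and $\D^{\infty} \to (\C \otimes_{\max} \D)^{\infty}$ land in subsets whose mutual commutativity must be argued. It does hold, since commutants in $(\C \otimes_{\max} \D)^{**}$ are $\sigma$-closed (equivalently, order limits commute with multiplication, \cref{fact:orderlimit}), so the commutant of the image of $\C$ contains the $\sigma$-closure of the image of $\D$, hence the image of $\D^{\infty}$; then symmetrically for $\C^{\infty}$. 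You should state this. Note that the paper's formulation sidesteps this entirely by building commuting ranges into the universal property it proves.
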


\begin{proof}
	We actually show that $\otimes^{\infty}_{\max}$ has the same universal property in $\PBenv$ as $\otimes_{\max}$ does in $\cCalg$.
	This will straightforwardly imply all other properties, including functoriality, the construction of the associator, unitor and braiding isomorphisms, and the commutativity of the coherence diagrams, in just the same way as one proves that $\otimes_{\max}$ makes $\cCalg$ into a symmetric monoidal category.

	So suppose that we have two \starshom{}s
	\[
		\phi \colon \A^{\infty} \to \C^{\infty}, \qquad \psi \colon \B^{\infty} \to \C^{\infty}
	\]
	with commuting ranges.
	Then we obtain an induced (restricted) $*$-homomorphism
	\[
		\A \otimes_{\max} \B \longrightarrow \C^{\infty}.
	\]
	The universal property of the Pedersen--Baire envelope now lets us extend this to a \starshom{} of the desired form
	\[
		\A^{\infty} \otimes^{\infty}_{\max} \B^{\infty} \longrightarrow \C^{\infty}.
	\]
	The fact that this \starshom{} recovers $\phi$ on $\A^{\infty}$ follows by noting that this holds on $\A$ which $\sigma$-generates $\A^{\infty}$, and similarly for $\psi$.
	The uniqueness is clear as well by combining both universal properties.
\end{proof}

\begin{rem}
	The reasoning of \cref{prop:nc_pb_tensor} holds verbatim with \scpu{} maps as morphisms.\footnote{Here we use the universal property of Pedersen--Baire envelopes in the form of \cref{cor:pb_univ2}; we also recall that the universal property of the maximal tensor product holds even for \cpu{} maps~\cite[Exercise 3.5.1]{brownozawa}.} 
	Therefore, $\otimes^{\infty}$ also makes $\pbenvcpu$ into a symmetric monoidal category.
	In fact, its monoidal unit is initial, which makes its opposite $(\pbenvcpu)^{\op}$ a \emph{semicartesian} category.
	This is relevant insofar as semicartesian categories are one abstract framework for quantum probability~\cite{houghtonlarsen2021dilations,fritz2022dilations}, and we hope that $\pbenvcpu$ could be a good candidate for a convenient category for quantum probability.
\end{rem}

\bibliographystyle{alpha}
{\footnotesize 
\bibliography{references}}

%\appendix
% \section{Stuff on order limits}
% \anto{This can be deleted for me. I'm leaving it here for you if you want to recover some additional things on them.}
% An important observation is that the existence of countable suprema and infima induces the following notion of order limit and order convergence, which tends to be better behaved than suprema and infima (of non-monotone sequences).

% \cref{def:orderlimit}.

% In particular if $(a_n)$ already is bounded increasing, then the order limit is simply the supremum of the sequence (whenever it exists).

% \begin{ex}
% 	An order limit need not be a norm limit, and vice versa.

% 	For a norm limit that is an order limit, consider the sequence defined by $a_n \coloneqq \frac{(-1)^n}{n}$ in $\A = \C$.
% 	For an order limit that is not a norm limit, take e.g.~$\A \coloneqq \Linf([0,1])$ and the bounded monotone sequence of functions $(f_n)$ given by $f_n(x) \coloneqq \sqrt[n]{x}$.
% 	The (pointwise) supremum of this sequence is the indicator function of $(0,1]$, which has a distance of $1$ in norm from every $f_n$.
% \end{ex}

% Next, we highlight some important results on order convergence, which will be used throughout the rest of the paper.
% These are covered in \cite[Section 2.1]{saito15monotone}.\footnote{In particular, see Lemma 2.1.19 and the brief discussion after its proof, Lemma 2.1.21, Proposition 2.1.22 and Corollary 2.1.23.}
\end{document}